\documentclass[english,11pt,oneside]{amsart}
\usepackage[foot]{amsaddr}
\usepackage[utf8]{inputenc}
\usepackage[T1]{fontenc}
\usepackage{lmodern}
\usepackage[expansion=false]{microtype}
\usepackage{array,tabularx}
\usepackage[english]{babel}
\usepackage[margin=2.6cm]{geometry}
\usepackage{amsfonts, amssymb, amscd, amsmath}
\usepackage{amsthm}
\usepackage{latexsym}
\usepackage{graphicx}
\usepackage{xcolor}
\usepackage{booktabs}
\usepackage[section]{placeins}
\usepackage{enumitem}
\usepackage{tikz}
\usepackage{tikz-cd}
\usetikzlibrary{decorations.markings}
\usepackage{algorithm}
\usepackage{algorithmic}
\usepackage[numbers]{natbib}
\usepackage[colorlinks=true,linkcolor=blue,citecolor=blue,urlcolor=blue]{hyperref}
\usepackage[capitalise,noabbrev]{cleveref}
\usepackage{amsthm}
\usepackage{thmtools}

\makeatletter
\renewcommand{\email}[2][]{%
  \ifx\emails\@empty\relax\else{\g@addto@macro\emails{,\space}}\fi%
  \@ifnotempty{#1}{\g@addto@macro\emails{\textrm{(#1)}\space}}%
  \g@addto@macro\emails{#2}%
}
\makeatother

\crefname{thm}{Theorem}{Theorems}
\Crefname{thm}{Theorem}{Theorems}
\crefname{prop}{Proposition}{Propositions}
\Crefname{prop}{Proposition}{Propositions}
\crefname{lem}{Lemma}{Lemmas}
\Crefname{lem}{Lemma}{Lemmas}
\crefname{cor}{Corollary}{Corollaries}
\Crefname{cor}{Corollary}{Corollaries}
\crefname{defin}{Definition}{Definitions}
\Crefname{defin}{Definition}{Definitions}
\crefname{con}{Construction}{Constructions}
\Crefname{con}{Construction}{Constructions}
\crefname{ex}{Example}{Examples}
\Crefname{ex}{Example}{Examples}
\crefname{rem}{Remark}{Remarks}
\Crefname{rem}{Remark}{Remarks}
\crefname{appendix}{Appendix}{Appendices}
\Crefname{appendix}{Appendix}{Appendices}

\theoremstyle{plain}
\newtheorem{thm}{Theorem}[section]
\newtheorem{prop}[thm]{Proposition}
\newtheorem{lem}[thm]{Lemma}
\newtheorem{cor}[thm]{Corollary}
\theoremstyle{definition}
\newtheorem{defin}[thm]{Definition}
\newtheorem{con}[thm]{Construction}
\newtheorem{ex}[thm]{Example}
\theoremstyle{remark}
\newtheorem{rem}[thm]{Remark}

\DeclareMathOperator{\Imm}{Im}
\DeclareMathOperator{\id}{id}
\DeclareMathOperator{\sgn}{sgn}

\DeclareMathOperator{\rk}{rk}

\DeclareMathOperator{\Cl}{Cl}
\DeclareMathOperator{\sk}{sk}
\DeclareMathOperator{\lk}{lk}
\DeclareMathOperator{\st}{st}
\DeclareMathOperator{\relint}{relint}
\DeclareMathOperator{\Int}{int}
\newcommand{\Flag}{\mathbf{Flag}}
\newcommand{\Flagref}{\mathbf{Flag}_{\mathrm{ref}}}
\newcommand{\FFaceng}{\mathbf{QF}_{\mathrm{ng}}}
\newcommand{\FFace}{\mathbf{QF}}
\newcommand{\FFacesh}{\mathbf{QF}_{\mathrm{sh,ng}}}
\newcommand{\GACat}{\mathbf{GACat}}

\newcommand{\ssSet}{\mathfrak{ssSet}}
\newcommand{\Sets}{\mathfrak{Set}}
\newcommand{\Topcat}{\mathbf{Top}_{\mathrm{fin}/}}
\newcommand{\Ch}{\mathbf{Ch}_{\geq 0}}
\newcommand{\Dd}{\mathcal{D}}
\newcommand{\Gg}{\mathcal{G}}

\newcommand{\bbot}{\boldsymbol{\bot}}
\newcommand{\genface}[2]{d_{#1}^{#2}}
\newcommand{\Gudhi}{\textsc{Gudhi}}

\title[A data structure for quotient flag complexes]{A data structure for quotient flag complexes}
\author[K.S.]{Konstantin Sorokin}
\address[K.S.]{HSE University, Saint Petersburg, Russian Federation}
\thanks{Corresponding author: K.~Sorokin, \texttt{ksorokin@hse.ru, mopsless7@gmail.com}}
\author[A.L.]{Aleksandr Levin}
\address[A.L.]{HSE University, Moscow, Russian Federation}
\author[M.B.]{Maxim Beketov}
\address[M.B.]{HSE University, Moscow, Russian Federation}
\author[A.A.]{Anton Ayzenberg}
\address[A.A.]{Noeon Research, Tokyo, Japan}
\date{September 2026}

\hypersetup{
 pdftitle={A data structure for quotient flag complexes},
 pdfauthor={Konstantin Sorokin, Aleksandr Levin, Maxim Beketov, Anton Ayzenberg},
 pdfsubject={Quotient flag complexes, local operations, and computational evaluation}
}

\begin{document}
\raggedbottom

\begin{abstract}

Vietoris--Rips filtrations, which are standard in topological data analysis (TDA), consist of flag complexes, and a simplex tree stores a flag complex without any attaching data, since every simplex is determined by its vertices. In this paper we ask what survives of this economy when a flag complex $K$ is divided by a subcomplex $A$, each connected component of $A$ being crushed to a point. Such a quotient is a CW complex whose cells are the simplices of $K\setminus A$, but their attaching maps are no longer implicit. A surviving edge may become a loop, a surviving triangle may be attached along a single edge, and two quotients can have the same $1$-skeleton, or even the same $2$-skeleton, and different homotopy types.

We show that for flag $K$ the face order of the quotient is strictly graded exactly when $A$ is flag, that in this case every cell has at most two collapsed facets, and that the surviving labelled cells are determined by those of dimension at most three, sharply. For $m$-flag pairs the sharp threshold is $2m+1$, and it drops to $m+2$ when $K$ is flag. The prescribed cells form a regular CW decomposition exactly when every component of $A$ is full in $K$, which we prove with an explicit convex model. These results justify the proposed \emph{QF-tree}: a cell table that stores, for each surviving simplex, its ordered list of $d+1$ facets with collapsed facets flagged, indexed by a trie of quotient-vertex words. For bounded dimension its size is linear in the number of surviving simplices plus the retained provenance, and we derive and verify a simple formula for the collapsed fraction above which it is smaller than the homotopy-equivalent cone model. Because a collapse changes the attaching data only on the closed star of $A$, the QF-tree can also be applied locally inside a simplex tree. For a ball-shaped $A$ in the sampled Vietoris--Rips regime the closed star is a thin shell, and the median compact budget is below the cone model at every sampled radius.

An accompanying library, modelled on \Gudhi, implements the QF-tree, its local variant, an editable layer with local quotient updates, gluing, disc attachment, induced maps, cup products, fundamental-group presentations and zigzag persistence, and controlled experiments separate the cost of maintaining a quotient from the cost of the algebra computed on it.

\end{abstract}
\subjclass[2020]{Primary 55U10, 55N31; Secondary 05E45, 68W05} \keywords{topological data analysis; TDA; flag complex; Vietoris--Rips complex; quotient; simplex tree; semisimplicial set; regular CW complex; persistent homology; qfcore; QF-tree}
\maketitle

\section{Introduction}\label{sec:intro}

The simplex tree of Boissonnat and Maria~\citep{boissonnat2014simplex} is the workhorse of \Gudhi~\citep{gudhi}: a simplicial complex is a trie of increasing vertex words, one node per simplex. This works because an abstract simplicial complex has canonical attaching maps, which need not be stored. Flag complexes push the economy one step further: a flag complex is the clique complex of its $1$-skeleton, so even the list of simplices is redundant once the graph is known. Vietoris--Rips complexes are flag, and this is why the persistence pipeline of topological data analysis scales to large point clouds~\citep{chazal2013interleaved}.

This paper is about what happens to that economy when we divide a flag complex by a subcomplex. Given $A\subseteq K$, we crush each connected component of $A$ to its own point and keep every simplex of $K\setminus A$ as an open cell of the quotient. The operation arises naturally in practice. Two Vietoris--Rips complexes $K$ and $K'$ built on overlapping samples share the flag subcomplex $K\cap K'$ (\cref{lem:intersection-flag}), and one may want to compare $K/(K\cap K')$ with $K'/(K\cap K')$. A subcomplex known to be contractible can be collapsed to shrink a complex before further computation, in the spirit of the reductions of discrete Morse theory~\citep{forman2002user,kozlov2021organized,fernandez2026strong} but keeping the original cells rather than a homotopy-equivalent model. Finally, the relative homology $H_*(K,A)$ is canonically the homology $H_*(Q,D)$ of the componentwise quotient relative to its distinguished component points. It agrees with the absolute $H_n(Q)$ for $n\ge2$, and degrees zero and one require the correction in \cref{thm:chain}.

Deleting the cofaces of $A$ would be a different, easier operation: it removes the relations we intend to keep. A quotient keeps the relations and changes their attachments. A surviving edge whose endpoints lie in the same component of $A$ becomes a loop, a surviving triangle with two edges in $A$ is attached to the quotient along a single edge, and collapsing the boundary of a disc creates a sphere. The resulting CW decomposition is in general not regular, even when the underlying space is a ball, and it is no longer determined by its low-dimensional skeleta (\cref{prop:skeleta,prop:2skel}). So the two questions that decide whether a quotient can be stored as cheaply as a flag complex are:
\begin{enumerate}[label=(Q\arabic*),leftmargin=3em]
\item Which part of the simplex-tree structure survives? Is the face order still graded, and can the attaching maps still be recorded by codimension-one data only?
\item Which skeleton determines the quotient, so that higher-dimensional records can be dropped?
\end{enumerate}

\subsection*{Results}
The answer to both questions is governed by two properties of the collapsed subcomplex, flagness and fullness, which Lemma~\ref{lem:flag-subcomplexes} shows are different: a flag subcomplex of a flag complex is the clique complex of \emph{any} subgraph, and it need not contain all the edges of $K$ between its own vertices. A missing edge with both endpoints in one component of $A$ survives as a loop; a missing edge joining two components survives as an ordinary edge.

\begin{itemize}[leftmargin=2em]
\item For flag $K$, the face order of $K/A$ is strictly graded if and only if $A$ is flag (\cref{thm:strict}), and then every surviving cell has at most two collapsed facets (\cref{lem:budget}). The ordered list of codimension-one facets, with collapsed ones flagged, determines the characteristic maps (\cref{cor:skeletal}), and the alternating sum of the genuine facets is the relative cellular differential (\cref{thm:chain}). A QF-tree stores these lists, at $d+1$ references per $d$-cell, plus one point per collapsed component. Despite the name, the structure is no longer a tree.
\item A flag collapse is determined by its cells of dimension at most three, and three cannot be lowered (\cref{prop:rigidity,prop:2skel}). For pairs whose minimal non-faces have at most $m+1$ vertices the threshold is $2m+1$, sharp (\cref{thm:rigidity-m,prop:sharp-m}), and it drops to $m+2$ when the ambient complex is flag (\cref{thm:threshold-unified}). No such threshold exists for gluings (\cref{prop:glue-noskel}).
\item The prescribed cells of $K/A$ form a regular CW decomposition if and only if every component of $A$ is full in $K$ (\cref{thm:fullness}). For flag $A$ this is visible on the edges (\cref{cor:regular-edges}). The proof of sufficiency is an explicit convex model of a simplex with disjoint faces crushed (\cref{lem:faces}), which we prefer to an appeal to ball recognition because homology and collapsibility tests cannot decide the weaker question of whether the quotient is a ball (\cref{sec:ball-question}).
\item The quotient representation is compared against a homotopy equivalent alternative: attach a cone to each component of $A$ and store the result in a simplex tree. In storage units, the quotient is the smaller representation once the collapsed fraction $\alpha=|A|/|K|$ exceeds $(s-1)/(s+1)$, where $s$ is the mean number of facets of a surviving cell (\cref{sec:crossover}). We measured this on Vietoris--Rips, Erd\H{o}s--R\'enyi and high-dimensional clique complexes with up to $855{,}000$ simplices, and predicted and observed crossovers of representation cost agree to within $0.01$.
\item The collapse changes the attaching data only on the closed star of $A$, so the QF-tree can be built on that closed star alone while the rest of $K$ stays in the simplex tree (\cref{sec:local}). Whether this is smaller than the cone model depends on the shape of $A$ as well as on its size. In the sampled Vietoris--Rips regime the median compact budget for a ball-shaped $A$ is below the cone model at every sampled radius, although not for every seed. For a scattered vertex subset it is above, and a four-point example shows that localization alone does not decide the comparison.
\end{itemize}

\subsection*{Software}
The library \texttt{qfcore}, written in C++17 with a Python interface and modelled on \Gudhi's API, implements the immutable QF-tree with its word trie, the closed-star local variant, and an editable cell layer supporting successive quotients with induced maps, point gluing, polygonal disc attachment, $\mathbb F_2$ cohomology with cup products, finite presentations of $\pi_1$, and insertion/deletion zigzag persistence driven by \Gudhi's streaming zigzag engine~\citep{qfe070gudhizz}. \Cref{sec:implementation,sec:experiments} describe the design and the controlled experiments. The library is available via \texttt{pip install qfcore} and at \url{https://github.com/qfcore/qfcore}.

In the experiments, maintaining the quotient locally is one to three orders of magnitude cheaper than rebuilding it, and the gap grows with the size of the complex (a paired rebuild/local ratio of $115$--$125$ on a triangulated torus with $3456$ simplices and $370$--$475$ on one with $13\,824$ simplices). Once the algebra computed on the result is included, the saving becomes small. Computing a cohomology ring from a retained quotient is several times faster than from the source triangulation, the only consumer for which we made a matched comparison. For one-shot relative homology, a direct relative boundary matrix as computed by \Gudhi\ remains faster. 

\subsection*{Related work}
Bounded-skeleton reconstruction has precedents for triangulated manifolds~\citep{dancis1984} and for manifold subcomplexes of cubes~\citep{rowlands2021}. Our object is the labelled upper set $K\setminus A$ inside a simplicial face poset, and the parameter $m$ is the usual bound on the size of minimal non-faces~\citep{nevo2008missing,goff2011balanced,adamaszek2013extremal}. Quotients and incidence categories occur in poset and group-action settings~\citep{hultman2002quotient,stanley1984quotients,quinn2013incidence,williams2024survey}, and we use only the standard semisimplicial constructions~\citep{rourke1971delta,ebert2019semisimplicial}. The signed cover category of \cref{app:covers} is an acyclic category in the sense of~\citep[Chapter~10]{kozlov2008combinatorial}. Tanaka~\citep{tanaka2019strong} develops the homotopy theory of finite acyclic categories and $\Delta$-complexes under point reductions, and Huang~\citep{huang2023abstract} studies incidence structures on finite bounded acyclic categories. Generalized triangulations in Regina~\citep{burton2013regina,ReginaHandbook}, the face-paired tetrahedra of SnapPy~\citep{culler2025snappy} and cell-tuple structures~\citep{lienhardt1994,brisson1993} also retain local attaching data, and \cref{sec:complexity} identifies the class of presentations shared with the facet-pairing formats. Compact simplex-tree variants and the skeleton-blocker representation~\citep{boissonnat2017compact,attali2012skeleton} are the natural baselines for the source complexes themselves. Dey, Fan and Wang~\citep{dey2014simplicial} compute persistence along simplicial maps, including vertex collapses. A vertex collapse merges simplices with equal images, whereas a componentwise quotient keeps them as distinct cells. The two already differ on the boundary of a triangle with one edge collapsed: the vertex collapse leaves an edge, the quotient a circle. Libraries for learning on cell and combinatorial complexes, such as TopoX~\citep{hajij}, are potential consumers of quotient presentations.

\subsection*{Organization}
\Cref{sec:quotients} fixes the objects and shows what is lost. \Cref{sec:decoration} builds the decoration $\Dd(K,A)$, its realization and its chains. \Cref{sec:graded} proves strict gradedness, \cref{sec:skeleta} the skeletal determinacy, and \cref{sec:regularity} the regularity criterion. \Cref{sec:encoding} describes the QF-tree, \cref{sec:crossover} the comparison with the cone model, and \cref{sec:local} the closed-star application. \Cref{sec:implementation,sec:experiments} describe the library and the experiments, and \cref{sec:discussion} concludes. The appendices contain the vertex-minimality of flag spheres, the signed cover category and what it forgets, the graded forcing lemmas, the convex model and the ball question, the order-preserving and permuted gluing regimes with the lens-space and Klein-bottle examples, and the complete experimental tables.

\section{Quotients of flag complexes}\label{sec:quotients}

\subsection{Flag complexes and their subcomplexes}\label{sec:prelim}
Throughout, simplicial complexes are finite and abstract, on a totally ordered vertex set; every simplex carries the induced increasing order, and homology is taken with $\mathbb{Z}$ coefficients unless stated otherwise. We write $\sk_1 K$ for the $1$-skeleton and $\Cl(G)$ for the clique complex of a graph $G$.

\begin{defin}\label{def:flag}
A simplicial complex $K$ is a \emph{flag complex} if $K=\Cl(\sk_1 K)$: every set of vertices that is pairwise connected by edges of $K$ spans a simplex of $K$. Equivalently, every minimal non-face of $K$ has exactly two vertices.
\end{defin}

A flag complex is determined by its $1$-skeleton. Vietoris--Rips complexes are flag because they are clique complexes of the neighbourhood graph at a given scale.

\begin{lem}\label{lem:intersection-flag}
If $K$ and $K'$ are flag complexes on a common vertex set, then $L=K\cap K'$ is a flag complex.
\end{lem}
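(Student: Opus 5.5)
The plan is to verify \cref{def:flag} directly for $L=K\cap K'$, using the clique characterization rather than minimal non-faces. First note that $L$ is a simplicial complex: an intersection of two families closed under taking subsets is again closed under taking subsets, and it contains every common vertex. Next identify its $1$-skeleton. An edge lies in $L$ exactly when it lies in both $K$ and $K'$, so $\sk_1 L=\sk_1 K\cap \sk_1 K'$ as graphs on the common vertex set.

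The key step is the inclusion $\Cl(\sk_1 L)\subseteq L$; the reverse inclusion holds for every simplicial complex, since each pair of vertices of a simplex spans an edge. So let $\sigma$ be a set of vertices that is pairwise joined by edges of $L$. Each such edge is an edge of $K$, so $\sigma$ is a clique in $\sk_1 K$, and flagness of $K$ gives $\sigma\in K$. The same argument with $K'$ gives $\sigma\in K'$. Hence $\sigma\in L$, and so $L=\Cl(\sk_1 L)$, which is the claim.

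One could instead argue with minimal non-faces. A minimal non-face $\tau$ of $L$ fails to lie in $K$ or in $K'$, say in $K$. All proper subsets of $\tau$ lie in $L\subseteq K$, so $\tau$ is a minimal non-face of $K$ and therefore has two vertices. The clique argument above is cleaner, and I would present it.

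I do not expect a real obstacle here. The only point that needs care is the phrase "common vertex set". If a vertex of one complex were missing from the other, the intersection would simply live on the common vertices and the argument would be unchanged. So the proof should state that $L$ is taken on the vertex set of both complexes and that cliques are formed in $\sk_1 L$ on that set.
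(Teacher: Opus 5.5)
Your proposal is correct and is essentially the paper's proof: you identify $\sk_1 L=\sk_1 K\cap\sk_1 K'$, note that a clique of this graph is a clique in both $\sk_1 K$ and $\sk_1 K'$, and use flagness of each to place it in $K\cap K'$. Your alternative argument via minimal non-faces is also valid, and your side remarks (that $L$ is a simplicial complex and that $L\subseteq\Cl(\sk_1 L)$ always holds) only make explicit what the paper leaves implicit.
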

\begin{proof}
Let $\sigma$ be a clique of $\sk_1 L=\sk_1 K\cap\sk_1 K'$. Then $\sigma$ is a clique of $\sk_1 K$ and of $\sk_1 K'$, hence $\sigma\in K$ and $\sigma\in K'$ by flagness of each, so $\sigma\in L$.
\end{proof}

\begin{lem}\label{lem:flag-subcomplexes}
Let $K$ be a flag complex. The flag subcomplexes of $K$ are exactly the complexes $\Cl(G)$ for arbitrary subgraphs $G\subseteq\sk_1 K$. In particular a flag subcomplex need not be \emph{full} (induced): it may omit edges of $K$ between its own vertices.
\end{lem}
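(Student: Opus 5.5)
We prove the two inclusions separately and then give an explicit example for the non-fullness remark. The argument uses only \cref{def:flag} and the fact that a subcomplex of $K$ is a downward-closed family of simplices of $K$.

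First, let $G\subseteq\sk_1 K$ be any subgraph, with vertex set $V(G)$ and edge set $E(G)$. We check that $\Cl(G)$ is a subcomplex of $K$. A clique of $G$ is pairwise joined by edges of $G$, hence by edges of $\sk_1 K$. Since $K$ is flag, the clique spans a simplex of $K$. Downward closure of $\Cl(G)$ is clear, because a subset of a clique is a clique.

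We then check that $\Cl(G)$ is flag, that is, $\sk_1\Cl(G)=G$. The edges of $\Cl(G)$ are the $2$-cliques of $G$, i.e.\ $E(G)$. The vertices are $V(G)$, and isolated vertices of $G$ are included as $1$-cliques. Hence $\Cl(\sk_1\Cl(G))=\Cl(G)$.

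Conversely, let $L\subseteq K$ be a flag subcomplex. Put $G=\sk_1 L$. This is a subgraph of $\sk_1 K$, and flagness of $L$ gives exactly $L=\Cl(G)$.

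For the remark, it suffices to take $K$ a single edge $\{0,1\}$, which is flag. Let $G$ be the subgraph with vertices $0,1$ and no edges. Then $\Cl(G)$ consists of two points. It is flag, but it is not full, since the edge $\{0,1\}$ of $K$ joins two of its vertices and is missing from it.

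There is no real obstacle here. The only point needing care is that subgraphs may have isolated vertices, so vertices must be tracked separately from edges when checking $\sk_1\Cl(G)=G$.
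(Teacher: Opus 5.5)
Your proposal is correct and follows the paper's proof. Both directions match: cliques of $G$ are cliques of $\sk_1 K$, hence simplices of $K$ by flagness, and conversely $L=\Cl(\sk_1 L)$ with $\sk_1 L\subseteq\sk_1 K$; your extra check that $\sk_1\Cl(G)=G$ just spells out what the paper calls ``flag by construction,'' and your two-points-in-an-edge witness is a valid, smaller alternative to the paper's hollow $4$-cycle inside the square with a diagonal.
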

\begin{proof}
If $G\subseteq\sk_1 K$ then every clique of $G$ is a clique of $\sk_1 K$, hence a simplex of $K$ by flagness; so $\Cl(G)\subseteq K$, and $\Cl(G)$ is flag by construction. Conversely a flag subcomplex $L$ equals $\Cl(\sk_1 L)$ with $\sk_1 L\subseteq\sk_1 K$.
\end{proof}

Inside the square with a diagonal, the hollow $4$-cycle is a flag subcomplex that is not full. The distinction between flagness and fullness runs through the whole paper: flagness of $A$ governs the order of the quotient (\cref{sec:graded}), fullness governs its regularity (\cref{sec:regularity}), and neither prevents the collapse from changing the homotopy type.

\begin{defin}[Componentwise collapse]\label{def:sc}
For a finite simplicial complex $K$ and a subcomplex $A$, let $A_\alpha$ be the nonempty connected components of $A$. On $|K|$ set $x\sim y$ if and only if $x=y$ or $x,y$ lie in one $|A_\alpha|$. Write $K/_{SC}A=|K|/{\sim}$ and $D=\pi_0(A)$. When $A=\varnothing$, $D=\varnothing$ and the quotient is $|K|$. We write $K/A$ only when $A$ is nonempty and connected, or when a one-point collapse is explicitly meant. The same operation makes sense for finite CW pairs.
\end{defin}

Intersections $K\cap K'$ of flag complexes are typically disconnected, which is why we collapse componentwise rather than to a single point. The difference matters already in degree one (\cref{ex:relative-interval}).

\begin{defin}[Notation]\label{def:notation}
The point to which a collapsed component is sent is written $\bbot$, or $\bbot_\alpha$ for the component indexed by $\alpha$. The structure recording, for every surviving cell, its ordered list of facets with collapse flags is written $\Dd(K,A)$ and called the \emph{decoration}, built in \cref{con:D}. We write
\[
F(K,A)\;=\;K\smallsetminus A
\]
for the \emph{surviving set}: the cells of the quotient, basepoints excluded. For an ordered simplex $\sigma=(v_0<\dots<v_d)$ of $K$ we write
\[
 \genface{i}{K}\sigma\;=\;\sigma\smallsetminus\{v_i\}
\]
for its $i$-th \emph{simplicial} facet, dropping the superscript when the ambient complex is clear. This is always a simplex of $K$, and it must be distinguished from the decorated face map $\partial_i$: the two agree when the facet survives, while $\partial_i\sigma=\bbot_\alpha$ when $\genface{i}{K}\sigma$ lies in the collapsed component $A_\alpha$ (\cref{con:D}).
\end{defin}

\subsection{What the low skeleta do not see}
A flag complex is determined by its $1$-skeleton. A quotient flag complex is not, and we illustrate this now, returning to it in \cref{sec:skeleta}.

\begin{prop}[the $1$-skeleton does not determine a quotient flag complex]\label{prop:skeleta}
Consider two flag complexes on $V=\{1,2,3,4,5\}$ (\cref{fig:counterex_for_sceleta}):
\begin{itemize}
\item $K$ has edges $51,52,12,24,43,31$ and the $2$-cell $125$ (the only triangle of $\sk_1 K$ is $125$, so $K$ is flag);
\item $K'$ has edges $53,52,12,24,43,31$ and no $2$-cells ($\sk_1 K'$ is triangle-free).
\end{itemize}
Collapse the closed edge $13$, a flag subcomplex by \cref{lem:flag-subcomplexes}. Both quotients have the same $1$-skeleton, with vertices $\{\bbot,2,4,5\}$ and edges $5\bbot,\,52,\,\bbot 2,\,24,\,4\bbot$, but $K/13$ contains a $2$-cell, the image of $125$, and $K'/13$ does not.
\end{prop}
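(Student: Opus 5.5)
The argument is a direct verification, carried out in three checks: both ambient complexes are flag, the collapsed subcomplex is flag, and the two quotients have the same $1$-skeleton but different $2$-cells.

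\emph{Flagness.} For $K$, list the triangles of $\sk_1 K$. The edges are $15,25,12,24,34,13$, so vertex $1$ has neighbours $\{2,3,5\}$. Among these, only $25$ is an edge, since $23$ and $35$ are absent; this gives the triangle $125$. Any triangle avoiding vertex $1$ lies in $\{2,3,4,5\}$ with edges $25,24,34$, and these contain no triangle. There is no $4$-clique, because it would contain two triangles. Hence $\Cl(\sk_1 K)=K$ once the $2$-cell $125$ is included.

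For $K'$, the edges are $35,25,12,24,34,13$. This graph is bipartite with parts $\{1,4,5\}$ and $\{2,3\}$: the edges $12,13,42,43,52,53$ all cross the partition. So it is triangle-free and $K'$ is its own clique complex. The closed edge $13$ is $\Cl$ of the single-edge graph, and it is a subgraph of both $1$-skeleta, so by \cref{lem:flag-subcomplexes} it is a flag subcomplex of both $K$ and $K'$. It is connected, so each quotient has a single point $\bbot$.

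\emph{Matching $1$-skeleta.} Apply \cref{def:sc} and track every surviving simplex.

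In $K$, the surviving vertices are $2,4,5$, and the surviving edges $51,52,12,24,43$ map to $5\bbot,52,\bbot2,24,\bbot4$, using $1,3\mapsto\bbot$. In $K'$, the edges $53,52,12,24,43$ map to $5\bbot,52,\bbot2,24,4\bbot$. These are the same lists.

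The endpoints in $1$-dimensional cells are determined because each surviving edge has at most one endpoint in $\{1,3\}$. No edge becomes a loop, since the only edge inside $\{1,3\}$ is $13$ itself, which is collapsed. So the characteristic maps of the $1$-cells agree, and the two $1$-skeleta are identical as CW complexes.

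\emph{Distinguishing the $2$-skeleta.} The open $2$-cell of $125$ lies in $K\setminus A$, so $K/13$ has a $2$-cell attached along $5\bbot$, $\bbot2$, $52$. Meanwhile $K'$ has no $2$-simplices at all, so $K'/13$ has none.

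The only point needing care is that "same $1$-skeleton" must mean the same cells and the same attaching data, which is why the check above goes through endpoints rather than just edge counts. Beyond that there is no real obstacle. As an optional remark, one can show the homotopy types also differ. $K'/13$ is a graph with $4$ vertices and $5$ edges, so its first Betti number is $2$. Adding the $2$-cell $125$, whose boundary is a nontrivial cycle in the graph, drops the first Betti number of $K/13$ to $1$.
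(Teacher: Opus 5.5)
Your verification is correct and matches the paper's own argument, which is the direct check built into the statement: $K$ and $K'$ are flag because of their triangles, $\overline{13}$ is flag by \cref{lem:flag-subcomplexes}, and you follow each surviving edge under $1,3\mapsto$ the collapsed point, with the $2$-cell $125$ surviving only in $K$. Your optional Betti-number remark is also right ($b_1(K'/13)=2$ against $b_1(K/13)=1$), and it strengthens the figure caption's ``non-homeomorphic'' to ``not homotopy equivalent''.
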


After the collapse the three quotient vertices $\bbot,2,5$ are pairwise joined, but whether they span a cell depends on which of $1$ and $3$ each edge came from, and the quotient $1$-skeleton has thrown that away. A faithful encoding must therefore store attaching data beyond dimension one.

\begin{figure}[htbp]
\centering
\begin{tikzpicture}[line join=round, line cap=round, scale=1.4]
\begin{scope}
  \coordinate [label=above:$1$] (a) at (1,1);
  \coordinate [label=below:$2$] (b) at (2,0);
  \coordinate [label=left:$3$] (c) at (0,1);
  \coordinate [label=above:$4$] (d) at (2,2);
  \coordinate [label=below:$5$] (e) at (1,0);
  \draw (a) -- (e) -- (b) -- (a);
  \draw (b) -- (d) -- (c);
  \draw (a) -- (c);
  \draw[fill=blue,fill opacity=.4] (a) -- (b) -- (e) -- cycle;
  \node at (1,2.3) {$K$};
\end{scope}
\begin{scope}[xshift=5cm]
  \coordinate [label=above:$1$] (a) at (1,1);
  \coordinate [label=below:$2$] (b) at (2,0);
  \coordinate [label=left:$3$] (c) at (0,1);
  \coordinate [label=above:$4$] (d) at (2,2);
  \coordinate [label=below:$5$] (e) at (1,0);
  \draw (c) -- (e) -- (b) -- (a) -- (c);
  \draw (b) -- (d) -- (c);
  \node at (1,2.3) {$K'$};
\end{scope}
\end{tikzpicture}
\caption{Two flag complexes whose collapses along the edge $13$ have identical $1$-skeleta but non-homeomorphic quotients: $K/13$ carries a $2$-cell, $K'/13$ does not.}
\label{fig:counterex_for_sceleta}
\end{figure}

One more dimension does not help either. The next example is the sharpness witness for the threshold of \cref{prop:rigidity}.

\begin{prop}[the $2$-skeleton does not determine a quotient flag complex either]\label{prop:2skel}
On $V=\{1,2,3,4\}$ put
\begin{itemize}
    \item $K=\Delta^{3}$, $L=\Cl\bigl(\sk_1\Delta^3\smallsetminus\{34\}\bigr)=\overline{123}\cup\overline{124}$;
    \item $K'=\Cl\bigl(\sk_1\Delta^3\smallsetminus\{12\}\bigr)=\overline{134}\cup\overline{234}$, $L'=\Cl\bigl(\sk_1K'\smallsetminus\{34\}\bigr)$, the $4$-cycle $1\,3\,2\,4$.
\end{itemize}
Both are flag pairs. Their decorations agree in dimensions $\le2$, with the same cells, labels and face maps
\[
34\mapsto(\bbot,\bbot),\qquad 134\mapsto(34,\bbot,\bbot),\qquad 234\mapsto(34,\bbot,\bbot),
\]
and no genuine $0$-cells on either side. Yet $\Dd(K,L)$ carries the extra cell $1234\mapsto(234,134,\bbot,\bbot)$, and
\[
K/L\;\cong\;D^{3}/D^{2}\;\simeq\;*,\qquad K'/L'\;\cong\;D^{2}/\partial D^{2}\;\cong\;S^{2}.
\]
The two quotients are not even homotopy equivalent.
\end{prop}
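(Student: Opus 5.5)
The verification is direct and splits into four checks: flagness of both pairs, the low-dimensional decorations, the extra $3$-cell, and the two homotopy types.

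\emph{Flagness.} Both $K=\Delta^3$ and $K'=\Cl(\sk_1\Delta^3\smallsetminus\{12\})$ are clique complexes by definition. Each of $L$ and $L'$ is defined as $\Cl(G)$ for a subgraph $G$ of the relevant $1$-skeleton, so both are flag subcomplexes by \cref{lem:flag-subcomplexes}. I will also record the explicit simplices.
\begin{itemize}
\item For $L$: the graph $\sk_1\Delta^3\smallsetminus\{34\}$ has maximal cliques $123$ and $124$, so $L=\overline{123}\cup\overline{124}$. This is connected, and in fact a $2$-disc.
\item For $L'$: the graph consists of edges $13,32,24,41$, a triangle-free $4$-cycle, so $L'$ is the hollow square $1\,3\,2\,4$. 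It is connected.
\end{itemize}
Consequently each quotient has a single basepoint $\bbot$.

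\emph{Decorations in dimensions $\le 2$.} I list the surviving sets.
\begin{itemize}
\item $F(K,L)=\{34,134,234,1234\}$: every vertex and every edge other than $34$ lies in $L$, and the triangles outside $L$ are $134$ and $234$.
\item $F(K',L')=\{34,134,234\}$: the vertices lie in $L'$, the only edge of $K'$ outside $L'$ is $34$, and the triangles of $K'$ are exactly $134$ and $234$.
\end{itemize}
In both decorations there are no genuine $0$-cells. The face maps use $\genface{i}{}$:
\begin{itemize}
\item For $34$, the facets $4$ and $3$ are collapsed, giving $(\bbot,\bbot)$ on both sides.
\item For $134$, the facets are $(34,14,13)$, where $14,13$ lie in $L$ and in $L'$, giving $(34,\bbot,\bbot)$.
\item For $234$, the facets $(34,24,23)$ give $(34,\bbot,\bbot)$ on both sides.
\item For $1234$ in $K$, the facets $(234,134,124,123)$ give $(234,134,\bbot,\bbot)$, since $124,123\in L$.
\end{itemize}
The decorations therefore agree up to dimension $2$ and differ only by this $3$-cell.

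\emph{Homotopy types.} For $K/L$, the space $|K|$ is a $3$-ball and $|L|$ is a $2$-disc in its boundary sphere (two triangles sharing the edge $12$). So $K/L\cong D^3/D^2$, and crushing a tame boundary disc of a ball gives a ball, which is contractible. Even without identifying it as a ball, $(|K|,|L|)$ is a CW pair with $|L|$ contractible, so the quotient map is a homotopy equivalence and $K/L\simeq|K|\simeq *$.

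For $K'/L'$, the space $|K'|$ is two triangles glued along $34$, i.e.\ a disc whose boundary circle is exactly the $4$-cycle $L'$. Hence $K'/L'\cong D^2/\partial D^2\cong S^2$.

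Finally, $H_2(S^2)=\mathbb Z\neq 0=H_2(*)$, so the two quotients are not homotopy equivalent.

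The only step requiring care is confirming that $|L'|$ is the full boundary circle of the disc $|K'|$. The boundary edges of $|K'|$ are those lying in only one triangle, namely $13,14,23,24$, which are exactly the edges of $L'$.
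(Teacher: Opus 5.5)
Your proposal is correct and takes essentially the same route as the paper. The paper organizes the bookkeeping through one observation: $\Cl(\sk_1M\smallsetminus\{e\})$ consists of the simplices of $M$ not containing $e$. Hence the surviving cells are those containing $34$, and $\partial_i\sigma=\bbot$ exactly when $\genface{i}{}\sigma$ omits $3$ or $4$. You enumerate the same lists by hand, and the topological identifications are identical; your extra remark that $|L|$ is a contractible subcomplex, giving $K/L\simeq|K|\simeq*$ without the ball-collapse fact, is a clean justification of the part the final non-equivalence actually needs.
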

\begin{proof}
$\Cl(\sk_1M\smallsetminus\{e\})$ consists of the simplices of $M$ not containing $e$, and is flag by \cref{lem:flag-subcomplexes}. Hence $F(K,L)$ is the set of simplices of $\Delta^3$ containing $34$, namely $\{34,134,234,1234\}$, while $F(K',L')$ is the set of simplices of $K'$ containing $34$, namely $\{34,134,234\}$; these agree in dimensions $\le2$. For the face maps, $\partial_i\sigma=\bbot$ iff $34\not\subseteq\genface{i}{}\sigma$, since a facet survives exactly when it still contains $34$. This reproduces the three lists displayed above, identically for both pairs: $\genface{0}{}(134)=34$ survives whereas $\genface{1}{}(134)=14$ and $\genface{2}{}(134)=13$ do not, and symmetrically for $234$; for the cell $1234$, present only on the left, $\genface{0}{}$ and $\genface{1}{}$ give $234$ and $134$ while $\genface{2}{}$ and $\genface{3}{}$ give $124$ and $123$. Topologically, $\lvert L\rvert$ is two triangles glued along $12$, a disc in $\partial\lvert K\rvert\cong S^2$, and collapsing a boundary disc of a ball yields a ball; whereas $\lvert K'\rvert$ is two triangles glued along $34$, a disc, whose boundary circle is $\lvert L'\rvert$.
\end{proof}

\begin{figure}[htbp]
\centering
\begin{tikzpicture}[line join=round, line cap=round,
  coll/.style={very thick, blue!65!black},
  surv/.style={very thick, red!75!black},
  vtx/.style={circle, fill=black, inner sep=1.1pt}]
\begin{scope}
  \coordinate (a1) at (-1.6,0);  \coordinate (a3) at (0,1.6);
  \coordinate (a2) at (1.6,0);   \coordinate (a4) at (0,-1.6);
  \fill[blue!14] (a1) -- (a3) -- (a2) -- (a4) -- cycle;
  \draw[coll] (a1) -- (a3) -- (a2) -- (a4) -- cycle;
  \draw[coll,dashed] (a1) -- (a2);
  \draw[surv] (a3) -- (a4);
  \foreach \p in {a1,a2,a3,a4} \node[vtx] at (\p) {};
  \node[left] at (a1) {$1$};  \node[above] at (a3) {$3$};
  \node[right] at (a2) {$2$}; \node[below] at (a4) {$4$};
  \node[blue!65!black] at (-0.5,0.5) {$123$};
  \node[blue!65!black] at (-0.5,-0.5) {$124$};
  \node at (0,-2.35) {(a)\ \ $K=\Delta^{3}$};
  \node at (0,-2.95) {$K/L\cong D^{3}\simeq *$};
\end{scope}
\begin{scope}[xshift=4.9cm]
  \coordinate (b1) at (-1.6,0);  \coordinate (b3) at (0,1.6);
  \coordinate (b2) at (1.6,0);   \coordinate (b4) at (0,-1.6);
  \fill[red!10] (b1) -- (b3) -- (b2) -- (b4) -- cycle;
  \draw[coll] (b1) -- (b3) -- (b2) -- (b4) -- cycle;
  \draw[surv] (b3) -- (b4);
  \foreach \p in {b1,b2,b3,b4} \node[vtx] at (\p) {};
  \node[left] at (b1) {$1$};  \node[above] at (b3) {$3$};
  \node[right] at (b2) {$2$}; \node[below] at (b4) {$4$};
  \node[red!75!black] at (-0.5,0) {$134$};
  \node[red!75!black] at (0.5,0) {$234$};
  \node at (0,-2.35) {(b)\ \ $K'=K\smallsetminus\{12\}$};
  \node at (0,-2.95) {$K'/L'\cong S^{2}$};
\end{scope}
\begin{scope}[xshift=9.8cm]
  \fill[red!8] (0,0) circle (1.5);
  \draw[gray!55] (0,0) circle (1.5);
  \draw[surv] (-1.5,0) arc (180:360:1.5 and 0.5);
  \draw[surv,dashed] (1.5,0) arc (0:180:1.5 and 0.5);
  \node[circle,fill=blue!65!black,inner sep=1.4pt] at (-1.5,0) {};
  \node[blue!65!black,above left=-1pt] at (-1.5,0) {$\bbot$};
  \node[red!75!black] at (0,0.75) {$134$};
  \node[red!75!black] at (0,-0.95) {$234$};
  \node[red!75!black] at (1.15,-0.6) {$34$};
  \node at (0,-2.35) {(c)\ \ common $2$-skeleton};
  \node at (0,-2.95) {$\cong S^{2}$};
\end{scope}
\end{tikzpicture}
\caption{\Cref{prop:2skel}. Blue: the collapsed subcomplex; red: the surviving cells. In (a) the tetrahedron is drawn with the edge $12$ dashed and the shading marks the collapsed disc $L=123\cup124$ hinged along it. In (b) the edge $12$ is absent, so $K'$ is the disc $134\cup234$ and $L'$ is its boundary $4$-cycle. Both quotients have the $2$-skeleton (c), one basepoint, the loop $34$ and two $2$-cells attached along it, but (a) fills it with a $3$-cell and (b) does not.}
\label{fig:2skeleta}
\end{figure}

It is natural to ask whether \emph{some} skeleton suffices. The answer, proved in \cref{sec:skeleta}, is yes for collapses, with threshold exactly three, and no for gluings.

Recall that the simplex tree rests on three properties: its nodes are in bijection with the simplices, each node stores the last vertex of its simplex, and the labels along a root-to-node path are the vertices of the simplex in increasing order. So the complex is built dimension by dimension, the face relation is a poset with no parallel attachments, and no attaching maps are stored. After a quotient we cannot keep all of this. The paper asks which of these properties survive, and under which hypotheses: strict gradedness together with facet-only attachment storage, and the skeleton dimensions that determine the structure.

\subsection{Homotopy type of a collapse}\label{sec:collapse}

\begin{prop}\label{prop:cone}
Let $A\subseteq X$ be a finite simplicial pair. There is a homotopy equivalence
\[ X/_{SC}A \;\simeq\; X\cup\bigsqcup_{\alpha}\operatorname{cone}(A_\alpha), \] where $\operatorname{cone}(A_\alpha)=(A_\alpha\times[0,1])/(A_\alpha\times\{1\})$ is attached to $X$ along $A_\alpha\times\{0\}=A_\alpha$.
\end{prop}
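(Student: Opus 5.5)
The plan is to realize both sides as quotients of a single CW complex $Y$ and to apply the standard fact that collapsing a contractible subcomplex of a CW complex is a homotopy equivalence. Set
\[
Y \;=\; |X|\cup\bigsqcup_\alpha \operatorname{cone}(|A_\alpha|),
\]
with the cones attached along $|A_\alpha|\times\{0\}=|A_\alpha|$. We can make $Y$ simplicial by coning each $A_\alpha$ from a new apex $c_\alpha$. Then $Y$ is a finite simplicial complex, and the $C_\alpha=\operatorname{cone}(A_\alpha)$ are pairwise disjoint subcomplexes, each contractible. The disjointness holds because the $A_\alpha$ are distinct components of $A$ and the apexes are distinct.

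\textbf{Step 1.} Collapse each cone $C_\alpha$ to its own point. Let $C=\bigsqcup_\alpha C_\alpha$. Since $(Y,C)$ is a CW pair, it has the homotopy extension property. Because every component of $C$ is contractible, the quotient map $Y\to Y/_{SC}C$ is a homotopy equivalence. This is the standard result (e.g.\ Hatcher, Prop.~0.17), applied one component at a time; there are finitely many components, and after each collapse the remaining cones are still disjoint contractible subcomplexes. On the other side, $Y/_{SC}C$ is $|X|$ with each $|A_\alpha|$ crushed to a point $\bbot_\alpha$. The cone points become identified with the image of $A_\alpha$. Hence $Y/_{SC}C=X/_{SC}A$ as spaces, and this identification is what we need to verify carefully.

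\textbf{Step 2.} Check that identification. The inclusion $|X|\hookrightarrow Y$ induces a continuous map $|X|/{\sim}\to Y/_{SC}C$, and it is a bijection. Both spaces are compact Hausdorff, being finite CW complexes and quotients of finite CW complexes by closed subcomplexes. Therefore the continuous bijection is a homeomorphism. Composing with Step 1 gives
\[
X/_{SC}A\;\cong\; Y/_{SC}C\;\simeq\; Y.
\]

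The main obstacle is Step 1: we need to justify that collapsing several disjoint contractible subcomplexes at once is a homotopy equivalence. I would handle it directly rather than by induction. Choose a homotopy $h_t$ of $C$ rel nothing, contracting each $C_\alpha$ to $c_\alpha$. Extend $h_t$ to all of $Y$ by homotopy extension, giving $f_t:Y\to Y$ with $f_0=\id$ and $f_1(C_\alpha)=c_\alpha$. Then $f_1$ factors through the quotient $q$ as $g\circ q$. The usual argument then shows that $q\circ g\simeq\id$ and $g\circ q\simeq\id$, using that $f_t(C)\subseteq C$ and that $f_t$ preserves each $C_\alpha$. The case $A=\varnothing$ is trivial, since there are no cones and the quotient is $|X|$.
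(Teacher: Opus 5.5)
Your proof is correct and follows the paper's argument: build $Y=X\cup\bigsqcup_\alpha\operatorname{cone}(A_\alpha)$, collapse its finitely many disjoint contractible cones via Hatcher's Proposition~0.17, and identify the resulting quotient with $X/_{SC}A$. Your Step~2 (a continuous bijection from a compact space to a Hausdorff one) and the direct homotopy-extension argument for collapsing all cones at once only spell out details the paper leaves implicit.
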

\begin{proof}
Write $Y=X\cup\bigsqcup_\alpha\operatorname{cone}(A_\alpha)$. Each cone is a contractible CW subcomplex, and there are finitely many disjoint cones; collapsing them one at a time is a homotopy equivalence~\cite[Proposition~0.17]{hatcher2002algebraic}. The quotient of $Y$ collapsing each cone to its apex is exactly $X/_{SC}A$.
\end{proof}

\begin{cor}\label{cor:pi1}
For a finite pair $A\subseteq X$ with $X$ connected,
\[ \pi_1(X/_{SC}A)\;\cong\;\pi_1(X)\Big/\big\langle\!\big\langle \bigcup_\alpha \Imm\big(\pi_1(A_\alpha)\to\pi_1(X)\big) \big\rangle\!\big\rangle , \] the quotient by the normal subgroup generated by the images of the components. (The images depend on basepaths only up to conjugacy, which the normal closure absorbs.)
\end{cor}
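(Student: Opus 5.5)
The plan is to work with the cone model of \cref{prop:cone} rather than the quotient itself. That proposition gives $X/_{SC}A\simeq Y:=X\cup\bigsqcup_\alpha\operatorname{cone}(A_\alpha)$. A homotopy equivalence of connected spaces induces an isomorphism on $\pi_1$, so it suffices to compute $\pi_1(Y)$. Note that $Y$ is connected because $X$ is and each cone meets $X$. We will build $Y$ from $X$ by attaching the cones one at a time and apply Seifert--van Kampen at each stage.

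\textbf{Single cone.} Fix a component $A_\alpha$, which is nonempty and connected, and choose a basepoint $a_\alpha\in A_\alpha$. The cone is a CW subcomplex, so a collar of $A_\alpha$ inside $\operatorname{cone}(A_\alpha)$, say $A_\alpha\times[0,\tfrac12)$, gives open sets with the right deformation properties. Take
\[
U=X\cup A_\alpha\times[0,\tfrac12), \qquad V=\operatorname{cone}(A_\alpha)\smallsetminus A_\alpha .
\]
Then $U$ deformation retracts to $X$, $V$ is contractible, and $U\cap V\simeq A_\alpha$ is path-connected. Van Kampen, based at a point of the collar over $a_\alpha$, gives
\[
\pi_1(X\cup\operatorname{cone}A_\alpha)\cong\pi_1(X,a_\alpha)\big/\langle\!\langle\Imm(\pi_1(A_\alpha,a_\alpha)\to\pi_1(X,a_\alpha))\rangle\!\rangle .
\]

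\textbf{Induction and basepoints.} The cones are pairwise disjoint, so we repeat the step inductively, with $X$ replaced at each stage by the previously coned space $X_k$. The image of $\pi_1(A_\alpha)$ in $\pi_1(X_k)$ is the image of the corresponding subgroup of $\pi_1(X)$ under the quotient map, because $A_\alpha$ sits inside $X\subseteq X_k$. Killing the normal closures successively therefore equals killing the normal closure of their union in $\pi_1(X)$.

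To compare subgroups living at different basepoints, transport each to a common basepoint $x_0$ along a chosen path $\gamma_\alpha$ from $x_0$ to $a_\alpha$ in $X$. Another choice of path changes the image subgroup only by conjugation by an element of $\pi_1(X,x_0)$. The normal closure is unchanged, which justifies the parenthetical remark in the statement.

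\textbf{Main obstacle.} The only real care is point-set: making the van Kampen open sets legitimate. We handle this with the collar neighbourhood above, or alternatively cite the CW version of attaching a contractible space along a connected subcomplex from \cite{hatcher2002algebraic}. If $A=\varnothing$, the statement reduces to the identity.
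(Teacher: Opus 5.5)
Your proof is correct and follows the paper's own route: pass to the cone model $Y$ of \cref{prop:cone} and apply van Kampen, so that each cone over a connected $A_\alpha$ kills the normal closure of the image of $\pi_1(A_\alpha)$. You also supply the explicit open sets, the induction over components and the basepoint bookkeeping, which the paper's one-line proof leaves implicit.
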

\begin{proof}
Apply van Kampen to $Y$ of \cref{prop:cone}: attaching the cone over the connected $A_\alpha$ kills the image of $\pi_1(A_\alpha)$.
\end{proof}

\begin{cor}[absolute and relative quotient homology]\label{cor:mv}
For a finite pair $A\subseteq K$ and $D=\pi_0(A)$, there is a natural long exact sequence
\[
 \cdots\to\bigoplus_\alpha H_n(A_\alpha)\to H_n(K) \to H_n(K/_{SC}A)\to\bigoplus_\alpha H_{n-1}(A_\alpha)\to\cdots \quad(n\ge2),
\]
whose low-degree end is
\[
\begin{split}
 \bigoplus_\alpha H_1(A_\alpha)\to H_1(K)\to H_1(K/_{SC}A)
 \to\bigoplus_\alpha H_0(A_\alpha)\\
\longrightarrow H_0(K)\oplus\mathbb Z[D]
 \longrightarrow H_0(K/_{SC}A)\to0.
\end{split}
\]
Moreover $H_*(K,A)\cong H_*(K/_{SC}A,D)$ in all degrees, and $H_n(K/_{SC}A)\cong H_n(K,A)$ for $n\ge2$.
\end{cor}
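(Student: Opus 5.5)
The plan is to deduce everything from \cref{prop:cone} and the long exact sequence of the pair $(Y,X)$, where $Y=K\cup\bigsqcup_\alpha\operatorname{cone}(A_\alpha)$. Write $C_\alpha=\operatorname{cone}(A_\alpha)$ and $C=\bigsqcup_\alpha C_\alpha$. By excision, $H_n(Y,K)\cong H_n(C,A)\cong\bigoplus_\alpha H_n(C_\alpha,A_\alpha)$. Since each cone is contractible, the long exact sequence of $(C_\alpha,A_\alpha)$ gives
\[
H_n(C_\alpha,A_\alpha)\cong\widetilde H_{n-1}(A_\alpha).
\]
Rather than pass through reduced homology, I would use the Mayer--Vietoris sequence of the cover $Y=K\cup C$ with $K\cap C=A$. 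Thicken by a collar: replace $K$ by $K\cup(A\times[0,\tfrac12])$ and $C$ by the upper part of the cones, so the cover is by open sets deformation retracting onto $K$ and $C$. This produces
\[
\cdots\to H_n(A)\to H_n(K)\oplus H_n(C)\to H_n(Y)\to H_{n-1}(A)\to\cdots .
\]
Next I substitute $H_n(A)=\bigoplus_\alpha H_n(A_\alpha)$ and $H_n(C)=\bigoplus_\alpha H_n(\mathrm{pt})$, which is $\mathbb Z[D]$ for $n=0$ and zero for $n>0$. I also substitute $H_n(Y)\cong H_n(K/_{SC}A)$, using \cref{prop:cone} and naturality. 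This gives exactly the displayed sequence: for $n\ge 2$ the $C$-summand vanishes, and in degree $0$ it contributes the summand $\mathbb Z[D]$. The map $H_0(A_\alpha)\to\mathbb Z[D]$ is $\pm$ the augmentation onto the generator $\alpha$; I would record the sign convention but not dwell on it. Naturality holds because every map is induced by inclusions of pairs or by the Mayer--Vietoris connecting map.

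For $H_*(K,A)\cong H_*(K/_{SC}A,D)$, I would argue through $(Y,C)$ and the quotient map. First, $(K,A)\to(Y,C)$ is an isomorphism on homology by excision, since $C\setminus A$ is open in $Y$ off a collar. Second, $(Y,C)\to(Y/C\text{-cones},D)$, collapsing each cone to its apex, is an isomorphism. The cleanest justification is that the quotient map $Y\to K/_{SC}A$ is a homotopy equivalence (\cref{prop:cone}) restricting to the homotopy equivalence $C\to D$. The five lemma applied to the two long exact sequences of pairs then gives the isomorphism.

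Finally, for $n\ge2$, I would use the long exact sequence of $(K/_{SC}A,D)$. Since $D$ is discrete, $H_n(D)=H_{n-1}(D)=0$ for $n\ge2$, so $H_n(K/_{SC}A)\cong H_n(K/_{SC}A,D)\cong H_n(K,A)$.

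The only delicate point I expect is the bookkeeping in degrees $0$ and $1$: whether the Mayer--Vietoris term at degree $0$ is $H_0(K)\oplus\mathbb Z[D]$ or its reduced version. I would settle this by working consistently with unreduced homology throughout, and by checking the resulting sequence on the example $K$ an interval, $A$ its two endpoints.
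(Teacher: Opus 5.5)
Your proposal is correct and follows the paper's route: Mayer--Vietoris on the cone model $Y$ of \cref{prop:cone} with collar thickenings, then the long exact sequence of $(K/_{SC}A,D)$ for $n\ge2$. (The open upper part of each cone is contractible rather than a deformation retract onto $C_\alpha$, but contractibility is all that is needed.) The one variation is that you get $H_*(K,A)\cong H_*(K/_{SC}A,D)$ by excision into $(Y,C)$ plus the five lemma for the homotopy equivalences $Y\to K/_{SC}A$ and $C\to D$, whereas the paper identifies the relative cellular chains of the two pairs directly; either argument works.
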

\begin{proof}
Apply Mayer--Vietoris to the cone model of \cref{prop:cone}, using CW neighbourhood thickenings of $K$ and of the union of the cones. Their intersection retracts onto $A$, and the cones have only $H_0=\mathbb Z[D]$. Relative cellular chains identify the pairs $(K,A)$ and $(K/_{SC}A,D)$, and the long exact sequence of the latter pair gives the degree restriction.
\end{proof}

A componentwise collapse can create homology in every dimension $n\ge2$, even from a contractible flag source. (It never creates connected components, and the map on $H_1$ is surjective by \cref{cor:mv}.)

We now consider quotients of flag spheres, giving two ways of obtaining minimal spherical CW flag quotient complexes as an illustration of extreme cases.  

\begin{thm}[classes created by a flag collapse]\label{thm:homotopy-change}
For each $n\ge2$ there is a contractible finite flag complex $X$ on $2n$ vertices with a connected flag subcomplex $A$ such that $X/_{SC}A\cong S^n$, and $2n$ is the smallest number of vertices for which this is possible.
\end{thm}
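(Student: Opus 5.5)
The construction generalizes the sharpness example of \cref{prop:2skel}. There, for $n=2$, the pair $K'=\overline{134}\cup\overline{234}$ with $A$ its boundary $4$-cycle gave $S^2$ on four vertices. For general $n$ I would take vertices $a_1,b_1,\dots,a_n,b_n$ and let $G$ be the $1$-skeleton of the boundary of the $n$-dimensional cross-polytope, i.e.\ all edges except the $n$ pairs $a_ib_i$. Put
\[
X=\Cl\bigl(G\cup\{a_nb_n\}\bigr),\qquad A=\Cl(G).
\]
Then $A$ is a flag subcomplex of the flag complex $X$ by \cref{lem:flag-subcomplexes}. Exactly as in the proof of \cref{prop:2skel}, $A$ consists of the simplices of $X$ that do not contain the edge $a_nb_n$.

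The cliques of $G\cup\{a_nb_n\}$ are obtained by choosing at most one vertex from each of the first $n-1$ pairs, together with an arbitrary subset of $\{a_n,b_n\}$. Hence
\[
X=\overline{a_nb_n}*\partial C_{n-1},
\]
the join of an edge with the boundary of the $(n-1)$-cross-polytope, which is an $(n-2)$-sphere. So $X$ is an $n$-ball, and in particular it is contractible. Its boundary is
\[
\partial X=\{a_n,b_n\}*\partial C_{n-1}=\partial C_n=A .
\]
This is a flag $(n-1)$-sphere, and it is connected because $n\ge 2$. Collapsing the boundary sphere of a ball to a point gives $X/_{SC}A\cong D^n/\partial D^n\cong S^n$, and $X$ has $2n$ vertices.

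For minimality, suppose $X$ is contractible, $A\subseteq X$ is a connected flag subcomplex, and $X/_{SC}A\cong S^n$ with $n\ge2$. Then $H_n(X,A)\cong H_n(X/_{SC}A)\cong\mathbb Z$ by \cref{cor:mv}. The long exact sequence of the pair, with $X$ contractible, then gives $\tilde H_{n-1}(A)\ne0$. It therefore suffices to prove the following lemma, whose conclusion transfers to $X$ because $A\subseteq X$.

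\emph{Lemma.} A flag complex with $\tilde H_k\neq0$ has at least $2(k+1)$ vertices.

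I would prove the lemma by induction on the number of vertices, for all $k\ge0$. The base case $k=0$ is clear, since a disconnected complex has at least two vertices. For the inductive step, pick a vertex $v$ and write
\[
A=(A\smallsetminus v)\cup\st v,\qquad (A\smallsetminus v)\cap\st v=\lk v .
\]
Both $A\smallsetminus v$ and $\lk v$ are full subcomplexes, hence flag. The star is contractible, so Mayer--Vietoris shows that either $\tilde H_k(A\smallsetminus v)\ne0$ or $\tilde H_{k-1}(\lk v)\ne0$.
\begin{itemize}
\item In the first case, induction on the smaller vertex set applies directly.
\item In the second case, induction gives $\lk v$ at least $2k$ vertices. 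Some vertex $w$ is not adjacent to $v$, since otherwise flagness makes $A$ a cone with apex $v$, which is contractible. Neither $v$ nor $w$ lies in $\lk v$, so $A$ has at least $2k+2$ vertices.
\end{itemize}

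The main obstacle is this vertex-count lemma. Specifically, one must check that the second Mayer--Vietoris branch genuinely forces a non-neighbour of $v$. This is where flagness is used essentially: without it, the boundary of a simplex is a sphere with fewer vertices. The explicit identification of $X$ as a join is routine once the cliques are listed.
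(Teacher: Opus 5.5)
Your proposal is correct and follows the paper's proof essentially verbatim: your $X=\Cl(G\cup\{a_nb_n\})$ is exactly the paper's $\Delta^1*(S^0)^{*(n-1)}$ with $A=(S^0)^{*n}$, and your lower bound uses the same identification $\widetilde H_{n-1}(A)\cong H_n(X,A)\cong\widetilde H_n(X/_{SC}A)$ followed by the vertex bound that the paper cites as \cref{thm:2k}. Your inline proof of that bound is the same star/deletion Mayer--Vietoris induction, including the non-neighbour argument, that the paper gives in \cref{app:flag-spheres}.
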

\begin{proof}
Let $B=(S^0)^{*(n-1)}$ be the boundary of the $(n-1)$-dimensional cross-polytope on $2n-2$ vertices, and put
\[
 X=\Delta^1*B,\qquad A=\partial\Delta^1*B=(S^0)^{*n}.
\]
By \cref{lem:join} and \cref{thm:cross-polytope}, $B$ and $A$ are flag spheres, $|B|\cong S^{n-2}$, $|A|\cong S^{n-1}$, and $X$ is flag as the join of two flag complexes. The join of an interval with $S^{n-2}$ is an $n$-ball with boundary $\partial\Delta^1*B$, so $|X|\cong B^n$ is contractible and $|A|=\partial|X|$, connected for $n\ge2$. Hence $X/_{SC}A=X/A\cong B^n/S^{n-1}\cong S^n$, on $2+2(n-1)=2n$ vertices.

For the lower bound let $X$ be contractible, $A\subseteq X$ connected and flag, and $X/A\cong S^n$. By \cref{cor:mv} and the long exact sequence of the pair, $\widetilde H_{n-1}(A)\cong H_n(X,A)\cong\widetilde H_n(X/A)\cong\mathbb Z$, so \cref{thm:2k} gives $|V(A)|\ge2n$ and hence $|V(X)|\ge 2n$.
\end{proof}

For $n=2$ the construction is the square with one diagonal modulo its boundary four-cycle, whose quotient has cell counts $(1,1,2)$ and Betti numbers $(1,0,1)$. The cone over the four-cycle, with five vertices, gives the same sphere with cells $(2,4,4)$. The vertex bound of \cref{thm:2k} is classical, and we recall it with a short proof in \cref{app:flag-spheres}, together with the extremal collapse family (octahedron modulo the disc complementary to an edge) that serves as a running example.

\subsection{Gluings}\label{sec:gluing-short}
A second way of identifying points of a flag complex is to glue equal-dimensional simplices along face-compatible affine maps. When the maps preserve the vertex orders this is a congruence of semisimplicial sets and the same cell records as for collapses represent it, with parallel facet occurrences and incidence numbers of absolute value greater than one now allowed (the torus, the Klein bottle and the M\"obius band are worked out in \cref{app:gluing}). Order-reversing gluings need the full vertex bijection per facet and are not semisimplicial quotients with the same cell inventory. The lens spaces $L(p,q)$ show that a sign per facet cannot replace the bijection. We keep the gluing regime in \cref{app:gluing} because the library implements only point gluing, and because, unlike collapses, gluings are determined by no finite skeleton (\cref{prop:glue-noskel}).

\section{The decoration, its realization, and its chains}\label{sec:decoration}

This section makes precise what a QF-tree stores and why this suffices. The construction is standard semisimplicial bookkeeping. Its only unusual feature is that the collapsed components are represented by explicit absorbing objects instead of by discarding degeneracies (\cref{rem:why-not-sset}).

\subsection{Semisimplicial sets with component points}\label{sec:machinery}
Let $\Delta_{inj}$ have the finite nonempty ordered sets $[n]$ as objects and increasing injections as maps. A semisimplicial set is a functor $X:\Delta_{inj}^{op}\to\Sets$, equivalently a graded set with face operators satisfying
\[
 \partial_i\partial_j=\partial_{j-1}\partial_i\qquad(i<j).
\]
The category $\ssSet$ is a presheaf category with levelwise colimits~\citep{maclane,rourke1971delta}, and realization $|X|=\int^{[n]}X_n\times\Delta_n^{top}$ preserves colimits; its CW cells are indexed by all semisimplicial simplices~\citep{rourke1971delta,ebert2019semisimplicial}. An ordered simplicial complex gives $K^\Delta$ by using its increasing vertex lists.

\begin{rem}[why the basepoint is represented explicitly]\label{rem:why-not-sset}
One could form the quotient in simplicial sets and discard degeneracies, but nondegenerate simplices of a simplicial set are not closed under faces, so this does not immediately give a semisimplicial object. We instead adjoin an explicit absorbing subobject for each component, as a matter of presentation.
\end{rem}

\begin{defin}[basepoint chains]\label{def:star}
Let $\star$ be the terminal semisimplicial set, with one simplex in every degree, and for a finite set $D$ put $\star_D=\coprod_{\alpha\in D}\star$. The representable $\Delta^0$ has no positive-dimensional simplices, so a positive-dimensional semisimplicial object cannot in general map to it; a map to $\star_D$ records which component absorbs each face.
\end{defin}

The realization $|\star|$ is not a point, but it is contractible. Its $1$-skeleton is one vertex with one loop $a$, and its unique $2$-cell is attached along $aaa^{-1}$, so that the $2$-skeleton is the dunce hat and the fundamental group is $\langle a\mid a\rangle=1$. Cells of higher dimension do not change the fundamental group. In positive degrees the cellular differential is multiplication by $\sum_{i=0}^{n}(-1)^i$, which is zero in odd degrees and the identity in even ones, so the reduced integral homology vanishes and the homological Whitehead theorem shows that $|\star|$ is contractible~\citep{hatcher2002algebraic}. The data structure stores only symbols for these chains, never their cells.

\begin{defin}[pointed face presentations]\label{def:fface}
For fixed finite $D$, let $\mathbf{QF}_D$ be the subcategory of $\star_D\downarrow\ssSet$ consisting of levelwise injective structure maps $\star_D\hookrightarrow\widetilde S$ with finitely many simplices outside the image. Write $S_n=\widetilde S_n\setminus(\star_D)_n$ for the \emph{genuine} cells. Let $\FFace$ be the total category with objects $(D,\widetilde S)$ and morphisms $(\rho,f)$, where $\rho:D\to D'$ and the semisimplicial map $f$ commute with the structure maps. Such an $f$ may send a genuine cell to a basepoint chain. The wide subcategory $\FFaceng$ consists of the morphisms sending every genuine cell to a genuine cell.
\end{defin}

\begin{lem}[unravelling the presentation]\label{lem:fface-unravel}
An object is equivalently a finite genuine graded set $S_n$, a finite set $D$, and face maps
\[
 \partial_i:S_n\longrightarrow S_{n-1}\sqcup D
\]
satisfying the semisimplicial identities after extending every $\bbot_\alpha\in D$ absorbingly in all degrees. A morphism is a degree-preserving map into the extended graded sets, with a map of component labels, that commutes with faces.
\end{lem}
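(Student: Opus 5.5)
The plan is to set up mutually inverse translations between objects and morphisms of $\mathbf{QF}_D$ (and of $\FFace$) and the ``unravelled'' data of the statement, and then check that these translations respect composition.

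\emph{From objects to data.} Start with $\star_D\hookrightarrow\widetilde S$, levelwise injective. We identify $(\star_D)_n$ with $D$ in every degree: the terminal $\star$ has exactly one simplex per degree, so $(\star_D)_n=\coprod_{\alpha}\{\ast_n\}\cong D$. This gives a levelwise decomposition $\widetilde S_n=S_n\sqcup D$. Each face operator of $\widetilde S$ restricts to a map $S_n\to\widetilde S_{n-1}=S_{n-1}\sqcup D$, and these restrictions are the $\partial_i$.

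On the image of $\star_D$, the faces are forced by the fact that $\star_D\to\widetilde S$ is a semisimplicial map: they send $\bbot_\alpha$ in degree $n$ to $\bbot_\alpha$ in degree $n-1$. This is precisely the absorbing extension. The simplicial identities $\partial_i\partial_j=\partial_{j-1}\partial_i$ for $i<j$ hold in $\widetilde S$, so they hold for the extended maps.

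Finiteness of $S=\coprod_n S_n$ is exactly the condition that only finitely many simplices lie outside the image. Hence each $S_n$ is finite and vanishes for large $n$.

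\emph{From data to objects.} Given $(S_n,D,\partial_i)$, set $\widetilde S_n=S_n\sqcup D$ and extend each $\partial_i$ by the identity on $D$. The hypothesis says the identities hold after this extension. On the $D$ part they hold automatically, since both sides send $\bbot_\alpha$ to $\bbot_\alpha$. So $\widetilde S$ is a functor $\Delta_{inj}^{op}\to\Sets$; it suffices to check the generating face maps, because every increasing injection factors through coface maps subject only to the cosimplicial identities.

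The inclusion $D\hookrightarrow\widetilde S_n$ in each degree commutes with faces, so it defines a levelwise injective map $\star_D\to\widetilde S$. The two constructions are visibly inverse up to the canonical identification $(\star_D)_n\cong D$.

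\emph{Morphisms.} A morphism $(\rho,f)$ in $\FFace$ consists of a semisimplicial map $f:\widetilde S\to\widetilde S'$ together with $\rho:D\to D'$, such that $f\circ\iota=\iota'\circ\star_\rho$. On the $D$ part this forces $f(\bbot_\alpha)=\bbot_{\rho(\alpha)}$ in every degree. The remaining information is therefore the restriction of $f$ to $S_n$, which is a degree-preserving map $S_n\to S'_n\sqcup D'$ commuting with the extended faces.

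Conversely, such a restriction, extended by $\rho$ on $D$, commutes with faces on both summands, so it is a semisimplicial map compatible with the structure maps. Composition corresponds to composing the extended maps, so the correspondence is an isomorphism of categories. Within a fixed $D$, the morphisms of $\mathbf{QF}_D$ are those with $\rho=\id$.

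\emph{Main obstacle.} The only real content is the identification $(\star_D)_n\cong D$ together with its faces. We must verify that the faces of the terminal object are the unique maps, so that ``absorbing'' is forced rather than chosen. Everything else is bookkeeping.
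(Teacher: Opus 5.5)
Your proposal is correct and follows the paper's own argument: identify $(\star_D)_n$ with $D$ in every degree, with label-preserving (hence absorbing) faces, and pass between the two descriptions by adjoining these formal chains or deleting them from the image of the structure map. Morphisms are then determined by their restriction to genuine cells together with $\rho$. You only spell out checks the paper leaves implicit, namely the semisimplicial identities on the adjoined part, the finiteness correspondence, and compatibility with composition.
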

\begin{proof}
The distinguished chains give $(\star_D)_n=D$ for every $n$, and all their face maps preserve the component label. Adjoining these formal chains to the displayed finite data, or deleting them from the structure-map image, gives the mutually inverse descriptions.
\end{proof}

This is the form in which the data structure stores an object: $d+1$ slots per $d$-cell, each holding either a genuine $(d-1)$-cell or a component label.

\begin{defin}[pointed realization]\label{def:real-pointed}
Set $|\widetilde S|_\bullet=|\widetilde S|\amalg_{|\star_D|}D$: every basepoint chain is collapsed to its own point. The target category $\Topcat$ has objects $(Y,D\to Y)$ with finite discrete $D$ and morphisms commuting with the maps of point sets.
\end{defin}

\begin{lem}[cells of the pointed realization]\label{lem:cells}
The space $|\widetilde S|_\bullet$ is a finite CW complex with $D\sqcup S_0$ as zero-cells and one $n$-cell for each $\sigma\in S_n$, $n>0$. The restriction of its characteristic map to local facet $i$ is the characteristic map of $\partial_i\sigma$ if genuine, and the constant map at $\bbot_\alpha$ otherwise.
\end{lem}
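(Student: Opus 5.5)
The plan is to use the standard skeletal description of a semisimplicial realization and then check that the pushout along $|\star_D|\to D$ preserves the cell structure. By \cite{rourke1971delta,ebert2019semisimplicial}, $|\widetilde S|$ is a CW complex with one $n$-cell per simplex of $\widetilde S_n$. The characteristic map of a simplex $\tau$ is the composite $\Delta^{top}_n\to X_n\times\Delta^{top}_n\to|\widetilde S|$. By the coend relations, its restriction to the $i$-th facet $\delta_i\Delta^{top}_{n-1}$ equals the characteristic map of $\partial_i\tau$. Since $\star_D\hookrightarrow\widetilde S$ is levelwise injective, the semisimplicial subobject $\star_D$ realizes to a CW subcomplex $|\star_D|=\coprod_\alpha|\star|$ of $|\widetilde S|$. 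Its cells are exactly the cells indexed by the basepoint chains.

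Next I would pass to the pointed realization. By \cref{def:real-pointed}, $|\widetilde S|_\bullet$ is the pushout of $|\widetilde S|\hookleftarrow|\star_D|\to D$, which crushes each connected subcomplex $|\star|_\alpha$ to a point. I would use the standard fact (cf.\ \cite[Chapter~0]{hatcher2002algebraic}) that if $Y$ is a CW complex and $Z\subseteq Y$ a subcomplex, then $Y/Z$, and likewise its componentwise version, is a CW complex. Its cells are the cells of $Y\setminus Z$ together with one $0$-cell for each component collapsed, and its characteristic maps are the composites of the old ones with the quotient map. Here $Y\setminus Z$ consists exactly of the cells indexed by the genuine simplices $S_n$. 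The $0$-cells are therefore $D\sqcup S_0$, and there is one $n$-cell for each $\sigma\in S_n$ when $n>0$. Finiteness holds because $D$ and every $S_n$ are finite, and only finitely many degrees are nonempty.

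Finally, the facet claim follows by composing with the quotient map $q$. Fix $\sigma\in S_n$ and a local facet $i$. If $\partial_i\sigma$ is genuine, then $q\circ\chi_{\partial_i\sigma}$ is by definition its characteristic map in $|\widetilde S|_\bullet$. If instead $\partial_i\sigma$ is the basepoint chain of degree $n-1$ in component $\alpha$, its characteristic map lands in $|\star|_\alpha$, and $q$ sends that subcomplex to $\bbot_\alpha$. The restriction is then constant at $\bbot_\alpha$.

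The only point needing care is the first step: the realization of a semisimplicial set, with no degeneracies to quotient by, really is a CW complex with characteristic maps given by the simplices, and the image of a levelwise injective subobject is a subcomplex. This rests on the cited references. One checks that the skeletal filtration $|\widetilde S|^{(n)}$ arises from $|\widetilde S|^{(n-1)}$ by attaching $\coprod_{\widetilde S_n}\Delta^{top}_n$ along the boundary maps determined by the $\partial_i$. The subobject $\star_D$ yields sub-pushouts at each stage. Quotienting by the subcomplex can then be done stage by stage, and the pushout against $D$ commutes with these attachments.
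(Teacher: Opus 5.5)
Your proof is correct and follows the same route as the paper. Both realize $\star_D$ as a CW subcomplex of $|\widetilde S|$, crush its components to the points of $D$ so that only $D$ and the genuine cells survive, and obtain the characteristic maps and their facet restrictions by composing the semisimplicial ones with the quotient map; you additionally spell out what the paper's three-line proof leaves implicit, such as the coend relation for facet restrictions and finiteness of the quotient even though $|\widetilde S|$ itself is infinite-dimensional.
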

\begin{proof}
The distinguished chains realize to a CW subcomplex. Quotienting its components to distinct points leaves exactly the stated cells, and the skeletal adjunctions are the source semisimplicial adjunctions followed by this quotient.
\end{proof}

\begin{defin}[component-resolved shallowness]\label{def:shallow}
On $D\sqcup\coprod_n S_n$, let $\preceq$ be the reflexive transitive closure of all genuine facet relations and $\bbot_\alpha\prec\sigma$ when a facet equals $\bbot_\alpha$. Give every basepoint rank zero and each genuine cell its dimension. Put $B_\alpha(S)=\{\sigma:\bbot_\alpha\preceq\sigma\}$. The object is \emph{shallow} if each $\sigma\in B_\alpha(S)$ of dimension at least two has a genuine facet in $B_\alpha(S)$, for every $\alpha$. Write $\FFacesh$ for the shallow objects with genuine-preserving morphisms.
\end{defin}

\begin{lem}[where a cover can skip rank]\label{lem:skips}
Covers between genuine cells raise dimension by one. A basepoint $\bbot_\alpha$ covers $\sigma$ precisely when $\sigma\in B_\alpha(S)$ and no genuine facet of $\sigma$ lies in $B_\alpha(S)$. Therefore all covers raise rank by one exactly when $S$ is shallow.
\end{lem}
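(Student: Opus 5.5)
We unwind the definitions of $\preceq$ and of ``cover'' in \cref{def:shallow}: $x$ is covered by $y$ when $x\prec y$ and there is no $z$ with $x\prec z\prec y$. The plan is to treat the two kinds of covers separately, first those between genuine cells and then those from a basepoint, and to read off the shallowness criterion from the second case.

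\emph{Genuine covers.} Every generating relation between genuine cells is a facet relation $\partial_i\sigma=\tau$, which lowers dimension by exactly one. Hence $\tau\prec\sigma$ between genuine cells forces $\dim\tau<\dim\sigma$: any chain of generators from $\tau$ to $\sigma$ is strictly decreasing in dimension read backwards. A chain cannot pass through a basepoint, because no relation has a basepoint on its upper end. If $\tau\prec\sigma$ with $\dim\sigma-\dim\tau\ge2$, the chain has an intermediate genuine cell, so $\tau\prec\sigma$ is not a cover. Thus genuine covers raise dimension by one. The same dimension argument shows that $\preceq$ is antisymmetric, so it is a partial order.

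\emph{Basepoint covers.} First we note that $B_\alpha(S)$ is an upper set. Suppose $\bbot_\alpha\prec\tau$ and $\tau\preceq\sigma$; then $\bbot_\alpha\prec\sigma$ by transitivity, since a basepoint is never above a genuine cell.

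$(\Leftarrow)$ Let $\sigma\in B_\alpha(S)$ have no genuine facet in $B_\alpha(S)$, and suppose $\bbot_\alpha\prec z\prec\sigma$. Then $z$ is genuine, since the only elements below a genuine cell are genuine cells and basepoints, and $z\ne\bbot_\alpha$ (a basepoint strictly between is impossible). Now $z\prec\sigma$ factors through a generating step into $\sigma$, which is either a genuine facet $\tau$ of $\sigma$ with $z\preceq\tau$, or a basepoint. A basepoint is ruled out because $z$ is genuine. Then $\tau\in B_\alpha(S)$ by the upper-set property, a contradiction. So $\bbot_\alpha$ covers $\sigma$.

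$(\Rightarrow)$ If $\bbot_\alpha$ covers $\sigma$, then $\sigma\in B_\alpha(S)$ by definition. If some genuine facet $\tau$ of $\sigma$ lay in $B_\alpha(S)$, we would have $\bbot_\alpha\prec\tau\prec\sigma$, so $\bbot_\alpha$ would not cover $\sigma$.

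\emph{Rank.} Basepoints have rank $0$. A basepoint cover therefore raises rank by one exactly when $\dim\sigma=1$. Combining with the genuine case, all covers raise rank by one if and only if every $\sigma\in B_\alpha(S)$ with $\dim\sigma\ge2$ has a genuine facet in $B_\alpha(S)$, which is shallowness.

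Two edge cases need checking. A $0$-cell cannot lie in $B_\alpha(S)$, since $0$-cells have no facets. A $1$-cell in $B_\alpha(S)$ has only basepoint facets, so it is covered by $\bbot_\alpha$ with rank jump exactly $1$.

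The main subtlety is the factorisation step in $(\Leftarrow)$: the reflexive transitive closure must be decomposed by its last generating step. This works because generators all point from a facet (genuine or basepoint) to a genuine cell.
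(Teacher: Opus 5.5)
Your proof is correct and is essentially the paper's argument written out in more detail. Genuine generator chains pass through every intermediate dimension. Any intermediate object between $\bbot_\alpha$ and $\sigma$ lies below a genuine facet of $\sigma$: this is your last-generating-step decomposition, and the paper's ``a genuine facet containing that face''. Conversely, a genuine facet in $B_\alpha(S)$ is itself intermediate.

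There is one small slip in your edge-case remark. A $1$-cell in $B_\alpha(S)$ need not have only basepoint facets, since its other endpoint may be a genuine $0$-cell. What is true, and all you use, is that no genuine facet of such a cell lies in $B_\alpha(S)$.
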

\begin{proof}
An iterated genuine face chain passes through each intermediate dimension; once it enters a basepoint it never leaves that label. If a genuine proper face of $\sigma$ lies above $\bbot_\alpha$, a genuine facet containing that face does too. Conversely such a facet is an intermediate object, excluding a cover from the basepoint.
\end{proof}

\subsection{Decoration, realization, and normalized chains}\label{sec:model}
\begin{defin}[categories of pairs]\label{def:flagcat}
Let $\Flag$ consist of finite ordered flag pairs $(K,A)$, with $A$ flag on its own vertex set. A morphism is a simplicial map $f:K\to K'$ that is increasing and injective on every simplex and satisfies $f(A)\subseteq A'$. Its wide subcategory $\Flagref$ requires additionally $f(\sigma)\in A'\iff\sigma\in A$. The construction below applies verbatim to nonflag pairs.
\end{defin}

\begin{con}[decoration]\label{con:D}
For $D=\pi_0(A)$, the component map defines $c:A^\Delta\to\star_D$. Set
\[
 \Dd(K,A)=K^\Delta\amalg_{A^\Delta}\star_D.
\]
Because $A^\Delta\hookrightarrow K^\Delta$ is levelwise injective, the distinguished chains remain distinct. The genuine cells are $K\setminus A$, and
\[
 \partial_i\sigma=
 \begin{cases}
  \genface{i}{K}\sigma,&\genface{i}{K}\sigma\notin A,\\
  \bbot_\alpha,&\genface{i}{K}\sigma\in A_\alpha.
 \end{cases}
\]
A pair map induces a map of the pushouts; genuine cells absorbed into $A'$ map into the corresponding basepoint chain. Thus $\Dd$ is functorial on $\Flag$, and genuine-preserving on $\Flagref$. For an order-preserving gluing datum (\cref{app:gluing}), $\Dd(K,\sim)$ is instead the levelwise coequalizer of the relation, with $D=\varnothing$.
\end{con}

\begin{thm}[realization]\label{thm:realization}
For every finite simplicial pair $A\subseteq K$, with $D=\pi_0(A)$,
\[
 |\Dd(K,A)|_\bullet\cong |K|/_{SC}|A|,
\]
naturally under the nondegenerate pair maps above. For order-preserving gluing data, $|\Dd(K,\sim)|\cong|K|/{\sim}$. Neither assertion requires flagness.
\end{thm}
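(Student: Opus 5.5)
The plan is to use that geometric realization preserves colimits, and then identify the pointed quotient with the componentwise collapse. By \cref{con:D}, $\Dd(K,A)$ is the pushout $K^\Delta\amalg_{A^\Delta}\star_D$ in $\ssSet$. Applying $|\cdot|$, which preserves colimits, gives a pushout of spaces
\[
 |\Dd(K,A)|\;\cong\;|K^\Delta|\amalg_{|A^\Delta|}|\star_D|.
\]
For an ordered simplicial complex the realization $|K^\Delta|$ is canonically homeomorphic to $|K|$: each simplex is realized once, via its increasing vertex list, and the face identifications are exactly the simplicial ones. The same holds for $|A^\Delta|\cong|A|$, compatibly with the inclusion.

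Next I pass to the pointed realization. By \cref{def:real-pointed}, $|\Dd|_\bullet$ is obtained from this space by collapsing each $|\star_\alpha|$ to its own point $\bbot_\alpha$. Pushouts commute with pushouts, so
\[
 |\Dd(K,A)|_\bullet\;\cong\;|K|\amalg_{|A|}\Bigl(|\star_D|\amalg_{|\star_D|}D\Bigr)\;=\;|K|\amalg_{|A|}D,
\]
where the map $|A|\to D$ is the realization of $c$ followed by the collapse. That composite is the locally constant map sending $|A_\alpha|$ to $\alpha$. A pushout of $|K|$ along a surjection $|A|\to D$ onto a discrete set is precisely the quotient of $|K|$ identifying points in the same fibre, which is $|K|/_{SC}|A|$ as in \cref{def:sc}. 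All spaces are finite CW complexes and all maps are cellular, so the point-set topology (quotient versus colimit topology) causes no trouble: the spaces are compact Hausdorff and the comparison maps are continuous bijections. Naturality comes from the functoriality of $\Dd$ noted in \cref{con:D}. A nondegenerate pair map $f$ induces $|f|:|K|\to|K'|$ sending $|A_\alpha|$ into some $|A'_{\rho\alpha}|$, so it descends to the quotients, and every step above is natural in such maps.

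For gluing data, $\Dd(K,\sim)$ is a levelwise coequalizer. Realization preserves coequalizers, and $|K^\Delta|\cong|K|$, so $|\Dd(K,\sim)|$ is the coequalizer of the two realized maps from the relation. This is $|K|/{\sim}$, because the affine face-compatible identifications are exactly the realized ones. Flagness is used nowhere in either argument.

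The main obstacle is the middle step: checking that the contractible but non-point space $|\star_\alpha|$ really disappears. Each $\bbot_\alpha$ is a pushout of $|\star_\alpha|$ to a point, and one must verify that $c$ realizes to the component-constant map without extra identifications. This needs levelwise injectivity of $A^\Delta\to K^\Delta$, so that no further identifications arise, and the fact that $|A^\Delta|\to|\star_\alpha|$ factors through the collapse exactly once per component. I would check this on characteristic maps, using \cref{lem:cells}.
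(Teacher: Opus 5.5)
Your proposal is correct and follows the same route as the paper. You realize the defining pushout as $|K|\amalg_{|A|}|\star_D|$, paste it with the collapse $|\star_D|\to D$ to obtain $|K|\amalg_{|A|}D$, identify this with the componentwise quotient, and use that realization preserves the coequalizer in the gluing case.

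One small slip does not affect the argument. For nonempty $A$, the space $|\star_D|$, and hence $|\Dd(K,A)|$ before pointing, is an infinite CW complex with one cell in every dimension per component, so your remark that ``all spaces are finite CW complexes'' is false. The remark is also unnecessary: realization preserves colimits in spaces, and since $D$ is discrete and $|A|\to D$ is surjective, the pushout topology on $|K|\amalg_{|A|}D$ is already the quotient topology from $|K|$.
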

\begin{proof}
Realization of the defining pushout is $|K|\amalg_{|A|}|\star_D|$; pasting with the pushout to $D$ gives $|K|\amalg_{|A|}D$, precisely the componentwise quotient. Realization preserves the defining coequalizer for an order-preserving congruence. Naturality follows from the universal properties and the induced map on components.
\end{proof}

\begin{rem}[skeletal attachments]\label{rem:skeletal-filtration}
Concretely: start at $D\sqcup S_0$ and attach one copy of $\Delta^n$ for every genuine $n$-cell, using on its $i$-th boundary facet the characteristic map of $\partial_i\sigma$ or the constant map at the recorded component point. The semisimplicial identities give agreement on codimension-two overlaps and hence on all iterated overlaps. Nothing here needs shallowness: a local facet may be mapped directly to a zero-cell.
\end{rem}

\begin{cor}[codimension-one constructibility]\label{cor:skeletal}
For a simplicial collapse or an order-preserving congruence, the ordered codimension-one targets, including component flags, determine the quotient's characteristic maps.
\end{cor}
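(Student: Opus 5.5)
We derive \cref{cor:skeletal} from \cref{thm:realization} together with the inductive attaching description of \cref{rem:skeletal-filtration} and \cref{lem:cells}. By \cref{lem:fface-unravel}, the codimension-one data consist of the genuine graded set $S_n$, the component set $D$, and the face maps $\partial_i:S_n\to S_{n-1}\sqcup D$. These data already form an object of $\FFace$: we extend each $\bbot_\alpha$ absorbingly to the chain $\star$ and recover $\widetilde S$. No information of codimension two or higher enters this reconstruction.

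In the collapse case, \cref{con:D} shows that $\widetilde S=\Dd(K,A)$ is precisely the object so described. For an order-preserving congruence, $\widetilde S$ is the levelwise coequalizer with $D=\varnothing$, and its face maps are again the recorded ones. \Cref{thm:realization} then identifies $|\widetilde S|_\bullet$ with $|K|/_{SC}|A|$, respectively with $|K|/{\sim}$.

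The characteristic maps are built by induction on $n$. At the start, the $0$-skeleton is $D\sqcup S_0$. Suppose the characteristic maps $\chi_\tau:\Delta^{n-1}\to X^{(n-1)}$ are defined for all genuine $(n-1)$-cells. For $\sigma\in S_n$ we define the map on $\partial\Delta^n$ facet by facet. On the $i$-th facet, identified with $\Delta^{n-1}$ by the $i$-th increasing coface inclusion, we use $\chi_{\partial_i\sigma}$ when $\partial_i\sigma$ is genuine, and the constant map at $\bbot_\alpha$ when $\partial_i\sigma=\bbot_\alpha$. This is the restriction statement of \cref{lem:cells}, so the resulting map is the characteristic map of $\sigma$ in $|\widetilde S|_\bullet$.

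The one point needing care is that these facet maps agree on codimension-two overlaps. The $i$-th and $j$-th facets, with $i<j$, meet in a common face. Restricting along the $i$-th facet gives the characteristic map of $\partial_{j-1}\partial_i\sigma$, and restricting along the $j$-th facet gives that of $\partial_i\partial_j\sigma$. The semisimplicial identity $\partial_i\partial_j=\partial_{j-1}\partial_i$ makes these equal, provided we read any step that lands in $D$ via the absorbing extension. Concretely, if either iterate passes through $\bbot_\alpha$, both must equal $\bbot_\alpha$ by absorbency, so both restrictions are the same constant map. This is exactly where the recorded component flags, and not only the genuine targets, are needed.

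Iterated overlaps follow by the same identity, since every face of $\Delta^n$ is reached by a composite of coface inclusions. The maps therefore glue to a well-defined map on $\partial\Delta^n$, and by induction every characteristic map is determined by the ordered codimension-one lists. Shallowness is not used anywhere, because a facet may map directly to a zero-cell.
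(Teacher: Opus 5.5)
Your proposal is correct and follows the paper's route, whose proof is simply ``apply \cref{lem:cells} and induct on cell dimension.'' The codimension-two compatibility you check via $\partial_i\partial_j=\partial_{j-1}\partial_i$ in the absorbing extension is exactly the content of \cref{rem:skeletal-filtration}, and your index bookkeeping for the overlap of the $i$-th and $j$-th facets is right.
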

\begin{proof}
Apply \cref{lem:cells} and induct on cell dimension.
\end{proof}

\begin{con}[normalized relative chains]\label{con:normalization}
Define $\mathcal N:\FFace\to\Ch$ by $\mathcal N_n(S)=\mathbb Z[S_n]$ and
\[
 \partial\sigma=\sum_{\substack{0\le i\le n\\\partial_i\sigma\text{ genuine}}} (-1)^i\partial_i\sigma
\]
for $n\ge1$, with $\partial=0$ in degree zero. On morphisms, a genuine generator maps to its image if genuine and to zero if absorbed. This is the quotient of the ordinary semisimplicial chain complex of $\widetilde S$ by the chain subcomplex of $\star_D$, so it is a functor and $\partial^2=0$.
\end{con}

\begin{thm}[relative and absolute cellular chains]\label{thm:chain}
There are natural identifications
\[
 \mathcal N\Dd(K,A)\cong C_*(K,A;\mathbb Z),\qquad H_*(\mathcal N\Dd(K,A))\cong H_*(Q,D;\mathbb Z), \quad Q=|K|/_{SC}|A|,
\]
and for every finite presentation a natural short exact sequence
\[
 0\longrightarrow\mathbb Z[D][0] \longrightarrow C_*^{CW}(|S|_\bullet) \longrightarrow\mathcal N(S)\longrightarrow0.
\]
In particular $H_n(Q)\cong H_n(K,A)$ for $n\ge2$, and if $D$ is a single point then $H_*(Q,D)\cong\widetilde H_*(Q)$.
\end{thm}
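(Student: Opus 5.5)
I will prove the statement in three steps: the chain-level identification, the short exact sequence, and the homological consequences.

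\emph{Step 1: chain-level identification.} By \cref{con:D}, the genuine $n$-cells of $\Dd(K,A)$ are exactly the $n$-simplices of $K\setminus A$. Hence
\[
\mathcal N_n\Dd(K,A)=\mathbb Z[K_n\setminus A_n]\cong C_n(K)/C_n(A)=C_n(K,A;\mathbb Z),
\]
where $C_*$ denotes ordered simplicial chains with the increasing vertex order. The differential of \cref{con:normalization} omits exactly the facets with $\genface{i}{K}\sigma\in A$, that is, the terms $\partial_i\sigma=\bbot_\alpha$. These are precisely the terms killed in the quotient $C_*(K)/C_*(A)$, so the two differentials agree term by term. For naturality, a pair map either sends a surviving simplex to a surviving simplex or absorbs it into $A'$. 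In the first case both functors send the generator to its image; in the second, the image is a basepoint chain in $\mathcal N$ and lies in $C_*(A')$ in the relative complex, so both send it to zero.

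\emph{Step 2: the short exact sequence.} I take $C_*^{CW}(|S|_\bullet)$ with the cells described in \cref{lem:cells}: zero-cells $D\sqcup S_0$ and one $n$-cell per $\sigma\in S_n$ for $n>0$. The surjection onto $\mathcal N(S)$ is the identity on genuine cells and kills the $D$ part in degree $0$; its kernel is $\mathbb Z[D][0]$. The main point is that this is a chain map, which amounts to computing the cellular boundary of a genuine cell. Each facet of $\sigma$ that is collapsed to $\bbot_\alpha$ is sent by the characteristic map constantly to a zero-cell. In degrees $n\ge2$ such a facet contributes nothing to the degree computation, so the cellular boundary equals the formula of \cref{con:normalization}.

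In degree $1$ the cellular boundary of an edge with a collapsed endpoint does contain $\pm\bbot_\alpha$. That term lies in $\mathbb Z[D]$ and is killed by the projection, so the projection still commutes with the differentials. Injectivity of $\mathbb Z[D]\to C_0^{CW}$ is clear, and exactness is a cell count in each degree. Naturality follows because morphisms respect component labels.

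A cleaner route to the same sequence is to identify $C_*^{CW}(|S|_\bullet)$ with $C_*(\widetilde S)/\widetilde C_*(\star_D)$, where $\widetilde C_*(\star_D)$ denotes the chains of $\star_D$ in positive degrees. I expect the main obstacle to be verifying this identification, equivalently the cellular-boundary computation for cells with facets mapped to points. I would handle it via \cref{rem:skeletal-filtration}: the cellular boundary is the degree of the attaching map, and a constant map on a facet contributes degree zero.

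\emph{Step 3: consequences.} The second identification is the composite
\[
H_*(\mathcal N\Dd(K,A))\cong H_*(K,A)\cong H_*(Q,D),
\]
using Step 1 and \cref{cor:mv}. Alternatively, the short exact sequence identifies $\mathcal N$ with the relative cellular complex $C_*^{CW}(Q,D)$, and \cref{thm:realization} identifies $|\Dd(K,A)|_\bullet$ with $Q$.

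The long exact homology sequence of the short exact sequence gives $H_n(C^{CW})\cong H_n(\mathcal N)$ for $n\ge2$, since $\mathbb Z[D][0]$ is concentrated in degree $0$; hence $H_n(Q)\cong H_n(K,A)$ for $n\ge2$.

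If $D$ is a single point, then $H_*(Q,\mathrm{pt})\cong\widetilde H_*(Q)$ by the long exact sequence of the pair $(Q,\mathrm{pt})$.
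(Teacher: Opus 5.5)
Your proposal is correct and follows the paper's proof. The paper identifies the genuine generators with $K\setminus A$ and the normalized differential with the simplicial one modulo $C_*(A)$. It then observes that the only cells of $|S|_\bullet$ not in $\mathcal N(S)$ are the points of $D$ in degree zero, and reads off the degree range from the long exact sequence of the pair $(Q,D)$. That sequence is exactly the long exact sequence of your short exact sequence. Your explicit degree computation (collapsed facets contribute nothing for $n\ge2$, and only $D$-terms for $n=1$) supplies detail the paper leaves implicit. Your second route to $H_*(\mathcal N\Dd(K,A))\cong H_*(Q,D)$, via \cref{thm:realization}, is the one the paper takes.
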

\begin{proof}
The genuine generators are the simplices outside $A$, and their differential is the source simplicial differential modulo $C_*(A)$. In the quotient CW structure the only discarded actual cells are $D$ in degree zero; positive-dimensional formal basepoint-chain cells do not remain. This proves the short exact sequence and identifies normalization with relative cellular chains of $(Q,D)$; the long exact sequence of that pair gives the degree range.
\end{proof}

\begin{ex}[the low-degree correction]\label{ex:relative-interval}
Let $K$ be the subdivided interval $1-2-3$ and $A=\{1\}\sqcup\{3\}$. The componentwise quotient is still an interval, so $H_1(Q)=0$, but $H_1(K,A)=\mathbb Z$, accounted for by the kernel of $H_0(D)\to H_0(Q)$. Keeping distinct component points therefore changes the topology: crushing all of $A$ to one point would give a circle here.
\end{ex}

\begin{prop}[quotient towers]\label{prop:quotient-tower}
For nested finite pairs $K_t\subseteq K_u$, $A_t\subseteq A_u$, there are canonical maps $K_t/_{SC}A_t\to K_u/_{SC}A_u$, composing in the parameter order. Their normalized relative chain maps send a surviving source simplex to itself if it still survives and to zero if it has entered $A_u$.
\end{prop}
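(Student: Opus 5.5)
The canonical maps come from the universal property of the componentwise quotient, and the chain-level formula from the description of $\mathcal N$ in \cref{con:normalization}. Nesting gives a simplicial inclusion $\iota:K_t\hookrightarrow K_u$ with $\iota(A_t)\subseteq A_u$. It is increasing and injective on simplices, so it is a morphism of pairs in the sense of \cref{def:flagcat}, though flagness is irrelevant here. Each connected component $A_{t,\alpha}$ is connected, so it lies in a unique component $A_{u,\rho(\alpha)}$. This defines $\rho:\pi_0(A_t)\to\pi_0(A_u)$.

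Topologically, the composite $|K_t|\to|K_u|\to|K_u|/_{SC}|A_u|$ is constant on each $|A_{t,\alpha}|$, since that set is sent into the single class $|A_{u,\rho(\alpha)}|$. Hence it factors uniquely through the quotient $|K_t|/_{SC}|A_t|$, and the factored map sends $\bbot_\alpha$ to $\bbot_{\rho(\alpha)}$. Combinatorially, the same map is $\Dd(\iota)$ from \cref{con:D}. The pushout $K^\Delta_t\amalg_{A^\Delta_t}\star_{D_t}\to K^\Delta_u\amalg_{A^\Delta_u}\star_{D_u}$ is induced by $\iota$ together with $\star_\rho$. Compatibility holds because the component map $c_u\circ\iota|_{A_t}$ equals $\star_\rho\circ c_t$. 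By the naturality clause of \cref{thm:realization}, the realization of $\Dd(\iota)$ agrees with the topological factorization.

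Composition in the parameter order follows from uniqueness of factorizations through quotients, or equivalently from functoriality of $\Dd$. For $t\le u\le w$, the composite of induced maps and the map induced by $K_t\subseteq K_w$ both factor the same map $|K_t|\to|K_w|/_{SC}|A_w|$. They therefore agree, and the component maps $\rho$ compose as well. The identity case is immediate.

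For the chain statement, apply $\mathcal N$ to $\Dd(\iota)$. By \cref{con:normalization}, a genuine generator $\sigma\in K_t\setminus A_t$ maps to $\iota(\sigma)=\sigma$ if that cell is genuine in $\Dd(K_u,A_u)$, that is, if $\sigma\notin A_u$. Otherwise $\sigma$ lies in some $A_{u,\beta}$ and is sent to the basepoint chain $\bbot_\beta$, so it maps to zero. Under \cref{thm:chain}, this is the relative chain map $C_*(K_t,A_t)\to C_*(K_u,A_u)$ induced by inclusion.

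The only point needing care, and the expected obstacle, is the statement that the morphism on $\Dd$ is well-defined when a cell becomes absorbed. Faces of an absorbed cell must be sent consistently to the same basepoint chain. This holds because the faces of a simplex in $A_{u,\beta}$ lie in $A_{u,\beta}$, a subcomplex that is connected within the component. It also holds because $\star_D$ is terminal componentwise, so the map into the pushout is forced.
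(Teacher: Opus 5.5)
Your proposal is correct and follows the paper's proof. The component map $\rho$ makes $|K_t|\to|K_u|/_{SC}|A_u|$ constant on classes, uniqueness of the factorization gives the composition law, and \cref{con:normalization} gives the chain formula. You also identify the topological map with the realization of $\Dd(\iota)$ through the naturality clause of \cref{thm:realization}. The paper leaves that step implicit, and it is what justifies applying $\mathcal N$ to the topological map.
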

\begin{proof}
Each component of $A_t$ lies in one component of $A_u$, so the composite $|K_t|\to |K_u|\to |K_u|/_{SC}|A_u|$ is constant on every class of the source quotient and factors uniquely; uniqueness proves the composition law. \Cref{con:normalization} gives the chain statement.
\end{proof}

These maps need not be genuine-preserving. This is the reason we did not build the theory on the face poset with signed covers: the signed cover quiver forgets which local facet an occurrence came from, and two order-preserving gluings of the same source with isomorphic signed quivers and identical chain complexes can have different fundamental groups (\cref{prop:signed-quiver-counterexample}). The nerve of the free cover category does not model the quotient even for a disc (\cref{prop:nerve}). \Cref{app:covers} records these facts and the restricted morphism class on which the cover functor does give a chain functor. The data structure stores the ordered facet lists, which is strictly more.

With all preparation done, we can move to addressing the questions we raised in the introductory \cref{sec:intro}. 

\section{Strict gradedness}\label{sec:graded}

The first question of the introduction is answered here. Under the face order of \cref{def:shallow}, a rank skip means a cell attached entirely to a basepoint.

\begin{lem}[skips are hollow simplices]\label{lem:skip-hollow}
For a finite simplicial pair and $\sigma\in K\setminus A$ of dimension at least two, $\bbot_\alpha\lessdot\sigma$ holds exactly when $\partial\sigma\subseteq A_\alpha$. Thus skips correspond to minimal nonfaces of $A$ of dimension at least two that belong to $K$.
\end{lem}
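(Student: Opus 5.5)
The plan is to unwind the cover relation through \cref{lem:skips}. That lemma says $\bbot_\alpha$ covers $\sigma$ exactly when $\sigma\in B_\alpha(S)$ and no genuine facet of $\sigma$ lies in $B_\alpha(S)$. So the first step is to describe $B_\alpha(\Dd(K,A))$ concretely for the decoration of \cref{con:D}. I claim that $\sigma\in K\setminus A$ lies in $B_\alpha$ if and only if some face of $\sigma$ lies in $A_\alpha$, equivalently some vertex of $\sigma$ lies in $A_\alpha$.

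To prove this claim, argue in both directions.
\begin{itemize}
\item If $\bbot_\alpha\preceq\sigma$, there is a chain of genuine facets ending in a cell whose decorated facet is $\bbot_\alpha$. That facet is a simplicial face in $A_\alpha$, hence a face of $\sigma$.
\item Conversely, suppose some face $\tau\subseteq\sigma$ lies in $A_\alpha$. Descend from $\sigma$ by dropping vertices not in $\tau$, one at a time. Stop at the first simplicial facet that lies in $A$. Since all vertices of $\tau$ are retained, the intermediate simplices are genuine until that step. The facet reached contains a vertex of $\tau$, so it lies in the component $A_\alpha$. This gives $\bbot_\alpha\preceq\sigma$.
\end{itemize}

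Next, characterize the covers, using $\dim\sigma\ge2$, which guarantees that every facet has at least two vertices.

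For sufficiency, assume $\partial\sigma\subseteq A_\alpha$. Then every facet of $\sigma$ is absorbed into $\bbot_\alpha$, so $\sigma$ has no genuine facets. It lies in $B_\alpha$ because its facets are in $A_\alpha$. Hence $\bbot_\alpha\lessdot\sigma$ by \cref{lem:skips}.

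For necessity, suppose $\bbot_\alpha\lessdot\sigma$ and some facet $\genface{i}{}\sigma$ is not in $A_\alpha$. Two cases arise.
\begin{itemize}
\item If $\genface{i}{}\sigma\notin A$, it is genuine. By the first step, some vertex $v\in A_\alpha$ of $\sigma$ exists. If $v$ is a vertex of $\genface{i}{}\sigma$, then this facet lies in $B_\alpha$, which contradicts the cover. Otherwise $v$ is the dropped vertex $v_i$. In that case pick any other facet $\genface{j}{}\sigma$, with $j\ne i$, which contains $v$. It is either genuine and in $B_\alpha$, again a contradiction, or it lies in $A$. If it lies in $A$, it lies in $A_\alpha$ since it contains $v$. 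It also contains every vertex except $v_j$, including the vertices of $\genface{i}{}\sigma$ other than $v_j$. Those vertices form a nonempty set because $\dim\sigma\ge2$, and they lie in $A_\alpha$. Hence $\genface{i}{}\sigma$ has a vertex in $A_\alpha$ and lies in $B_\alpha$, which is a contradiction.
\item If $\genface{i}{}\sigma\in A_\beta$ with $\beta\ne\alpha$, then since $\dim\sigma\ge2$, any two facets share a vertex. So the facets lying in $A$ all lie in a single component. At least one facet reaches $\alpha$, and by the argument above it can only do so through a facet in $A_\alpha$. These two facets share a vertex, so $\beta=\alpha$, a contradiction.
\end{itemize}
This case analysis is the fiddliest step, and I would organize it around the observation that two facets of a simplex of dimension at least two always intersect.

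Finally, translate the result. The condition $\partial\sigma\subseteq A$ with $\sigma\notin A$ says exactly that $\sigma$ is a minimal nonface of $A$ lying in $K$, and $\dim\sigma\ge2$ is assumed. Such a boundary is connected, so it automatically sits in a single component $A_\alpha$. This yields the stated correspondence between skips and minimal nonfaces of $A$ of dimension at least two that belong to $K$.
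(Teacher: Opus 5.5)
Your proof is correct and follows the same strategy as the paper: via \cref{lem:skips}, a cover $\bbot_\alpha\lessdot\sigma$ means that no genuine facet of $\sigma$ lies above $\bbot_\alpha$, and a genuine face meeting $V(A_\alpha)$ always lies above $\bbot_\alpha$. That second fact is your first step, which the paper uses without spelling it out. The paper organizes the necessity step more economically by working with vertices instead of a bad facet: if some $w\in V(\sigma)$ lies outside $V(A_\alpha)$, then for $v\in V(\sigma)\cap V(A_\alpha)$ the edge $vw$ is not in $A$, so any facet containing it is genuine and meets $V(A_\alpha)$, a contradiction; hence $V(\sigma)\subseteq V(A_\alpha)$, every facet meets $V(A_\alpha)$, no facet can be genuine, and your two-case analysis becomes unnecessary.
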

\begin{proof}
If the entire boundary lies in $A_\alpha$, no genuine proper face intervenes. Conversely suppose $\bbot_\alpha\lessdot\sigma$ and choose $v\in V(\sigma)\cap V(A_\alpha)$ from an iterated collapsed face. If some vertex $w$ is outside $V(A_\alpha)$, the edge $vw$ cannot belong to $A$; it extends to a genuine facet containing $v$, an intermediate object above $\bbot_\alpha$, a contradiction. All vertices of $\sigma$ therefore belong to $A_\alpha$, any genuine facet would again be intermediate, and the whole boundary lies in $A_\alpha$.
\end{proof}

\begin{thm}[strict gradedness for flag ambient complexes]\label{thm:strict}
Let $K$ be finite and flag, and $A\subseteq K$. The following are equivalent:
\begin{enumerate}[label=(\arabic*),leftmargin=2em]
\item $\Dd(K,A)$ is component-resolved shallow; equivalently its quotient face order is strictly graded;
\item $A$ is flag;
\item no surviving simplex of dimension at least two has its whole boundary in $A$.
\end{enumerate}
\end{thm}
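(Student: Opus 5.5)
We prove the cycle of equivalences $(1)\Leftrightarrow(3)$ and $(2)\Leftrightarrow(3)$. The first is essentially bookkeeping with the earlier lemmas. The second is where flagness of $K$ is used.

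For $(1)\Leftrightarrow(3)$ we first use \cref{lem:skips}. It says all covers raise rank by one exactly when $\Dd(K,A)$ is shallow, and shallowness is condition (1). It also says that a failure of shallowness is a cover $\bbot_\alpha\lessdot\sigma$ with $\dim\sigma\ge2$. Such a cover is a skip of rank at least two, since basepoints have rank zero and genuine covers raise dimension by one. A cover $\bbot_\alpha\lessdot\sigma$ with $\dim\sigma\le1$ raises rank by at most one and is harmless. By \cref{lem:skip-hollow}, a cover of this kind with $\dim\sigma\ge2$ exists exactly when some surviving $\sigma$ has $\partial\sigma\subseteq A_\alpha$. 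If instead $\partial\sigma\subseteq A$ for a surviving $\sigma$ of dimension at least two, the boundary is connected because $\dim\sigma\ge2$, so it lies in a single component $A_\alpha$. This gives $(1)\Leftrightarrow(3)$ for arbitrary $K$.

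For $(2)\Rightarrow(3)$, suppose $A$ is flag and $\sigma\in K\setminus A$ has $\dim\sigma\ge2$ with $\partial\sigma\subseteq A$. Every edge of $\sigma$ is a proper face, so it lies in $A$. Thus the vertices of $\sigma$ form a clique of $\sk_1A$, and flagness of $A$ gives $\sigma\in A$, a contradiction. This direction does not need $K$ flag.

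For $(3)\Rightarrow(2)$, suppose $A$ is not flag. We choose a minimal non-face $\tau$ of $A$ with at least three vertices. Such a $\tau$ exists because a non-flag complex has a minimal non-face that is not an edge. Every edge of $\tau$ is a proper subset, hence a face of $A$ by minimality, and so an edge of $K$. Since $K$ is flag, $\tau\in K$. We have $\tau\notin A$ and $\dim\tau\ge2$, and minimality again gives $\partial\tau\subseteq A$. So $\tau$ violates (3).

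The main point to check carefully is the role of flagness in $(3)\Rightarrow(2)$. Without flag $K$, the minimal non-face $\tau$ of $A$ need not lie in $K$, and the equivalence with (2) fails; only $(1)\Leftrightarrow(3)$ holds in general. A secondary detail in $(1)\Leftrightarrow(3)$ is the identification of the component: the boundary of a simplex of dimension at least two is connected, so all its collapsed faces lie in one $A_\alpha$. This is exactly what \cref{lem:skip-hollow} needs.
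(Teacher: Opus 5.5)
Your proof is correct and follows essentially the paper's route: both reduce (1) to (3) through \cref{lem:skip-hollow}, with connectedness of $\partial\sigma$ placing the boundary in a single component; both get (3) from flagness of $A$ by the clique argument; and both use a minimal non-face of $A$ with at least three vertices, forced into $K$ by flagness of $K$, as the hollow simplex for the converse. The paper also sketches a direct component-resolved argument for (2)$\Rightarrow$(1) that avoids \cref{lem:skip-hollow}, but your arrangement into (1)$\Leftrightarrow$(3) for arbitrary $K$ and (2)$\Leftrightarrow$(3) uses the same ingredients and makes it explicit that only the last implication needs $K$ flag.
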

\begin{proof}
If $A$ is flag, a surviving simplex cannot have every boundary edge in $A$, so \cref{lem:skip-hollow} rules out skips. For a direct component-resolved argument, start from $\bbot_\alpha\prec\sigma$ and choose $v\in V(A_\alpha)\cap V(\sigma)$. If an edge $vw$ is not in $A$, extend it to a genuine facet containing $v$. Otherwise every vertex of $\sigma$ lies in $A_\alpha$; flagness and $\sigma\notin A$ then provide a missing edge $ab$ of $A$ inside $\sigma$, and any facet containing it is genuine and lies above $\bbot_\alpha$, including the case where that facet is the edge $ab$ itself.

If $A$ is not flag, choose a minimal nonface $\tau$ on $V(A)$ with at least three vertices. Its edges lie in $A\subseteq K$, and flagness of $K$ forces $\tau\in K$. Its boundary is connected and lies in one component of $A$, and \cref{lem:skip-hollow} gives a skip.
\end{proof}

\begin{rem}\label{rem:archetype}
The archetype is $(\Delta^2,\partial\Delta^2)$: one basepoint and a two-cell attached entirely to it, quotient $S^2$, a cover skipping rank one. The filled triangle and its boundary have the same graph, so the obstruction is not visible from $\sk_1 A$. The graph produces clique candidates, and membership in $A$ decides which are missing. The data structure can also represent nonflag collapses, since its cell table is indexed by dimension, but the traversal by covers that strict gradedness allows is then unavailable, and the facet budget of next \cref{lem:budget} can fail.
\end{rem}

\section{Skeletal determinacy}\label{sec:skeleta}

We now answer the second question: how much of a quotient must be stored so that the rest can be recovered. Write $\Dd(K,L)_{\le n}$ for the truncation of the decoration to cells of dimension $\le n$, retaining the full basepoint set and all face maps between the retained cells. Two opposing forces meet in the next two lemmas, and the threshold is where they cross.

\begin{lem}[forcing]\label{lem:forcing}
Let $K$ be a flag complex, $L\subseteq K$ a subcomplex, and $\sigma\subseteq V(K)$ a vertex set at least three of whose facets $\sigma\smallsetminus\{v\}$ lie in surviving set $F(K,L)$. Then $\sigma\in F(K,L)$.
\end{lem}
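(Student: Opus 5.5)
We must show two things: that $\sigma\in K$, and that $\sigma\notin L$. Write $\tau_v=\sigma\smallsetminus\{v\}$ for the facets, and fix three distinct vertices $a,b,c\in\sigma$ with $\tau_a,\tau_b,\tau_c\in F(K,L)=K\smallsetminus L$.

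\emph{Step 1: $\sigma\in K$.} Since $K$ is flag (\cref{def:flag}), it suffices to show that every pair $\{x,y\}\subseteq\sigma$ is an edge of $K$. Among the three missing vertices $a,b,c$, at most two lie in $\{x,y\}$, so some $v\in\{a,b,c\}$ differs from both $x$ and $y$. Then $\{x,y\}\subseteq\tau_v\in K$, and $K$ is closed under faces, so $xy\in K$. Hence $\sigma$ is a clique of $\sk_1K$, and $\sigma\in K$. The degenerate case $|\sigma|\le 2$ cannot occur: $\sigma$ has at least three facets, so $|\sigma|\ge3$. For $|\sigma|=3$ the three facets are exactly its edges, and the argument still applies.

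\emph{Step 2: $\sigma\notin L$.} If $\sigma\in L$, then every face of $\sigma$ would lie in the subcomplex $L$, including $\tau_a$. This contradicts $\tau_a\in F(K,L)$. Therefore $\sigma\in K\smallsetminus L=F(K,L)$.

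Note that only Step 1 uses flagness of $K$, and it uses nothing about $L$. Three surviving facets are needed because two would fail to cover the edge $\{a,b\}$ when exactly $\tau_a,\tau_b$ are known. Survival of the facets matters only through their membership in $K$ and the single non-membership in $L$.

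There is no real obstacle here. The one point to state carefully is the pigeonhole step: three removed vertices versus a two-element edge guarantees a facet containing that edge. This step is the reason the lemma requires at least three facets rather than two.
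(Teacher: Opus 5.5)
Your proof is correct and is essentially the paper's argument. Each edge of $\sigma$ misses one of the three chosen vertices, so it lies in a facet that belongs to $K$. Flagness then gives $\sigma\in K$, and face-closedness of $L$ applied to one surviving facet gives $\sigma\notin L$.
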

\begin{proof}
Let $\tau_i=\sigma\smallsetminus\{v_i\}\in K\smallsetminus L$ for three distinct $v_1,v_2,v_3\in\sigma$. Every edge $\{a,b\}\subseteq\sigma$ omits at least one of $v_1,v_2,v_3$, hence lies in the corresponding $\tau_i\subseteq K$. So $\sigma$ is a clique of $\sk_1K$ and $\sigma\in K$ by flagness. If $\sigma\in L$ then $\tau_1\in L$ by face-closedness, a contradiction.
\end{proof}

\begin{lem}[facet budget]\label{lem:budget}
Let $L\subseteq K$ be a \emph{flag} subcomplex ($K$ arbitrary) and $\sigma\in F(K,L)$ of dimension $d\ge1$, with vertices $v_0<\dots<v_d$. Put
\[
N_1(\sigma)=\{\rho\subseteq\sigma\;:\;1\le|\rho|\le2,\ \rho\notin L\},
\]
the vertices and edges of $\sigma$ that survive the collapse. Then
\begin{enumerate}[label=(\roman*)]
\item $N_1(\sigma)\neq\varnothing$;
\item $\partial_i\sigma=\bbot$ (equivalently, by \cref{con:D}, the simplicial facet $\genface{i}{}\sigma=\sigma\smallsetminus\{v_i\}$ lies in $L$) if and only if $v_i\in\rho$ for every $\rho\in N_1(\sigma)$;
\item hence $\sigma$ has exactly $\bigl|\bigcap N_1(\sigma)\bigr|\le2$ collapsed facets and at least $d-1$ genuine ones.
\end{enumerate}
The bound $2$ is attained in every dimension.
\end{lem}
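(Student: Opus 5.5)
The plan is to reduce everything to the flagness of $L$, through the observation that a simplex of $K$ lies in $L$ exactly when all of its vertices and edges lie in $L$. Since $L$ is flag, $L=\Cl(\sk_1 L)$, so for any $\tau\subseteq\sigma$ (with $\tau\in K$ automatically) we have
\[
\tau\in L\iff\text{every }\rho\subseteq\tau\text{ with }1\le|\rho|\le2\text{ lies in }L,
\]
that is, $\tau\in L$ iff $\tau$ contains no element of $N_1(\sigma)$. The forward direction is face-closedness of $L$. The backward direction is the clique property; for $|\tau|=1$ it reads directly as $\tau\in L$.

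For (i), apply this with $\tau=\sigma$. Since $\sigma\notin L$, some vertex or edge of $\sigma$ is missing from $L$, so $N_1(\sigma)\neq\varnothing$.

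For (ii), apply the same equivalence to $\tau=\genface{i}{}\sigma$. This facet lies in $L$ iff no $\rho\in N_1(\sigma)$ is contained in $\sigma\smallsetminus\{v_i\}$. For $\rho\subseteq\sigma$, that is the same as saying every such $\rho$ contains $v_i$. Combined with \cref{con:D}, this gives $\partial_i\sigma=\bbot$. One point needs care: $\partial_i\sigma$ is a basepoint label $\bbot_\alpha$ exactly when the simplicial facet is in $L$, so we need no claim about which component. For $d=1$ the facets are vertices, and the criterion still applies.

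For (iii), count the collapsed facets. By (ii) they are the $i$ with $v_i\in\bigcap N_1(\sigma)$, so there are $|\bigcap N_1(\sigma)|$ of them. This intersection is contained in any single element of $N_1(\sigma)$, which is nonempty by (i), and each element has at most two vertices; hence the bound $\le2$. The remaining $d+1-2=d-1$ facets are genuine.

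For sharpness, in each dimension $d\ge1$ take $K=\Delta^d$ on $v_0<\dots<v_d$ and $L=\Cl(\sk_1\Delta^d\smallsetminus\{v_0v_1\})$. This $L$ is flag by \cref{lem:flag-subcomplexes}, and $N_1(\sigma)=\{v_0v_1\}$, so $\partial_0\sigma$ and $\partial_1\sigma$ are both collapsed. For $d=1$ this is an edge with both endpoints collapsed, which is a loop.

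I expect no real obstacle. The only delicate step is to state the clique criterion so that it covers the singleton case, where a missing vertex lies in $N_1$, and the case $d=1$, so that (ii) is a genuine equivalence and not merely one direction.
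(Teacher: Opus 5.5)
Your proof is correct and is the paper's argument. The paper also uses the clique criterion for the flag complex $L$, applied to $\sigma$ for (i) and to each facet $\sigma\smallsetminus\{v_i\}$ for (ii), followed by intersection counting for (iii) and a one-missing-edge witness for sharpness; your choice of $v_0v_1$ instead of the paper's $v_{d-1}v_d$ is immaterial.

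One side remark is wrong, although the lemma does not depend on it. For $d=1$ your $L$ consists of the two isolated vertices $v_0$ and $v_1$, which are distinct components. So $\partial_0\sigma$ and $\partial_1\sigma$ go to different basepoints, and the quotient is an interval, not a loop.
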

\begin{proof}
(i) If some vertex $v$ of $\sigma$ lies outside $V(L)$ then $\{v\}\in N_1(\sigma)$. Otherwise $\sigma\subseteq V(L)$ and $\sigma\notin L$, so $\sigma$ contains a minimal non-face of $L$, which has at most two vertices by flagness.

(ii) If $\genface{i}{}\sigma\in L$ then every $\rho\subseteq\sigma\smallsetminus\{v_i\}$ with $|\rho|\le2$ lies in $L$, so every member of $N_1(\sigma)$ contains $v_i$. Conversely, if every member of $N_1(\sigma)$ contains $v_i$, then every vertex of $\sigma\smallsetminus\{v_i\}$ lies in $V(L)$ and every edge in $\sk_1L$, so $\sigma\smallsetminus\{v_i\}$ is a clique of $\sk_1L$ and flagness gives $\genface{i}{}\sigma\in L$.

(iii) The collapsed facets are indexed by $\bigcap N_1(\sigma)$, the intersection of a nonempty family of sets of cardinality at most two. For attainment take $K=\Delta^{d}$ and $L=\Cl\bigl(\sk_1\Delta^{d}\smallsetminus\{v_{d-1}v_d\}\bigr)$: then $N_1(\sigma)=\{v_{d-1}v_d\}$ and the collapsed facets of $\Delta^d$ are $\genface{d-1}{}\sigma$ and $\genface{d}{}\sigma$.
\end{proof}

Forcing needs three surviving facets, and the budget guarantees $d-1$ of them. A $d$-cell can therefore be hidden from the cells below it only if $d-1\le2$, and from dimension four upwards every cell is implied by its own facets. This argument proves the next proposition and locates the sharp case at $d=3$, which is \cref{prop:2skel}.

\begin{prop}[flag collapses are labelled $3$-determined]\label{prop:rigidity}
Let $(K,A)$ and $(K',A')$ be flag pairs on a common ordered vertex set. If their surviving sets agree through dimension three, then $K\setminus A=K'\setminus A'$ in every dimension. If in addition a bijection of their component-point sets identifies the decorated truncations through dimension three, it extends uniquely to equality of the full decorations and gives a homeomorphism of the quotient CW presentations.
\end{prop}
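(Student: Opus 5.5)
We argue by induction on dimension, using the facet budget of \cref{lem:budget} to supply three surviving facets and the forcing lemma \cref{lem:forcing} to promote them to a surviving simplex. Suppose $F(K,A)$ and $F(K',A')$ agree in all dimensions $\le n-1$ for some $n\ge4$, and let $\sigma\in F(K,A)$ have dimension $n$. Since $A$ is flag, \cref{lem:budget}(iii) shows that $\sigma$ has at least $n-1\ge3$ facets in $F(K,A)$. These facets have dimension $n-1$, so by the inductive hypothesis they also lie in $F(K',A')$. Applying \cref{lem:forcing} to the flag complex $K'$ with $L=A'$ then gives $\sigma\in F(K',A')$. The argument is symmetric in the two pairs, so the surviving sets agree in dimension $n$. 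The base case is exactly the hypothesis through dimension three.

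For the decorated statement, the face maps are determined by the surviving sets. By \cref{con:D}, $\partial_i\sigma$ equals the simplicial facet $\genface{i}{}\sigma$ when that facet survives, and otherwise equals $\bbot_\alpha$, where $\alpha$ is the component of $A$ containing $\genface{i}{}\sigma$. The only data not yet fixed by the surviving sets are these component labels on collapsed facets in dimensions $\ge4$. We recover them from below. Let $\sigma$ have dimension $n\ge4$ and a collapsed facet $\tau=\genface{i}{}\sigma\in A_\alpha$. Choose a genuine facet $\genface{j}{}\sigma$ with $j\ne i$; one exists because there are at least $n-1$ genuine facets. By the semisimplicial identity, $\partial_i$ or $\partial_{i-1}$ (according to the order of $i$ and $j$) applied to $\partial_j\sigma$ equals the corresponding face of $\tau$. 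That face lies in $A_\alpha$, so it is collapsed with label $\bbot_\alpha$. By induction, the label on $\partial_j\sigma$ is already matched by the given bijection, which therefore forces the label $\bbot_\alpha$ on $\tau$. Uniqueness follows because every basepoint label above dimension three is pinned down in this way. Existence uses the fact that the two decorations are defined by the same rule from identical surviving sets. The homeomorphism of the quotients then comes from \cref{cor:skeletal} together with \cref{lem:cells}: equal ordered facet lists yield identical skeletal attachments.

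The main obstacle is the label bookkeeping for collapsed facets. A collapsed facet of a $d$-cell may carry a label that is not visible through genuine cells, and the bijection of component points has to be consistent. The semisimplicial-identity argument handles dimensions $\ge4$. One must also check that the bijection is determined on every component, including components that appear only as labels of facets of cells of dimension $\le3$ and possibly only through $0$-cells. This holds because the statement assumes the bijection identifies the truncated decorations, which retain the full basepoint set. A secondary point to verify is that \cref{lem:forcing} needs only the flagness of $K'$ and places no flagness condition on $A'$, so it applies verbatim.
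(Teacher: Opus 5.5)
Your argument is correct. The determination of the surviving sets is exactly the paper's: the budget gives $d-1\ge3$ genuine facets, then induction, \cref{lem:forcing} in the other pair, and symmetry.

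For the component labels you take a different route.

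\textbf{The paper's route.} It does not induct here. For a collapsed facet $\tau\subset\sigma$ it picks $u\in V(\tau)$. Using flagness of $A$ a second time, it picks a nonface $\rho\subseteq\sigma$ of $A$ with at most two vertices. The face $\rho\cup\{u\}$ survives and has dimension at most two. Its iterated face at $u$ is the basepoint of the component containing $u$, hence of $\tau$, and this is read directly from the truncation.

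\textbf{Your route.} You propagate labels upward one dimension at a time. Any genuine facet $\partial_j\sigma$ satisfies $\partial_k\partial_j\sigma=\bbot_\alpha$, with $k=i$ or $k=i-1$, by the absorbing semisimplicial identities. The inductive hypothesis on $(n-1)$-cells then transfers the label.

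\textbf{Comparison.} The paper's version locates every label in a cell of dimension at most two. It also carries over verbatim to \cref{thm:rigidity-m}, using a face of dimension at most $m+1$. Your version uses only the decoration axioms and the existence of one genuine facet per cell. So this step needs no small-nonface structure of $A$; it is in effect the shallowness of \cref{thm:strict}.

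\textbf{One sentence to add.} Since $\sigma\in F(K',A')$ and $\tau\notin F(K',A')$, the facet $\tau$ lies in some component $A'_{\alpha'}$. The same identity in $\Dd(K',A')$ then gives $\bbot_{\alpha'}$ at that slot of $\partial_j\sigma$. Only with this does the inductive hypothesis yield that the bijection sends $\alpha$ to $\alpha'$.
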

\begin{proof}
Induct on $d\ge4$. A surviving $d$-simplex has at least $d-1\ge3$ genuine facets by \cref{lem:budget}; the induction hypothesis and \cref{lem:forcing} force the same simplex in the other pair, and symmetry gives equality of the surviving sets.

For a collapsed facet $\tau\subset\sigma$ in dimension $\ge3$, choose $u\in V(\tau)$. Since $\sigma\notin A$ and $A$ is flag, $\sigma$ contains a nonface $\rho$ of $A$ with one or two vertices. The face $\eta=\rho\cup\{u\}$ survives, has dimension at most two, and its iterated face at $u$ is recorded in the truncation as the component containing $u$, hence containing $\tau$. Genuine targets are already fixed by the surviving set.
\end{proof}

The component assignment is additional data: on vertices $0,1,2,3$, the pairs $(01\cup23\cup03,\ 01\cup23)$ and $(\text{path }0123\cup03,\ \text{path }0123)$ are flag, have the single surviving edge $03$, and their quotients are an interval and a loop respectively.

\subsection{Larger minimal non-faces}
The argument only used that the minimal non-faces of $L$ are small. Let $m\ge1$. A complex is \emph{$m$-flag} if every minimal non-face has at most $m+1$ vertices; it is then determined by its $m$-skeleton, and $1$-flag means flag. This is the class $\mathcal F_{m+1}$ of the face-number literature~\citep{nevo2008missing,goff2011balanced,adamaszek2013extremal}. The facet budget becomes $d-m$ genuine facets (\cref{lem:budget-m}) and forcing needs $m+2$ of them when $K$ is only $m$-flag (\cref{lem:forcing-m}); both proofs are in \cref{app:graded}.

\begin{thm}[$m$-flag collapses are $(2m{+}1)$-determined]\label{thm:rigidity-m}
Let $(K,L)$ and $(K',L')$ be pairs of $m$-flag complexes with $m$-flag subcomplexes, on a common ordered vertex set, whose surviving sets agree in dimensions $\le 2m+1$. Then $F(K,L)=F(K',L')$ in every dimension, and with matching component-point data the decorations agree. The bound is sharp, and $m=1$ is \cref{prop:rigidity}.
\end{thm}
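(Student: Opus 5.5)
The plan is to follow the proof of \cref{prop:rigidity} with the two appendix lemmas in place of \cref{lem:budget} and \cref{lem:forcing}. I first fix their content. By \cref{lem:budget-m}, since $L$ is $m$-flag, a surviving $d$-simplex $\sigma$ has at least $d-m$ genuine facets. The reason is that $\sigma\notin L$ contains a surviving face $\rho$ with at most $m+1$ vertices: either a vertex outside $V(L)$, or a minimal non-face of $L$. Every collapsed facet must omit a vertex of $\rho$'s complement, so only the at most $m+1$ facets $\sigma\smallsetminus\{v\}$ with $v\in\rho$ can collapse.

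\Cref{lem:forcing-m} gives the forcing step. If a vertex set $\sigma$ has at least $m+2$ facets lying in $F(K,L)$, then every subset of $\sigma$ with at most $m+1$ vertices misses one of the omitted vertices. Hence every such subset lies in $K$, so $\sigma\in K$ by $m$-flagness of $K$. Finally, $\sigma\notin L$ because its facets are not in $L$.

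The equality of surviving sets then follows by induction on $d\ge 2m+2$. Suppose the surviving sets agree below $d$, and let $\sigma\in F(K,L)$ be a $d$-simplex. It has at least $d-m\ge m+2$ genuine facets. These lie in $F(K',L')$ by the induction hypothesis, so forcing applied to $(K',L')$ gives $\sigma\in F(K',L')$. Symmetry gives equality in dimension $d$.

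For the decorations, genuine face targets are determined by the surviving set, so only the component labels of collapsed facets remain. I copy the argument of \cref{prop:rigidity}. Take a collapsed facet $\tau$ of $\sigma$, a vertex $u\in\tau$, and the small surviving face $\rho\subseteq\sigma$ with $|\rho|\le m+1$. Then $\eta=\rho\cup\{u\}$ survives and has dimension at most $m+1\le 2m+1$. Its iterated face at $u$ is therefore recorded in the truncated decoration as the component of $u$, which is the component containing $\tau$. This identifies all labels through the given bijection of component points.

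Sharpness is \cref{prop:sharp-m}, and it is the main obstacle. We need two $m$-flag pairs whose surviving sets agree through dimension $2m$ but differ in dimension $2m+1$. I would generalize \cref{prop:2skel} as follows. Take $K=\Delta^{2m+1}$ on vertex set $P\sqcup Q$ with $|P|=|Q|=m+1$. Let $L$ consist of the simplices not containing $Q$; this $L$ is $m$-flag, since its only minimal non-face is $Q$. For $K'$, delete the simplices containing $P$, so that $K'$ is $m$-flag with the single minimal non-face $P$, and set $L'=L\cap K'$. Then $F(K,L)$ consists of the faces containing $Q$, including the top cell $P\cup Q$. The set $F(K',L')$ consists of the faces containing $Q$ but not $P$, so the two agree exactly in dimensions $\le 2m$. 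The remaining checks are that $L'$ is $m$-flag (its minimal non-faces are $P$ and $Q$) and that the collapsed components match. Both sides are connected once $m\ge1$, so the decorations agree through dimension $2m$. The quotients differ, as in \cref{prop:2skel}: one contains the extra top cell and the other does not.
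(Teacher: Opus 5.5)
Your proof is correct and is the paper's argument. It has the same pieces: induction from $d=2m+2$ via \cref{lem:budget-m} and \cref{lem:forcing-m}, component labels read off from the surviving face $\rho\cup\{u\}$ of dimension $\le m+1$, and sharpness from \cref{prop:sharp-m} with $p=q=m+1$, which is exactly your construction. The paper additionally shows there that $K/L\simeq *$ while $K'/L'\cong S^{2m}$, so even the quotient spaces differ, whereas you only showed that the decorations differ.

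There is one wording slip in your recap of the budget lemma. A collapsed facet $\sigma\smallsetminus\{v\}$ lies in $L$, so it cannot contain $\rho\notin L$. It must therefore omit a vertex of $\rho$ itself, not of its complement, and that is what forces $v\in\rho$.
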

\begin{proof}
Induct on $d\ge 2m+2$. A surviving $d$-simplex has at least $d-m\ge m+2$ genuine facets by \cref{lem:budget-m}, of dimension $d-1\ge 2m+1$, hence in $F(K',L')$ by induction or by hypothesis; \cref{lem:forcing-m} inside $(K',L')$ gives $\sigma\in F(K',L')$. Exchange the pairs for the reverse inclusion. A collapsed facet is read off, as in \cref{prop:rigidity}, from a surviving face $\rho\cup\{u\}$ of dimension at most $m+1\le 2m+1$.
\end{proof}

\begin{thm}[unified threshold for flag ambient complexes]\label{thm:threshold-unified}
Let $K,K'$ be flag on a common vertex set, $L\subseteq K$ and $L'\subseteq K'$ subcomplexes all of whose missing faces have at most $m+1$ vertices, with $\dim L,\dim L'\le\ell$. Put $t=\min(m+2,\ \ell+1)$. Then agreement of $F(K,L)$ and $F(K',L')$ in dimensions $\le t$ implies agreement in all dimensions.
\end{thm}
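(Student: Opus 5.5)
The plan is to copy the induction of \cref{prop:rigidity} and \cref{thm:rigidity-m}. We prove $F(K,L)_d=F(K',L')_d$ by induction on $d\ge t+1$; the hypothesis covers $d\le t$. By symmetry it suffices to show that a surviving $d$-simplex $\sigma\in F(K,L)$ lies in $F(K',L')$. The target tool is \cref{lem:forcing} applied inside the pair $(K',L')$. This is legitimate because $K'$ is flag, and the lemma asks nothing of $L'$. So the only thing to produce is three facets $\sigma\smallsetminus\{v\}$ of $\sigma$ that lie in $F(K',L')$.

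Each such facet has dimension $d-1\ge t$. Hence, once we know it lies in $F(K,L)$, it lies in $F(K',L')$, either by the hypothesis (when $d-1=t$) or by the induction hypothesis (when $d-1>t$). The step therefore reduces to a purely one-sided count: $\sigma$ must have at least three genuine facets in $(K,L)$. Since $d\ge t+1=\min(m+3,\ell+2)$, we split into two cases according to which term of the minimum is attained.

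\emph{Case $d\ge m+3$.} The missing faces of $L$ have at most $m+1$ vertices. So the facet budget for $m$-flag subcomplexes, \cref{lem:budget-m}, applies; note that it asks nothing of the ambient complex, exactly like \cref{lem:budget}. It gives at least $d-m\ge3$ genuine facets of $\sigma$.

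\emph{Case $d\ge\ell+2$.} Every facet of $\sigma$ is a simplex of $K$ of dimension $d-1\ge\ell+1>\dim L$, so none of them lies in $L$. All $d+1\ge3$ facets are therefore genuine.

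In either case \cref{lem:forcing} in $(K',L')$ gives $\sigma\in F(K',L')$. Exchanging the roles of the two pairs, which is legitimate since the hypotheses are symmetric (both $K,K'$ flag, both $L,L'$ with missing faces of size at most $m+1$ and dimension at most $\ell$), gives the reverse inclusion.

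The only point needing care, and the one I expect to be the obstacle, is checking that \cref{lem:budget-m} really holds without flagness of the ambient complex and with only the bound on missing faces of $L$. This should mirror parts (i)--(ii) of \cref{lem:budget}, with edges replaced by faces of at most $m+1$ vertices. Let $N_m(\sigma)$ be the set of faces of $\sigma$ with at most $m+1$ vertices that do not lie in $L$. As in \cref{lem:budget}(i), $N_m(\sigma)$ is nonempty, since $\sigma\notin L$ contains a minimal non-face of $L$, which has at most $m+1$ vertices. As in \cref{lem:budget}(ii), the facet $\sigma\smallsetminus\{v_i\}$ is collapsed exactly when $v_i$ lies in every member of $N_m(\sigma)$. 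Hence at most $m+1$ facets collapse, leaving at least $d-m$ genuine ones. No other ingredient is needed, and unlike \cref{prop:rigidity} the statement concerns surviving sets only, so no component bookkeeping is required.
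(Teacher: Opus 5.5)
Your proof is correct and is essentially the paper's own argument. It runs the induction of \cref{prop:rigidity} starting at $d=t+1$, handles the case $d\ge m+3$ by \cref{lem:budget-m} and the case $d\ge\ell+2$ by noting that every facet has dimension $>\ell$, and in each case obtains the three genuine facets needed for the flag forcing \cref{lem:forcing} in the other pair. One small caveat: the count $d+1\ge3$ in the second case tacitly uses $\ell\ge0$, as the paper does (it is stated explicitly in \cref{prop:rigidity-dim}).
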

\begin{proof}
Both routes are the induction of \cref{prop:rigidity}, started one dimension above $t$, and both end in the flag forcing \cref{lem:forcing}, which needs three facets because $K,K'$ are flag. For $d\ge m+3$, \cref{lem:budget-m} supplies $d-m\ge3$ genuine facets of dimension $d-1\ge m+2$. For $d\ge\ell+2$ every facet has dimension $>\ell$ and is genuine, so all $d+1\ge3$ are available.
\end{proof}

The two thresholds do not contradict each other: $m$ bounds how much of $L$ is invisible from below, $\ell$ bounds how much room a cell has to hide in, and flagness of $K$ controls how cheaply a cell is forced. One family witnesses the sharpness of both.

\begin{prop}[sharpness]\label{prop:sharp-m}
Fix $p,q\ge2$, let $V=P\sqcup Q$ with $|P|=p$, $|Q|=q$, and set
\[
K=2^{V},\quad L=\{\tau\subseteq V: Q\not\subseteq\tau\},\qquad K'=\{\tau\subseteq V: P\not\subseteq\tau\},\quad L'=\{\tau\in K': Q\not\subseteq\tau\}.
\]
Then $K$ is flag, $K'$ is $(p-1)$-flag and flag precisely when $p=2$, $L$ is $(q-1)$-flag, $L'$ is $(\max\{p,q\}-1)$-flag, $\dim L=p+q-2$ and $\dim L'=p+q-3$. The decorations $\Dd(K,L)$ and $\Dd(K',L')$ agree in all dimensions $\le p+q-2$ and differ in dimension $p+q-1$, where the first has one further cell; $K/L\simeq*$ while $K'/L'\cong S^{\,p+q-2}$. Taking $p=q=m+1$ shows that $2m+1$ in \cref{thm:rigidity-m} is sharp; taking $p=2$, $q=m+1$ shows that $m+2$ in \cref{thm:threshold-unified} is sharp; and $m=1$ is \cref{prop:2skel}.
\end{prop}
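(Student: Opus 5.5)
The plan is to first identify the surviving sets explicitly and then read off each claimed invariant in turn. A subset $\tau\subseteq V$ survives in $(K,L)$ exactly when $\tau\in K$ and $Q\subseteq\tau$. Since $K=2^V$, this gives
\[
F(K,L)=\{\tau: Q\subseteq\tau\}.
\]
Similarly $F(K',L')=\{\tau: Q\subseteq\tau,\ P\not\subseteq\tau\}$. The two sets differ only in the single simplex $V$, of dimension $p+q-1$. Everything below dimension $p+q-1$ agrees, and this includes the cells of dimension $p+q-2$, namely the $V\smallsetminus\{v\}$.

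For the face maps, the rule of \cref{con:D} says that $\partial_i\sigma=\bbot$ iff $Q\not\subseteq\sigma\smallsetminus\{v_i\}$, i.e.\ iff $v_i\in Q$. This rule is identical for both pairs. It remains to check that each of $L$ and $L'$ has a single component, so that the component labels also match. $L$ contains every vertex and every edge, since $q\ge2$. For $L'$, every edge lies in $K'$ because $p\ge2$; an edge could fail to lie in $L'$ only if it contained $Q$, which is impossible when $q\ge3$. When $q=2$ the missing edge is $Q$ itself, but the remaining edges still connect $V$ because $p\ge2$. Hence both decorations have one basepoint and agree through dimension $p+q-2$.

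Next come the flagness degrees and dimensions. A minimal non-face of $K'$ is $P$ itself, with $p$ vertices. So $K'$ is $(p-1)$-flag, and it is flag iff $p=2$. The minimal non-face of $L$ is $Q$, so $L$ is $(q-1)$-flag. The minimal non-faces of $L'$ are $P$ and $Q$, giving $(\max\{p,q\}-1)$-flag. The maximal faces of $L$ are $V\smallsetminus\{w\}$ with $w\in Q$, so $\dim L=p+q-2$. The maximal faces of $L'$ omit one vertex of $P$ and one vertex of $Q$, so $\dim L'=p+q-3$. Finally, $K=2^V$ is a simplex and hence flag.

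Topology is the step needing real care. For $K/L$, the complex $|L|$ is the union of the facets of $\partial\Delta^V$ opposite vertices of $Q$. This is a nonempty proper union of boundary facets, hence a PL disc in the sphere $\partial|K|$, and collapsing a boundary disc of a ball gives a ball, exactly as in \cref{prop:2skel}. For $K'/L'$, write $K'=\partial\Delta^P*\Delta^Q$, a ball of dimension $(p-2)+(q-1)+1=p+q-2$. Its boundary is $\partial\Delta^P*\partial\Delta^Q$. This boundary consists precisely of the simplices containing neither $P$ nor $Q$, which is $L'$. Therefore $K'/L'\cong B^{p+q-2}/S^{p+q-3}\cong S^{p+q-2}$.

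As a consistency check, the chains via \cref{thm:chain} give $\widetilde H_{p+q-2}(K'/L')=\mathbb Z$ from the single cell $V\smallsetminus\{v\}$, which is one cell per $v\in P$ modulo relations, and $\widetilde H_*(K/L)=0$. So the two decorations differ exactly by the top cell $V$.

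For the sharpness consequences, take $p=q=m+1$. Then all four complexes are $m$-flag, and the decorations agree through dimension $2m$ but not $2m+1$. So a threshold of $2m$ in \cref{thm:rigidity-m} fails, since the hypothesis there is agreement in dimensions $\le 2m+1$ and the differing cell has dimension exactly $2m+1$.

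Now take $p=2$ and $q=m+1$. Then $K$ and $K'$ are flag, $L$ and $L'$ are $m$-flag, and $\dim L=m+1$, so $t=\min(m+2,m+2)=m+2$. Agreement holds through dimension $m+1$ and fails in dimension $m+2$, so lowering $t$ fails. Setting $m=1$ recovers \cref{prop:2skel}.

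The main obstacle is the ball identification, specifically verifying that $\partial(\partial\Delta^P*\Delta^Q)$ equals $L'$. For this I would use the join boundary formula $\partial(X*Y)=\partial X*Y\cup X*\partial Y$, together with $\partial(\partial\Delta^P)=\varnothing$.
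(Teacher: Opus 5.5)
Your proposal is correct and follows essentially the paper's proof. It uses the same explicit surviving sets, with $S\subseteq P$ against $S\subsetneq P$. Facets collapse exactly at the positions indexed by $Q$, flagness and dimensions are read off from the missing faces $P$ and $Q$, and $K'=\partial\Delta^P*\Delta^Q$ is identified as a ball with boundary $L'=\partial\Delta^P*\partial\Delta^Q$. The only difference of route is for $K/L$. You prove the stronger statement $K/L\cong B^{p+q-1}$ by the boundary-disc argument of \cref{prop:2skel}. The paper only observes that $L=\Delta(P)*\partial\Delta(Q)$ is a cone with apex any $v\in P$, so $K/L\simeq K\simeq *$, and this avoids any PL ball recognition.

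One justification is wrong as written. Your connectivity check says $L$ contains every edge ``since $q\ge2$'' and every edge lies in $K'$ ``because $p\ge2$''. But for $q=2$ the edge $Q$ is not in $L$, and for $p=2$ the edge $P$ is not in $K'$; your $L'$ case also overlooks the latter. These are exactly the cases $p=q=2$ and $p=2$ that feed the sharpness conclusions. The conclusion survives with a one-line fix. Every edge $\{a,b\}$ with $a\in P$, $b\in Q$ lies in $L'\subseteq L$ because $p,q\ge2$. These cross edges form a connected complete bipartite graph on all of $V$, so $L$ and $L'$ each have one component and all vertices collapsed.

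Your homology ``consistency check'' is also garbled. $K'/L'$ has $p$ top cells $V\smallsetminus\{v\}$, $v\in P$, and $H_{p+q-2}$ is generated by their alternating sum. Nothing in the proof depends on that check.
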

\begin{proof}
$K=2^V$ has no missing face. The missing faces are $\{P\}$ for $K'$, $\{Q\}$ for $L$, and $\{P,Q\}$ for $L'$; equivalently $L=\Delta(P)*\partial\Delta(Q)$ and $L'=\partial\Delta(P)*\partial\Delta(Q)$, which gives the dimensions. Since $\tau\in K'$ iff $P\not\subseteq\tau$, we get $F(K,L)=\{Q\cup S: S\subseteq P\}$ and $F(K',L')=\{Q\cup S: S\subsetneq P\}$, with $\dim(Q\cup S)=q-1+|S|$; the two agree except at $S=P$. The face maps agree: for $S\subsetneq P$ the facet $(Q\cup S)\smallsetminus\{v\}$ omits $Q$ exactly when $v\in Q$, and lies in $K'$ in every case, so both decorations send $\partial_i\mapsto\bbot$ precisely at the positions indexed by $Q$. Both $L$ and $L'$ are connected for $q\ge2$. Topologically, $L$ is a cone with apex any $v\in P$, so $K/L\simeq*$; and $S\mapsto Q\cup S$ identifies $C_*(K',L')$ with the augmented chain complex of $\partial\Delta^{P}$ shifted by $q$, so $\widetilde H_*(K'/L')=\mathbb Z$ in degree $p+q-2$, while $|K'|$ is the union of $p$ facets of $\partial\Delta^V\cong S^{p+q-2}$, a ball with boundary complex $L'$.
\end{proof}

\begin{cor}[truncated storage]\label{cor:store}
For an $m$-flag pair whose cells record their vertex supports, as the QF-tree does, the cells of dimension $>2m+1$ need not be stored: they and their facet lists are recovered by iterating \cref{lem:forcing-m} over candidate vertex sets. For flag pairs the retained range is $\dim\le3$.
\end{cor}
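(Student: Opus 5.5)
The plan is a reconstruction algorithm run dimension by dimension, whose correctness is \cref{thm:rigidity-m} (and \cref{prop:rigidity} when $m=1$). Suppose we retain the cells of dimension $\le 2m+1$ with their vertex supports, their ordered facet lists and the component labels of the collapsed slots. For $d=2m+2,2m+3,\dots$ we enumerate candidate vertex sets $\sigma$ of size $d+1$ all of whose $(d-1)$-dimensional faces $\sigma\smallsetminus\{v\}$ are already known to survive. A cheap way to do this is to extend each surviving $(d-1)$-cell by one vertex and keep the candidates with enough surviving facets. Whenever at least $m+2$ facets of $\sigma$ lie in the reconstructed $F_{d-1}$, \cref{lem:forcing-m} puts $\sigma$ in $F(K,L)$. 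Conversely, \cref{lem:budget-m} says every genuine $d$-cell has at least $d-m\ge m+2$ genuine facets. Since $d-1\ge 2m+1$, these facets are present by induction. So the test ``at least $m+2$ known facets'' detects exactly $F_d$, and no surviving cell is missed.

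Next we recover the facet lists. For a reconstructed $\sigma$ with vertices $v_0<\dots<v_d$, slot $i$ is genuine precisely when $\sigma\smallsetminus\{v_i\}$ is in the reconstructed $F_{d-1}$, and then it points to that cell. Otherwise the slot is collapsed, and its component label is determined as in the proof of \cref{prop:rigidity}. Pick a vertex $u$ of the collapsed facet. Then $\sigma\notin L$ together with $m$-flagness of $L$ gives a minimal non-face $\rho\subseteq\sigma$ with $|\rho|\le m+1$. The face $\rho\cup\{u\}$ survives, has dimension $\le m+1\le 2m+1$, so it is stored. Following its iterated face maps down to $u$ reads off the component of $u$, and hence of the collapsed facet. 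In practice it is simpler to propagate the label from a facet: a collapsed facet of $\sigma$ shares a codimension-two face with a genuine facet $\tau$, and that face is collapsed inside $\tau$ with a label already known. This only requires that genuine and collapsed facets coexist, which the budget lemma guarantees for $d\ge m+1$.

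For flag pairs ($m=1$) we set $2m+1=3$ and invoke \cref{prop:rigidity} directly. The bound is sharp by \cref{prop:2skel}, so dimension $3$ cannot be dropped.

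The main obstacle is making sure the forcing step runs inside the unknown pair: \cref{lem:forcing-m} must hold with the ambient complex only $m$-flag, and must use nothing beyond the surviving set. Otherwise the enumeration could certify a candidate that lies in $K\setminus L$ for some other pair with the same truncation. I would check this by re-reading the hypotheses of \cref{lem:forcing-m}. If they turn out to need $K$ itself, the fallback is the uniqueness statement of \cref{thm:rigidity-m}. Since the true decoration is the unique extension of the stored truncation, any candidate passing the test in the true pair is a genuine cell. Completeness then comes from the budget count.
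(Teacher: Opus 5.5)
Your argument is correct and is essentially the paper's: the corollary is the induction in the proof of \cref{thm:rigidity-m} run as an algorithm. \cref{lem:budget-m} gives completeness and \cref{lem:forcing-m} gives soundness, and your worry in the last paragraph is unfounded, since that lemma is stated for $m$-flag $K$ and face-closed $L$ and your reconstructed $F_{d-1}$ is the true one by induction. Collapsed labels are read off from $\rho\cup\{u\}$ as in \cref{prop:rigidity}, and your propagation through the face $\sigma\smallsetminus\{v_i,v_j\}$ of a genuine facet is a valid, more local alternative. Only reword your first enumeration sentence: a surviving $d$-cell can have up to $m+1$ collapsed facets, so the filter must be ``at least $m+2$ surviving facets'', as you then use it, and not ``all facets surviving''.
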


Two remarks on scope. First, the theorems reconstruct the \emph{labelled} decoration on a fixed vertex set, and the source pair is not recovered. We do not know in general whether the label-free truncation $\Dd(K,L)_{\le3}$ determines $\Dd(K,L)$ up to isomorphism in $\FFace$: two genuine facets meeting in a ridge that lies in $L$ both report $\bbot$, and the truncation cannot see that they meet (\cref{lem:forcing-intrinsic} gives a local label-free test when the intersections are known). The negative results \cref{prop:skeleta,prop:2skel,prop:sharp-m} are label-free as stated, so sharpness holds in either reading. Second, the truncation saves storage only when cells exceed the bound: on Vietoris--Rips complexes capped at dimension three it saves nothing, while on uncapped clique complexes of dimension ten and above it discards most of the records.

\subsection{Gluings have no threshold}
\begin{prop}[gluings are determined by no finite skeleton]\label{prop:glue-noskel}
For every $n\ge1$ there is a flag complex $K$ and order-preserving gluing data $\sim_1,\sim_2$ on $K$ with $\Dd(K,\sim_1)_{\le n-1}=\Dd(K,\sim_2)_{\le n-1}$ while $\lvert K\rvert/{\sim_1}$ and $\lvert K\rvert/{\sim_2}$ are not homotopy equivalent.
\end{prop}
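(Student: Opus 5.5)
The plan is to give an explicit two-component example in which the two gluings agree on every face of dimension below $n$ and differ only on whether the two top simplices are themselves identified. Take $K=\Delta^n\sqcup\Delta^n$: two disjoint $n$-simplices $\sigma=\{a_0,\dots,a_n\}$ and $\tau=\{b_0,\dots,b_n\}$, ordered $a_0<\dots<a_n<b_0<\dots<b_n$. $K$ is flag, because a disjoint union of clique complexes is a clique complex. Let $\phi$ be the bijection $a_i\mapsto b_i$; it is increasing on every face of $\sigma$.

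The two gluing data are:
\begin{itemize}
\item $\sim_1$ identifies each proper face $\rho\subsetneq\sigma$ with $\phi(\rho)$, but leaves the interiors of $\sigma$ and $\tau$ apart;
\item $\sim_2$ identifies $\sigma$ with $\tau$ via $\phi$, and hence all their faces.
\end{itemize}
Both are generated by identifying equal-dimensional simplices along order-preserving, face-compatible maps. They are therefore order-preserving congruences of semisimplicial sets in the sense of \cref{app:gluing}. For $\sim_1$ one can take the generators to be the $n+1$ facet pairs $\genface{i}{}\sigma\sim\genface{i}{}\tau$; compatibility on ridges holds because $\phi$ restricts consistently.

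Next, compare the decorations, which by \cref{con:D} are the levelwise coequalizers. In every degree $k\le n-1$, the classes under $\sim_1$ and $\sim_2$ coincide: each pair $\{\rho,\phi(\rho)\}$ with $\dim\rho=k$ is one cell. The face maps between these classes are induced by the same simplicial face maps. Hence $\Dd(K,\sim_1)_{\le n-1}=\Dd(K,\sim_2)_{\le n-1}$, and this copy is the semisimplicial model of $\partial\Delta^n$. In degree $n$, $\sim_1$ keeps two cells $[\sigma]$ and $[\tau]$ with identical facet lists, while $\sim_2$ has a single cell.

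By \cref{thm:realization}, $|K|/{\sim_1}$ is two $n$-balls glued along their boundary spheres by a homeomorphism, so it is $S^n$. Likewise $|K|/{\sim_2}\cong\Delta^n$, which is contractible. For $n\ge1$ these are not homotopy equivalent, since $\widetilde H_n(S^n)=\mathbb Z$. One can also read this off from \cref{con:normalization}: $[\sigma]-[\tau]$ is an $n$-cycle.

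The only point needing care is that $\sim_1$ is a legitimate gluing datum in the paper's sense: it glues proper faces without gluing the simplices that contain them, and the coequalizer it generates must not identify anything more. The key check is that the generated relation is levelwise the orbit partition $\{\rho,\phi(\rho)\}$. This holds because $\phi$ is an order-preserving simplicial isomorphism $\partial\sigma\to\partial\tau$, so this partition is already closed under faces. If the paper's gluing data insist on top-dimensional identifications, one can instead enlarge $K$ to a flag complex in which $\partial\sigma$ and $\partial\tau$ are boundaries of other simplices; the same argument then goes through verbatim.
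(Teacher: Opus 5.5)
Your proposal is correct and is essentially the paper's proof: the same two disjoint ordered $n$-simplices, with $\sim_1$ identifying only corresponding proper faces and $\sim_2$ adding the identification of the top simplices, so the decorations agree below dimension $n$ while the quotients are $S^n$ and $D^n$. Your check that $\sim_1$ is admissible, because the face condition of \cref{def:gluing} is a one-way implication and does not force the top simplices together, is exactly the point the paper makes; your closing hedge about enlarging $K$ is unnecessary.
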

\begin{proof}
Let $K$ be two disjoint $n$-simplices $\sigma=(v_0<\dots<v_n)$ and $\sigma'=(w_0<\dots<w_n)$, a flag complex whose minimal non-faces are the cross pairs. Let $\sim_1$ identify each proper face $\{v_{i_0},\dots,v_{i_k}\}$ with $\{w_{i_0},\dots,w_{i_k}\}$ by $v_i\mapsto w_i$, and let $\sim_2$ be $\sim_1$ together with $\sigma\sim\sigma'$. Both satisfy \cref{def:gluing}; condition (ii) there is a one-way implication and does not force $\sigma\sim\sigma'$ from its facets. The quotients agree in dimensions $<n$, whereas $\lvert K\rvert/{\sim_1}\cong S^{n}$ and $\lvert K\rvert/{\sim_2}\cong D^{n}$.
\end{proof}

The reason is structural. A collapse is specified by a subcomplex $L$, which for flag $L$ is one-dimensional data, so only bounded information can hide above the visible range. A gluing datum is an equivalence relation on cells of every dimension, constrained only downwards, and identifications in the top dimension are free choices invisible below. The summary is:
\begin{center}
\begin{tabular}{@{}lll@{}}
\toprule
regime & determined by & sharpness\\
\midrule
collapse, $L$ flag & cells of $\dim\le3$ & \cref{prop:2skel}\\
collapse, $L$ $m$-flag & cells of $\dim\le 2m+1$ & \cref{prop:sharp-m}\\
collapse, $K$ flag, $\dim L\le\ell$ & cells of $\dim\le\ell+1$ & $\ell\le2$: \cref{prop:skeleta,prop:2skel}\\
gluing datum & no finite skeleton & \cref{prop:glue-noskel}\\
\bottomrule
\end{tabular}
\end{center}
The third line is \cref{prop:rigidity-dim} of the appendix; its common-vertex-set hypothesis cannot be dropped (\cref{rem:dichotomy-skel}).

\section{Regularity, and the role of fullness}\label{sec:regularity}

Flagness of $L$ governs the \emph{order} of the quotient. A different condition, independent of flagness, governs whether the prescribed cells form a regular CW decomposition, and it is the one \cref{lem:flag-subcomplexes} warned about: fullness. Throughout, $q\colon\lvert K\rvert\to K/_{SC}A$ is the quotient map, each $\lvert A_\alpha\rvert$ crushed to its own point $p_\alpha$, and $\mathcal E(K,A)$ is the prescribed open-cell decomposition of \cref{lem:cells}: the points $p_\alpha$ together with $e_\sigma=q(\relint\lvert\sigma\rvert)$ for $\sigma\in K\smallsetminus A$.

\begin{defin}\label{def:regular}
$\mathcal E(K,A)$ is \emph{regular} if its open cells admit characteristic maps that are homeomorphisms from closed balls onto their closures. The characteristic maps are \emph{not} required to be the inherited restrictions $q|_{\lvert\sigma\rvert}$.
\end{defin}

Crushing one edge of a triangle gives a regular bigon even though the inherited $2$-cell map is not injective, whereas crushing two adjacent edges leaves a $2$-ball whose prescribed cells are irregular, because the surviving edge becomes a loop. For a vertex set $W$ put $K[W]=\{\sigma\in K:V(\sigma)\subseteq W\}$. A subcomplex $C$ is \emph{full} in $K$ if $C=K[V(C)]$.

\begin{lem}[closed-cell reduction]\label{lem:closed-cell}
A finite CW decomposition of a space is regular if and only if every open $r$-cell $e$ satisfies $(\overline e,\overline e\smallsetminus e)\cong(B^r,S^{r-1})$. For the quotient decomposition and every surviving $\sigma$,
\begin{equation}
 (\overline{e_\sigma},\ \overline{e_\sigma}\smallsetminus e_\sigma)
 =\bigl(q(\lvert\sigma\rvert),\ q(\lvert\partial\sigma\rvert)\bigr).
 \label{eq:closed-cell-pair}
\end{equation}
\end{lem}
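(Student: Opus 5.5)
The lemma has two parts: a general characterization of regular finite CW decompositions, and the identification \eqref{eq:closed-cell-pair} of closed cells and their frontiers in the quotient. I will prove them in that order.

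\emph{Characterization.} For the forward direction, suppose the decomposition is regular. Then each open $r$-cell $e$ has a characteristic map $\Phi\colon B^r\to\overline e$ that is a homeomorphism onto $\overline e$ (\cref{def:regular}). Since $\Phi$ maps $\Int B^r$ onto $e$, it also maps $S^{r-1}$ onto $\overline e\smallsetminus e$. This gives the pair homeomorphism. For the converse, take a homeomorphism of pairs $h\colon(B^r,S^{r-1})\to(\overline e,\overline e\smallsetminus e)$. It restricts to a homeomorphism $\Int B^r\to e$ and sends the boundary sphere into the closure frontier. In a finite CW complex the frontier $\overline e\smallsetminus e$ is contained in the $(r-1)$-skeleton. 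So $h$ satisfies the defining properties of a characteristic map for $e$, and the characteristic maps of a CW structure may be replaced by any maps with these properties without changing the cell decomposition. The only point needing care is that regularity in \cref{def:regular} asks merely for \emph{some} characteristic maps, so I will state explicitly that no compatibility with the original maps is required.

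\emph{The identity \eqref{eq:closed-cell-pair}.} Three facts drive the argument: $q$ is a closed map, $|\sigma|$ is compact, and $q$ is injective on $|K|\smallsetminus|A|$.
\begin{enumerate}
\item Since $|\sigma|$ is compact, $q(|\sigma|)$ is compact and hence closed in the Hausdorff quotient. It contains $e_\sigma$ and is contained in the closure $q(\overline{\relint|\sigma|})$. Therefore $\overline{e_\sigma}=q(|\sigma|)$.
\item Next I show $q(|\partial\sigma|)\cap e_\sigma=\varnothing$. Points of $\relint|\sigma|$ lie outside $|A|$, because $\sigma\notin A$ and $A$ is a subcomplex. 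So their $\sim$-classes are singletons. A point of $|\partial\sigma|$ lies in a different open simplex, so its image cannot equal the image of a point of $\relint|\sigma|$.
\item Finally, $q(|\sigma|)=q(\relint|\sigma|)\cup q(|\partial\sigma|)$, and the two pieces are disjoint by the previous step. Hence $\overline{e_\sigma}\smallsetminus e_\sigma=q(|\partial\sigma|)$.
\end{enumerate}

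Hausdorffness of $K/_{SC}A$ holds because each $|A_\alpha|$ is a closed subcomplex, so the quotient is a CW complex by \cref{thm:realization} and \cref{lem:cells}.

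The main obstacle is the converse of the characterization: checking that an abstract homeomorphism of pairs really qualifies as a characteristic map. I will handle it by noting that the CW topology is determined by the closed cells, a finite closed cover, and that a finite complex is Hausdorff, so the topology is unchanged after the replacement.
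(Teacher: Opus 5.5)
Your proposal is correct and is essentially the paper's proof. For the characterization, a regular characteristic map is a homeomorphism of pairs, and conversely a pair homeomorphism is a characteristic map because $\overline e\smallsetminus e$ already lies in the $(r-1)$-skeleton. The identity \eqref{eq:closed-cell-pair} then follows from four facts: $q$ is injective on $\relint\lvert\sigma\rvert$ and identifies none of its points with boundary points, $\lvert\sigma\rvert$ is compact, the quotient is Hausdorff, and the interior is dense.

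Your extra justifications (Hausdorffness via \cref{thm:realization} and \cref{lem:cells}, and why replacing a characteristic map leaves a finite CW topology unchanged) make explicit what the paper leaves implicit. In step 1 the inclusion you intend is $q(\lvert\sigma\rvert)=q(\overline{\relint\lvert\sigma\rvert})\subseteq\overline{e_\sigma}$, by continuity of $q$.
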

\begin{proof}
A regular characteristic map is a homeomorphism of the displayed ball pairs; conversely such a homeomorphism is a characteristic map for the same open cell, whose boundary already lies in the lower skeleton. For~\eqref{eq:closed-cell-pair}: $q$ is injective on $\relint\lvert\sigma\rvert$ and identifies no point of it with a boundary point, $\lvert\sigma\rvert$ is compact and the quotient Hausdorff, so $q(\lvert\sigma\rvert)$ is closed and equals $\overline{e_\sigma}$ by density of the interior.
\end{proof}

The sufficiency half of the criterion rests on an explicit convex model, which we state here and prove in \cref{app:convex}. Hersh~\citep{Hersh} gives general criteria for recognizing a regular CW complex from its characteristic maps. For componentwise collapses the convex model suffices.

\begin{lem}[disjoint faces can be crushed without changing the ball pair]\label{lem:faces}
Let $\Delta$ be a $d$-simplex, $d\ge1$, and $F_1,\dots,F_s$ a possibly empty family of pairwise disjoint nonempty proper faces. Let $q$ crush each $F_i$ to its own point. Then $\bigl(\Delta/\{F_i\},\ q(\partial\Delta)\bigr)\cong(B^d,S^{d-1})$, with $q(\relint\Delta)$ corresponding to the interior of the ball.
\end{lem}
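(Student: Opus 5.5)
The plan is to build the ball by hand from barycentric coordinates, with no appeal to general recognition theorems. Write $V(\Delta)=W_0\sqcup W_1\sqcup\dots\sqcup W_s$, where $W_i=V(F_i)$ for $i\ge1$ and $W_0$ holds the remaining vertices. A point $x\in\Delta$ is then encoded by its block masses $t_i=\sum_{v\in W_i}x_v$ and, wherever $t_i>0$, its normalized position $y_i=x|_{W_i}/t_i$ inside the face $\Delta(W_i)$. Crushing $F_i$ to a point makes $y_i$ irrelevant precisely when $t_i=1$. This is the degeneration we must interpolate: as $t_i\to1$, the coordinates $y_j$ for $j\ne i$ lose meaning. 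The empty family and the case where each $F_i$ is a vertex are trivial, since there $q$ is the identity. So the content lies in faces of positive dimension.

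I will first handle a single face, $s=1$, with $F=\Delta(W_1)$ of dimension $k\ge1$ and complementary face $G=\Delta(W_0)$ of dimension $d-k-1\ge0$. Then $\Delta=F*G$, and $\Delta/F\cong\operatorname{cone}(F)*G$ modulo the relation generated at the cone point. Concretely, a point of $\Delta/F$ is $(1-t)\,y_0+t\,y_1$ with $y_1$ forgotten at $t=1$. This is the join $\{\text{pt}\}*G$ glued with $F*G$ along $F$ crushed, and I will show it is homeomorphic to $\Delta^{k}\times$ something.

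A cleaner explicit route is the following. Choose a convex body $P_F\subset\mathbb R^{k}$ with a distinguished boundary point $p$, together with a continuous surjection $\phi:F\to P_F$ that crushes nothing. Then realize $\Delta/F$ as the convex hull of $G$ (placed in complementary coordinates) and the single point $p$ in a $d$-dimensional space, and deform to fill. This does not obviously work. The working plan is instead to exhibit $\Delta/F$ as a convex polytope directly: the map $x\mapsto\bigl(x|_{W_0},\ (1-t_1)\,y_1\bigr)$, for example, contracts $F$ (where $t_1=1$) to the origin in the $W_1$-coordinates. One has to check that this map is injective off $F$ and that its image is a compact convex (or star-shaped) $d$-dimensional set whose boundary is exactly the image of $\partial\Delta$.

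For several disjoint faces I will apply this coordinate change blockwise, using $x\mapsto\bigl(x|_{W_0},((1-t_i)\,y_i)_{i\ge1}\bigr)$, or a variant weighted so that the total still pins down $x$ off the crushed set. Injectivity off $\bigcup F_i$ follows because $t_i<1$ lets $y_i$ be recovered from $(1-t_i)\,y_i$, provided $t_i$ itself is recoverable from the image. Here the disjointness of the $F_i$ matters: only one $t_i$ can equal $1$ at a time, so the constraint $\sum t_i=1$ together with the recorded data determines the $t_i$. Since $\Delta$ is compact and the target Hausdorff, the induced continuous bijection from $\Delta/\{F_i\}$ onto the image is a homeomorphism.

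The main obstacle is the last step: proving that the image is a genuine $d$-ball with $q(\partial\Delta)$ as its boundary sphere and $q(\relint\Delta)$ as its interior. I would first try to show the image is star-shaped with respect to the image of the barycenter, with every ray from that point meeting the boundary exactly once. Radial projection then gives $(B^d,S^{d-1})$, and I would check separately that boundary points of the image are exactly images of points having some coordinate $x_v=0$. If star-shapedness fails for the naive coordinates, my fallback is to induct on $s$: crush one face at a time, show each step is a quotient of a ball pair that is again homeomorphic to a ball pair, and use the single-face model in a neighbourhood of the crushed face.
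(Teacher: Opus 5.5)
Your outline matches the paper's: block masses $t_i$, positions inside each block, an explicit continuous map whose only nontrivial fibres are the crushed faces, the compact-to-Hausdorff argument, and a convex image read radially. However, the map you propose does not work, and the step you call ``the main obstacle'' is the real content of the lemma.

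The assignment $x\mapsto\bigl(x|_{W_0},((1-t_i)y_i)_{i\ge1}\bigr)$ is not continuous where $t_i=0$, because $y_i=x|_{W_i}/t_i$ has no limit there. And $t_i=0$ does occur on $\partial\Delta$: it is the face opposite $F_i$, which is nonempty because $F_i$ is proper. Take $\Delta(abc)$ with $F_1=ab$. The points $\epsilon s\,a+\epsilon(1-s)\,b+(1-\epsilon)\,c$ all tend to $c$ as $\epsilon\to0$, while their images $(x_c,z_a,z_b)$ tend to $(1,s,1-s)$. The closure of the image is a triangle in which $ab$ has become a vertex and $c$ has been blown up into an edge. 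So there is no continuous map on $\Delta$ for the compact-to-Hausdorff argument to use.

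The obvious repairs fail as well. Recording $(1-t_i)\,x|_{W_i}$ is continuous, but it only determines $t_i(1-t_i)$. For the opposite edges $ab$, $cd$ of a tetrahedron, the points $(x_a,x_b,x_c,x_d)=(0.1,0.2,0.35,0.35)$ and $(0.7/3,1.4/3,0.15,0.15)$ receive the same record. Adding the $t_i$ as extra coordinates restores injectivity, but the image is cut out by $\sum_{v\in W_i}z_{iv}=t_i(1-t_i)$, so it is no longer convex and you are back to recognizing a ball. Your fallback induction on $s$ would need an inductive statement about crushing a face of a ball that is no longer a simplex, and you do not formulate one.

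The missing idea is to keep the $t_i$ as coordinates and to record the \emph{deviation from the block barycentre}, scaled by a factor that vanishes at both ends. Partition $V(\Delta)$ into blocks $V_1,\dots,V_m$: the crushed faces plus one singleton block per unused vertex. Put $n_i=|V_i|$, $t_i=\sum_{v\in V_i}x_v$ and $a_i=\min\{t_i,1-t_i\}$. Set $y_{iv}=a_i(x_v/t_i-1/n_i)$, and $y_{iv}=0$ when $t_i=0$.
\begin{itemize}
\item Since $|y_{iv}|\le a_i$, the map is continuous at both $t_i=0$ and $t_i=1$.
\item For $0<t_i<1$ the block is recovered as $x_v=t_i(1/n_i+y_{iv}/a_i)$, and the fibre over $t_i=1$ is exactly $F_i$.
\item The condition $y_{iv}\ge-a_i/n_i$ is the intersection of the two half-spaces $y_{iv}\ge-t_i/n_i$ and $y_{iv}\ge-(1-t_i)/n_i$. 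Together with $t_i\ge0$, $\sum_i t_i=1$ and $\sum_{v\in V_i}y_{iv}=0$, the image is therefore a convex polytope $P$.
\item $P$ is $d$-dimensional, with interior point $t_i=1/m$, $y=0$, because properness of the faces gives $m\ge2$ blocks.
\item All $x_v>0$ exactly when every inequality is strict, so $q(\relint\Delta)$ maps onto $\Int P$ and $q(\partial\Delta)$ onto $\partial P$.
\end{itemize}
Convexity of this particular image is what replaces the star-shapedness you hoped for.
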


\begin{thm}[full-component criterion]\label{thm:fullness}
For any finite simplicial pair $A\subseteq K$, the decomposition $\mathcal E(K,A)$ is regular if and only if every component of the collapsed subcomplex is full in $K$:
\[
 A_\alpha=K[V(A_\alpha)]\qquad\text{for every component }A_\alpha .
\]
No flag hypothesis is required, on $K$ or on $A$.
\end{thm}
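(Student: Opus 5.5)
By \cref{lem:closed-cell}, regularity of $\mathcal E(K,A)$ is equivalent to the following: for every surviving $\sigma$ of dimension $r$, the pair $\bigl(q(\lvert\sigma\rvert),q(\lvert\partial\sigma\rvert)\bigr)$ is homeomorphic to $(B^r,S^{r-1})$. The zero-cells, whether genuine vertices or the points $p_\alpha$, are trivially regular. So I will work one closed cell at a time and prove both directions at the level of a single simplex $\sigma\in K\smallsetminus A$.

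For sufficiency, assume every $A_\alpha$ is full. Fix a surviving $\sigma$ and, for each $\alpha$, set $F_\alpha=\sigma\cap V(A_\alpha)$, the face of $\sigma$ spanned by its vertices lying in $A_\alpha$.
\begin{itemize}
\item Fullness gives $F_\alpha\in K[V(A_\alpha)]=A_\alpha$. Hence $\sigma\cap\lvert A_\alpha\rvert$ is exactly the closed face $\lvert F_\alpha\rvert$, since any face of $\sigma$ lying in $A_\alpha$ has its vertices in $F_\alpha$.
\item The faces $F_\alpha$ are pairwise disjoint because the components have disjoint vertex sets.
\item Each $F_\alpha$ is proper, since $\sigma\notin A$.
\end{itemize}
Two points of $\lvert\sigma\rvert$ are therefore identified by $q$ exactly when they lie in a common $\lvert F_\alpha\rvert$. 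It follows that $q(\lvert\sigma\rvert)$ is the quotient $\lvert\sigma\rvert/\{F_\alpha\}$: this is a continuous bijection from a compact space to a Hausdorff space, hence a homeomorphism. \Cref{lem:faces} then gives the required ball pair, with $q(\lvert\partial\sigma\rvert)$ as the boundary sphere.

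For necessity, suppose some $A_\alpha$ is not full. Choose a simplex $\tau\in K[V(A_\alpha)]\smallsetminus A_\alpha$ of minimal dimension. Then $\dim\tau\ge1$, since its vertices lie in $A_\alpha$. By minimality, every proper face of $\tau$ lies in $A_\alpha$; the proper faces lie in $K[V(A_\alpha)]$, and a face in $A$ on these vertices lies in $A_\alpha$. Also $\tau\notin A$, since a simplex of $A$ on vertices of $A_\alpha$ is connected to $A_\alpha$. So $\tau$ survives and $q(\lvert\partial\tau\rvert)=\{p_\alpha\}$, so $\overline{e_\tau}$ is $S^r$ or a loop when $r=1$. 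A point is not homeomorphic to $S^{r-1}$ for $r\ge1$; for $r=1$ note that $S^0$ has two points. This violates \cref{lem:closed-cell}.

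The main obstacle is the care needed in the sufficiency step: showing that $q$ restricted to $\lvert\sigma\rvert$ induces exactly the face-crushing quotient of \cref{lem:faces}, with no extra identifications. Components are disjoint in $\lvert K\rvert$, and fullness makes each intersection $\sigma\cap\lvert A_\alpha\rvert$ a single face rather than a union of faces. In the non-full case that intersection can be a union of faces, such as two edges of a triangle, and the convex model no longer applies. This is exactly where fullness enters. The real geometric content, the convex model itself, is deferred to \cref{lem:faces}.
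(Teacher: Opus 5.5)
Your proof is correct and follows the paper's argument essentially step for step. For sufficiency, both show that $\lvert\sigma\rvert\cap\lvert A_\alpha\rvert$ is a single proper face, that these faces are disjoint across components, and then invoke \cref{lem:faces}; for necessity, both take a minimal simplex of $K[V(A_\alpha)]\smallsetminus A_\alpha$, which survives and has its whole boundary crushed to $p_\alpha$. The one cosmetic difference is that you exclude the ball pair by noting that $\overline{e_\tau}\smallsetminus e_\tau$ is a single point, whereas the paper observes that $\overline{e_\tau}\cong S^r$ is not a ball.
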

\begin{proof}
Suppose some $A_\alpha$ is not full and choose $\sigma\in K[V(A_\alpha)]\smallsetminus A_\alpha$. If $\sigma$ lay in $A$ it would lie in one component, and having a vertex in $V(A_\alpha)$ that component would be $A_\alpha$; so $\sigma$ survives. Choose an inclusion-minimal surviving $\tau$ supported on $V(A_\alpha)$. Its dimension is $r\ge1$, every proper face of $\tau$ lies in $A_\alpha$ by minimality and the component argument, and so the only identifications inside $\lvert\tau\rvert$ crush its entire boundary to one point: $\overline{e_\tau}\cong\lvert\tau\rvert/\lvert\partial\tau\rvert\cong S^r$, which is not a closed ball. By \cref{lem:closed-cell} no reparametrization of the same open cell can repair this.

Conversely let every component be full. For a surviving $\sigma$ of positive dimension and each $\alpha$ with $\lvert\sigma\rvert\cap\lvert A_\alpha\rvert\neq\varnothing$, every subset of $V(\sigma)\cap V(A_\alpha)$ spans a face of $K$ with vertices in $V(A_\alpha)$, hence a face of $A_\alpha$ by fullness; conversely a point of the intersection lies in the relative interior of a unique face of $\sigma$, which must lie in $A_\alpha$. So $\lvert\sigma\rvert\cap\lvert A_\alpha\rvert$ is a proper face of $\sigma$, and faces for different components are disjoint because the $V(A_\alpha)$ are. \Cref{lem:faces} applied to $\lvert\sigma\rvert$ gives a homeomorphism $(q(\lvert\sigma\rvert),q(\lvert\partial\sigma\rvert))\cong(B^{\dim\sigma},S^{\dim\sigma-1})$ carrying the open ball onto $e_\sigma$, which by~\eqref{eq:closed-cell-pair} is a regular characteristic map.
\end{proof}

The necessity argument locates the \emph{minimal} missing simplex because that is where the first offending closure appears: a top-dimensional closure may be a ball while a lower-dimensional one is a sphere.

\begin{cor}[flag case: regularity is visible on edges]\label{cor:regular-edges}
Assume $A$ is flag. The following are equivalent:
\begin{enumerate}[label=(\roman*)]
\item $\mathcal E(K,A)$ is regular;
\item every component of $A$ is full in $K$;
\item every edge $uv\in K$ with $u,v\in V(A_\alpha)$ lies in $A$;
\item no surviving edge of the quotient has both endpoints at the same collapsed point.
\end{enumerate}
\end{cor}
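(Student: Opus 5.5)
The plan is to close the cycle (i)$\Leftrightarrow$(ii)$\Leftrightarrow$(iii)$\Leftrightarrow$(iv). The equivalence (i)$\Leftrightarrow$(ii) is exactly \cref{thm:fullness}, which needs no flag hypothesis. So the real content is that, for flag $A$, fullness of every component reduces to a condition on edges, and that this edge condition can be read off the quotient.

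For (ii)$\Rightarrow$(iii), take an edge $uv\in K$ with $u,v\in V(A_\alpha)$. It lies in $K[V(A_\alpha)]=A_\alpha\subseteq A$, so there is nothing to do.

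For (iii)$\Rightarrow$(ii), let $\sigma\in K[V(A_\alpha)]$. Every vertex of $\sigma$ lies in $V(A_\alpha)$, and by (iii) every edge of $\sigma$ lies in $A$. Since $A$ is flag, $\sigma$ is a clique of $\sk_1 A$ and hence $\sigma\in A$. A simplex of $A$ is connected and meets $A_\alpha$ in a vertex, so it lies in the component $A_\alpha$. Hence $K[V(A_\alpha)]\subseteq A_\alpha$. The reverse inclusion is trivial.

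For (iii)$\Leftrightarrow$(iv), I would use the decoration of \cref{con:D}. A surviving edge $uv\in K\smallsetminus A$ has face maps $\partial_0(uv),\partial_1(uv)$. Each is either a genuine vertex (when that vertex is not in $A$) or the point $\bbot_\beta$ of the component containing it. Both endpoints sit at the same collapsed point exactly when $u,v\in V(A_\alpha)$ for one $\alpha$. The contrapositive of (iii) is precisely the existence of such a surviving edge, which gives the equivalence.

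The only delicate point is the component bookkeeping in (iii)$\Rightarrow$(ii). A clique of $\sk_1A$ on $V(A_\alpha)$ lands in $A$, and one needs it to land in $A_\alpha$ rather than in some other component. This follows because components have disjoint vertex sets and simplices are connected. Everything else is a direct unwinding of \cref{thm:fullness} and the flag condition.
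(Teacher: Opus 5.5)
Your proposal is correct and follows the paper's proof. The paper also gets (i)$\Leftrightarrow$(ii) from \cref{thm:fullness}, (ii)$\Rightarrow$(iii) directly from fullness, and (iii)$\Rightarrow$(ii) by applying flagness of $A$ to a clique on $V(A_\alpha)$; it treats (iii)$\Leftrightarrow$(iv) as the definition of a surviving loop edge. The only cosmetic difference is in the bookkeeping: the paper first places the edges in $A_\alpha$ and applies flagness there, while you apply flagness in $A$ and then use connectedness of simplices to land in $A_\alpha$.
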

\begin{proof}
(i)$\Leftrightarrow$(ii) is \cref{thm:fullness}, and fullness gives the edge condition. Conversely, if every edge of $\sigma\in K$ with vertices in $V(A_\alpha)$ lies in $A$, those edges lie in $A_\alpha$, so $\sigma$ is a clique of $\sk_1A_\alpha$ and flagness of $A$ puts $\sigma$ in $A_\alpha$. (iii)$\Leftrightarrow$(iv) is the definition of a surviving loop edge.
\end{proof}

This is the practical form: compute the components of $\sk_1A$ and scan $E(K)\smallsetminus E(A)$ for an edge with both endpoints in one component, in time $O(|V|+|E(A)|+|E(K)|)$ by a breadth-first search on $\sk_1A$ (union--find gives the same in near-linear time). The test is not monotone in a filtration: a new path can turn a surviving edge into a loop, and a later scale can absorb that edge into the collapsed core. The flag hypothesis cannot be dropped from the edge test: for $A=\partial\Delta^2\subset\Delta^2$ no edge survives, yet the $2$-cell has closure $S^2$. \Cref{thm:fullness} detects it through the minimal missing $2$-simplex.

\begin{prop}[regularity implies strict gradedness]\label{prop:reg-implies-shallow}
For a finite simplicial pair $A\subseteq K$, regularity of the prescribed quotient cells implies shallowness of $\Dd(K,A)$. The converse fails.
\end{prop}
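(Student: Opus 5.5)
We would prove the forward implication by contraposition, combining \cref{lem:skip-hollow} with the necessity half of \cref{thm:fullness}. Suppose $\Dd(K,A)$ is not shallow. By \cref{lem:skips}, some cover $\bbot_\alpha\lessdot\sigma$ skips rank, so $\sigma$ is genuine of dimension at least two. \Cref{lem:skip-hollow} then gives $\partial\sigma\subseteq A_\alpha$. This lemma needs no flag hypothesis, so $K$ may be arbitrary.

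In particular every vertex of $\sigma$ lies in $V(A_\alpha)$. Hence $\sigma\in K[V(A_\alpha)]\smallsetminus A_\alpha$, so $A_\alpha$ is not full, and \cref{thm:fullness} shows that $\mathcal E(K,A)$ is not regular. One can also argue directly, without citing the theorem. Since every proper face of $\sigma$ lies in $A_\alpha$, the only identification inside $\lvert\sigma\rvert$ crushes $\lvert\partial\sigma\rvert$ to $p_\alpha$. Then \eqref{eq:closed-cell-pair} gives $\overline{e_\sigma}\cong\lvert\sigma\rvert/\lvert\partial\sigma\rvert\cong S^{\dim\sigma}$, which is not a closed ball, and \cref{lem:closed-cell} excludes any reparametrization of this cell.

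For the failure of the converse we need a pair whose decoration is shallow but whose prescribed cells are not regular. By \cref{cor:regular-edges}, if $A$ is flag then regularity fails exactly when some surviving edge has both endpoints in one component. By \cref{thm:strict}, if $K$ is flag then shallowness holds exactly when $A$ is flag. So it suffices to take a flag $K$ and a flag but non-full $A$. The smallest instance is $K=\Delta^2$ on $\{0,1,2\}$ with $A$ the path $01\cup12$, which is flag by \cref{lem:flag-subcomplexes}. The surviving cells are the edge $02$ and the triangle $012$. The edge $02$ has both faces equal to $\bbot$, which is a genuine cover of rank one above rank zero. The triangle has the genuine facet $\partial_1(012)=02$, so it is not a skip, and the decoration is shallow. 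However, $02$ is a loop, whose closure is $S^1$ rather than an arc, so the decomposition is not regular. This is the ``two adjacent edges'' example mentioned before \cref{lem:closed-cell}.

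The only delicate step is confirming that \cref{lem:skip-hollow} applies with no flag assumption. Its statement is for arbitrary finite simplicial pairs, so this is fine. Everything else consists of direct citations.
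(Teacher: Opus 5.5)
Your proof is correct and is essentially the paper's argument run in contrapositive form. The paper first obtains fullness from \cref{thm:fullness}; it then shows inline that any $\sigma$ of dimension at least two above $\bbot_\alpha$ has a vertex $w$ outside $V(A_\alpha)$, so that the surviving edge $vw$ extends to a genuine facet above $\bbot_\alpha$ --- the step you import from \cref{lem:skip-hollow} --- and it uses the same counterexample $ab\cup bc\subset abc$.
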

\begin{proof}
By \cref{thm:fullness} every component is full. If $\bbot_\alpha\prec\sigma$ and $\dim\sigma\ge2$, choose $v\in V(\sigma)\cap V(A_\alpha)$; some vertex $w$ of $\sigma$ lies outside $V(A_\alpha)$, else fullness puts $\sigma$ in $A$. The edge $vw$ survives; extend it to a facet of $\sigma$, a genuine facet above $\bbot_\alpha$. The boundary arc $ab\cup bc$ in the triangle $abc$ gives a shallow but nonregular quotient.
\end{proof}

So for flag $K$ the two hypotheses divide the labour cleanly:
\[
\begin{gathered}
 L\text{ flag}\ \Longleftrightarrow\ \text{strict gradedness};\qquad
 \text{component fullness}\ \Longleftrightarrow\ \text{regular prescribed cells},
\end{gathered}
\]
with regular $\Rightarrow$ strictly graded and no implication back. A regular prescribed decomposition can be reconstructed from its face poset by Bj\"orner's theorem~\citep{bjorner1995topological}. When fullness fails that theorem no longer applies, which is another reason to keep the ordered facet lists. \Cref{app:convex} also explains why the weaker question, whether the quotient of a ball is a ball, cannot be settled by homology, acyclicity or collapsibility: it concerns the cellularity of the crushed sets in the boundary sphere, which is why \cref{thm:fullness} is proved with an explicit model.

Fullness has one more consequence that we will need in \cref{sec:crossover}: it is exactly the condition under which the cone model of \cref{prop:cone} remains a flag complex (\cref{prop:cone-flag}).

\section{The QF-tree}\label{sec:encoding}

We now describe the data structure. The cell table is the primary record of the topology, and the word trie is an index over it, as the trie of a simplex tree indexes its simplices, except that here one word can point to several cells.

\subsection{The cell record}\label{sec:qfcell}
A $d$-cell stores its identifier, its dimension, a provenance reference (the source simplex identifier and its vertex support), and exactly $d+1$ local facet slots. A zero-cell has an empty facet list. A slot contains either a collapsed-component identifier or a genuine target of dimension $d-1$. In the collapse and order-preserving regimes the face coordinate map is prescribed by the increasing source lists, so a slot is one reference, and a relative orientation sign may be cached but is derived data. These are the data required by \cref{cor:skeletal}. A class representative is not determined by the sorted quotient vertices: different source cells can have the same vertex image and different attachments, and the record keeps them apart.

The facet budget of \cref{lem:budget} counts local facet slots that target a basepoint: a flag collapse has at most two per cell. Maintaining the count makes this necessary condition testable in constant time, although passing the test does not prove that $A$ is flag.

\subsection{Container and quotient operations}\label{sec:qfcomplex}
A container has a cell dictionary, per-dimension identifiers, component-point records and optional reverse-incidence indices. Source cells are built dimension first with lexicographic tie-breaking, so every referenced facet exists before a cell is installed (\cref{alg:build}).

\begin{algorithm}[t]
\caption{Promote a finite ordered simplicial complex to a cell table}
\label{alg:build}
\begin{algorithmic}[1]
\REQUIRE the nonempty simplices of $K$, each an increasing source tuple
\FORALL{$\sigma=(v_0,\ldots,v_d)$ in increasing dimension, then lexicographic order}
 \STATE Create a cell with identifier $\sigma$, dimension $d$ and provenance $\sigma$.
 \IF{$d>0$}
  \STATE Store local facet $i$ as $\sigma\setminus\{v_i\}$ for $0\le i\le d$.
 \ELSE
  \STATE Store an empty facet list.
 \ENDIF
\ENDFOR
\STATE Return the cell table and construct a word index only if requested.
\end{algorithmic}
\end{algorithm}

\paragraph{Collapse a cell subcomplex.} Verify closure under all attaching images, find the components of its one-skeleton, replace every component by one point, delete its old cells, and redirect any surviving local facet whose target has been removed. The operation applies to every finite CW presentation considered here, including nonflag pairs, and the flag theorems apply when their hypotheses hold. Collapsing the boundary of a triangle, for instance, creates a rank-two skip, and the table represents it regardless.

\paragraph{Merge zero-cells.} Replace specified groups of zero-cells by their representatives and redirect the incident zero-cell targets. No other record changes. Equal quotient vertices do not imply equal local positions, and a vertex merge can produce loop edges without violating the prescribed dimension of any cell.

\paragraph{Dynamic locality.} A cell record needs rewriting only if one of its recorded facet targets is absorbed or identified. Reverse-incidence indices give direct access to those references, and indirect higher-face effects are already reflected by recursion through the modified target records. This is the observation that \cref{sec:local} turns into a representation and \cref{sec:implementation} into an editable layer. The rewrite is local, but global homology still requires the rest of $K$.

\subsection{Boundary reconstruction and validation}\label{sec:boundary-reconstruction}
For a genuine target $\tau_i$ with affine coordinate map $\theta_i$ the oriented cellular contribution is
\[
 \partial c=\sum_{\substack{0\le i\le d\\\dim\tau_i=d-1}} (-1)^i\sgn(\theta_i)\tau_i ,
\]
with $\sgn\theta_i=1$ for implicit increasing maps; for normalized relative chains all component-point terms are omitted (\cref{con:normalization}), and in dimension one the endpoint points are retained in absolute chains. Higher local faces are recovered by composing the facet maps and reindexing at each step; a collapsed value stays at its component point. The recursion does not require strict grading. The structural checks are:
\begin{enumerate}[label=(V\arabic*),leftmargin=2em]
\item valid target dimensions and component identifiers, and at each codimension-two overlap agreement of the two induced maps, a collapse being absorbing with its particular component label;
\item $\partial^2=0$ for the assembled differential, a necessary consistency check rather than a proof of correctness of the attaching maps;
\item in an asserted flag-collapse regime, the facet budget and component-resolved shallowness.
\end{enumerate}
With implicit increasing maps these cost $O(\sum_c d_c^2)$ elementary operations. Homology is a separate cost: Betti numbers over a field come from boundary ranks, and integral homology needs Smith normal forms with compatible changes of basis between adjacent differentials.

\subsection{Quotient words and the trie}\label{sec:qftree}
For an original vertex $v$ let $[v]$ be its current zero-cell identifier, and order the zero-cells, say by minimum source label. From a source $d$-simplex form the multiset of its $d+1$ vertex images, sort it, remove repeated labels and pad to length $d+1$ by repeating the final label. This is its \emph{index word}. Insert each word in a prefix trie and keep all cell identifiers in its terminal bucket. In the cone over a square with its boundary collapsed, all four triangles have word $(Q,5,5)$ and all four spokes have word $(Q,5)$, and they remain different cells. A word is only a lookup key: it identifies neither a cell nor its cofaces, and a masked lookup must examine the bucket's provenance and, when required, take the facet closure.

\subsection{Storage, and comparison with existing formats}\label{sec:complexity}
Let $N$ be the number of quotient cells and $M_{prov}$ the retained provenance. Facet storage for collapse and order-preserving presentations uses
\[
 O\!\left(N+\sum_{\dim c>0}(\dim c+1)+M_{prov}\right)
\]
words. For bounded dimension the cell and facet records are $O(N)$, so the whole is linear in the cell inventory precisely when $M_{prov}=O(N)$ as well. That condition can fail even for a one-dimensional collapse: collapsing an entire connected path leaves one quotient cell while the component-membership list retains every one of its vertices. This is a separate matter from retaining an archive of the original pair. Explicit permutations (\cref{app:gluing}) cost a further factor $\dim c$. An uncompressed trie adds at most $1+\sum_w|w|$ nodes plus terminal buckets of total size $N$. If every source member of a large gluing class is retained explicitly, provenance is not bounded by a constant per cell and must be counted separately.

Compared with the simplex tree, the facet table pays for the changed attachments and the component flags. Compared with storing every local face of every cell, it avoids an exponential expansion. Regina's generalized triangulations~\citep{burton2013regina,ReginaHandbook} take top-dimensional simplices with pairwise facet bijections and allow repeated vertices and self-identifications, so nonregularity and permutations are not new in themselves. The exact subclass shared with such facet-pairing formats consists of the quotients whose entire identification relation is generated by pairwise top-facet gluings. On this subclass the two presentations are interconvertible, with different storage, since QF retains lower-dimensional cells and provenance. What QF adds is non-pure sources (a Vietoris--Rips complex can be non-pure), unbounded facet incidences, lower-face identifications not generated by top-facet pairings, and source-labelled subcomplex collapse. Generalized maps and cell-tuple structures~\citep{lienhardt1994,brisson1993} and the compact simplicial representations~\citep{boissonnat2017compact,attali2012skeleton} are further baselines, and a fair comparison must match the requested operations, input class and dimension.

\section{The cone model and the storage crossover}\label{sec:crossover}

\Cref{prop:cone} offers an alternative to storing the quotient: attach a simplicial cone to each component of $A$ and keep the result, a bona fide simplicial complex, in a simplex tree. When only the homotopy type is wanted this is the natural choice, so we determine when the quotient representation is the smaller one.

\begin{con}[the cone model]\label{con:cone-model}
For $A\subseteq K$ with components $\{A_\alpha\}_{\alpha\in D}$, choose one new vertex $w_\alpha$ per component and set $\check K(K,A)=K\cup\bigcup_{\alpha\in D}\bigl(w_\alpha * A_\alpha\bigr)$. No quotient is taken and no $\bbot$ occurs. By \cref{prop:cone} it is homotopy equivalent to $K/_{SC}A$, and it can be stored in a simplex tree at one node per simplex.
\end{con}

\begin{prop}[when the cone model stays flag]\label{prop:cone-flag}
Let $K$ be flag and $A\subseteq K$. Then $\check K(K,A)$ is flag if and only if every component of $A$ is full in $K$, if and only if $K/_{SC}A$ has regular prescribed cells.
\end{prop}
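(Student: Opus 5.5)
The second equivalence is \cref{thm:fullness}, so it suffices to prove that $\check K=\check K(K,A)$ is flag if and only if every $A_\alpha$ is full in $K$. The plan is to describe the edges and simplices of $\check K$ directly and compare them. The simplices of $\check K$ are those of $K$, together with $w_\alpha$ and $w_\alpha*\tau$ for $\tau\in A_\alpha$. Its $1$-skeleton is $\sk_1K$ plus the edges $w_\alpha v$ for $v\in V(A_\alpha)$. The cone vertices are pairwise non-adjacent, because distinct components are joined to distinct new vertices. Consequently a clique of $\sk_1\check K$ contains at most one cone vertex, and has one of two forms: a clique $\sigma$ of $\sk_1K$, or $\{w_\alpha\}\cup\sigma$ with $\sigma$ a clique of $\sk_1K$ supported on $V(A_\alpha)$.

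Sufficiency. Assume every component is full. A clique of the first form lies in $K\subseteq\check K$ because $K$ is flag. For the second form, $\sigma$ is a clique of $\sk_1K$, so $\sigma\in K$ by flagness. Its vertices lie in $V(A_\alpha)$, so fullness gives $\sigma\in K[V(A_\alpha)]=A_\alpha$. Hence $w_\alpha*\sigma\in\check K$, and $\check K$ is flag.

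Necessity. Suppose some $A_\alpha$ is not full. Choose $\sigma\in K[V(A_\alpha)]\smallsetminus A_\alpha$. Every vertex of $\sigma$ is in $V(A_\alpha)$, so $w_\alpha$ is adjacent to all of them, and $\sigma\cup\{w_\alpha\}$ is a clique of $\sk_1\check K$ because $\sigma\in K$. This clique is not a simplex of $\check K$. Simplices containing $w_\alpha$ are exactly $w_\alpha*\tau$ with $\tau\in A_\alpha$, and no simplex of $K$ contains $w_\alpha$. So $\sigma\cup\{w_\alpha\}\in\check K$ would force $\sigma\in A_\alpha$, a contradiction. Hence $\check K$ is not flag.

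The only delicate point is the bookkeeping in the classification of cliques. One must check that no edge joins two cone vertices, and that $w_\alpha$ is adjacent only to vertices of $A_\alpha$; the latter uses that the $V(A_\alpha)$ are pairwise disjoint. Otherwise a clique could mix components and escape the case analysis. Both facts follow directly from \cref{con:cone-model}. No further obstacle is expected, and the proof uses neither flagness of $A$ nor connectedness beyond the definition of the components.
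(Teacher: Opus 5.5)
Your proof is correct and is essentially the paper's argument. Both classify cliques of $\sk_1\check K$ by whether they contain an apex $w_\alpha$ (apices being pairwise non-adjacent) and note that $\{w_\alpha\}\cup S$ spans a simplex of $\check K$ exactly when $S\in A_\alpha$; hence flagness of $\check K$ is equivalent to $K[V(A_\alpha)]\subseteq A_\alpha$ for every $\alpha$, and the second equivalence is \cref{thm:fullness}. One minor point: a clique through $w_\alpha$ lies in $V(A_\alpha)\cup\{w_\alpha\}$ simply because the neighbourhood of $w_\alpha$ is $V(A_\alpha)$ by construction, so pairwise disjointness of the vertex sets is not actually needed there.
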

\begin{proof}
The second equivalence is \cref{thm:fullness}. For the first, $\sk_1\check K$ is $\sk_1K$ plus the edges $w_\alpha v$ for $v\in V(A_\alpha)$, and no two apices are adjacent, so a clique of $\sk_1\check K$ either lies in $V(K)$, and is a simplex of $K$ by flagness, or has the form $\{w_\alpha\}\cup S$ with $S$ a clique of $\sk_1K$ inside $V(A_\alpha)$. Then $S\in K[V(A_\alpha)]$, while $\{w_\alpha\}\cup S\in\check K$ holds precisely when $S\in A_\alpha$. So every such clique spans a simplex iff $K[V(A_\alpha)]\subseteq A_\alpha$.
\end{proof}

The three conditions of this paper therefore coincide: regular collapse, full components and a flag cone model. The cone model can be stored in a simplex tree even when it is not flag, and fullness decides only whether it remains flag.

\begin{prop}[exact cell counts]\label{prop:cell-identity}
Let $c=\lvert\pi_0(A)\rvert$. Counting nonempty simplices and quotient cells,
\[
 N_Q=\lvert K\rvert-\lvert A\rvert+c,\qquad N_{\check K}=\lvert K\rvert+\lvert A\rvert+c,\qquad N_{\check K}-N_Q=2\lvert A\rvert .
\]
\end{prop}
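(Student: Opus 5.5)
The plan is a direct count of both representations, each reduced to a bijection with a set whose size we already know.

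For the quotient, \cref{lem:cells} together with \cref{thm:realization} says that the cells of $K/_{SC}A$ are exactly the zero-cells $D\sqcup S_0$ and one $n$-cell for each genuine $\sigma\in S_n$. By \cref{con:D} the genuine cells are precisely the simplices of $K\smallsetminus A$. The cell set is therefore the disjoint union of $F(K,A)=K\smallsetminus A$ and $D=\pi_0(A)$. Since $A\subseteq K$, this gives $N_Q=\lvert K\rvert-\lvert A\rvert+c$. The only point to check is that no surviving simplex is a vertex of $A$: a vertex lies in $A$ exactly when it is a $0$-simplex of $A$, so the surviving $0$-cells and the component points are disjoint.

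For the cone model, I will list the simplices of $\check K$ by whether they contain an apex. Those without an apex are the simplices of $K$, since every face of $w_\alpha*A_\alpha$ that avoids $w_\alpha$ lies in $A_\alpha\subseteq K$. Those containing $w_\alpha$ are $\{w_\alpha\}\cup\tau$ with $\tau\in A_\alpha\cup\{\varnothing\}$. The empty case gives the apex vertex itself. Because the $w_\alpha$ are distinct new vertices and the apices are pairwise non-adjacent, no simplex contains two apices. The apex-containing simplices are therefore in bijection with $\bigsqcup_\alpha\bigl(A_\alpha\sqcup\{\varnothing\}\bigr)$, which has size $\lvert A\rvert+c$, since the components partition the nonempty simplices of $A$. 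This yields $N_{\check K}=\lvert K\rvert+\lvert A\rvert+c$, and subtracting gives $2\lvert A\rvert$.

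There is no real obstacle here. The one step that needs care is keeping the counting conventions consistent: nonempty simplices on one side, and on the other side quotient cells with each basepoint counted once. In particular the apex $w_\alpha$, which comes from the empty face of $A_\alpha$, must be matched with the component point $\bbot_\alpha$, so that $c$ appears with the same sign in both formulas. I would state that convention explicitly before doing the two counts.
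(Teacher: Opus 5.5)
Your proposal is correct and is the same direct count the paper gives. The quotient's cells are the simplices of $K\smallsetminus A$ plus one point per component, and the cone model adds one apex and one simplex $w_\alpha*\sigma$ for each $\sigma\in A_\alpha$. Your additional checks, that basepoints are disjoint from surviving vertices and that no simplex contains two apices, make explicit what the paper's one-line proof leaves implicit.
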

\begin{proof}
The quotient deletes the simplices of $A$ and adds one basepoint per component; the cone adds one apex and one simplex $w_\alpha*\sigma$ per $\sigma\in A_\alpha$.
\end{proof}

The quotient always has fewer cells, but the QF-tree keeps $d+1$ references per $d$-cell while the simplex tree keeps one node per simplex. Writing $\alpha=|A|/|K|$ for the collapsed fraction and
\[
 S_Q\;=\;c+\!\!\sum_{\sigma\in K\smallsetminus A}\!\!(\dim\sigma+1), \qquad s\;=\;\frac{S_Q}{N_Q}
\]
for the storage units and the mean arity of a surviving cell, one has $S_Q\approx s(1-\alpha)|K|$ against $N_{\check K}\approx(1+\alpha)|K|$, so
\begin{equation}
 \frac{S_Q}{N_{\check K}}\;\approx\;\frac{s(1-\alpha)}{1+\alpha},
 \qquad\text{giving}\qquad
 \alpha^{\ast}\;\approx\;\frac{s-1}{s+1}
 \label{eq:crossover}
\end{equation}
for the fraction above which the quotient stores fewer units than the cone model. The number $\alpha^\ast$ depends on the input only through its dimension distribution, via $s$. Small examples show both directions: the octahedron modulo the disc complementary to an edge has $4$ quotient cells against $50$ simplices in the cone model, while $\Delta^4$ modulo a triangle has $25$ against $39$. On the boundary of the $5$-dimensional cross-polytope ($242$ simplices, $s\approx3.3$) the quotient representation becomes the smaller one at $|A|\approx0.54\,|K|$, as~\eqref{eq:crossover} predicts.

\subsection{Measurements}
We tested~\eqref{eq:crossover} on three families of flag complexes chosen to span a wide range of $s$, taking the collapsed subcomplex to be an induced subcomplex and recording the achieved simplex fraction rather than a vertex fraction, since an induced subcomplex on a fraction $f$ of the vertices contains roughly $f^{k+1}$ of the $k$-simplices. Across nine conditions, on sources of up to $854\,953$ simplices and with twenty seeds each, the predicted and measured crossovers differ by at most $0.0088$, with median error $0.0033$, while $s$ ranges from $1.86$ to $5.11$ and $\alpha^\ast$ from $0.30$ to $0.68$. In these experiments the crossover is governed by the dimension distribution alone: at fixed vertex count and mean degree, changing the clique structure alone moves $\alpha^\ast$ by a factor of two. The measurements and the plot are in \cref{sec:crossover-measured}.

\subsection{Homology alone needs no quotient object}\label{sec:homology-direct}
One might expect the quotient to pay for itself downstream, since by \cref{thm:chain} its cellular chain complex is $C_*(K,A)$ and is smaller than the cone model's. The relevant baseline, however, is the relative chain complex assembled directly from $K$: index the simplices of $K\smallsetminus A$ and drop the boundary terms that land in $A$. No quotient object, no cone and no basepoints are needed, and the matrix differs from the quotient's only by the $|\pi_0(A)|$ basepoint generators in degree zero. We measured the three routes on $190$ pairs ($3$-skeleta of Vietoris--Rips complexes on $100$ to $1000$ points, $\alpha$ from $0.06$ to $0.76$), reducing all three with one shared $\mathbb F_2$ column-reduction kernel. The median paired total-time ratio was $1.65$ for quotient versus direct and $2.52$ for cone versus direct in a reference Python implementation, and $1.33$ for quotient versus direct with a compiled kernel. The chain complexes were checked against a brute-force oracle on all $729$ simplicial pairs on four labelled vertices and $200$ larger random pairs, and across $2400$ measured pairs on sources of up to $854\,953$ simplices the three routes agreed on every $\beta_k$, $k\ge2$. The full-component criterion of \cref{thm:fullness} held in all $2400$ induced cores and failed in all $2560$ edge-thinned ones, as predicted.

For one-shot relative homology the direct route is therefore preferable, as \cref{sec:experiments} confirms against \Gudhi. Beyond masking, the quotient provides the cells of $K/_{SC}A$ as addressable objects, its homeomorphism type, and closure under further operations that need the attaching data.

\section{Local application: the closed star}\label{sec:local}

The remark on dynamic locality in \cref{sec:qfcomplex} can be turned into a representation. A collapse changes the attaching data of a simplex only when the simplex meets $A$, so the QF-tree need only be built there, and the rest of $K$ can stay in the simplex tree that already holds it. This is the setting the library is designed for: a large Vietoris--Rips complex in a \Gudhi\ simplex tree, and a comparatively small subcomplex to be collapsed.

Let $A\subseteq K$ and partition the simplices of $K$ into four classes:
\begin{itemize}[leftmargin=2em]
\item \emph{collapsed}: $\sigma\in A$;
\item \emph{star}: $\sigma\notin A$ with $V(\sigma)\cap V(A)\neq\varnothing$;
\item \emph{frontier}: $\sigma$ disjoint from $V(A)$ that is a face of a star simplex;
\item \emph{untouched}: everything else.
\end{itemize}
A simplex meets $|A|$ if and only if it has a vertex in $V(A)$, so collapsed and star simplices together form the star $\st(A)$ of $A$ in $K$, and the first three classes form the closed star $\overline{\st}(A)$, a subcomplex of $K$ containing $A$. Write $U$ for the set of simplices disjoint from $V(A)$, so that $K\setminus A$ is the disjoint union of the star simplices and $U$, and the frontier is $U\cap\overline{\st}(A)$.

\begin{prop}[locality of the collapse]\label{prop:locality}
Let $A\subseteq K$ be a finite simplicial pair.
\begin{enumerate}[label=(\roman*),leftmargin=2em]
\item Every facet of a simplex in $U$ lies in $U$, and its record in $\Dd(K,A)$ is the simplicial one: no facet is collapsed and all targets are in $U$.
\item Every facet of a star simplex is collapsed, star, or frontier.
\item Consequently $\Dd(K,A)$ is the pushout, along the frontier, of $\Dd(\overline{\st}(A),A)$ and the semisimplicial set $U^\Delta$: the decoration of the closed-star pair together with the simplex tree of $K$ determines the quotient, its characteristic maps, and its chains.
\end{enumerate}
\end{prop}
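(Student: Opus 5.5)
The plan is to verify (i) and (ii) directly from the definitions of the four classes and the face rule of \cref{con:D}, and then obtain (iii) by comparing levelwise sets and face maps on both sides of the claimed pushout.

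For (i), let $\sigma\in U$, so $V(\sigma)\cap V(A)=\varnothing$. Any facet $\genface{i}{K}\sigma$ has vertex set contained in $V(\sigma)$, hence is disjoint from $V(A)$ and lies in $U$. In particular it is not in $A$, since every simplex of $A$ is nonempty and has its vertices in $V(A)$. By \cref{con:D} the decorated face is therefore $\partial_i\sigma=\genface{i}{K}\sigma$, which is the simplicial face, with no collapse flag.

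For (ii), let $\sigma$ be a star simplex and $\tau$ a facet. If $\tau\in A$, it is collapsed. Otherwise, if $\tau$ has a vertex in $V(A)$, it is a star simplex. If $\tau$ has no vertex in $V(A)$, then $\tau\in U$ and it is a face of the star simplex $\sigma$, so it is frontier by definition.

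For (iii), set $P=\Dd(\overline{\st}(A),A)$. The frontier simplices are disjoint from $V(A)$ and so lie outside $A$; by (i) they form a semisimplicial subset $\Phi^\Delta$ of $U^\Delta$, closed under faces. Part (i) applied inside $\overline{\st}(A)$ shows that they also form a genuine, uncollapsed semisimplicial subset of $P$, with the same face maps. I would then form the levelwise pushout $P\amalg_{\Phi^\Delta}U^\Delta$ in $\ssSet$, which exists because $\ssSet$ is a presheaf category (\cref{sec:machinery}), and compare it with $\Dd(K,A)$ degree by degree.

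The genuine cells of the pushout are (star $\sqcup$ frontier) $\cup\,U$ glued along the frontier, which is exactly (star) $\sqcup\, U=K\setminus A$. The basepoint chains are $\star_D$ on both sides, since $A$ and its components $\pi_0(A)$ are the same whether viewed inside $\overline{\st}(A)$ or inside $K$. For the face maps:
\begin{itemize}[leftmargin=2em]
\item On $U$, both the pushout and $\Dd(K,A)$ use the simplicial faces, by (i).
\item On a star simplex, the face computed in $P$ uses the rule of \cref{con:D}. This rule depends only on whether the simplicial facet lies in $A$, and if so in which component. These are the same data in $K$ and in $\overline{\st}(A)$, because $A\subseteq\overline{\st}(A)$ and (ii) keeps every facet inside the closed star.
\end{itemize}
So the canonical map from the pushout to $\Dd(K,A)$, induced by the two evident inclusions, is a levelwise bijection commuting with faces, hence an isomorphism.

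The remaining claims follow from this isomorphism:
\begin{itemize}[leftmargin=2em]
\item \Cref{thm:realization} identifies the quotient as the realization of the pushout.
\item \Cref{cor:skeletal} recovers the characteristic maps from the ordered facet lists, which now come either from $P$ or from the simplex tree.
\item \Cref{con:normalization} gives the chains; these are the pushout of the two normalized complexes along $\mathbb Z[\text{frontier}]$.
\end{itemize}

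The only step needing care is the well-definedness of the gluing: the frontier must appear in $P$ with its plain simplicial structure and without any collapse flags. This is exactly where (i) is used a second time, and where the distinction between frontier and star (vertex-disjointness from $V(A)$) does the work. Everything else is bookkeeping.
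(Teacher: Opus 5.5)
Your proposal is correct and follows the paper's proof: (i) and (ii) rest on the same vertex-disjointness observations, and (iii) is the same identification of the genuine cells as star $\sqcup$ frontier $\sqcup$ untouched, glued along the frontier where both sides prescribe the simplicial face maps. You are more explicit than the paper about the levelwise pushout in $\ssSet$ and about the basepoint set $D=\pi_0(A)$ being the same on both sides, but the argument is the same.
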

\begin{proof}
(i) A face of a simplex disjoint from $V(A)$ is disjoint from $V(A)$, hence not in $A$ and in $U$. (ii) A facet of a star simplex either meets $V(A)$, and is then collapsed or star, or is disjoint from $V(A)$, and is then by definition frontier. (iii) By (i) and (ii) the genuine cells of $\Dd(K,A)$ are the disjoint union of the star and frontier cells, which form $\Dd(\overline{\st}(A),A)$ with the same face maps, and the cells of $U\setminus\text{frontier}$, whose face maps are simplicial and land in $U$; the two parts overlap exactly in the frontier, on which both prescribe the simplicial face maps. The claims about characteristic maps and chains follow from \cref{cor:skeletal,thm:chain}.
\end{proof}

The local QF-tree stores $d+1$ references for each star and frontier cell, with source identifiers into $K$, and nothing for the untouched cells. How much this saves depends on what happens to the source records, and we distinguish three accounting conventions, all in the units of \cref{sec:crossover}: one unit per simplex held in a simplex tree, $d+1$ units per QF record, one unit per component. Write $T$, $F$, $S$ for the untouched, frontier and star simplices, $H=S\cup F$ for the cells with local records, $R_H=\sum_{\sigma\in H}(\dim\sigma+1)$, and $c=|\pi_0(A)|$.
\begin{itemize}[leftmargin=2em]
\item \emph{Retained source.} The object \texttt{K.local\_quotient(A)} of the library keeps the whole of $K$ next to the local records: $S_{\mathrm{ret}}=|K|+R_H+c$.
\item \emph{Compact.} \texttt{compact()} replaces $K$ by the subcomplex $T\cup F$ (a subcomplex, since faces of untouched or frontier simplices are untouched or frontier) and keeps the local records: $S_{\mathrm{cmp}}=|T|+|F|+R_H+c$. The frontier is present in both parts, which is what allows them to be joined without the source pair.
\item \emph{Ideal.} A hybrid that shares every record, so that a frontier simplex is stored once and a star record can point at it directly, would use $S_{\mathrm{id}}=|T|+R_H+c$. This is a reference budget for the stated inventory of explicit closed-star records without duplication, and other hybrid encodings can use less. For example, one may retain $U=T\cup F$ with its implicit simplicial attachments and store explicit facet lists only for $S$, with tagged references into $U$, $S$ or the component points. In the same logical unit convention this gives
\[
 S_{\mathrm{imp}}=|T|+|F|+R_S+c =S_{\mathrm{id}}-\sum_{\sigma\in F}\dim\sigma.
\]
Frontier faces need no explicit lists because $F$ is a subcomplex and its face maps are unchanged. The library does not implement this alternative.

\end{itemize}
Against the cone model, $S_{\mathrm{cone}}=|K|+|A|+c$, the three criteria are
\begin{align*}
 S_{\mathrm{id}}<S_{\mathrm{cone}}&\iff\textstyle\sum_{\sigma\in H}\dim\sigma<2|A|,\\
 S_{\mathrm{cmp}}<S_{\mathrm{cone}}&\iff\textstyle\sum_{\sigma\in H}\dim\sigma+|F|<2|A|,\\
 S_{\mathrm{ret}}<S_{\mathrm{cone}}&\iff R_H<|A|.
\end{align*}
These budgets count logical units. They are not byte counts and omit masks, labels, provenance and transient indexes.

Whether the closed star is small compared with $A$ depends on the shape of $A$ rather than on its size, and localization alone does not decide it. Take the Vietoris--Rips complex at radius $1$ on the four collinear points $(0,0),(0.1,0),(0.8,0),(0.9,0)$, which is the full tetrahedron, and let $A$ be the edge on the first two points, the induced subcomplex on a ball of radius $0.2$ about the origin. Both are flag and $A$ is connected, but every simplex of $K$ belongs to the closed star of $A$: $T=\varnothing$, $|F|=3$, $|S|=9$, $R_H=28$, and $S_{\mathrm{id}}=29$, $S_{\mathrm{cmp}}=32$, $S_{\mathrm{ret}}=44$ against $S_{\mathrm{cone}}=19$. A statement such as ``localized collapses have thin shells'' therefore needs geometric hypotheses on the sampling, and we report only one sampled regime. In our experiments, at every sampled ball radius, the median compact budget is below the cone budget, although two of the five seeds at the smallest radius lie slightly above it (Appendix \ref{sec:local-measured}). For these medians, the compact closed-star presentation thus removes the crossover of \cref{sec:crossover} for collapses that arise from a region of a point cloud. A record-sharing hybrid on a live simplex tree would reach the ideal budget. The library's compact object exceeds it by at most about $3\%$ on the tested inputs, which is less than one percent of the cone budget. The retained-source object exceeds it by a factor that grows with the radius, to about $3.6$ at $\rho=0.50$.

\section{The library}\label{sec:implementation}

The library \texttt{qfcore} is written in C++17 with a Python interface, and its API follows \Gudhi's: a \texttt{FlagComplex} is built from a graph or a point cloud as a \Gudhi\ simplex tree is, a subcomplex is a mask or an induced or flag subcomplex, and \texttt{K.quotient(A)} returns an immutable \texttt{QFTree}. \Gudhi's own simplex tree, its persistence-matrix module and its streaming zigzag engine are used as consumers and as baselines throughout. This section describes the four layers, and \cref{sec:experiments} reports the measurements.

\subsection{The immutable QF-tree}
\texttt{QFTree} is the cell table of \cref{sec:encoding}: per-dimension arrays of source identifiers, vertex supports, and $d+1$ facet slots per cell, the first $c$ zero-cells being the component points. The word trie is optional. Construction from a pair $(K,A)$ is \cref{alg:build} followed by redirection of the facets in $A$, and the validator runs the checks (1)--(3) of \cref{sec:boundary-reconstruction}: facet supports, codimension-two identities, and, on request, the facet budget, component-resolved shallowness (\cref{thm:strict}) and component fullness (\cref{cor:regular-edges}). A further collapse of a closed cell subcomplex returns the new tree together with the cell map, which is checked to be a morphism of pointed presentations. Trees are serialized without their source pair in a versioned binary format with a checksum, and reloaded without it, so the quotient is a standalone object.

\subsection{The closed-star variant}
\texttt{K.local\_quotient(A)} implements \cref{sec:local}. It labels every simplex of $K$ as untouched, frontier, star or collapsed in one pass over $K$ with a face walk from the star simplices, builds the QF-tree of the closed-star pair with source identifiers into $K$, and keeps the snapshot of $K$ taken at construction, which later insertions into $K$ do not affect. This is the retained-source object of \cref{sec:local}. \texttt{compact()} replaces $K$ by the subcomplex of untouched and frontier simplices and returns an object that computes the quotient and relative chain complexes from the two parts alone, joined along the frontier by vertex support. It realizes the compact budget of \cref{sec:local}. The source complexes themselves are the library's flat \texttt{FlagComplex} arrays, into which a \Gudhi\ simplex tree is copied. An in-place overlay on a live simplex tree, which would reach the ideal budget, is not implemented. The relative and quotient chain complexes of the retained-source object are assembled in the same generator order as the direct construction, so that the two can be compared column for column. The method \texttt{verify()} performs this comparison together with a record-by-record comparison with the full QF-tree. The measurements of \cref{sec:local-measured} ran it on every pair and also checked that the compact object returns the same Betti numbers.

\subsection{The editable layer}\label{sec:qfe070records}
\texttt{EditableQF} is a mutable cell table with stable identifiers and reverse incidences. A simplex-type $d$-cell keeps its ordered list of $d+1$ local facets, each a codimension-one cell or a constant point map, and repeated occurrences remain separate: cells are never merged because their vertices, words or boundary columns agree. Reverse incidences are kept before cancellation over the coefficient field, so an edge appearing twice with opposite signs in the boundary of a disc is still a dependency and cannot be deleted as a maximal cell.

The editable layer also accepts a polygonal $2$-cell, stored as a basepoint and a finite closed signed edge word $w=e_1^{\epsilon_1}\cdots e_m^{\epsilon_m}$, and the empty word is a constant attaching map. Polygons are a cellular extension and cannot serve as ordinary facets of higher simplex cells. The editor supports cell insertion, deletion of a maximal cell or of a cell with all its cofaces, point identification, and componentwise quotienting by a closed subcomplex. During a quotient each selected component is represented by a retained vertex, surviving cells keep their identifiers, and only records that directly reference a removed cell are rewritten (\cref{prop:locality} in its dynamic form). The operation returns a sparse cell map, the identity being understood on all other cells. Cloning, serialization, whole-object validation and chain assembly remain global, and branching uses full copies rather than a persistent structure, which \cref{sec:qfe070collapse} shows in the memory figures.

\subsection{Consumers: maps, cohomology, $\pi_1$, zigzag}\label{sec:qfe070consumers}
All algebra is over $\mathbb F_2$ and every homology or ring result is an explicit snapshot rebuilt for the changed space, so the local topology updates do not maintain homology bases incrementally. For a represented map $f:Q\to R$ the consumer constructs the chain map, homology representatives, $H_k(f)$, kernel and cokernel bases, and compositions. The cell-map validator checks compatibility with the ordered attaching data slot by slot, in addition to $\partial F=F\partial$. The latter is necessary for a chain map but does not certify that the map respects the stored parametrizations.

For simplex-type cells the cup product is Alexander--Whitney evaluated through the iterated local face maps,
\begin{equation}
 (a\smile b)(\sigma)
 =a(\sigma[0,\ldots,p])\,b(\sigma[p,\ldots,p+q]),
 \label{eq:qfe070aw}
\end{equation}
a positive-degree cochain evaluating to zero on a constant face; SageMath documents a comparable interface for finite simplicial sets~\citep{qfe070sage}. For polygonal $2$-cells and degree-one cocycles, representatives are first changed by coboundaries so as to vanish on a deterministic maximal forest, and with $A_i=a(e_i)$, $B_i=b(e_i)$ the based word diagonal gives
\begin{equation}
 \langle a\smile b,P\rangle
 =\sum_{i<j}A_iB_j+\sum_{\epsilon_i=-1}A_iB_i
 \pmod 2 .
 \label{eq:qfe070polygon}
\end{equation}
The inverse-letter term is essential: for $\mathbb{RP}^2$ with relator $aa$ the square of the generator is nonzero, for the word $aa^{-1}$ it is zero. The formula is checked against an independent triangulated-fan model, in which each polygon is replaced by ordered triangles and the whole product table is compared under the induced pullback using~\eqref{eq:qfe070aw} only.

Fundamental groups are extracted as edge-path presentations: nonforest edges of a maximal forest generate, an ordered triangle contributes the boundary path $(d_2,d_0,d_1^{-1})$ with constant and forest steps omitted, and a polygon contributes its word. A bounded Tietze simplifier is available. A useful control is the torus and $(S^1\vee S^1)\vee S^2$ as one-vertex models with two loops $a,b$ and one disc attached along $aba^{-1}b^{-1}$ or constantly. Their integral cellular differentials are identical and zero, both have $\beta=(1,2,1)$, but the stored words give $\langle a,b\mid aba^{-1}b^{-1}\rangle$ against $\langle a,b\mid\ \rangle$, and the degree-one products $x\smile y=y\smile x=z$ against zero. The proposed identity cell map between them is rejected despite the identical chain matrices.

Point gluing forms the disjoint union of saved presentations and identifies specified points, returning the inclusion maps. Subsequent quotients and disc attachments operate on the result without the original pairs. For insertion and deletion streams the consumer is \Gudhi's streaming zigzag with stored intervals~\citep{qfe070gudhizz,carlsson2010zigzag}. The editor checks the legality of each elementary step from the full topological incidences, and all representations compared in the edit experiment feed the same engine. Quotient maps are not deletions, so for small zigzags of arbitrary homology maps a separate reference consumer computes interval multiplicities from the ranks of the canonical maps from inverse to direct limits on subintervals. This consumer is exact but global.

\section{Experiments}\label{sec:experiments}

We report three sets of measurements: a static comparison of the QF-tree with \Gudhi\ for relative homology, in which \Gudhi's direct route is faster, a controlled comparison of local quotient updates with rebuilding and with a masked simplex tree, and the reuse of a retained quotient for cohomology rings, zigzag persistence and a source-free composition of operations. Every timed computation runs in a fresh process. The routes, the timing endpoints, the design of the operation experiments and the complete tables and figures are in \cref{app:tables}, together with the timing endpoints on which the conclusions depend. This section gives only the findings.

\subsection{Static comparison with \Gudhi}\label{sec:qf-gudhi-controlled-050}
On $3$-skeleta of Vietoris--Rips complexes on $5000$ points, cold computation of a relative Betti vector (source import, quotient or cone construction, boundary assembly and the solver) is $1.26$ to $1.55$ times slower through a full QF-tree than through direct relative \Gudhi, at every one of the five sampled fractions. The QF route overtakes the component-cone route between the fractions $0.20$ and $0.50$ (at $\alpha=0.95$ the median paired cone/QF ratio is $2.6$). When both representations are already materialized and the same \Gudhi\ Matrix reducer is used, feeding it from a QF-tree is $0.98$ to $1.16$ times as fast as from the simplex-tree pair ($1.3$ to $1.4$ for two factors by an exact common intersection), and on an uncapped $50\,000$-vertex clique complex with $1\,170\,307$ simplices and dimension $11$ the QF-fed solve took $0.011$\,s against $0.044$\,s. Serialized size strongly favours the quotient (at $\alpha=0.95$, $0.23$\,MiB against $1.28$ for the marked simplex tree and $2.49$ for the cone model, and $3.6$ against $17.9$ and $34.8$ on the large example), while process peak memory does not: $56$\,MiB against $45$ on the $5000$-vertex inputs and $402$ against $201$ on the large one. All Betti vectors agreed across routes in every one of the $135$ pairs of the replication and the $60$ of the pilot, and sixty stages of nested quotients agreed byte for byte with the corresponding direct quotients. As anticipated in \cref{sec:homology-direct}, a direct relative matrix is the faster tool for a one-shot relative Betti vector, and a QF-tree is worth its construction only when the quotient is subsequently used as a space.

\subsection{Operations on a retained quotient}\label{sec:qfe070collapse}
The operation experiments use periodic triangulations $K_s$ of the $2$-torus on an $s\times s$ grid of vertices, with $\lvert K_s\rvert=6s^2$ (here $s$ is the grid side, unrelated to the mean arity of \cref{sec:crossover}). Contracting a maximal spanning tree gives an initial quotient $Q_s$ with $4s^2+2$ cells, which at $s=48$ means $13\,824$ source simplices and $9218$ quotient cells. We compare three quotient implementations: the editable local QF, the immutable QF-tree rebuilt after each change, and a persistent \Gudhi\ simplex tree with an updated subcomplex mask. All three feed the same consumer, so the comparison isolates the cost of maintaining the representation. All compared outputs agree: boundary matrices, cycle representatives, induced maps, kernels and cokernels for quotients; complete zigzag barcodes and final cell sets for edits; basis-independent invariants and the fan-model pullback for rings.

The results fall into four parts. First, local updates are cheap, but they are a small part of the total cost. Each local edge quotient visits four cell records and examines six facet occurrences, independently of the quotient size, which ranges from $66$ to $9218$ cells. At $s=48$ the paired rebuild/local ratio for the topology update alone is $369$ (range $307$--$431$). The complete-operation ratio is only $1.17$, and the masked-pair/local ratio is $1.0$, because homology-basis construction accounts for a median $94\%$ of local-QF operation time.

Second, editing behaves in the same way. For thirty-two edits at $s=48$ the paired reconstruction/local ratio is $23$ (range $18$--$24$), while the \Gudhi/local ratio is $0.75$ (range $0.65$--$0.96$): local editing avoids the repeated full-table copies, but \Gudhi's own insertion and deletion route is faster on this family. Barcodes agree across the three representations, including the pairing of births and deaths, which a sequence of Betti vectors would not determine.

Third, retaining the quotient is worthwhile for cohomology rings. From a saved quotient $Q_s$ and the saved triangulation $K_s$ the same consumer returns the torus ring, with dimensions $(1,2,1)$, zero squares and a nonzero cross product. At $s=24$ basis construction and the positive-degree product table take $4.6$\,ms from the quotient against $23.8$\,ms from the triangulation, a paired ratio of $5.2$ (range $4.4$--$6.3$). Including loading, the ratio is $2.6$, and including the entire process it is $1.1$. The gain therefore applies to a new ring computation on an already retained factor, with the construction of the quotient not charged.

Fourth, the operations compose without the source. A composition task loads two copies of the saved factor, identifies a marked point of each, builds the inclusion maps, quotients by an edge from each module, saves and reloads, extracts a fundamental-group presentation, attaches a disc along a commutator, computes the cohomology ring, saves and reloads again, and finally deletes and reattaches the disc in a zigzag session. The pointed union has $\beta=(1,4,2)$, the maps and attaching records survive throughout, and a control repeats the whole computation after the source files have been deleted. At $s=24$ the final object has $4610$ cells and the median time including loads, saves and reloads is $186$\,ms. This task has no comparator and demonstrates only that the operations compose on the retained object.

\section{Discussion and limitations}\label{sec:discussion}

The two structural statements about collapses are complementary. Flagness of the collapsed subcomplex controls the order of the quotient when the ambient complex is flag, and the size of its minimal non-faces controls how many low-dimensional cells must be kept to reconstruct the rest. Componentwise fullness controls regularity without any flag hypothesis, and it is also the condition under which the cone model stays flag. Together they determine what a simplex-tree-like structure can keep after a quotient: strict gradedness, codimension-one attachment storage at $d+1$ references per cell, and a finite reconstruction threshold, all under the hypothesis that the collapsed part is flag, which is automatic for intersections of Vietoris--Rips complexes.

The experiments show where retaining the quotient is worthwhile. Maintaining it locally is orders of magnitude cheaper than rebuilding it, and computing a cohomology ring from a retained factor is several times cheaper than from the source triangulation. Fundamental-group presentations are extracted from the retained factor without the source pair. When only a vector of Betti numbers is wanted, the relative boundary matrix that \Gudhi\ already computes is the better tool. In the sampled regime, the closed-star representation of \cref{sec:local} changes the storage comparison in the situation that motivated the paper, a large complex whose collapsed part occupies a region of the point cloud. It does not help when the collapsed part is scattered, and the four-point example shows that localization alone guarantees no saving.

The results have several limitations. The reconstruction theorems are for labelled pairs on a fixed vertex set, and we do not know whether the label-free truncation determines the quotient in general. The gluing theory of \cref{app:gluing} is only partially implemented: the library supports point gluing and polygonal disc attachment but not arbitrary affine identifications, and a coherent permuted gluing needs the full vertex bijection per facet rather than a sign. Source labels, class membership and the word trie can dominate storage when the quotient has few cells. The storage bound of \cref{sec:complexity} is linear in the number of cells only for bounded dimension (the full $d$-simplex has $\Theta(N\log N)$ facet references) and for retained provenance linear in the number of cells (collapsing a whole path leaves one cell but keeps the labels of all its vertices). None of the storage-unit comparisons is a byte or RSS measurement. The dimension-capped Vietoris--Rips inputs of \cref{sec:crossover,sec:local,sec:experiments} are $3$-skeleta of flag complexes and need not be flag themselves, so they exercise the constructions, which apply to arbitrary pairs, rather than the flag-specific theorems. The operation experiments use a two-dimensional torus family with three seeds per condition. They do not cover heterogeneous modules, large incidence stars or high-dimensional attaching maps, and the local topology updates do not maintain homology bases incrementally, although basis construction dominates the running time. Finally, the regression tests check the implementation, whereas the theorems rest on their proofs.

\section*{Software and data availability}
The source of \texttt{qfcore} (version 0.7.4), with its tests, examples and executable controls, is freely available under the MIT License via \texttt{pip install qfcore} and at \url{https://github.com/qfcore/qfcore}. The numerical data and plotting scripts of the operation experiments form a separate reproducibility companion, provided in the \texttt{reproducibility} directory of the same repository. The closed-star measurements of \cref{sec:local} and \cref{fig:star-locality} are produced by \texttt{tools/star\_locality.py} and \texttt{tools/plot\_star\_locality.py} of the release.

\section*{Acknowledgements} 

The work of K.S. was supported by HSE University (HSE-BR-2025-077). This research was supported in part with computational resources of HPC facilities at HSE University \cite{charisma}. K.S. thanks Fedor Vylegzhanin and Ilyas Bayramov for fruitful discussions on the original construction of quotient flags. Large language model assistants (Claude by Anthropic and ChatGPT by OpenAI) were used to review the manuscript and the software, to draft and revise passages of the text and to write additional software tests, and the authors take full responsibility for the content. 

\renewcommand{\bibliofont}{\fontsize{10}{11.4}\selectfont}

\appendix

\section{Flag spheres and the extremal collapse family}\label{app:flag-spheres}

\begin{lem}[joins]\label{lem:join}
For graphs $G_1,G_2$ on disjoint vertex sets let $G_1*G_2$ denote their join (all edges between the parts added). Then $\Cl(G_1*G_2)=\Cl(G_1)*\Cl(G_2)$, the simplicial join.
\end{lem}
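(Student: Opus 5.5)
I will prove the equality $\Cl(G_1*G_2)=\Cl(G_1)*\Cl(G_2)$ by double inclusion of simplices, that is, of vertex sets. Both sides live on $V_1\sqcup V_2$. The simplices of the right-hand side are the sets $\sigma_1\sqcup\sigma_2$ with $\sigma_j\in\Cl(G_j)$, where either part may be empty but not both. The simplices of the left-hand side are the nonempty cliques of $G_1*G_2$.

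For the inclusion of the right side in the left, take $\sigma_1\sqcup\sigma_2$ with each $\sigma_j$ a clique of $G_j$. I check all pairs of vertices. Two vertices in $\sigma_1$ are adjacent in $G_1\subseteq G_1*G_2$, and likewise for two vertices in $\sigma_2$. A vertex of $\sigma_1$ and a vertex of $\sigma_2$ are adjacent because the join adds all edges between the parts. So $\sigma_1\sqcup\sigma_2$ is a clique.

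Conversely, let $\sigma$ be a nonempty clique of $G_1*G_2$ and set $\sigma_j=\sigma\cap V_j$. The only edges of $G_1*G_2$ with both endpoints in $V_j$ are the edges of $G_j$. Hence each $\sigma_j$ is a clique of $G_j$ (possibly empty), so $\sigma_j\in\Cl(G_j)\cup\{\varnothing\}$. Then $\sigma=\sigma_1\sqcup\sigma_2$ is a simplex of the join.

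There is no real obstacle. The one point to state explicitly is the convention on the empty simplex: the join allows one factor to be empty, which covers simplices lying entirely in $V_1$ or entirely in $V_2$. I will also record the consequence used later in the paper: a join of flag complexes is flag. Writing $K_j=\Cl(\sk_1K_j)$, the $1$-skeleton of $K_1*K_2$ is $\sk_1K_1*\sk_1K_2$, so the lemma gives $\Cl(\sk_1(K_1*K_2))=K_1*K_2$.
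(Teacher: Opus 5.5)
Your proposal is correct and follows the paper's proof: both rest on the observation that a clique of $G_1*G_2$ is exactly a disjoint union of a clique of $G_1$ and a clique of $G_2$, because every cross pair is adjacent. You spell out the two inclusions and the empty-factor convention, which the paper leaves implicit. Your closing remark that a join of flag complexes is flag is also correct, and it is how the paper later uses the lemma in \cref{thm:homotopy-change}.
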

\begin{proof}
A clique of $G_1*G_2$ is exactly a union of a clique of $G_1$ and a clique of $G_2$, since all cross pairs are adjacent; and the simplices of a simplicial join are exactly such unions.
\end{proof}

\begin{thm}\label{thm:cross-polytope}
Let $G$ be the graph on $2k$ vertices $\{x_1,y_1,\dots,x_k,y_k\}$ with all edges present except the $k$ partner edges $x_iy_i$. Then $\Cl(G)$ is the boundary of the $k$-dimensional cross-polytope; in particular $|\Cl(G)|\cong S^{k-1}$.
\end{thm}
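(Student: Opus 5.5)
The plan is to identify $G$ as an iterated graph join and then apply \cref{lem:join}. Let $G_i$ be the graph on the two vertices $\{x_i,y_i\}$ with no edges, so that $\Cl(G_i)=S^0$ is the two-point complex. Two vertices of $G$ from different pairs are always adjacent, and the only missing edges are the partner edges. Hence $G=G_1*G_2*\cdots*G_k$ as a graph join. Applying \cref{lem:join} inductively on $k$ gives
\[
\Cl(G)=\Cl(G_1)*\cdots*\Cl(G_k)=(S^0)^{*k}.
\]
The induction step uses that $G_1*\cdots*G_{k-1}$ and $G_k$ are on disjoint vertex sets, and that graph joins are associative.

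Next we identify $(S^0)^{*k}$ with the boundary complex of the cross-polytope. Realize the cross-polytope as $\mathrm{conv}\{\pm e_1,\dots,\pm e_k\}\subset\mathbb R^k$, with $x_i\mapsto e_i$ and $y_i\mapsto -e_i$. Its proper faces are the convex hulls of vertex sets that contain no antipodal pair $\{e_i,-e_i\}$. For the simplex on such a set $S$, the functional $\sum_{e_i\in S}x_i-\sum_{-e_i\in S}x_i$ attains its maximum $1$ exactly on $S$; for example it is $x_1$ when $S=\{e_1\}$. Every such face is a simplex. The simplices of $(S^0)^{*k}$ are precisely the sets choosing at most one vertex from each pair $\{x_i,y_i\}$, that is, the cliques of $G$. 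So the two complexes coincide, and the boundary complex is a simplicial complex because the cross-polytope is simplicial.

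For the topological claim there are two routes. The first is that the boundary of a convex $k$-polytope with nonempty interior is homeomorphic to $S^{k-1}$ by radial projection from an interior point, here the origin. The second is the join formula $|(S^0)^{*k}|\cong S^{0}*\cdots*S^0\cong S^{k-1}$, using $S^a*S^b\cong S^{a+b+1}$ and the fact that geometric realization commutes with joins of finite complexes. I would give the radial projection route, since it is self-contained.

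No step is a real obstacle. The only point needing care is the face description of the cross-polytope, namely checking that a vertex set spans a face exactly when it omits antipodal pairs. The supporting functional above gives one direction. For the converse, a set containing $\pm e_i$ spans a segment through the origin, an interior point, so it cannot lie in a proper face.
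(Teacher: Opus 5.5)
Your proof is correct and follows the paper's route: write $G$ as the graph join of the $k$ edgeless pairs $\{x_i,y_i\}$ and apply \cref{lem:join} to get $\Cl(G)=(S^0)^{*k}$. There are two small differences. You check the identification with the cross-polytope boundary complex explicitly by supporting functionals, where the paper only asserts it and counts the $2^k$ maximal cliques. You also get $S^{k-1}$ by radial projection rather than by iterating $S^p*S^q\cong S^{p+q+1}$, which is the paper's route.
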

\begin{proof}
$G$ is the join of the $k$ two-vertex edgeless graphs $\{x_i,y_i\}$. By Lemma~\ref{lem:join}, $\Cl(G)=\Cl(\{x_1,y_1\})*\cdots*\Cl(\{x_k,y_k\})=(S^0)^{*k}$, the $k$-fold join of $0$-spheres. Since $S^p*S^q\cong S^{p+q+1}$, induction gives $(S^0)^{*k}\cong S^{k-1}$, and combinatorially the join of $k$ copies of $S^0$ is the boundary complex of the cross-polytope. (Concretely: a maximal clique picks one vertex from each pair, since partners are never adjacent, so there are $2^k$ facets of dimension $k-1$, the facets of the cross-polytope.)
\end{proof}

The cross-polytopes are the vertex-minimal flag spheres. The bound below is classical (see Meshulam~\citep{domination} for the sharp statement in terms of face numbers). We include the short link induction because the uniqueness statement is used in the examples.

\begin{thm}[vertex-minimality]\label{thm:2k}
Let $X$ be a flag complex with $\widetilde H_{k-1}(X;\mathbb{Z})\neq 0$ for some $k\ge 1$. Then $X$ has at least $2k$ vertices. In particular every flag triangulation of $S^{k-1}$ has at least $2k$ vertices, and the cross-polytope boundary attains the bound.
\end{thm}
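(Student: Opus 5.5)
The argument is an induction on $k$ through vertex links, as the paper announces ("the short link induction").

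For the base case $k=1$, $\widetilde H_0(X)\neq0$ means $X$ is disconnected, so it has at least $2$ vertices.

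For $k\ge2$, assume the bound for $k-1$ and let $X$ be flag with $\widetilde H_{k-1}(X)\neq0$. We may discard vertices one at a time while a nonzero class survives. Concretely, among the full subcomplexes $X[W]$ with $\widetilde H_{k-1}(X[W])\neq0$, choose $W$ of minimal size. Full subcomplexes of a flag complex are flag, and $|V(X)|\ge|W|$, so we may assume $X=X[W]$ is minimal in this sense.

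Fix a vertex $v$. Write $X=(X-v)\cup\overline{\st}(v)$, where $X-v=X[V\setminus v]$ and the two pieces meet in $\lk(v)$. Since the star is a cone, Mayer--Vietoris gives an exact sequence
\[
\widetilde H_{k-1}(\lk v)\to\widetilde H_{k-1}(X-v)\to\widetilde H_{k-1}(X)\to\widetilde H_{k-2}(\lk v).
\]
By minimality $\widetilde H_{k-1}(X-v)=0$, so $\widetilde H_{k-1}(X)$ injects into $\widetilde H_{k-2}(\lk v)$, which is therefore nonzero.

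In a flag complex the link $\lk v$ is the full subcomplex on the neighbourhood $N(v)$, hence flag. Induction gives $|N(v)|\ge 2(k-1)$.

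The key step is to find two more vertices outside $N(v)$. The vertex $v$ itself is one, since it is not in its own link. For a second, suppose every other vertex were adjacent to $v$. Then $X=\overline{\st}(v)$ by flagness, because every clique extends by $v$. So $X$ would be a cone, hence contractible, contradicting $\widetilde H_{k-1}(X)\neq0$. Thus some $w\neq v$ is not adjacent to $v$, and
\[
|V(X)|\ge |N(v)|+2\ge 2k.
\]

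I expect this step to be the main point to get right. The induction hypothesis must apply to the link, which needs fullness of links in flag complexes. The minimality reduction must also keep the complex flag, which is why we pass to full subcomplexes.

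For the remaining claims: a flag triangulation of $S^{k-1}$ has $\widetilde H_{k-1}\cong\mathbb Z$, so it has at least $2k$ vertices. Attainment follows from \cref{thm:cross-polytope}, since the cross-polytope boundary is flag with $2k$ vertices and realizes $S^{k-1}$.
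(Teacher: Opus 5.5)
Your proof is correct and is essentially the paper's argument: the same Mayer--Vietoris decomposition $X=(X-v)\cup\overline{\st}(v)$, the flagness of links, and the observation that a vertex adjacent to all others makes $X$ a cone. The only difference is bookkeeping. You first pass to a vertex-minimal full subcomplex carrying the class, which removes the paper's Case~1 (where $\widetilde H_{k-1}(X-v)\neq0$ and the paper inducts on the number of vertices), so your induction runs on $k$ alone.
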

\begin{proof}
Two facts about flag complexes: (i) for a vertex $v$, the link $\lk(v)$ is the clique complex of the induced subgraph on the neighbours of $v$ (since $\sigma\in\lk(v)\iff\sigma\cup\{v\}\in X\iff\sigma\cup\{v\}$ is a clique $\iff\sigma$ is a clique inside $N(v)$), hence flag; (ii) the induced subcomplex $X\smallsetminus v$ on $V\smallsetminus\{v\}$ is flag.

We induct on $|V|$; the statement is: for all $k\ge1$, a flag complex on $n$ vertices with $\widetilde H_{k-1}\neq0$ has $n\ge 2k$. If $k=1$: $\widetilde H_0\neq0$ means $X$ is disconnected, so $n\ge2$. Let $k\ge2$ and pick any vertex $v$. Decompose $X=\st(v)\cup(X\smallsetminus v)$ with $\st(v)\cap(X\smallsetminus v)=\lk(v)$, where $\st(v)$ denotes the closed star, a cone, hence acyclic. Mayer--Vietoris gives exactness of
\[
\widetilde H_{k-1}(X\smallsetminus v)\longrightarrow \widetilde H_{k-1}(X) \longrightarrow \widetilde H_{k-2}(\lk(v)).
\]
Since the middle group is non-zero, one flank is non-zero.

\emph{Case 1: $\widetilde H_{k-1}(X\smallsetminus v)\neq0$.} By induction (fewer vertices, same $k$), $n-1\ge 2k$, so $n\ge 2k$.

\emph{Case 2: $\widetilde H_{k-2}(\lk(v))\neq0$.} First, $v$ has a non-neighbour: otherwise $X=v*\lk(v)$ is a cone, so $\widetilde H_{k-1}(X)=0$, a contradiction. Hence $\lk(v)$ is a flag complex on at most $n-2$ vertices (missing $v$ and a non-neighbour). By induction (fewer vertices, parameter $k-1$), $n-2\ge 2(k-1)$, so $n\ge 2k$.
\end{proof}

\begin{rem}[uniqueness]\label{rem:2k-unique}
For $k=1$, equality means two isolated vertices. For $k\ge2$, equality forces the cross-polytope: if $n=2k$, Case~1 is impossible at every vertex (it would give $n\ge 2k+1$), so every vertex has \emph{exactly} one non-neighbour. Case~2 must then hold at every vertex $v$, so $\lk(v)$ is a flag complex with $\widetilde H_{k-2}\neq0$ on at most $n-2=2k-2$ vertices, and the bound forces $\lk(v)$ to have exactly $2(k-1)$ vertices, so $v$ is adjacent to all but one vertex, and it has at least one non-neighbour because $X$ is not a cone. Non-adjacency is therefore a perfect matching on $V$, and Theorem~\ref{thm:cross-polytope} identifies $X=\Cl(\sk_1X)$ as the cross-polytope boundary.
\end{rem}

\begin{ex}[the extremal collapse family]\label{ex:octa-family}
Let $k\ge2$, let $X$ be the cross-polytope sphere of Theorem~\ref{thm:cross-polytope}, and let $e=x_ix_j$ be any edge. The closed star $\overline{\st}(e)=e*\lk(e)$, where $\lk(e)$ is the clique complex induced on the common neighbours of $x_i$ and $x_j$ (all vertices except $x_i,x_j$ and their two partners), which is a cross-polytope sphere $S^{k-3}$, with the formal empty complex $S^{-1}$ at $k=2$; so $\overline{\st}(e)\cong D^{k-1}$. The deletion of the open star of $e$ is the inclusion-maximal proper flag subcomplex $L$ obtained by deleting that edge: it is $\Cl$ of $\sk_1X$ with the single edge $e$ removed, hence flag by Lemma~\ref{lem:flag-subcomplexes}, and $|L|\cong D^{k-1}$ with $\partial L\cong S^{k-2}$. The quotient $X/L\cong S^{k-1}$ has exactly the cells
\[
0'=[L],\quad e,\quad \{\,e*\tau\;:\;\tau\in\lk(e)\,\},
\]
one basepoint, one $1$-cell (a loop, both endpoints collapsed) and one cell in every dimension $2,\dots,k-1$ for each face of the link sphere; all attachments are along codimension-one faces, collapsed facets appearing as basepoint arcs. For $k=2$: the square modulo a $3$-chain gives a loop on one vertex. For $k=3$: the octahedron ($6$ vertices, $12$ edges, $8$ triangles) modulo the disc complementary to one edge gives $S^2$ with cells $(1,1,2)$; the library confirms Betti numbers $(1,0,1)$ and $\chi=2$. This family is related to, but different from, the contractible-source construction of Theorem~\ref{thm:homotopy-change}, since its source is not contractible.
\end{ex}

\section{Signed cover categories, and what they forget}\label{app:covers}

The face poset of a regular CW complex determines it up to homeomorphism~\citep{bjorner1995topological}. For the nonregular quotients of this paper one might hope that a face poset enriched by parallel covers and signs still suffices. It does not, and this appendix records exactly what such a signed cover category retains: enough for the cellular chain complex, on a restricted class of morphisms, and not enough for the homotopy type.

\begin{defin}[Signed cover categories and their morphisms]\label{def:gacat}
An object of $\GACat$ is a finite acyclic category $\mathcal C$ (in the sense of~\citep[Chapter~10]{kozlov2008combinatorial}) with ranks, a distinguished set $D$ of rank-zero objects, and signs on covers, such that every cover raises rank by one and, for genuine $x,y$ with $\rk y=\rk x+2$,
\[
 \sum_{\substack{z\notin D\\x\xrightarrow{f}z\xrightarrow{g}y\text{ covers}}} \varepsilon(f)\varepsilon(g)=0.
\]
Put $[x:y]=\sum_{f\in\operatorname{Cov}(x,y)}\varepsilon(f)$. A morphism is a rank-preserving functor preserving covers and their signs, mapping basepoints to basepoints and genuine objects to genuine objects, and satisfying the additional \emph{incidence pushforward condition}
\begin{equation}
 [x':F(y)]=\sum_{\substack{x\notin D\\F(x)=x'}}[x:y]
 \label{eq:incidence-pushforward}
\end{equation}
for each genuine $y$ and target genuine $x'$ one rank below it. Identities satisfy this condition, and composition preserves it by summing first over the intermediate object fibres.
\end{defin}

\begin{rem}[Why the extra morphism condition is needed]\label{rem:chain-map-counterexample}
A sign-preserving functor can send one positive arrow $x\to y$ to one of two positive arrows $x'\rightrightarrows y'$. Rank and cover preservation hold, while the induced chain differentials are $1$ and $2$. The object maps are not a chain map. Equation~\eqref{eq:incidence-pushforward} rules out exactly this discrepancy. Bounds $[x:y]\in\{0,\pm1\}$ alone do not characterize regularity, since nonregular collapse presentations can also have only these coefficients.
\end{rem}

\begin{con}[Cover functor]\label{con:U}
For $S\in\FFacesh$, take the quiver with objects $D\sqcup\coprod_nS_n$, one arrow $\partial_i\sigma\to\sigma$ for every genuine facet occurrence, and an arrow $\bbot_\alpha\to\sigma$ for every basepoint facet of an edge. Give each such arrow sign $(-1)^i$. Let $U(S)$ be its free category. Genuine-preserving face maps send the occurrence indexed by $i$ to the occurrence indexed by $i$, defining $U$ on morphisms.
\end{con}

\begin{prop}[Properties of the cover functor]\label{prop:U}
The functor $U:\FFacesh\to\GACat$ is well defined. Its covers are the quiver arrows. For each component label, $\bbot_\alpha\preceq\sigma$ in the original presentation if and only if there is a morphism $\bbot_\alpha\to\sigma$ in $U(S)$.
\end{prop}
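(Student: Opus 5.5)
Three things have to be checked: that $U(S)$ lies in $\GACat$, that $U$ is defined on morphisms and lands in $\GACat$-morphisms, and that reachability from basepoints is preserved. I take them in that order.

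\emph{Objects.} The quiver has one arrow per genuine facet occurrence and, for edges, one per basepoint facet. Every arrow raises rank by exactly one: genuine arrows go from dimension $n-1$ to $n$, and basepoint arrows go from rank $0$ to an edge. So the free category is acyclic with a rank function. Next I show that the covers of $U(S)$ are exactly the quiver arrows. A morphism in a free category is a path, and an arrow cannot factor through an intermediate object because ranks strictly increase along paths. Conversely, a path of length at least two passes through an intermediate object and is therefore not a cover. Hence every cover raises rank by one.

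The sign condition for genuine $x$ and $y$ with $\rk y=\rk x+2$ is the coefficient of $x$ in $\partial\partial y$ in the normalized complex of \cref{con:normalization}. Paths through basepoints are excluded from the sum, and $\mathcal N$ likewise omits basepoint terms. Since $\partial^2=0$ there, the sum vanishes. One case needs care: if $x$ is a genuine vertex and $y$ a $2$-cell, the relevant paths go through edges, and the paths excluded from the sum are those through $D$, not those ending at $x$. This is exactly the normalized coefficient.

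\emph{Morphisms.} A genuine-preserving map $f$ commutes with faces, so it sends the occurrence $\partial_i\sigma\to\sigma$ to $\partial_i f\sigma\to f\sigma$, with the same index and hence the same sign. Basepoint arrows of edges go to basepoint arrows, because $f(\bbot_\alpha)=\bbot_{\rho\alpha}$. The map is rank-preserving, preserves covers, and sends genuine objects to genuine objects. For the incidence pushforward condition \eqref{eq:incidence-pushforward}, I plan to compute both sides from the facet slots. The left side $[x':f(y)]$ sums $(-1)^i$ over the slots $i$ of $f(y)$ with $\partial_i f(y)=x'$. Since $\partial_i f(y)=f(\partial_i y)$, these are exactly the slots $i$ of $y$ whose genuine target $x$ satisfies $f(x)=x'$. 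Grouping them by $x$ gives the right side. Functoriality on identities and composites is immediate from the definition on arrows.

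\emph{Reachability.} Suppose $\bbot_\alpha\preceq\sigma$ in the presentation. This relation is generated by genuine facet relations and by $\bbot_\alpha\prec\tau$ whenever some facet of $\tau$ equals $\bbot_\alpha$. I would use shallowness (\cref{def:shallow}), by induction on dimension, to replace every such basepoint step by a chain whose only basepoint arrow lands on an edge. If $\tau$ has dimension at least two and $\tau\in B_\alpha(S)$, then $\tau$ has a genuine facet in $B_\alpha(S)$. Descending repeatedly reaches an edge $e$ with a basepoint facet $\bbot_\alpha$; $e$ cannot be a genuine vertex, since vertices are not above basepoints. This produces a path $\bbot_\alpha\to e\to\cdots\to\tau\to\cdots\to\sigma$. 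The converse is clear, since every arrow of $U(S)$ is a facet relation of the presentation, and a basepoint arrow only ever starts at its own label $\bbot_\alpha$.

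I expect this reachability step to be the main obstacle, specifically checking that the descent along genuine facets stays inside $B_\alpha(S)$ and terminates at an edge whose basepoint arrow is labelled $\alpha$ and not by another component.
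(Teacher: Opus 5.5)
Your proposal is correct and follows the paper's proof essentially step for step. Freeness gives acyclicity and identifies the covers with the quiver arrows. The incidence axiom is the semisimplicial cancellation, which you reach through $\partial^2=0$ in $\mathcal N$; this is the same computation, because a genuine final face forces both intermediate facets to be genuine. Index-preserving face maps give \eqref{eq:incidence-pushforward}, and shallowness by induction gives reachability.

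The step you flag as the main obstacle closes at once. An edge $e\in B_\alpha(S)$ must have a facet equal to $\bbot_\alpha$, because the only elements strictly below an edge in $\preceq$ are its two facet targets: a genuine vertex has an empty facet list and so has nothing below it.
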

\begin{proof}
Every arrow raises rank by one, so freeness gives acyclicity and identifies the covers. Shallowness provides, by induction, a genuine facet above each relevant component point; this proves reachability. The incidence axiom is the cancellation of the two orders of deleting two vertices in the semisimplicial identities. If the final face is genuine, both intermediate facets are genuine, so none of the cancelling terms is discarded. For a genuine-preserving morphism, the target facet at each index is the image of the source facet at that same index. Thus the sum of incoming signs over the source objects mapping to $x'$ equals the target incidence sum, proving~\eqref{eq:incidence-pushforward}. General pointed morphisms that absorb genuine cells are excluded from this rank-preserving construction, but remain valid for $\mathcal N$.
\end{proof}

\begin{con}[Linearization of covers]\label{con:GC}
Define $C_n(\mathcal C)=\mathbb Z\{x\notin D:\rk x=n\}$ and
\[
 \partial y=\sum_{x\notin D}[x:y]x.
\]
The incidence axiom gives $\partial^2=0$. On morphisms use $x\mapsto F(x)$; equation~\eqref{eq:incidence-pushforward} is exactly the chain-map identity. Hence $C:\GACat\to\Ch$ is a functor on the stated morphism class. On $\FFacesh$ there is a natural equality $CU=\mathcal N$. Write $\Gg=U\Dd$ on $\Flagref$. For an order-preserving gluing datum, $\Gg(K,\sim)$ denotes the object $U\Dd(K,\sim)$; this notation does not specify any additional gluing morphism category.
\end{con}

\begin{rem}[What the quiver retains]\label{rem:quiver}
The free category can be represented by its finite signed generating quiver, without storing composite paths. Its arrows retain incidence occurrences and signs but forget their local facet indices. The complete facet table of $\Dd$ additionally retains those indices and all collapsed higher facets. Thus the quiver and the table are different objects. Acyclic categories in the standard sense~\citep[Chapter~10]{kozlov2008combinatorial} may have relations between paths, whereas our free category imposes none.
\end{rem}

\begin{rem}[The two regimes]\label{rem:dichotomy}
For an order-preserving gluing $D=\varnothing$ and $\Dd$ is an ordinary semisimplicial set. For a collapse the distinguished component chains need not be inactive; flagness supplies shallowness, but collapsed facet values remain. Permuted gluings use the affine-CW construction of Proposition~\ref{prop:permuted-cw}. Their signed cellular chains satisfy $\partial^2=0$, but they are not objects of $\FFace$ with the same cells.
\end{rem}

\begin{figure}[htbp]\centering
\begin{tikzcd}[column sep=large,row sep=large]
 \Flagref \arrow[r,"\Dd"] \arrow[d,hook]
 & \FFacesh \arrow[r,"U"] \arrow[d,hook]
 & \GACat \arrow[d,"C"]\\
 \Flag \arrow[r,"\Dd"']
 & \FFace \arrow[r,"\mathcal N"'] \arrow[d,"|-|_\bullet"']
 & \Ch\\
 & \Topcat &
\end{tikzcd}
\caption{The reflecting restriction is needed only for the rank-preserving cover path. The direct normalized-chain and realization paths allow genuine cells to be absorbed, as happens in a quotient tower. Permuted CW presentations have a separate realization rule.}
\label{fig:architecture}
\end{figure}

\subsection{What signed covers and their nerves do not recover}
\begin{prop}[A free-cover nerve is not a realization]\label{prop:nerve}
For $K=\Delta(123)$ and $A=12\cup13$, the quotient is contractible, whereas $B\Gg(K,A)\simeq S^1$.
\end{prop}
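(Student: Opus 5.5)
The plan is to compute both sides directly, since everything is small. $K=\Delta(123)$ has seven simplices and $A=12\cup13$ is the closed path $2-1-3$. This path is connected and contractible, so $D$ is a single point.

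For the quotient, I would apply \cref{prop:cone}. Since $A$ is contractible, $K/_{SC}A\simeq K$, and $K$ is a simplex, so the quotient is contractible. Alternatively I would argue directly: $|A|$ is an arc in $\partial|K|$, and crushing a boundary arc of a disc leaves a disc. \Cref{lem:faces} does not apply here, because the crushed faces are not disjoint, so I would rely on the homotopy argument.

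For the nerve, I would first write out $\Dd(K,A)$ from \cref{con:D}.
\begin{itemize}
\item The genuine cells are $23$ and $123$.
\item The face maps of $23$ are $\partial_0(23)=3$ and $\partial_1(23)=2$, but both vertices lie in $A$, so $23\mapsto(\bbot,\bbot)$.
\item The face maps of $123$ are $\partial_0=23$, which is genuine, and $\partial_1=13\mapsto\bbot$, $\partial_2=12\mapsto\bbot$.
\end{itemize}
I would check shallowness (\cref{def:shallow}) so that $U$ is defined. Here $\bbot\preceq 123$, and the genuine facet $23$ lies in $B(S)$, so the object is shallow, as \cref{thm:strict} also guarantees because $A$ is flag.

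Next I would apply \cref{con:U}. Only basepoint facets of \emph{edges} give arrows, and every genuine facet occurrence gives one. The quiver $\Gg(K,A)=U\Dd(K,A)$ therefore has three objects, $\bbot$, $23$ and $123$, and three arrows:
\begin{itemize}
\item two parallel arrows $\bbot\rightrightarrows 23$, for $i=0,1$;
\item one arrow $23\to 123$.
\end{itemize}
The basepoint facets of the triangle contribute no arrows.

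Now I would take the classifying space of the free category on this quiver. The nerve of the free category on a quiver is homotopy equivalent to the realization of the quiver as a $1$-dimensional graph. This is the step I expect to need justification: I would cite the standard fact that the nerve of a free category deformation retracts onto its generating graph, or I would observe directly that every nondegenerate simplex of the nerve is a string of composable arrows. The composites $\bbot\to 123$ are new morphisms, two of them, and their $2$-simplices fill in the composite edges. Collapsing each such $2$-simplex onto its composite edge is an elementary collapse, and doing so leaves the generating graph.

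That graph has three vertices and three edges. The two parallel edges form a cycle, and the edge $23\to123$ is a pendant. So $B\Gg(K,A)\simeq S^1$ while the quotient is contractible. The two spaces differ, for instance in $H_1$ or $\pi_1$, and this proves \cref{prop:nerve}.
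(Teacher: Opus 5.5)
Your proposal is correct and matches the paper's proof. It has the same decoration $23\mapsto(\bbot,\bbot)$, $123\mapsto(23,\bbot,\bbot)$, the same quiver with two parallel arrows into $23$ and one arrow $23\to123$, the same reduction of the nerve to that graph, and the same argument that the quotient is contractible because a contractible subcomplex is collapsed. Your explicit elementary collapse of each $2$-simplex through its free composite edge is exactly the matching of \cref{prop:nerve-graph} (toggling the cut after the first arrow), specialized to this example; the paper simply cites that proposition.
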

\begin{proof}
The genuine cells are $a=23$ and $b=123$, with $\partial a=(\bbot,\bbot)$ and $\partial b=(a,\bbot,\bbot)$. The quiver has two arrows from the component point to $a$ and one from $a$ to $b$. Its free-category nerve is homotopy equivalent to that graph by Proposition~\ref{prop:nerve-graph}, hence to $S^1$. The source $A$ is a contractible boundary arc, so its collapse in the disc is a homotopy equivalence and the quotient is contractible. This refutes reconstruction from the nerve, although it does not exclude other reconstructions from the retained data.
\end{proof}

\begin{prop}[The free-cover nerve is a graph up to homotopy]\label{prop:nerve-graph}
For every finite shallow presentation, the classifying space of its free cover category is homotopy equivalent to the underlying generating graph. Each connected component is a wedge of circles, possibly a point. In particular its reduced homology vanishes above degree one.
\end{prop}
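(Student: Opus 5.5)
The plan is to use the standard fact that the classifying space of a free category on a quiver $G$ is homotopy equivalent to the realization of $G$ as a one-dimensional CW complex. The nerve $B\,U(S)$ is the simplicial set whose $n$-simplices are composable strings of $n$ morphisms. In a free category every morphism is a unique path of generating arrows. Acyclicity of $U(S)$, which follows from \cref{prop:U} because every arrow raises rank by one, guarantees that there are only finitely many such paths and no nontrivial endomorphisms. I will therefore argue that $B\,U(S)$ deformation retracts onto the subcomplex spanned by identities and generating arrows, and that this subcomplex is exactly the realized quiver.

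The key steps are as follows.
\begin{enumerate}
\item Recall that for any small category $\mathcal C$ with a finite generating quiver $G$, the free category $\mathrm{F}(G)$ has nerve weakly equivalent to $|G|$. The cleanest route is to write $\mathrm{F}(G)$ as a pushout of categories. The discrete category on the vertices and one copy of the arrow category $[1]$ for each edge are glued, and the gluing is along the two-element discrete categories given by each edge's endpoints.
\item Observe that this pushout is a pushout of nerves along cofibrations in the Thomason model structure. An elementary alternative is to check directly that $N\mathrm{F}(G)$ is obtained from $NG$ by attaching simplices in pairs that form elementary collapses. Each nondegenerate $n$-simplex that is a string containing a composite arrow is paired with the $(n+1)$-simplex that splits the first composite into its first generating arrow followed by the rest. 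Acyclicity makes this a well-founded matching, by induction on total path length.
\item Conclude that each component of $B\,U(S)$ is homotopy equivalent to a connected finite graph, hence to a wedge of circles (possibly a point), and that reduced homology vanishes above degree one.
\end{enumerate}

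The main obstacle is step 2: verifying that the pairing is an acyclic matching in the sense of discrete Morse theory, so that the collapse is legitimate. If this becomes messy, I would instead cite Quillen's Theorem A for the functor $\mathrm{F}(G)\to$ the poset of ``subdivided'' edges, or quote the known result (e.g.\ Dwyer--Kan) that free categories are cofibrant and their nerves model the underlying graph. Whichever route is used, the homology conclusion follows immediately once the homotopy equivalence is established.
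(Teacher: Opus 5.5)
\textbf{There is a genuine gap in step~2, which you yourself flagged as the main obstacle.} Your overall plan is the route the paper takes: a discrete Morse matching on the nerve that splits off a generating arrow, acyclic because path length drops. But the pairing you write down is not a matching. ``Split the first composite factor at its first arrow'' is not an involution. Take a quiver with a path of length three, for example any presentation with a $3$-cell. Then $(a_1a_2a_3)$ is sent to $(a_1,\,a_2a_3)$. That simplex still contains a composite, so it is also sent up to $(a_1,a_2,a_3)$. Meanwhile $(a_1,a_2,a_3)$ is reached from both $(a_1a_2,\,a_3)$ and $(a_1,\,a_2a_3)$. Cells lie in several pairs, so no collapse is defined, and ``well-founded by induction on path length'' has nothing to apply to.

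Step~1 does not rescue this. The nerve does not in general carry the pushout $V\amalg_{\coprod_e\partial[1]}\coprod_e[1]$ of categories to the corresponding pushout of simplicial sets. That pushout is one-dimensional, whereas $N\mathrm{F}(G)$ has a nondegenerate simplex for every composable string of nonempty paths. Also, $\partial[1]\hookrightarrow[1]$ is not a Thomason cofibration: those are sieve inclusions, in particular full. Proving that this pushout is nevertheless a homotopy pushout is the content of the proposition. So without a working matching your argument reduces to citing $B\mathrm{F}(G)\simeq\lvert G\rvert$, and the Theorem~A alternative leaves its functor unspecified.

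\textbf{The repair is to toggle one fixed position rather than search for a composite.} This is the paper's proof.
\begin{itemize}
\item A nondegenerate $n$-simplex of the nerve is a path $P=a_1\cdots a_L$ of generating arrows with $n-1$ of its $L-1$ interior cuts marked. Acyclicity makes its vertices distinct, so the realization is a regular CW complex and discrete Morse theory applies.
\item For $L\ge2$, toggle the cut immediately after $a_1$. Replace $f_1$ by $(a_1,f_1')$ when $\lvert f_1\rvert\ge2$, and take $d_1$ (compose $a_1$ with $f_2$) when $f_1=a_1$. This is an involution. Its unmatched cells are exactly the vertices and the single arrows, that is, the generating graph.
\item For acyclicity, let $\tau=(a_1,f_1',f_2,\dots,f_n)$ be the partner of $\sigma=d_1\tau$. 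Let $\sigma'\neq\sigma$ be a face of $\tau$ at which an alternating path continues, so $\sigma'$ must be matched upward.
\item The faces $d_j\tau$ with $2\le j\le n$, and the last face $d_{n+1}\tau$, keep $a_1$ as a separate first factor. So they are matched downward, or equal the critical arrow $(a_1)$. Hence $\sigma'=d_0\tau$, whose underlying path is one arrow shorter.
\item Since the quiver is finite and acyclic, path lengths are bounded. The matching is therefore acyclic, and the nerve collapses onto the generating graph.
\end{itemize}
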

\begin{proof}
A nondegenerate simplex of the nerve is a directed path $P$ in the generating quiver together with cuts separating its nonempty factors. Outside the subcomplex of vertices and single-arrow paths, toggle the cut immediately after the first arrow of $P$. This pairs each cell with a codimension-one face. In an alternating matching path, changing an interior cut other than this distinguished cut cannot continue the matching path; a continuing step must remove the first factor and strictly shorten the underlying path. Hence the matching is acyclic. The quiver is finite and acyclic, so all path lengths are bounded, and the matching collapses the nerve to the generating graph~\citep{forman2002user,kozlov2021organized}.
\end{proof}

Even an uncollapsed regular triangle has seven face objects and nine cover occurrences. Its free-cover nerve has first Betti number three, although the triangle is contractible. Bj\"orner's regular-CW reconstruction uses the face poset with its transitive relations~\citep{bjorner1995topological}, whereas this nerve is that of the free category on the covers. Nonregularity is therefore not the cause of this particular nerve failure.

\begin{prop}[Signed cover data lose attachment information]\label{prop:signed-quiver-counterexample}
Two order-preserving gluings of a common flag source can have isomorphic signed cover quivers and different fundamental groups, even with equal integral cellular chain complexes.
\end{prop}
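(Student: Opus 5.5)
The plan is to give an explicit example on the smallest possible source, using the freedom the signed quiver leaves untouched: it records each facet occurrence of a cell only with its sign $(-1)^i$, not with its index $i$. In a $2$-simplex the local facets $0$ and $2$ both carry sign $+1$. Exchanging which edges occupy these two slots therefore leaves the signed quiver and the cellular differential unchanged, but it changes the attaching word.

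Take the flag source $K$ to be two disjoint triangles $T_1=(u_0<u_1<u_2)$ and $T_2=(w_0<w_1<w_2)$. Its minimal non-faces are the cross pairs, so $K$ is flag.

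\emph{Gluing $\sim_1$ (the $\Delta$-torus).}
\begin{itemize}
\item Identify all six vertices to a single vertex.
\item Identify edges so that only three edge classes $a,b,c$ remain, with facet lists
\[
T_1\mapsto(b,c,a),\qquad T_2\mapsto(a,c,b).
\]
\end{itemize}

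\emph{Gluing $\sim_2$.}
\begin{itemize}
\item Identify all vertices as before.
\item Use the same three edge classes, with facet lists
\[
T_1\mapsto(b,c,a),\qquad T_2\mapsto(b,c,a).
\]
\end{itemize}
In $\sim_2$ the two edges of $T_2$ in slots $0$ and $2$ have been exchanged. Every edge identification is the unique increasing map between $1$-simplices. All vertices are identified, so the semisimplicial identities hold automatically in each case. I will check these against \cref{def:gluing}, conditions (i) and (ii). I will also verify that each relation is a congruence and hence defines an object $\Dd(K,\sim_j)$.

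\emph{Comparing the invariants.}
\begin{itemize}
\item \emph{Signed quivers.} In both presentations the objects are one vertex, the edges $a,b,c$, and the cells $T_1,T_2$. Each edge has two arrows from the vertex, with signs $+1$ and $-1$. Each $T_j$ has one arrow from each of $a$, $b$ and $c$, with $c$ negative and $a,b$ positive. So the identity on objects is an isomorphism of signed quivers.
\item \emph{Integral cellular chain complexes.} In both presentations $\partial T_j=a-c+b$ and $\partial$ vanishes on edges. The two chain complexes are therefore literally equal.
\item \emph{Fundamental groups.} I will use the edge-path presentation of \cref{sec:qfe070consumers}, in which an ordered triangle with facets $(d_0,d_1,d_2)$ contributes the relator $d_2d_0d_1^{-1}$.
\begin{itemize}
\item Under $\sim_1$, the relators $abc^{-1}$ and $bac^{-1}$ give, after eliminating $c$, the group $\langle a,b\mid ab=ba\rangle\cong\mathbb Z^2$ (the torus).
\item Under $\sim_2$, both relators are $abc^{-1}$, and eliminating $c$ leaves the free group $\langle a,b\mid\ \rangle$. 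This is the fundamental group of $(S^1\vee S^1)\vee S^2$, the second triangle becoming a sphere attached along a nullhomotopic loop.
\end{itemize}
\end{itemize}
Since $\mathbb Z^2\not\cong F_2$, the two gluings are distinguished by their fundamental groups, as claimed.

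The main obstacle is confirming that both relations really are admissible order-preserving gluing data in the precise sense of \cref{app:gluing}, in particular that the face-compatibility requirements are satisfied. The delicate step is justifying the one-vertex edge-path presentation for a semisimplicial realization, using \cref{thm:realization} together with the van Kampen argument of \cref{cor:pi1}. If the definition forbids identifying two edges of the same triangle, I will instead subdivide, replacing each triangle by a barycentric or otherwise flag-compatible refinement. I would then repeat the same slot exchange on one $2$-cell of the second gluing and check that the group computation survives this refinement.
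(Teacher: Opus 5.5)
Your proposal is correct and is essentially the paper's proof. It uses the same two disjoint ordered triangles with all vertices identified, the same facet lists ($(b,c,a)$ paired with $(a,c,b)$ versus $(b,c,a)$ twice), and the same comparison of signed quivers, chain complexes and presentations, $\langle a,b,c\mid abc^{-1},bac^{-1}\rangle\cong\mathbb Z^2$ versus $\langle a,b,c\mid abc^{-1},abc^{-1}\rangle\cong F_2$. Your fallback subdivision is unnecessary, since no two edges of one triangle are identified here and \cref{def:gluing} would permit it anyway. The group is read directly from the one-vertex CW structure of \cref{lem:cells} rather than via \cref{cor:pi1}, which concerns collapses.
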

\begin{proof}
Start with two disjoint ordered triangles $012$ and $345$, a flag source. Identify all six vertices, and give both quotients the edge classes $a,b,c$. On $012$ the facet list is $(b,c,a)$. In the first quotient the second facet list is $(a,c,b)$; in the second it is $(b,c,a)$. These are generated by increasing identifications of corresponding edge representatives (and the specified vertex identifications), so both are order-preserving congruences.

In each quotient, every edge has two opposite-signed covers from the unique vertex, and every two-cell has positive occurrences of $a,b$ and a negative occurrence of $c$. The signed quivers and the cellular matrices are therefore identical after forgetting the indices of the two positive facets. Reading the oriented boundary words gives, respectively,
\[
 \langle a,b,c\mid ab c^{-1},\ ba c^{-1}\rangle\cong\mathbb Z^2, \qquad \langle a,b,c\mid ab c^{-1},\ ab c^{-1}\rangle\cong F_2.
\]
The first is the standard two-triangle torus presentation. In the second, the second disc is attached along a loop already filled by the first, giving homotopy type $(S^1\vee S^1)\vee S^2$. The two fundamental groups differ, proving the assertion. Unlike the nerve example, this gives isomorphic retained signed quivers with different spaces. The assertion is about abstract signed quivers: an additional archive of the source members of each gluing class is extra information, and is not required to be preserved by this quiver isomorphism.
\end{proof}

\subsection*{Worked traces}
Tables~\ref{tab:disc} and~\ref{tab:torus} give the full cell tables for two small examples.
\begin{table}[htbp]
\centering\small
\begin{tabular}{@{}llccc@{}}
\toprule
cell & $\dim$ & $\Dd$: $(\partial_0,\dots,\partial_n)$ & $\Gg$: covers into cell &
$C\Gg$: $\partial$ \\
\midrule
$a=(2,3)$ & $1$ & $(\bbot,\ \bbot)$ & $*\lessdot a$ (two parallel) & $0$ \\
$b=(1,2,3)$ & $2$ & $\bigl((2,3),\ \bbot,\ \bbot\bigr)$ & $a\lessdot b$,
$[a{:}b]=+1$ & $+a$ \\
\bottomrule
\end{tabular}
\caption{Disc witness of Proposition~\ref{prop:nerve} ($K=\Delta^2$, $L$ the two edges through vertex $1$): shallow and strictly graded even though $b$ carries two $\bbot$-facets; $\widetilde H_*=0$, matching $K/L\simeq*$; cautionary $B\Gg\simeq S^1$.}
\label{tab:disc}
\end{table}

\begin{table}[htbp]
\centering\small
\begin{tabular}{@{}llccc@{}}
\toprule
cell & $\dim$ & $\Dd$: $(\partial_0,\partial_1,\partial_2)$ &
$\Gg$: covers into cell & $C\Gg$: $\partial$ \\
\midrule
$Q$ & $0$ & --- & --- & --- \\
$a$ & $1$ & $(Q,\ Q)$ & $Q\lessdot a$ (two parallel) & $0$ \\
$b$ & $1$ & $(Q,\ Q)$ & $Q\lessdot b$ (two parallel) & $0$ \\
$c$ & $1$ & $(Q,\ Q)$ & $Q\lessdot c$ (two parallel) & $0$ \\
$F_1=(1,2,4)$ & $2$ & $\bigl((2,4)\!\to\!b,\ (1,4)\!=\!c,\ (1,2)\!\to\!a\bigr)$ &
$a,b,c\lessdot F_1$ & $a+b-c$ \\
$F_2=(1,3,4)$ & $2$ & $\bigl((3,4)\!\to\!a,\ (1,4)\!=\!c,\ (1,3)\!\to\!b\bigr)$ &
$a,b,c\lessdot F_2$ & $a+b-c$ \\
\bottomrule
\end{tabular}
\caption{Torus of Example~\ref{ex:torus}: $\bbot$ inactive, an ordinary $\Delta$-complex, strictly graded with no flag hypothesis used. Here $\partial_1=0$ (each edge is a loop at $Q$), $\operatorname{im}\partial_2=\langle a+b-c\rangle$ is primitive, and $\ker\partial_2=\langle F_1-F_2\rangle$, giving $H_*=(\mathbb Z,\mathbb Z^2,\mathbb Z)$.}
\label{tab:torus}
\end{table}

\section{Graded forcing and budget lemmas}\label{app:graded}

\begin{defin}[$m$-flag]\label{def:mflag}
Let $m\ge1$. A simplicial complex $M$ is \emph{$m$-flag} if every missing face (minimal non-face) of $M$ has at most $m+1$ vertices; equivalently, if $\sigma\subseteq V(M)$ lies in $M$ as soon as every subset of $\sigma$ of cardinality $\le m+1$ does. Thus an $m$-flag complex is determined by its $m$-skeleton, and $1$-flag $=$ flag. In the notation of the face-number literature this is the class $\mathcal F_{m+1}$ \citep{nevo2008missing,goff2011balanced,adamaszek2013extremal}. Write $\Cl_m(\,\cdot\,)$ for the $m$-clique operator: $\Cl_m(X)$ is the largest $m$-flag complex with $m$-skeleton $X$.
\end{defin}

\begin{defin}[$\Flag^{(m)}$]\label{def:mflagcat}
$\Flag^{(m)}$ has objects the pairs $(K,L)$ with $K$ a finite ordered $m$-flag complex and $L\subseteq K$ an $m$-flag subcomplex; nondegenerate pair maps as in Definition~\ref{def:flagcat}, with the reflecting subcategory specified separately. Thus $\Flag=\Flag^{(1)}$. As in Lemma~\ref{lem:flag-subcomplexes}, the $m$-flag subcomplexes of $K$ are exactly the $\Cl_m(X)$ for arbitrary subcomplexes $X\subseteq\sk_m K$.
\end{defin}

\begin{lem}[facet budget, graded]\label{lem:budget-m}
Let $L\subseteq K$ with $L$ $m$-flag ($K$ arbitrary), and let $\sigma\in F(K,L)=K\smallsetminus L$ with $\dim\sigma=d\ge m$. Put
\[
N_m(\sigma)\;=\;\{\rho\subseteq\sigma\;:\;1\le|\rho|\le m+1,\ \rho\notin L\},
\]
the small non-faces of $L$ inside $\sigma$. Then:
\begin{enumerate}[label=(\roman*)]
\item $N_m(\sigma)\neq\varnothing$;
\item $\partial_i\sigma=\bbot$ (equivalently, by \cref{con:D}, $\genface{i}{}\sigma=\sigma\smallsetminus\{v_i\}$ lies in $L$) if and only if $v_i\in\rho$ for every $\rho\in N_m(\sigma)$;
\item hence $\sigma$ has exactly $\bigl|\bigcap N_m(\sigma)\bigr|\le m+1$ collapsed facets, and at least $d-m$ genuine ones.
\end{enumerate}
The bound $m+1$ is attained: on $V=\{v_0,\dots,v_d\}$ take $K=2^{V}$ and $L=\{\tau\subseteq V:\rho_0\not\subseteq\tau\}$ for a fixed $\rho_0$ with $|\rho_0|=m+1$; then $L$ is $m$-flag with unique missing face $\rho_0$, and the collapsed facets of $\sigma=V$ are exactly those indexed by $\rho_0$.
\end{lem}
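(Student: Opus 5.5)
The plan is to transcribe the proof of \cref{lem:budget}, replacing ``at most two vertices'' by ``at most $m+1$ vertices'' and ``clique of $\sk_1L$'' by the $m$-flag membership criterion of \cref{def:mflag}.

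\emph{Step (i).} Since $\sigma\notin L$, the vertex set of $\sigma$ contains a minimal non-face $\rho$ of $L$. If some vertex $v$ of $\sigma$ is not in $V(L)$, then $\{v\}\in N_m(\sigma)$. Otherwise $\sigma\subseteq V(L)$, so $\sigma$ contains a minimal non-face of $L$, which has at most $m+1$ vertices by $m$-flagness and lies in $N_m(\sigma)$.

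\emph{Step (ii).} Suppose $\sigma\smallsetminus\{v_i\}\in L$. Every subset of it lies in $L$, so every $\rho\in N_m(\sigma)$ must contain $v_i$. Conversely, suppose every member of $N_m(\sigma)$ contains $v_i$. Then every subset of $\sigma\smallsetminus\{v_i\}$ of cardinality $\le m+1$ is in $L$; this includes its vertices, so it lies in $V(L)$. The characterization of $m$-flag in \cref{def:mflag} then gives $\sigma\smallsetminus\{v_i\}\in L$. The equivalence with $\partial_i\sigma=\bbot$ is \cref{con:D}.

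\emph{Step (iii).} By (ii), the collapsed facets are indexed by $\bigcap N_m(\sigma)$, a nonempty family of sets each of size $\le m+1$. So there are at most $m+1$ collapsed facets and at least $d+1-(m+1)=d-m$ genuine ones. The hypothesis $d\ge m$ only keeps this count nonnegative.

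\emph{Attainment.} The missing faces of $L=\{\tau:\rho_0\not\subseteq\tau\}$ are exactly $\{\rho_0\}$: every proper subset of $\rho_0$ lies in $L$, and any non-face contains $\rho_0$. So $L$ is $m$-flag, and $V\notin L$. The subsets of $V$ of size $\le m+1$ not in $L$ are exactly those containing $\rho_0$, and since $|\rho_0|=m+1$ this leaves only $\rho_0$. Hence $N_m(V)=\{\rho_0\}$, and by (ii) the collapsed facets are indexed by $\rho_0$. This requires $d+1\ge m+1$, which holds.

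The only step needing care is the converse in (ii). One must check that the condition also forces all vertices of the facet into $V(L)$, because singletons count among the small subsets. The rest is routine.
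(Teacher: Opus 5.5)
Your proof is correct and follows the paper's argument step for step: (i) from $m$-flagness of $L$, (ii) by face-closedness in one direction and the $m$-flag membership criterion in the other, (iii) by intersecting a nonempty family of sets of size at most $m+1$, and the same check of $N_m(V)=\{\rho_0\}$ for attainment. Your case split on vertices outside $V(L)$ in (i), taken from the flag \cref{lem:budget}, and your remark that the singletons place the facet inside $V(L)$ in (ii) only make explicit what the paper's shorter graded proof leaves implicit.
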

\begin{proof}
(i) $\sigma\notin L$ and $L$ is $m$-flag, so $\sigma$ contains a non-face of $L$ of cardinality $\le m+1$.

(ii) ($\Rightarrow$) If $\sigma\smallsetminus\{v_i\}\in L$ then every $\rho\subseteq\sigma$ with $v_i\notin\rho$ satisfies $\rho\subseteq \sigma\smallsetminus\{v_i\}$, hence $\rho\in L$ by face-closedness; so no such $\rho$ lies in $N_m(\sigma)$. ($\Leftarrow$) If every $\rho\in N_m(\sigma)$ contains $v_i$, then every subset of $\sigma\smallsetminus\{v_i\}$ of cardinality $\le m+1$ lies in $L$, so $\sigma\smallsetminus\{v_i\}\in L$ by $m$-flagness of $L$.

(iii) By (i) the family $N_m(\sigma)$ is non-empty and all its members have at most $m+1$ elements, so $\bigl|\bigcap N_m(\sigma)\bigr|\le m+1$; subtract from the $d+1$ facets.
\end{proof}

\begin{lem}[forcing, graded]\label{lem:forcing-m}
Let $K$ be $m$-flag, $L\subseteq K$ face-closed, and $\sigma\subseteq V(K)$ a vertex set at least $m+2$ of whose facets $\sigma\smallsetminus\{v\}$ lie in $F(K,L)$. Then $\sigma\in F(K,L)$.
\end{lem}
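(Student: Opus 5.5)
The plan is to copy the proof of \cref{lem:forcing}, replacing edges by subsets of size at most $m+1$. Let $v_1,\dots,v_{m+2}\in\sigma$ be distinct vertices such that each $\tau_j=\sigma\smallsetminus\{v_j\}$ lies in $K\smallsetminus L$. The argument has two steps: first show $\sigma\in K$ using $m$-flagness of $K$, then show $\sigma\notin L$ using face-closedness of $L$.

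For the first step, take any $\rho\subseteq\sigma$ with $|\rho|\le m+1$. Since $\rho$ has at most $m+1$ elements, it cannot contain all $m+2$ of the vertices $v_1,\dots,v_{m+2}$. So some $v_j\notin\rho$, which gives $\rho\subseteq\tau_j$. Because $\tau_j\in K$ and $K$ is closed under faces, we get $\rho\in K$. Thus every subset of $\sigma$ of cardinality at most $m+1$ lies in $K$. By the characterization in \cref{def:mflag}, $m$-flagness of $K$ then gives $\sigma\in K$. The one detail to check is that $\sigma\subseteq V(K)$, which holds by hypothesis; the singletons are in fact already covered because $m+2\ge 3$.

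For the second step, suppose $\sigma\in L$. Then its face $\tau_1$ lies in $L$ because $L$ is face-closed. This contradicts $\tau_1\in F(K,L)$. Hence $\sigma\in K\smallsetminus L=F(K,L)$.

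I do not expect a real obstacle. The only point that needs care is the count in the first step: $m+2$ distinct omitted vertices are exactly enough to make every subset of size at most $m+1$ miss one of them. This is also why the number $m+2$ cannot be lowered: with only $m+1$ facets, the set of those $m+1$ omitted vertices could itself be a missing face of $K$.
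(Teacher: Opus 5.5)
Your proposal is correct and follows the paper's own proof essentially line for line. You show every subset of size at most $m+1$ misses some $v_j$ and so lies in $\tau_j\subseteq K$, conclude $\sigma\in K$ by $m$-flagness, and then exclude $\sigma\in L$ by face-closedness applied to $\tau_1$.
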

\begin{proof}
Let $\tau_j=\sigma\smallsetminus\{v_j\}\in F(K,L)$ for $m+2$ distinct $v_1,\dots,v_{m+2}\in\sigma$. Let $\rho\subseteq\sigma$ with $|\rho|\le m+1$. Since $m+2>|\rho|$, some $v_j\notin\rho$, whence $\rho\subseteq\tau_j\subseteq K$ and $\rho\in K$. So every subset of $\sigma$ of cardinality $\le m+1$ lies in $K$, and $\sigma\in K$ by $m$-flagness. Finally $\sigma\notin L$, since otherwise $\tau_1\in L$ by face-closedness.
\end{proof}

\begin{thm}[graded strictness]\label{thm:strict-m}
Let $K$ be a finite ordered $m$-flag complex and $L\subseteq K$ a subcomplex. The following are equivalent:
\begin{enumerate}[label=(\arabic*)]
\item every cover of $\preceq$ raises rank by at most $m$;
\item $L$ is $m$-flag;
\item every hollow simplex of $(K,L)$ has dimension at most $m$.
\end{enumerate}
The covers of jump $\ge2$ correspond to the minimal missing faces of $L$ of dimension $\ge2$ that belong to $K$, each giving a cover from its boundary component point of jump $\dim\tau$. The case $m=1$ is Theorem~\ref{thm:strict}: jump at most $1$ means no skips at all.
\end{thm}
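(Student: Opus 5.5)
The plan is to follow the proof of \cref{thm:strict} closely, with \cref{lem:skip-hollow} generalized. \emph{Hollow simplex} will mean a surviving $\sigma\in K\setminus L$ whose whole boundary lies in $L$; equivalently, $\sigma$ is a minimal nonface of $L$ that belongs to $K$. \cref{lem:skip-hollow} does not use flagness, so it identifies the covers that skip rank: a cover of jump $\ge2$ is exactly $\bbot_\alpha\lessdot\sigma$ with $\sigma$ hollow of dimension $\ge2$ and $\partial\sigma\subseteq L_\alpha$, and its jump is $\dim\sigma-0=\dim\sigma$. Covers between genuine cells raise rank by one (\cref{lem:skips}). A cover from a basepoint to a vertex or an edge has jump at most $1\le m$. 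So every cover of jump $\ge2$ has the form $\bbot_\alpha\lessdot\tau$ with $\tau$ hollow, which is the final sentence of the theorem. It gives (1)$\Leftrightarrow$(3) immediately: the maximal jump equals $\max\{1,\dim\tau\}$ over hollow $\tau$ of dimension $\ge2$. One caveat is that a hollow $1$-simplex (an edge whose endpoints lie in $L$) gives jump $1$, which is harmless.

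For (2)$\Rightarrow$(3), take $\tau$ hollow. Then $\tau$ is a minimal nonface of $L$. Since $L$ is $m$-flag, $|\tau|\le m+1$, so $\dim\tau\le m$. No hypothesis on $K$ is needed here.

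For (3)$\Rightarrow$(2) I argue by contraposition, as in \cref{thm:strict}. Suppose $L$ is not $m$-flag, and pick a minimal nonface $\tau$ of $L$ on $V(L)$ with $|\tau|\ge m+2$. Every proper subset of $\tau$ lies in $L\subseteq K$. In particular every subset of cardinality $\le m+1<|\tau|$ lies in $K$, so $m$-flagness of $K$ gives $\tau\in K$. Hence $\tau$ survives with boundary in $L$, so it is hollow of dimension $\ge m+1$, violating (3). To get the cover from $\tau$ one needs its boundary to lie in a single component, so that \cref{lem:skip-hollow} applies. Since $\dim\tau\ge m+1\ge2$, $\partial\tau$ is connected, as every two vertices lie on a common edge in $\partial\tau$.

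The main obstacle is the bookkeeping in the first step. I need the converse direction of \cref{lem:skip-hollow}, namely that a basepoint covering $\sigma$ with $\dim\sigma\ge2$ forces $\partial\sigma\subseteq L_\alpha$, and I need it for arbitrary (non-flag) $L$. I would recheck that its proof only uses the absorbing-label argument and the existence of a genuine facet through any surviving edge $vw$. Its ``whole boundary'' step (all vertices in $V(L_\alpha)$, and any genuine facet is intermediate) holds without flagness. With that confirmed, the equivalences and the jump formula follow.
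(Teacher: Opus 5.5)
Your proposal is correct and is essentially the paper's proof: \cref{lem:skip-hollow} identifies the covers of jump $\ge2$ with hollow simplices, and you are right that it uses no flagness. That lemma gives (1)$\Leftrightarrow$(3) and the stated correspondence, (2)$\Rightarrow$(3) is the definition of $m$-flag, and (3)$\Rightarrow$(2) applies $m$-flagness of $K$ to a minimal nonface of $L$ with at least $m+2$ vertices, while your extra check that $\partial\tau$ is connected when $\dim\tau\ge2$ (so the skip comes from a single component point) is a detail this proof leaves implicit and the proof of \cref{thm:strict} states.
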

\begin{proof}
By Lemma~\ref{lem:skip-hollow} the covers of jump $\ge2$ are exactly the hollow simplices, and a hollow simplex is precisely a missing face of $L$ lying in $K$; this gives $(1)\Leftrightarrow(3)$ and the bijection. $(2)\Rightarrow(3)$ is Definition~\ref{def:mflag}. For $(3)\Rightarrow(2)$, let $\tau$ be a missing face of $L$ with $|\tau|\ge m+2$. Every proper subset of $\tau$ lies in $L\subseteq K$, in particular every subset of cardinality $\le m+1$, so $\tau\in K$ because $K$ is $m$-flag; then $\tau$ is a hollow simplex of dimension $\ge m+1$.
\end{proof}

\begin{prop}[labelled, dimension-bounded]\label{prop:rigidity-dim} 
Let $K,K'$ be flag on a common vertex set $V$, and $L\subseteq K$, $L'\subseteq K'$ subcomplexes with $\dim L,\dim L'\le\ell$, with $\ell\ge0$. If $F(K,L)$ and $F(K',L')$ agree in dimensions $\le\ell+1$, they agree in all dimensions.
\end{prop}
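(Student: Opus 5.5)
We argue by induction on dimension, exactly as in the $\ell$-route of \cref{thm:threshold-unified}, with \cref{lem:forcing} as the only tool. The key observation is that above dimension $\ell$ nothing can be collapsed. If $\sigma\in K$ has $\dim\sigma\ge\ell+1$, then $\sigma\notin L$ because $\dim L\le\ell$. So in dimensions $\ge\ell+1$ the surviving set $F(K,L)$ consists of all simplices of $K$ of that dimension, and likewise for $(K',L')$. Each facet of a simplex of dimension $d\ge\ell+2$ has dimension $d-1\ge\ell+1$, so it is automatically genuine. This is what replaces the facet budget of \cref{lem:budget}: no flag hypothesis on $L$ is needed.

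The proof proceeds by induction on $d\ge\ell+2$, the base being the hypothesis that the surviving sets agree in dimensions $\le\ell+1$. Assume $F(K,L)$ and $F(K',L')$ agree in dimension $d-1$, and let $\sigma\in F(K,L)$ have dimension $d$. All $d+1$ facets of $\sigma$ lie in $K$ and have dimension $>\ell$, hence lie in $F(K,L)$. By the induction hypothesis they therefore lie in $F(K',L')$. Since $d+1\ge\ell+3\ge3$, the vertex set $\sigma\subseteq V$ has at least three facets in $F(K',L')$. \Cref{lem:forcing}, applied to the flag complex $K'$ and the subcomplex $L'$ (the lemma requires only that $L'$ be a subcomplex), gives $\sigma\in F(K',L')$. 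Exchanging the roles of the two pairs yields the reverse inclusion, which closes the induction.

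The one point needing care is that the common-vertex-set hypothesis is genuinely used. \Cref{lem:forcing} is applied to the vertex set $\sigma$ regarded inside $V(K')=V$, and its proof reads off the edges of $\sigma$ from facets that already lie in $K'$. Without a common labelled vertex set this transfer is meaningless, as \cref{rem:dichotomy-skel} indicates. A second subtlety is the case $\ell=0$, where the base range is dimensions $\le1$ and the induction starts at $d=2$. There $d+1=3$, so exactly three facets are available and the forcing lemma still applies. Beyond this I expect no real obstacle, since the count $d+1\ge3$ is automatic once $d\ge2$.
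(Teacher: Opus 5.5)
Your proof is correct and follows the paper's argument: induction on $d\ge\ell+2$, with every facet of a $d$-simplex genuine because its dimension $d-1$ exceeds $\ell$. The $d+1\ge3$ facets then transfer by the induction hypothesis, \cref{lem:forcing} applied in $(K',L')$ puts $\sigma$ in $F(K',L')$, and exchanging the pairs gives the reverse inclusion.
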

\begin{proof}
Induct on $d\ge\ell+2$. For $\sigma\in F(K,L)$ of dimension $d$, every facet has dimension $d-1\ge\ell+1>\ell$, hence lies outside $L$: all $d+1\ge3$ facets are genuine, and by induction lie in $F(K',L')$. Lemma~\ref{lem:forcing} gives $\sigma\in F(K',L')$.
\end{proof}

\begin{lem}[intrinsic forcing]\label{lem:forcing-intrinsic}
Let $K$ be flag and $\tau_1,\tau_2,\tau_3\in K$ of dimension $d-1$, pairwise intersecting in $(d-2)$-simplices, with the three intersections $\tau_1\cap\tau_2$, $\tau_1\cap\tau_3$, $\tau_2\cap\tau_3$ pairwise distinct. Then $\sigma:=\tau_1\cup\tau_2$ has $d+1$ vertices, each $\tau_i$ is a facet of $\sigma$, and $\sigma\in K$.
\end{lem}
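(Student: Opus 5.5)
The plan is to reduce everything to a vertex count and then invoke flagness exactly as in \cref{lem:forcing}. Each $\tau_i$ has $d$ vertices and each pairwise intersection has $d-1$, so $\sigma=\tau_1\cup\tau_2$ has $d+d-(d-1)=d+1$ vertices. Then $\tau_1$ and $\tau_2$ are facets of $\sigma$, so they differ from $\sigma$ by single vertices: $\tau_1=\sigma\smallsetminus\{a\}$ and $\tau_2=\sigma\smallsetminus\{b\}$ with $a\neq b$. Moreover $\tau_1\cap\tau_2=\sigma\smallsetminus\{a,b\}$.

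The main step is to show $\tau_3\subseteq\sigma$. Write $\tau_3\cap\tau_1=\tau_1\smallsetminus\{x\}$ and $\tau_3\cap\tau_2=\tau_2\smallsetminus\{y\}$, and let $z$ be the one vertex of $\tau_3$ outside $\tau_1$. Since these two intersections are distinct, their union has at least $d$ vertices, and all of them lie in $\tau_3\cap\sigma$. As $|\tau_3|=d$, we get $\tau_3\subseteq\sigma$ and hence $\tau_3=\sigma\smallsetminus\{c\}$ for some vertex $c$.

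It remains to see that $c\notin\{a,b\}$. If $c=a$ then $\tau_3=\tau_1$, and the intersections $\tau_1\cap\tau_3=\tau_1$ would be $(d-1)$-dimensional, contradicting the hypothesis. The same argument excludes $c=b$. (Equivalently, if $\tau_3$ equalled $\tau_1$, the intersections $\tau_1\cap\tau_2$ and $\tau_3\cap\tau_2$ would coincide, violating distinctness.) So $a,b,c$ are three distinct vertices and $\tau_1,\tau_2,\tau_3$ are three distinct facets of $\sigma$, all lying in $K$.

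Membership now follows as in the proof of \cref{lem:forcing}. Any edge of $\sigma$ misses at least one of $a,b,c$, so it lies in the corresponding $\tau_i\in K$. Thus $\sigma$ is a clique of $\sk_1K$, and flagness gives $\sigma\in K$.

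The only delicate point is the containment $\tau_3\subseteq\sigma$. This is exactly where the distinctness of the three intersections is used; without it, $\tau_3$ could be the set $(\tau_1\cap\tau_2)\cup\{z\}$ with $z$ a new vertex, and the triple would not span a simplex.
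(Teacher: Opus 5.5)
Your proof is correct and takes essentially the same route as the paper: show that $\tau_3$ is a third facet of $\sigma=\tau_1\cup\tau_2$, distinct from $\tau_1$ and $\tau_2$, and then conclude by the flag forcing argument of \cref{lem:forcing}. The only difference is cosmetic. The paper identifies the unique vertices of $\tau_3$ outside $\tau_1$ and outside $\tau_2$ as $b$ and $a$. You instead get $\tau_3\subseteq\sigma$ from the fact that two distinct $(d-1)$-element subsets of $\tau_3$ cover it; your auxiliary labels $x,y$ are never used.
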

\begin{proof}
Write $r=\tau_1\cap\tau_2$, so $\tau_1=r\cup\{a\}$, $\tau_2=r\cup\{b\}$, $\sigma=r\cup\{a,b\}$ with $|\sigma|=d+1$. Since $|\tau_3\cap\tau_1|=d-1$ there is exactly one $x\in\tau_3\smallsetminus\tau_1$, and exactly one $y\in\tau_3\smallsetminus\tau_2$. If $x=y$ then $\tau_3\smallsetminus\{x\} \subseteq r$ and, by cardinality, $\tau_3=r\cup\{x\}$, whence $\tau_3\cap\tau_1=r=\tau_1\cap\tau_2$, excluded. So $x\neq y$, forcing $x\in\tau_2\smallsetminus\tau_1=\{b\}$ and $y\in\tau_1\smallsetminus\tau_2 =\{a\}$; hence $a,b\in\tau_3\subseteq\sigma$ and $\tau_3$ is a facet of $\sigma$. Now Lemma~\ref{lem:forcing} applies.
\end{proof}

\begin{rem}[the dichotomy]\label{rem:dichotomy-skel}
Collecting the results of this section:
\begin{center}
\begin{tabular}{@{}lll@{}}
\toprule
regime & determined by & sharpness\\
\midrule
collapse, $L$ flag & cells of $\dim\le3$ & Proposition~\ref{prop:2skel}\\
collapse, $L$ $m$-flag & cells of $\dim\le 2m+1$ & Proposition~\ref{prop:sharp-m}\\
collapse, $K$ flag, $\dim L\le\ell$ & cells of $\dim\le\ell+1$ & $\ell\le2$: Prop.~\ref{prop:skeleta},~\ref{prop:2skel}\\
gluing datum & no finite skeleton & Proposition~\ref{prop:glue-noskel}\\
\bottomrule
\end{tabular}
\end{center}
The third line has a threshold that grows with $\dim L$. Without a uniform bound on missing-face size or collapsed dimension there is no fixed reconstruction threshold for this class. The witnessing family also shows that the common-vertex-set hypothesis of Proposition~\ref{prop:rigidity-dim} cannot be dropped. For $d\ge1$ compare
\[
\bigl(\Delta^{d+2},\ \sk_d\Delta^{d+2}\bigr) \qquad\text{with}\qquad \bigl(K_2,\ \sk_dK_2\bigr),
\]
where $K_2$ is $d+3$ disjoint copies of $\Delta^{d+1}$ strung together by a path of edges joining them (no new clique appears, since the endpoints of each new edge have no common neighbour, so $K_2$ is flag and is exactly the $d+3$ simplices plus the path). In both quotients every surviving cell of dimension $d+1$ has all its facets in the collapsed $d$-skeleton, so both $(d+1)$-skeleta are $\bigvee_{d+3}S^{d+1}$ and are isomorphic as objects of $\FFace$; but the first carries one further cell, of dimension $d+2$, whose boundary is the alternating sum of the $d+3$ others, giving $H_{d+1}=\mathbb Z^{d+2}$ against $\mathbb Z^{d+3}$. The two pairs live on vertex sets of different sizes, so this is a counterexample to the label-free analogue of Proposition~\ref{prop:rigidity-dim} and not to the proposition itself. Note also that neither $L$ here is flag: $\sk_d$ of a simplex has missing faces of size $d+2$, so these are $(d+1)$-flag pairs, consistent with Theorem~\ref{thm:rigidity-m}. The hypothesis on $L$ makes the threshold uniform whatever $K$ is, and flagness of $L$ makes it $3$.

A collapse is specified by the subcomplex $L$, which, being flag, is determined by $1$-dimensional data, so only bounded information can hide above the visible range. A gluing datum is an equivalence relation on cells of \emph{every} dimension, and Definition~\ref{def:gluing}(ii) constrains it only downwards: identifications in the top dimension are free choices, invisible below. Thus the class of gluing presentations has no uniform fixed-skeleton reconstruction threshold, whereas Theorem~\ref{thm:rigidity-m} gives a positive compression statement for the specified collapse class (Corollary~\ref{cor:store}).
\end{rem}

\section{The convex model and the ball question}\label{app:convex}

\subsection{Proof of \texorpdfstring{\cref{lem:faces}}{the crushing lemma}}
\begin{proof}
Complete the vertex sets of the crushed faces to a partition $V(\Delta)=V_1\sqcup\dots\sqcup V_m$ by making every unused vertex a singleton block; singleton blocks create no nontrivial identification. Set $n_i=\lvert V_i\rvert$ and $N=\sum_i n_i=d+1$. Because the crushed faces are proper and disjoint, $m\ge2$.

For barycentric coordinates $\lambda_v\ge0$, $\sum_v\lambda_v=1$, put
\[
 t_i=\sum_{v\in V_i}\lambda_v,\qquad a_i=\min\{t_i,1-t_i\},\qquad y_{iv}=
 \begin{cases}
 a_i\bigl(\tfrac{\lambda_v}{t_i}-\tfrac1{n_i}\bigr),&t_i>0,\\[1mm]
 0,&t_i=0.
 \end{cases}
\]
The apparent division at $t_i=0$ is harmless, since $\lvert y_{iv}\rvert\le a_i\le t_i$; the same bound with $a_i\le1-t_i$ gives continuity at $t_i=1$. So $\Phi\colon\lambda\mapsto(t,y)$ is continuous.

Its image is exactly the set $P$ cut out by
\[
 t_i\ge0,\qquad \sum_i t_i=1,\qquad \sum_{v\in V_i}y_{iv}=0,\qquad y_{iv}\ge-\frac{t_i}{n_i},\qquad y_{iv}\ge-\frac{1-t_i}{n_i},
\]
the last two together saying $y_{iv}\ge-a_i/n_i$. When $0<t_i<1$ a point of $P$ has the unique preimage
\[
 \lambda_v=t_i\Bigl(\frac1{n_i}+\frac{y_{iv}}{a_i}\Bigr)
\]
on that block, with nonnegative coordinates summing to $t_i$. If $t_i=0$ the zero-sum and lower bounds force $y_i=0$ and all those $\lambda_v$ vanish. If $t_i=1$ all other block masses vanish and $y=0$, and the entire face $\Delta(V_i)$ is the fibre. These are exactly the prescribed nonsingleton fibres; in particular no internal coordinates of a block are merged at mixed mass $0<t_i<1$.

The defining inequalities are affine, so $P$ is convex, and it is bounded, since $t_i\in[0,1]$ and the lower bounds together with the zero-sum conditions bound each $y_{iv}$ above. Its affine dimension is $(m-1)+\sum_i(n_i-1)=N-1=d$, and the point $t_i=1/m$, $y_{iv}=0$ satisfies every inequality strictly there: the two lower bounds read $-1/(mn_i)<0$ and $-(m-1)/(mn_i)<0$, the second using $m\ge2$. So $P$ is a full-dimensional compact convex $d$-polytope, and $\Phi$ induces a continuous bijection from the compact quotient onto $P$, hence a homeomorphism.

Finally $\lambda_v>0$ for all $v$ if and only if every defining inequality is strict in the affine hull: a missing coordinate in a mixed block is exactly $y_{iv}=-a_i/n_i$, a missing block is $t_i=0$, and a pure block is $t_i=1$. So $q(\relint\Delta)$ is carried onto $\Int P$ and $q(\partial\Delta)$ onto $\partial P$; radial parametrization about an interior point gives the ball pair.
\end{proof}

\begin{rem}[why merging barycentric coordinates is not enough]\label{rem:why-deviations}
Keeping only $(t_1,\dots,t_m)$ would also identify distinct points with $0<t_i<1$, collapsing mixed faces and the interior along with the intended ones. The deviation variables $y_{iv}$ retain exactly those directions and vanish only when the whole mass of a block is concentrated on the face being crushed. The argument therefore computes the quotient space itself, which a dimension or homology count could not do.
\end{rem}

\begin{cor}[simplex quotients]\label{cor:simplex-ball}
Let $d\ge1$ and $A\subsetneq\Delta^d$. The prescribed quotient cells form a regular CW decomposition of a $d$-ball if and only if the connected components of $A$ are nonempty proper faces with pairwise disjoint vertex sets. These faces must be the components themselves, and a presentation of $A$ as some other union of faces does not suffice. Singleton components are allowed, and $A=\varnothing$ returns the simplex.
\end{cor}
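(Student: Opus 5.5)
The plan is to derive the corollary from \cref{thm:fullness} and \cref{lem:faces}, applied to the pair $(\Delta^d,A)$. Two facts have to be combined. First, the prescribed cells form a regular CW decomposition exactly when every component $A_\alpha$ is full in $\Delta^d$. Second, the space is then a $d$-ball. Fullness in a simplex is very restrictive: $\Delta^d[W]$ is the full face $\Delta(W)$ for every vertex set $W$. So $A_\alpha$ is full if and only if $A_\alpha=\Delta(V(A_\alpha))$, that is, if and only if the component is itself a single closed face. Since $A\neq\Delta^d$ and the components are disjoint, each such face is proper, and distinct components have disjoint vertex sets because components of a simplicial complex are vertex-disjoint.

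For sufficiency, assume the components are nonempty proper faces $F_1,\dots,F_s$ with pairwise disjoint vertex sets. Then each component is full, so \cref{thm:fullness} gives regularity. In addition, \cref{lem:faces} applied to $\Delta^d$ with this family shows that $\Delta^d/_{SC}A$ is homeomorphic to $B^d$. The case $s=0$, where $A=\varnothing$, returns the simplex, and singleton components are covered by the lemma because a vertex is a proper face.

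For necessity, assume the prescribed cells are regular. \Cref{thm:fullness} then forces every component to be full, hence a face, by the observation above. These faces are proper because $A\subsetneq\Delta^d$: a component equal to $\Delta^d$ would be all of $A$. They are vertex-disjoint because they are distinct components. The hypothesis that the space is a $d$-ball is not even needed for this direction.

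The clause about ``some other union of faces'' is handled by an example. Take $A=12\cup13$ in $\Delta^2$. It is a union of two faces, but not of vertex-disjoint ones. Its single component is not a face, so it is not full: the edge $23$ becomes a loop and the criterion fails. The statement therefore has to refer to the components themselves.

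The main obstacle is only bookkeeping. One must check that ``full in $\Delta^d$'' really collapses to ``is a face''. One must also make sure that the ball conclusion uses \cref{lem:faces} for the whole simplex rather than cellwise, and that the case $A=\varnothing$ is consistent with $s=0$ in that lemma.
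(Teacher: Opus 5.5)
Your proposal is correct and matches the paper's proof. Both observe that a full subcomplex of a simplex is a face, apply \cref{thm:fullness} for regularity, and apply \cref{lem:faces} to the whole simplex for the ball conclusion, with properness of $A$ keeping the open $d$-cell; your $12\cup13$ example for the last clause is essentially the paper's \cref{ex:arc}.
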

\begin{proof}
Every full subcomplex of a simplex is a face. Apply Theorem~\ref{thm:fullness} and Lemma~\ref{lem:faces}; properness retains the open $d$-cell.
\end{proof}

\begin{cor}[all-cell acyclicity]\label{cor:all-cell-acyclicity}
Fix any field $\Bbbk$. For any finite simplicial pair $A\subseteq K$ the following are equivalent:
\begin{enumerate}[label=(\roman*)]
\item every component $A_\alpha$ is full in $K$;
\item for every surviving $\sigma\in K\smallsetminus A$ and every $\alpha$, the intersection $\lvert\sigma\rvert\cap\lvert A_\alpha\rvert$ is empty or $\Bbbk$-acyclic.
\end{enumerate}
The same holds with integral acyclicity. No flag hypothesis is needed.
\end{cor}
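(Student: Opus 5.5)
The plan is to prove (i)$\Rightarrow$(ii) by identifying the intersection geometrically, and (ii)$\Rightarrow$(i) by contraposition, using the minimal-missing-simplex argument from the necessity half of \cref{thm:fullness}.

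For (i)$\Rightarrow$(ii), take a surviving $\sigma$ and a component $\alpha$ with $\lvert\sigma\rvert\cap\lvert A_\alpha\rvert\neq\varnothing$. I repeat verbatim the step in the sufficiency half of the proof of \cref{thm:fullness}. Every subset of $V(\sigma)\cap V(A_\alpha)$ spans a face of $K$ supported on $V(A_\alpha)$, hence a face of $A_\alpha$ by fullness. Conversely, a point of the intersection lies in the relative interior of a unique face of $\sigma$, and that face is in $A_\alpha$. So the intersection is the single face of $\sigma$ spanned by $V(\sigma)\cap V(A_\alpha)$. It is a simplex, hence contractible, hence acyclic over $\mathbb Z$ and over every field $\Bbbk$.

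For (ii)$\Rightarrow$(i), suppose some $A_\alpha$ is not full. As in \cref{thm:fullness}, choose an inclusion-minimal surviving $\tau$ supported on $V(A_\alpha)$. The earlier argument shows $\tau\notin A$, $\dim\tau=r\ge1$, and every proper face of $\tau$ lies in $A_\alpha$. It then follows that $\lvert\tau\rvert\cap\lvert A_\alpha\rvert=\lvert\partial\tau\rvert$. The intersection contains $\lvert\partial\tau\rvert$, and it cannot contain an interior point, since that would force $\tau\in A_\alpha$. Now $\lvert\partial\tau\rvert\cong S^{r-1}$ with $r\ge1$, so $\widetilde H_{r-1}(\partial\tau;\Bbbk)\cong\Bbbk\neq0$. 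For $r=1$ this is $S^0$, two points, which has nonzero reduced $H_0$. The intersection is therefore nonempty and not $\Bbbk$-acyclic, contradicting (ii). The same computation with $\mathbb Z$ gives the integral version.

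The only step needing care is the convention that ``acyclic'' means vanishing \emph{reduced} homology. Under that convention the disconnected case $r=1$ counts as non-acyclic, and the contrapositive goes through uniformly in $r$. I expect no real obstacle beyond reusing the minimality argument: neither direction uses flagness, which matches the statement, and independence of the field is automatic because both a simplex and a sphere have field-independent reduced homology.
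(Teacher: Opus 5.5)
Your proposal is correct and follows the paper's own proof. In one direction, fullness makes every nonempty $\lvert\sigma\rvert\cap\lvert A_\alpha\rvert$ a face, hence contractible. In the other, the minimal missing $\tau$ from the necessity half of \cref{thm:fullness} gives $\lvert\tau\rvert\cap\lvert A_\alpha\rvert=\lvert\partial\tau\rvert\cong S^{r-1}$, with the case $r=1$ handled by reduced $H_0$. You also supply the carrier-of-a-point argument that the paper leaves implicit.
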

\begin{proof}
Under fullness every nonempty intersection is a face, hence contractible. Otherwise the minimal missing $\tau$ of Theorem~\ref{thm:fullness} has $\lvert\tau\rvert\cap\lvert A_\alpha\rvert=\lvert\partial\tau\rvert\cong S^{r-1}$, whose reduced homology in degree $r-1$ is nonzero over every field and over $\mathbb Z$; for $r=1$ the obstruction is the reduced $H_0$ of two points.
\end{proof}

\begin{rem}[the quantifier is what matters]\label{rem:quantifier}
For flag $A$ the edge criterion is also equivalent to connectedness of every nonempty $\lvert\sigma\rvert\cap\lvert A_\alpha\rvert$ \emph{for every surviving $\sigma$}: a missing edge inside one component has disconnected intersection, and fullness makes all such intersections faces. Testing only maximal simplices is not enough, as the chordless four-cycle of Example~\ref{ex:four-cycle} shows. Without flagness even the all-simplices connectedness test fails, by Example~\ref{ex:boundary-not-flag}. Corollary~\ref{cor:all-cell-acyclicity} repairs this, again with the condition at every surviving simplex.
\end{rem}

\subsection{Three properties, and how they are related}

For a proper $A\subsetneq\Delta^d$ distinguish
\[
\begin{aligned}
 \mathsf E&:\ \text{all inherited simplex characteristic maps are embeddings};\\
 \mathsf R&:\ \mathcal E(\Delta^d,A)\text{ is a regular CW decomposition};\\
 \mathsf B&:\ \bigl(\Delta^d/_{SC}A,\ q(\partial\Delta^d)\bigr)\cong(B^d,S^{d-1}).
\end{aligned}
\]

\begin{defin}[iterated faces]\label{def:iterated}
For $S\in\FFace$, $\sigma\in S_d$ and a non-empty $T\subseteq\{0,\dots,d\}$, let $\partial^{T}\sigma$ denote the iterated face obtained by applying $\partial_i$ for every $i\notin T$, in decreasing order of $i$. This is well defined by the semisimplicial identities, and $\dim\partial^{T}\sigma=\lvert T\rvert-1$ whenever the value is genuine; $T=\{0,\dots,d\}$ returns $\sigma$ itself. Thus $\partial^{T}\sigma$ is the face of $\sigma$ spanned by the positions in $T$, tracked through the decoration, and it is $\bbot_\alpha$ as soon as any stage of the descent is.
\end{defin}

\begin{prop}[embedding criterion for the inherited maps]\label{prop:regular}
Let $S\in\FFace$. The canonical characteristic maps of $\lvert S\rvert_{\bullet}$ supplied by Lemma~\ref{lem:cells} are embeddings if and only if for every $\sigma\in S_d$ with $d\ge1$:
\begin{enumerate}[label=(R\arabic*)]
\item $\partial^{T}\sigma\neq\bbot_\alpha$ for every $\alpha$ and every $T$ with $|T|\ge2$ (no positive-dimensional face is crushed);
\item $T\mapsto\partial^{T}\sigma$ is injective on non-empty proper subsets $T\subsetneq\{0,\dots,d\}$, distinct basepoints counting as distinct $0$-cells.
\end{enumerate}
For a collapse this says exactly that $\lvert\sigma\cap V(A_\alpha)\rvert\le1$ for every surviving $\sigma$ and every $\alpha$, i.e.\ that $q$ is injective on every closed simplex outside $A$.
\end{prop}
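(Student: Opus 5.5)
We prove the two directions separately, working cell by cell, and then specialize to collapses. By \cref{lem:cells} and \cref{rem:skeletal-filtration}, the canonical characteristic map $\Phi_\sigma\colon\Delta^d\to\lvert S\rvert_\bullet$ agrees on the face of $\Delta^d$ spanned by positions $T$ with the characteristic map of $\partial^T\sigma$ if this is genuine, and with the constant map at $\bbot_\alpha$ otherwise. Induction on iterated faces together with the semisimplicial identities makes this precise for every $T$. Since $\Delta^d$ is compact and $\lvert S\rvert_\bullet$ is Hausdorff, $\Phi_\sigma$ is an embedding if and only if it is injective. The open faces $\relint\Delta(T)$ partition $\Delta^d$, so it suffices to track where each open face goes.

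\emph{Necessity.} If (R1) fails for some $T$ with $|T|\ge2$, then the positive-dimensional face $\Delta(T)$ is sent to a point, and $\Phi_\sigma$ is not injective. If (R2) fails, we have $T\ne T'$ with $\partial^T\sigma=\partial^{T'}\sigma$. When this common value is a genuine cell $\tau$, the two open faces $\relint\Delta(T)$ and $\relint\Delta(T')$ are disjoint in $\Delta^d$ but both map onto $e_\tau$. When it is a basepoint, then by (R1) we may assume $|T|=|T'|=1$, and two distinct vertices of $\Delta^d$ are identified. Either way injectivity fails.

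\emph{Sufficiency.} Assume (R1) and (R2) hold for every cell; we induct on $d$. First, every genuine $\partial^T\sigma$ again satisfies (R1) and (R2), because its iterated faces are among the $\partial^{T''}\sigma$ with $T''\subseteq T$. By induction its characteristic map is therefore an embedding. Next, (R1) says each $\partial^T\sigma$ with $|T|\ge2$ is a genuine cell of dimension $|T|-1$. Distinct genuine cells have disjoint open cells, and so do distinct basepoints. So (R2) implies that the images of distinct open faces of $\Delta^d$ are pairwise disjoint, while the inductive hypothesis gives injectivity on each open face. The interior of $\Delta^d$ maps homeomorphically onto $e_\sigma$, which is disjoint from the lower skeleton. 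Together these give injectivity of $\Phi_\sigma$.

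\emph{The collapse case.} Here $\partial^T\sigma$ is the face $\sigma_T$ spanned by $T$ if $\sigma_T\notin A$, and $\bbot_\alpha$ if $\sigma_T\in A_\alpha$. Distinct genuine faces are distinct simplices, and a vertex lying in $A$ goes to the basepoint of its component. Hence (R2) fails exactly when two vertices of $\sigma$ lie in the same $V(A_\alpha)$. Conversely, if two vertices of $\sigma$ lie in the same $A_\alpha$, they are identified in the quotient, so (R2) fails even when the edge between them survives. Condition (R1) is implied by this vertex condition, because any crushed face with at least two vertices has two vertices in one component. The equivalence with injectivity of $q$ on $\lvert\sigma\rvert$ then follows, since the characteristic map is $q\vert_{\lvert\sigma\rvert}$.

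The main obstacle is the precise identification of the restriction of $\Phi_\sigma$ to an arbitrary lower face when the descent passes through a basepoint. This step relies on the absorbing convention of \cref{lem:face-unravel} and on the gluing of \cref{rem:skeletal-filtration}.
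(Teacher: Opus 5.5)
Your proof is correct and follows the paper's argument. You split the simplex into open faces, identify each restriction with the characteristic map of $\partial^{T}\sigma$ or with a constant at a component point, let (R1) govern injectivity on each open face and (R2) the disjointness of their images, and finish by compactness and Hausdorffness. The differences are cosmetic. Your induction on $d$ is unnecessary, since every CW characteristic map is already injective on its open disc. In the collapse case you argue through (R1)--(R2) and the identification of the characteristic map with $q|_{\lvert\sigma\rvert}$. The paper instead argues directly that the vertex bound makes each $\lvert\sigma\rvert\cap\lvert A_\alpha\rvert$ at most a point.
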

\begin{proof}
By Definition~\ref{def:real-pointed} and Lemma~\ref{lem:cells} the attaching map $\Phi^\sigma\colon\partial\Delta^{d}\to\lvert S\rvert^{(d-1)}_\bullet$ restricts, on the open face indexed by $T$, to the characteristic map of $\partial^{T}\sigma$ on the open disc when that value is genuine and to the constant map at $\bbot_\alpha$ otherwise. Characteristic maps are homeomorphisms from open discs onto open cells, so each restriction is injective precisely under (R1); the images, being open cells, are equal or disjoint, so they are pairwise disjoint precisely under (R2). Compactness of $\partial\Delta^d$ and Hausdorffness give an embedding. For the collapse form: two vertices of $\sigma$ in one $V(A_\alpha)$ violate (R2), while under the stated bound a surviving closed simplex meets each $\lvert A_\alpha\rvert$ in at most one point, since a nonempty intersection contains its vertices.
\end{proof}

\begin{prop}[strict hierarchy]\label{prop:hierarchy}
For every $d\ge1$ and $A\subsetneq\Delta^d$ one has $\mathsf E\Rightarrow\mathsf R\Rightarrow\mathsf B$. Neither converse holds in general, already in dimension two. The three properties are therefore distinct but not independent.
\end{prop}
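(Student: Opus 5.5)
The plan is to prove the two implications first and then exhibit two planar counterexamples. For $\mathsf E\Rightarrow\mathsf R$ there is little to do. If every inherited characteristic map $q|_{\lvert\sigma\rvert}$ is an embedding, then $q(\lvert\sigma\rvert)$ is homeomorphic to a closed simplex. By \eqref{eq:closed-cell-pair}, it carries $\lvert\partial\sigma\rvert$ onto $\overline{e_\sigma}\smallsetminus e_\sigma$. So each closed cell is a ball pair, and \cref{lem:closed-cell} gives regularity. One could instead argue through \cref{thm:fullness}. By \cref{prop:regular}, $\mathsf E$ means $\lvert\sigma\cap V(A_\alpha)\rvert\le1$ for every surviving $\sigma$. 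Then any simplex of $K[V(A_\alpha)]$ is a vertex of $A_\alpha$, and every edge of $\Delta^d$ on $V(A_\alpha)$ lies in $A_\alpha$ since it cannot survive. Hence every component is full, which is $\mathsf R$. I would keep the first argument, since it is direct.

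For $\mathsf R\Rightarrow\mathsf B$, I use \cref{thm:fullness}. Regularity makes each component $A_\alpha$ full in $\Delta^d$. A full subcomplex of a simplex is a face, so the components are faces $F_\alpha=\Delta(V(A_\alpha))$. They are pairwise disjoint because distinct components have disjoint vertex sets. They are proper because $A\subsetneq\Delta^d$ and $A$ is face-closed: if some $F_\alpha$ were all of $\Delta^d$ then $A=\Delta^d$. Now \cref{lem:faces} applies to the top simplex and gives $(\Delta^d/\{F_\alpha\},q(\partial\Delta^d))\cong(B^d,S^{d-1})$, which is $\mathsf B$. This is essentially \cref{cor:simplex-ball}.

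For the failure of the converses I would take two examples in $\Delta^2=\Delta(123)$. For $\mathsf R\not\Rightarrow\mathsf E$, take $A=\overline{12}$. The component is a full face, so $\mathsf R$ holds and the quotient is a regular bigon. But the triangle $123$ survives with two vertices in $V(A)$, so \cref{prop:regular} says its inherited map is not an embedding. For $\mathsf B\not\Rightarrow\mathsf R$, take $A=\overline{12}\cup\overline{13}$, the boundary arc. Crushing a boundary arc of a disc yields a disc whose boundary is the image of the boundary circle, so $\mathsf B$ holds. However, $A$ is not full, since it misses the edge $23$. By \cref{thm:fullness} the decomposition is not regular; concretely, the surviving edge $23$ becomes a loop.

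The only step needing care is checking $\mathsf B$ in the second example, namely that the quotient of $\Delta^2$ by a contractible boundary arc is a disc with the correct boundary. I would do this explicitly. Identify $\Delta^2$ with a closed disc in which the arc is a closed subarc $J$ of the boundary circle. A homeomorphism of pairs sends this to the upper half-disc with $J$ the lower diameter. Collapsing a diameter of a half-disc to a point gives a disc, by an explicit radial map, and the image of the boundary is the remaining circle. The final clause of the statement then just records that the implications are strict while the chain $\mathsf E\Rightarrow\mathsf R\Rightarrow\mathsf B$ links the three properties.
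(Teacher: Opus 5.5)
Your proof is correct. The implication $\mathsf E\Rightarrow\mathsf R$ and both counterexamples are essentially the paper's: Examples~\ref{ex:bigon} and~\ref{ex:arc} use a single crushed edge and a crushed boundary arc of the triangle. Your half-disc argument also supplies the detail for $\mathsf B$ in the arc example, which the paper only asserts.

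You differ from the paper only in $\mathsf R\Rightarrow\mathsf B$. Your chain is:
\begin{itemize}
\item the necessity half of \cref{thm:fullness} gives full components;
\item full components of a simplex are disjoint proper faces;
\item the convex model of \cref{lem:faces} then gives the ball pair.
\end{itemize}
In effect you reprove \cref{cor:simplex-ball}.

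The paper applies \cref{lem:closed-cell} once, to the top cell. Since $A$ is proper, $\Delta^d$ itself survives, and the closure of its open cell is the whole quotient. By \eqref{eq:closed-cell-pair}, the frontier of that cell is $q(\partial\Delta^d)$. So regularity of this one closed cell is literally the ball pair $\mathsf B$.

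What each route buys:
\begin{itemize}
\item The paper's route needs neither fullness nor the convex model. It also shows at once why the implication is special to a simplex source: for a general $K$ there need be no surviving cell whose closure is everything.
\item Your route is a detour. The necessity half of \cref{thm:fullness} is itself proved by \cref{lem:closed-cell}, applied to a minimal missing simplex, and you then rebuild the ball from the polytope $P$. In exchange, it determines exactly which $A\subsetneq\Delta^d$ satisfy $\mathsf R$, and it produces the homeomorphism explicitly.
\end{itemize}
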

\begin{proof}
Embedded inherited characteristic maps are in particular regular ones, giving the first implication. Since $A$ is proper the top open $d$-cell survives; its closure is the whole quotient and its complement in that closure is $q(\partial\Delta^d)$, so Lemma~\ref{lem:closed-cell} gives the second. Examples~\ref{ex:bigon} and~\ref{ex:arc} refute the converses. For a general source $K$ the second implication is unavailable: regularity does not make the whole quotient a ball, since there need be no surviving top cell whose closure is everything.
\end{proof}

\begin{ex}[one edge of a triangle: regular cells, non-embedded source maps]\label{ex:bigon}
Let $K=\Delta(abc)$ and $A=\Delta(ab)\sqcup\{c\}$. Both components are full, so the quotient is regular: two $0$-cells, two $1$-cells and one $2$-cell, a bigon disc. Yet $abc$ meets $A_1=\Delta(ab)$ in two vertices, so the inherited triangle map is not injective and $\mathsf E$ fails. Concretely, writing a point of the triangle as $(1-t)\bigl((1-s)a+sb\bigr)+tc$, the map
\[
 (s,t)\longmapsto\bigl(2(2s-1)\sqrt{t(1-t)},\ 2t-1\bigr)
\]
descends to a continuous bijection onto the closed unit disc, since at height $y=2t-1$ the disc has half-width $\sqrt{1-y^2}=2\sqrt{t(1-t)}$, exactly the range of the first coordinate; compactness makes it a homeomorphism, and the two surviving edges become the boundary semicircles.
\end{ex}

\begin{ex}[a collapsible boundary arc: a ball with irregular cells]\label{ex:arc}
In $\Delta(abc)$ crush $A=ab\cup bc$ to one point. The quotient is a $2$-ball, but the surviving edge $ac$ has both endpoints at the collapsed point, so its closure is $S^1$ rather than a closed interval and $\mathsf R$ fails. Here $A$ is flag, connected and collapsible, and still not full in $K$. Thus $\mathsf B\not\Rightarrow\mathsf R$, while Example~\ref{ex:bigon} gives $\mathsf R\not\Rightarrow\mathsf E$.
\end{ex}

All four possibilities that occur are realized on the one triangle:
\begin{center}
\begin{tabular}{@{}lccc@{}}
\toprule
$A\subseteq\Delta(abc)$ & $\mathsf E$ & $\mathsf R$ & $\mathsf B$\\
\midrule
$\{a\}\sqcup\{b\}\sqcup\{c\}$ & yes & yes & yes\\
$\Delta(ab)\sqcup\{c\}$ & no & yes & yes\\
$ab\cup bc$ & no & no & yes\\
$\partial\Delta^2$ & no & no & no\\
\bottomrule
\end{tabular}
\end{center}
The first row makes no nontrivial identification; the last collapses the whole boundary and yields $S^2$.

\begin{ex}[two opposite edges of a tetrahedron]\label{ex:opposite-edges}
Crush $\Delta(ab)$ and $\Delta(cd)$ in $\Delta(abcd)$ separately. The faces are disjoint and full, so the quotient is a regular ball with cells $(f_0,f_1,f_2,f_3)=(2,4,4,1)$; the four $2$-cells are bigons and the boundary has $\chi=2$. In the convex model of Lemma~\ref{lem:faces} there are two blocks of size two, and with $t=t_1=1-t_2$ and one deviation coordinate per block the cross-sections are squares of side proportional to $\min(t,1-t)$: the polytope $P$ is a square bipyramid. The vertices of $P$ are not cells of the quotient decomposition: $P$ is an auxiliary model, and only its interior/boundary pair is used.
\end{ex}

\begin{ex}[the chordless four-cycle]\label{ex:four-cycle}
In $\partial\Delta^3$ let $C_4=ab\cup bc\cup cd\cup da$, a flag circle separating the boundary sphere into two discs, so $\partial\Delta^3/C_4\cong S^2\vee S^2$. For the solid tetrahedron instead $\Delta^3/C_4\simeq\Sigma C_4\simeq S^2$, a homotopy statement and not a homeomorphism to the boundary quotient. The missing diagonals $ac$ and $bd$ are already surviving loop edges, so Corollary~\ref{cor:regular-edges} reports irregularity without any reference to the top cell: connectedness of the intersection with the maximal simplex alone would have missed it.
\end{ex}

\begin{ex}[why the edge test needs flagness]\label{ex:boundary-not-flag}
Take $A=\partial\Delta^2\subset K=\Delta^2$. Every edge is collapsed, so there is no surviving loop edge; yet the surviving $2$-cell has closure $S^2$. Theorem~\ref{thm:fullness} detects this through the minimal missing $2$-simplex. Here $A$ is not flag, and, consistently with Theorem~\ref{thm:strict}, this is also the archetype of a rank-skipping cover.
\end{ex}

\begin{rem}[what fullness does \emph{not} bound]\label{rem:not-mild}
Lemma~\ref{lem:budget} bounds the number of facets of a cell lying \emph{wholly} in $L$, and it is tempting to read this as a bound on how far the attaching map is from injective. The facet-budget bound does not bound the number of noninjective facet restrictions. Take $K=\Delta^{d}$ on $\{u,x,v,w_1,\dots,w_{d-2}\}$ with $L$ the flag path $u\!-\!x\!-\!v$: for $d\ge3$ no facet of the top cell lies in $L$, so the facet budget is untouched, yet every facet contains two of $u,x,v$ and its restriction identifies them, so all $d+1$ restrictions fail to be injective. Non-injectivity is governed by $\lvert\sigma\cap V(A_\alpha)\rvert$ (Proposition~\ref{prop:regular}), which flagness does not bound. What the encoding exploits is therefore not mildness of the failure but Corollary~\ref{cor:skeletal}: the attaching map, however far from injective, is still determined by the ordered facet list.
\end{rem}

\subsection{The weaker ball question, and why the obvious tests fail}\label{sec:ball-question}

Theorem~\ref{thm:fullness} settles regularity of the prescribed cells. Whether the quotient of a single ball is a ball is a weaker question with a different answer, and we record where the natural candidate criteria fail.

For $n\ge1$, a nonempty proper compact $C\subset S^n$ is \emph{cellular} if it is the intersection of a nested sequence of closed $n$-balls, $B_{k+1}\subset\Int B_k$. Brown's theorem gives $S^n/C\cong S^n$ exactly when $C$ is cellular~\citep{Brown,Uspenskij}, and the corresponding statement for $d\ge2$ and finitely many disjoint nonempty proper compact $C_i\subset\partial B^d$ crushed to distinct points is that $\bigl(B^d/\{C_i\},q(S^{d-1})\bigr)\cong(B^d,S^{d-1})$ if and only if every $C_i$ is cellular; the necessity uses the generalized Schoenflies theorem on a bicollared sphere around each exceptional fibre, and the sufficiency is the standard shrinking construction. Cellularity depends on the embedding and is not determined by the homotopy type.

For the following homotopy and homology statements, assume that the $C_i$ are finite boundary subcomplexes, so that the required inclusions are CW cofibrations. The preceding cellularity statement itself permits arbitrary compact sets. First, homology is not enough: attaching a cone on each $C_i$ and collapsing gives
\[
 B^d/\{C_i\}\;\simeq\;\bigvee_i\Sigma C_i,\qquad \widetilde H_k\bigl(B^d/\{C_i\}\bigr)\cong\bigoplus_i\widetilde H_{k-1}(C_i),
\]
so acyclicity of each $C_i$ is necessary but not sufficient. Second, acyclicity does not suffice: the presentation complex of $\langle x,y\mid x^3y^{-5},\,x^3(xy)^{-2}\rangle$ has $\partial_2=\left(\begin{smallmatrix}3&1\\-5&-2\end{smallmatrix}\right)$ of determinant $-1$ and $\partial_1=0$, hence is integrally acyclic, while $x\mapsto(1\,2\,3)$, $y\mapsto(1\,3\,4\,2\,5)$ gives $xy=(2\,5)(3\,4)$ and a nontrivial quotient of its fundamental group. Replace the presentation complex by a finite simplicial homotopy model before embedding it in a sufficiently large simplex boundary. If a finite polyhedron $C$ is cellular, some ball $B$ with $C\subset B$ lies inside a neighbourhood $U$ that retracts onto $C$; then $C$ is a retract of the contractible $B$, hence contractible. Thus this model is not cellular in any embedding, and crushing it as a boundary subcomplex does not yield a ball, even though $\Sigma C$ is simply connected and acyclic, hence contractible. Third, collapsibility is sufficient in the PL setting, since a collapsible PL subpolyhedron has ball regular neighbourhoods~\citep{RourkeSanderson}, but not necessary: Benedetti and Lutz exhibit a noncollapsible dunce hat in a PL $3$-sphere onto which a $3$-ball collapses~\citep{BenedettiLutz}, which is therefore cellular without admitting a collapsible triangulation.

None of this weakens Theorem~\ref{thm:fullness}, which is why we proved it by the explicit model of Lemma~\ref{lem:faces} rather than by ball recognition. The summary for a simplex source is:
\begin{center}
\begin{tabular}{@{}p{.40\textwidth}p{.52\textwidth}@{}}
\toprule
Property requested & Criterion\\
\midrule
$\mathsf E$: inherited maps are embeddings & every surviving simplex meets each component in at most one vertex (Prop.~\ref{prop:regular})\\[1mm]
$\mathsf R$: prescribed cells are regular & every component is full; for a simplex source, a proper face (Thm.~\ref{thm:fullness}, Cor.~\ref{cor:simplex-ball})\\[1mm]
$\mathsf B$: the quotient is a ball pair & the crushed sets are cellular in the boundary sphere\\[1mm]
only the reduced homology of a ball & each crushed subcomplex acyclic over the chosen coefficients\\
\bottomrule
\end{tabular}
\end{center}
Nothing here forces a change to the stored records: the implementation may keep the rank-dropping presentation, since reparametrizing a cell as a regular ball is a different operation from altering the stored source map.

\section{The gluing regime}\label{app:gluing}

We distinguish two presentation classes. The first uses the fixed orders of source simplices; the second retains affine vertex bijections explicitly. In either case the quotient identifies points of $|K|$ and does more than rename simplices.

\begin{defin}[order-preserving gluing datum]\label{def:gluing}
An order-preserving gluing datum on an ordered complex $K$ is an equivalence relation $\sim$ on its nonempty simplices such that
\begin{enumerate}[label=(\roman*),leftmargin=2em]
\item $\sigma\sim\sigma'$ implies $\dim\sigma=\dim\sigma'$;
\item $\sigma\sim\sigma'$ implies $\genface{i}{K}\sigma\sim\genface{i}{K}\sigma'$ for every local facet index $i$.
\end{enumerate}
The identified simplices are matched by the increasing bijection of their vertex lists and its affine extension. The space $|K|/{\sim}$ means the equivalence relation generated by these affine identifications.
\end{defin}

\begin{rem}[congruences and repeated gluings]\label{rem:not-iso}
This is a semisimplicial congruence, a standard coequalizer construction. It can be generated by several overlapping face identifications, rather than a single isomorphism between disjoint subcomplexes. In the torus example, one corner is identified with two different corners. The least congruence containing a union of order-preserving gluing relations is again a congruence, and this is what composing these operations means. After the first quotient the object need not be an abstract simplicial complex. The next quotient is supported by the semisimplicial presentation, without a new global order on quotient vertices.
\end{rem}

\begin{defin}[coherent gluing datum with permutations]\label{def:gluing-perm}
A permuted gluing datum is an equivalence relation on the simplices of $K$, preserving dimension, together with a bijection $\pi_{\sigma\sigma'}:V(\sigma)\to V(\sigma')$ for every related pair, such that
\[
 \pi_{\sigma\sigma}=\id,\qquad \pi_{\sigma'\sigma}=\pi_{\sigma\sigma'}^{-1},\qquad \pi_{\sigma'\sigma''}\pi_{\sigma\sigma'}=\pi_{\sigma\sigma''}.
\]
For every face $\tau\subseteq\sigma$ one requires $\tau\sim\pi_{\sigma\sigma'}(\tau)$ and the assigned face bijection to be the restriction of $\pi_{\sigma\sigma'}$. The affine maps generate the quotient of $|K|$. In particular, a loop of identifications of any simplex must induce the identity on its vertices; this is the \emph{trivial self-holonomy} requirement.
\end{defin}

The order-preserving datum is the special case with increasing bijections. The permutation extension is not an ordinary semisimplicial quotient with the unchanged cell inventory. It has the following separate CW realization statement.

\begin{prop}[CW realization of coherent permuted gluings]\label{prop:permuted-cw}
A datum of Definition~\ref{def:gluing-perm} gives a finite CW quotient with one open $d$-cell for each equivalence class of source $d$-simplices. Choose one ordered representative $\sigma$ in each class. Its characteristic map is the source simplex followed by the quotient; on a facet $\tau$ it is the characteristic map of the chosen representative $r(\tau)$ composed with the affine map induced by $\pi_{\tau,r(\tau)}$. These codimension-one affine maps determine all attachments. The cell-face order is strictly graded.
\end{prop}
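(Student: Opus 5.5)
The plan is to construct the quotient $X=|K|/{\approx}$, where $\approx$ is the equivalence relation generated by the affine identifications $|\sigma|\to|\sigma'|$ induced by the bijections $\pi_{\sigma\sigma'}$, and then to read off the cells. The first step is to show that $\approx$ has a normal form. Each point $x\in|K|$ lies in the relative interior of a unique simplex $\sigma(x)$ and has barycentric coordinates there. I would prove that $x\approx y$ if and only if $\sigma(x)\sim\sigma(y)$ and $\pi_{\sigma(x)\sigma(y)}$ carries the coordinates of $x$ to those of $y$.

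The argument for this is the heart of the proof. A generating identification $|\sigma|\to|\sigma'|$ restricts on each face $\tau\subseteq\sigma$ to the affine map of $\pi_{\sigma\sigma'}|_\tau$. By the face requirement of Definition~\ref{def:gluing-perm}, this restriction is the assigned bijection $\pi_{\tau,\pi(\tau)}$. So a single generating step between points in open simplices is already of the asserted form. A chain of such steps composes the bijections along a path in the relation. The cocycle identities $\pi_{\sigma'\sigma''}\pi_{\sigma\sigma'}=\pi_{\sigma\sigma''}$ show that the composite depends only on the endpoints. In particular, trivial self-holonomy ensures that no point of $\relint|\sigma|$ is identified with a different point of the same open simplex. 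Hence the relation described is already transitive, and it coincides with $\approx$.

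It follows that each class of $d$-simplices contributes exactly one open $d$-cell, namely the image of $\relint|\sigma|$ for the chosen representative $\sigma$, and that these images are pairwise disjoint. Next I check the characteristic maps. The composite $|\sigma|\to|K|\to X$ is injective on the interior. On a facet $\tau$ it agrees with the map $|\tau|\xrightarrow{\pi_{\tau,r(\tau)}}|r(\tau)|\to X$, because $\tau\approx r(\tau)$ via that affine bijection, which is the normal form again. Induction on dimension then shows that the attaching maps are determined by these codimension-one affine data, as in Corollary~\ref{cor:skeletal}. For the topology, $X$ is a quotient of the compact space $|K|$ by a closed relation. The relation is a finite union of graphs of affine maps between closed simplices, so $X$ is compact Hausdorff. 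A set is closed in $X$ iff its preimage in each closed cell is closed, so $X$ carries the weak topology, and $X$ is a finite CW complex.

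Strict gradedness follows because there are no collapsed components, so every facet of a $d$-cell is a genuine $(d-1)$-cell. Every face relation then factors through a chain of facets, and each cover raises dimension by exactly one. I expect the main obstacle to be the normal-form step, specifically showing that generated chains passing through faces of different dimensions cannot produce extra identifications inside an open simplex. Here the requirement that face bijections restrict, together with coherence on faces, is exactly what is needed, and that is where I would spend the care.
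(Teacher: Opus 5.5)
Your proposal is correct, and it reaches the CW structure by a different route from the paper.

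\textbf{The paper's route.} It builds an auxiliary CW complex by attaching the chosen representative simplices in dimension order along the prescribed boundary maps. These maps agree on overlaps because the bijections are coherent under restriction and composition. The paper then asserts that each adjunction identifies exactly the points the relation requires. Finally, the induced map from the compact quotient $|K|/{\approx}$ to this Hausdorff CW space is a continuous bijection, hence a homeomorphism.

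\textbf{Your route.} You never construct that auxiliary space. Your carrier-simplex normal form comes from the face-restriction requirement and the cocycle identities. It describes the point classes of $\approx$ explicitly and identifies $\approx$ with the finite union of the graphs of the affine maps $|\sigma|\to|\sigma'|$ over related pairs. So $\approx$ is closed in $|K|\times|K|$, and Hausdorffness follows from the standard fact about closed equivalence relations on compact Hausdorff spaces. The cells, characteristic maps and weak topology are then read off directly, and the strict-gradedness argument is the same as the paper's.

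\textbf{Trade-off.} Your route gives an explicit proof of the step the paper states in one sentence, that no extra identifications occur, and it derives Hausdorffness rather than inheriting it. The paper's route gets the CW axioms for free from adjunction spaces, at the cost of checking that the boundary maps are compatible.

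\textbf{On your anticipated obstacle.} It does not really arise. Each generating step preserves the dimension of a point's carrier, so at the level of points a chain never passes between faces of different dimensions. Your normal-form argument already handles this.

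\textbf{One line to add.} For a complete write-up, show that the characteristic map restricted to $\relint|\sigma|$ is a homeomorphism onto the open cell. Its preimage in $|\sigma|$ is exactly $\relint|\sigma|$, since boundary points have lower-dimensional carriers. The restriction of the closed map $|\sigma|\to X$ to a full preimage is again closed, so the continuous bijection onto the open cell is a homeomorphism.
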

\begin{proof}
An identification carries a face interior homeomorphically to an interior of the same dimension. Trivial self-holonomy ensures that no two distinct points in the interior of a chosen representative become equal. Restrictions to a lower face agree because the prescribed bijections are coherent under restriction and composition. One can therefore attach the representative simplices in dimension order, using those boundary maps. Each finite adjunction is a CW adjunction, and it identifies exactly the points required by the original relation. The resulting map from the compact source quotient to this Hausdorff CW space is a continuous bijection and hence a homeomorphism. Every proper source face factors through a sequence of facets; all dimensions survive under a gluing, so any relation spanning more than one dimension has an intermediate class. Thus no cover skips a dimension.
\end{proof}

\begin{prop}[A permutation quotient outside the unchanged semisimplicial inventory]\label{prop:permuted-not-ssset}
The coherent permuted class contains a quotient that has no ordinary semisimplicial presentation with the same open-cell inventory.
\end{prop}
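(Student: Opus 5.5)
The plan is to exhibit one explicit small coherent permuted gluing and to show by a parity-and-endpoint argument that no semisimplicial set with the same open cells can realize it. I take $K=\Delta(012)$ and the datum that identifies the edge $01$ with the edge $21$ by $\pi(0)=2$, $\pi(1)=1$, together with the induced vertex identification $0\sim2$. I first check \cref{def:gluing-perm}:
\begin{enumerate}[label=(\alph*)]
\item the relation preserves dimension;
\item the bijections restrict correctly to faces;
\item the only nontrivial loops of identifications are $01\to21\to01$ and $0\to2\to0$, and both induce the identity, so self-holonomy is trivial.
\end{enumerate}
\Cref{prop:permuted-cw} then gives a CW quotient $X$. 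It has two $0$-cells, $P=[0]=[2]$ and $Q=[1]$. It has two $1$-cells: $e=[01]=[21]$, which joins $P$ to $Q$, and $f=[02]$, a loop at $P$. It has one $2$-cell $c$. Topologically $X$ is a disc, obtained by folding the two edges through $1$ together.

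The main step is to read off an invariant of the attaching map of $c$ that any CW structure with the same open cells must share. The characteristic map of $c$ restricted to $\partial\Delta^2$ is a loop in the $1$-skeleton $X^{(1)}$. I will use the following facts about it:
\begin{itemize}
\item Its based edge word is $e\,e^{-1}f^{\pm1}$ up to cyclic rotation and inversion. The edge $e$ is traversed twice, once in each direction, and $f$ is traversed once.
\item This word is determined by the open cells and the topology of $X$. The closure of $c$ is all of $X$, and the attaching map of any characteristic map for $c$ differs from this one by a homeomorphism of $S^1$.
\item Over each point interior to an edge, the local preimage count of the attaching map and the local direction of traversal are invariants. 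Since $X$ is a disc, a second parametrization of the same cell can only reparametrize the boundary circle.
\end{itemize}
I expect this invariance to be the main obstacle. I would first argue it through the local degree at an interior point of $e$. The points of $e$ are interior to the disc, so a small neighbourhood of such a point is split by $e$ into two sheets of $c$. Their orientations, together with the fact that $e$ is not a boundary edge, force $e$ to appear exactly twice with opposite signs.

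Suppose now that $\widetilde S$ is a semisimplicial set with $|\widetilde S|\cong X$ cellwise. Its unique $2$-simplex $\sigma$ has boundary word $(\partial_2\sigma)(\partial_0\sigma)(\partial_1\sigma)^{-1}$, in which the first two letters are traversed forward and the third backward. Because $e$ must occur with opposite signs, the two occurrences of $e$ are either at $\{\partial_2,\partial_1\}$ or at $\{\partial_0,\partial_1\}$, and $f=\partial_0\sigma$ or $f=\partial_2\sigma$ respectively. I then use the semisimplicial vertex identities.
\begin{enumerate}[label=(\roman*)]
\item Case $\partial_2\sigma=\partial_1\sigma=e$. Comparing last vertices gives $\sigma(1)=\sigma(2)$, namely the head of $e$. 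But $\partial_0\sigma=f$ runs from $\sigma(1)$ to $\sigma(2)$ and is a loop at $P$, so the head of $e$ is $P$. The tail of $e$ is $\sigma(0)$, and since $e$ joins the two distinct vertices $P$ and $Q$, the tail must be $Q$. The vertex identities give no contradiction here, so this case has to be excluded by the direction invariant instead: in the word of $\sigma$, the traversal of $f$ must sit between the two occurrences of $e$ in the pattern forced by $e\,e^{-1}$ being adjacent at the vertex $Q=[1]$.
\item Case $\partial_0\sigma=\partial_1\sigma=e$. Comparing first vertices gives $\sigma(1)=\sigma(0)$, and then $f=\partial_2\sigma$ runs from $\sigma(0)$ to $\sigma(1)$.
\end{enumerate}

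In each case I will refine the invariant to a cyclic one. In $X$ the two occurrences of $e$ meet at $Q$, a vertex that is not on $f$, so in the cyclic word the letters $e$ and $e^{-1}$ must be adjacent through $Q$. A semisimplicial $\sigma$ whose two $e$-slots are adjacent through the corner $\sigma(0)$ (the $\partial_2,\partial_1$ case) or through $\sigma(2)$ (the $\partial_0,\partial_1$ case) puts that shared corner at the tail, respectively the head, of $e$. The endpoint bookkeeping above then forces that corner to be $P$ in both cases, which contradicts the requirement that it be $Q$. This gives the contradiction, and hence no semisimplicial presentation with this open-cell inventory exists.
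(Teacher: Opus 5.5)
\textbf{The example fails.} Your quotient $X$ does admit an ordinary semisimplicial presentation with exactly the same open cells, so no invariant can exclude one.

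Your own case (i) shows this. There you correctly derive that the head of $e$ is $P$ and the tail $\sigma(0)$ is $Q$. The two $e$-slots $\partial_2\sigma,\partial_1\sigma$ meet at the corner $\sigma(0)$, which is therefore $Q$, not $P$. Your closing sentence (``forces that corner to be $P$ in both cases'') contradicts what you had just proved.

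Concretely, take $\sigma(0)=Q$, $\sigma(1)=\sigma(2)=P$, $\partial_2\sigma=\partial_1\sigma=e$ (from $Q$ to $P$) and $\partial_0\sigma=f$. This satisfies all three identities $\partial_i\partial_j=\partial_{j-1}\partial_i$. Its boundary word $e\,f\,e^{-1}$ has its $e^{-1}e$ junction at $\sigma(0)=Q$, which matches $X$. Its realization is the triangle folded about $\sigma(0)$, a cone with apex $Q$ on the loop $f$. That is exactly your $X$, cell for cell; the affine relabelling $1\mapsto\sigma(0)$, $0\mapsto\sigma(1)$, $2\mapsto\sigma(2)$ carries one identification onto the other. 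Case (ii) works as well, folding about $\sigma(2)$.

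The underlying reason is that your datum is a fold of two adjacent edges about their common vertex $1$. It becomes order-preserving as soon as $1$ is placed first or last in the vertex order, for example $1<0<2$. Then \cref{thm:realization} produces the semisimplicial presentation directly. Note also that in a cyclic word of length three every letter is adjacent to the other two. Your adjacency invariant therefore reduces to the junction vertex, and that matches.

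\textbf{What is needed.} You need a datum that no choice of vertex order makes increasing, together with an invariant that does not depend on how the attaching circle is parametrized.

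The paper takes the cyclic identification $(0,1)\mapsto(1,2)\mapsto(2,0)\mapsto(0,1)$ of the sides of a single triangle. It is coherent because the threefold composite is the identity. The quotient has one vertex, one edge $a$, and one $2$-cell attached along $a^3$, so $\partial_2=3$ and $H_1=\mathbb Z/3$.

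Any semisimplicial set with one simplex in each of degrees $0,1,2$ must have all three faces of its $2$-simplex equal to $a$. Its boundary is then $a-a+a=a$, which gives $H_1=0$.

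Homology is a homeomorphism invariant, so this argument needs none of the local-degree and reparametrization claims that you flagged as the main obstacle. Those claims are also not justified as stated: two characteristic maps for the same open cell need not differ by a homeomorphism of the boundary circle.
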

\begin{proof}
In one triangle with vertices $0,1,2$, identify its cyclically directed sides by
\[
 (0,1)\longmapsto(1,2)\longmapsto(2,0)\longmapsto(0,1).
\]
Their threefold composite is the identity, and restrictions to vertices are coherent. The quotient has one vertex, one edge $a$, and one two-cell attached by $a^3$. Hence $\partial_2=3$ and $H_1=\mathbb Z/3$. With one ordinary semisimplicial simplex in each of degrees $0,1,2$ and none higher, all three faces of the two-simplex would instead be $a$. Its boundary is $a-a+a=a$, giving $H_1=0$. Thus the unchanged ordinary semisimplicial model is impossible.
\end{proof}

\begin{rem}[Scope and refinement]\label{rem:permuted-scope}
The realization theorem for order-preserving congruences (\cref{thm:realization}) is a semisimplicial coequalizer theorem, and \cref{prop:permuted-cw} is its separate affine-CW counterpart. It cannot be obtained by declaring a permuted relation to commute with the original ordered face maps. Symmetric $\Delta$-complexes provide a related established enlargement with symmetric-group actions~\citep{AllcockCoreyPayne}. The present thin coherence excludes nontrivial simplex stabilizers. Combining arbitrary lists of permutation generators can violate that coherence, so an implementation has to test self-holonomy.

Alternatively subdivide $K$ barycentrically. Its vertices are the nonempty source faces; order them by dimension, with any tie-breaking order. A simplex is a chain of faces of distinct dimensions. The induced gluing maps preserve those dimensions and are increasing on each chain. Hence the refined datum is order-preserving and has the same topological quotient. This changes the cell inventory and the cost. Full permutations are needed only to retain a particular unrefined presentation, and they produce no new homeomorphism types.
\end{rem}

\begin{prop}[Compatible orders for cyclic bipyramid gluings]\label{prop:lens-orders}
Let $p\ge4$, $1\le q<p$, and let $C_p$ have cyclically indexed vertices $c_0,\ldots,c_{p-1}$. Set
\[
 B_p=v*(C_p*S^0),\qquad S^0=\{n,s\}.
\]
This is a flag three-ball. Prescribe its boundary triangle maps by
\[
 g_i(c_i)=c_{i+q},\quad g_i(c_{i+1})=c_{i+q+1},\quad g_i(n)=s, \qquad i\pmod p.
\]
There is a total order of source vertices making every $g_i$ increasing if and only if $\gcd(p,q)>1$.
\end{prop}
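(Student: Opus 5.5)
The plan has two parts. First I check that $B_p$ is a flag three-ball using the join results of \cref{app:flag-spheres}. Then I reduce ``$g_i$ is increasing for every $i$'' to a shift-invariance condition on the orientations of the cycle edges, and decide that condition by the size of the cyclic subgroup generated by $q$.

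For flagness, $C_p$ with $p\ge4$ is flag, since its graph has no triangles and $\Cl(C_p)$ is the cycle itself. The complexes $S^0$ and the point $v$ are also flag. By \cref{lem:join}, $B_p=\Cl(\{v\}*C_p*S^0)$ is the simplicial join of these, so it is flag. Topologically, $C_p*S^0$ is the suspension of a circle, hence $S^2$, and $B_p$ is the cone on it, a $3$-ball. The triangles on which the $g_i$ are defined, $c_ic_{i+1}n\mapsto c_{i+q}c_{i+q+1}s$, are boundary triangles, so $v$ plays no role in the order question.

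Next I translate the condition. A total order makes $g_i$ increasing exactly when $g_i$ preserves every pairwise comparison among the three vertices $c_i,c_{i+1},n$. Writing $\varepsilon_i=+$ when $c_i<c_{i+1}$ and $-$ otherwise, this amounts to three conditions for all $i \bmod p$:
\begin{enumerate}[label=(\alph*)]
\item $\varepsilon_i=\varepsilon_{i+q}$;
\item $c_i<n\iff c_{i+q}<s$;
\item $c_{i+1}<n\iff c_{i+q+1}<s$.
\end{enumerate}
Condition (a) says that $\varepsilon$ is constant on the cosets of the subgroup $\langle q\rangle\subseteq\mathbb Z/p$, which has index $g=\gcd(p,q)$.

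If $g=1$, then (a) forces every $\varepsilon_i$ to be equal. Then $c_0<c_1<\dots<c_{p-1}<c_0$, or the same chain reversed, which contradicts antisymmetry of a total order. So no such order exists.

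If $g\ge2$, I construct an order directly. Order the $c_i$ lexicographically by the key $(i\bmod g,\ i)$, with $i\in\{0,\dots,p-1\}$. Consecutive indices $i,i+1$ have different residues because $g\ge2$, so $\varepsilon_i=+$ exactly when $i\not\equiv g-1\pmod g$. This includes the wrap-around edge from $c_{p-1}$ to $c_0$, since $p-1\equiv g-1$. Because $q\equiv0\pmod g$, this rule is invariant under $i\mapsto i+q$, so (a) holds. Now place $n$ and $s$ above every $c_j$, and $v$ anywhere. Then both sides of (b) and (c) are true for all $i$, so every $g_i$ is increasing.

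The only delicate point is reading ``increasing'' correctly as preservation of all three pairwise comparisons, including those involving $n\mapsto s$. Once that reading is fixed, the argument is a residue computation.
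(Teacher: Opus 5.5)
Your proof is correct and follows the paper's argument. The sign condition $\varepsilon_i=\varepsilon_{i+q}$ forces a strict cyclic chain when $\gcd(p,q)=1$, and otherwise a nonconstant period-$g$ sign pattern with $n,s$ placed above every $c_j$ gives the required order. The differences are cosmetic: you write the order down explicitly via the key $(i\bmod g,\,i)$ instead of topologically sorting an acyclic orientation, and you also verify the flag three-ball claim, which the paper takes as given.
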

\begin{proof}
Let $\epsilon_i=+1$ when $c_i<c_{i+1}$ and $-1$ otherwise. An increasing $g_i$ requires $\epsilon_i=\epsilon_{i+q}$. If $\gcd(p,q)=1$, translation by $q$ is transitive on indices, so all signs coincide. This would impose a strict cyclic chain, in one direction or the other, which no total order admits.

Conversely put $g=\gcd(p,q)>1$ and take a nonconstant sign pattern of period $g$, for example $\epsilon_i=+1$ precisely for $i\equiv0\pmod g$. Orient the cycle accordingly. The orientation is acyclic, since the only cycle in its underlying graph would be directed only if every sign coincided. Take a topological ordering of its vertices and put both $n,s$ after them. Corresponding cycle edges have the same relative order and both apices occupy the last triangle position. All maps are therefore increasing. The interior apex $v$ can be placed anywhere.
\end{proof}

\begin{rem}[A fixed-presentation obstruction]\label{rem:order-preserving-narrow}
The classical bipyramid construction gives $L(p,q)$ for a coprime pair, with the convention for rotations in~\citep{KorepanovMartyushev}. Proposition~\ref{prop:lens-orders} rules out an increasing order for this \emph{unrefined} presentation for every coprime pair, including every $p\ge7$. It does not exclude lens spaces as topological quotients of order-preserving data.

For example, $p=4,q=2$ gives the antipodal map of the octahedral boundary, and the quotient ball is $\mathbb{RP}^3=L(2,1)$. More generally, bisect every equatorial edge of the original coprime presentation. The new source is $B_{2p}$ with shift $2q$, a subdivision of the same gluing. Put the old equatorial vertices first, then the new midpoints, then $n,s$. Every boundary triangle has one vertex of each type, and every gluing preserves type. Thus the refined datum is increasing and its quotient remains $L(p,q)$. Although $(2p,2q)$ is not a reduced lens parameter pair, the quotient is still a lens space.

Exhaustive searches of orders on the $p+2$ boundary vertices give the following finite regression counts, which the proof does not use:
\begin{center}
\begin{tabular}{@{}lll@{}}\toprule
$p$ & admissible $q$ & number of orders, respectively\\\midrule
4 & 2 & 48\\
5 & none & 0\\
6 & 2,3,4 & 528,720,528\\\bottomrule
\end{tabular}
\end{center}
\end{rem}

\begin{ex}[Lens spaces on one source, with distinct attaching data]\label{ex:lens}
The flag ball $B_5$ has face vector $(8,22,25,10)$, hence $65$ nonempty simplices. The rotations $q=1,2$ give coherent permuted quotients with equal face counts
\[
 (f_0,f_1,f_2,f_3)=(3,13,20,10)
\]
and homology $(\mathbb Z,\mathbb Z/5,0,\mathbb Z)$. They are $L(5,1)$ and $L(5,2)$, which are not homotopy equivalent: $2$ is not a signed square modulo $5$~\citep{HAPLenses}. Equality of these counts and homology groups is not equality of cell structures or attachment records.

For $c_0<\cdots<c_4<n<s<v$, the signs of the five triangle bijections are
\[
 q=1:(+,+,+,-,-),\qquad q=2:(+,+,-,+,-).
\]
The target pairings $i\mapsto i+q$ differ too. A polygon rotation between two different triangles is not automatically an even permutation in their sorted local charts. A sign does not specify a general facet bijection because $S_k\to\{\pm1\}$ is not injective for $k\ge3$. The lens example serves to distinguish homology from homotopy type.
\end{ex}

\begin{ex}[Torus]\label{ex:torus}
Label the square $1=$ bottom-left, $2=$ bottom-right, $3=$ top-left, $4=$ top-right, and use diagonal $14$. Its triangles are $124,134$, and its only missing edge is $23$, so it is flag. Glue bottom to top by $1\mapsto3,2\mapsto4$ and left to right by $1\mapsto2,3\mapsto4$. These are increasing maps. The quotient has one vertex $Q$, edges $a=[12]=[34]$, $b=[13]=[24]$, $c=[14]$, and two triangles. This is the usual two-triangle torus $\Delta$-complex~\citep{hatcher2002algebraic}. Its signed trace is in Table~\ref{tab:torus}. An isomorphic source disc with a different gluing datum gives the Klein bottle (Remark~\ref{rem:orientation}); relabelling and transporting one fixed datum cannot change its topology.
\end{ex}

\begin{rem}[An order-preserving Klein-bottle gluing]\label{rem:orientation}
On the cyclically labelled square $1,2,3,4$ with diagonal $24$, glue $12$ to $34$ by $1\mapsto3,2\mapsto4$, and $14$ to $23$ by $1\mapsto2,4\mapsto3$. Both maps are increasing for $1<2<3<4$. Put $a=[12]=[34]$, $b=[14]=[23]$, $c=[24]$. Then
\[
 \partial(124)=a-b+c,\qquad \partial(234)=a+b-c, \qquad
 \partial_2=\begin{pmatrix}1&1\\-1&1\\1&-1\end{pmatrix}.
\]
Its Smith invariants are $1,2$, and $\partial_1=0$, so $H_1=\mathbb Z\oplus\mathbb Z/2$ and $H_2=0$. Every edge has two incident triangle-side occurrences, and the quotient vertex link is a circle of six corner intervals. Thus the quotient is a closed connected surface; the boundary word $aba b^{-1}$ identifies it as the Klein bottle. The fixed maps are increasing in exactly the four orders $1234,2143,3412,4321$. This is a different gluing datum on an isomorphic source disc, which relabelling a fixed datum could not produce. The example uses no order-reversing map, which would belong to the separate permutation regime.
\end{rem}

The next example exhibits a repeated genuine facet and then a composition of gluing with a CW subcomplex collapse.

\begin{ex}[M\"obius band and $\mathbb{RP}^2$; a doubled cover]\label{ex:mobius}
Let $K=\Delta^2$ on $\{1,2,3\}$ (flag) with the gluing datum generated by $(1,2)\sim(2,3)$ via the order-preserving bijection $1\mapsto2$, $2\mapsto3$. Condition (ii) of Definition~\ref{def:gluing} forces $2\sim3$ and $1\sim2$, so the classes are one vertex $P$, edges $a=\{(1,2),(2,3)\}$ and $c=\{(1,3)\}$, and one $2$-cell $F$ with
\[
\partial F=\bigl(a,\ c,\ a\bigr).
\]
The covers $a\lessdot F$ are therefore the two \emph{parallel} edges $e_{F,0}$ and $e_{F,2}$ of $Q(S)$, with signs $(-1)^0=+1$ and $(-1)^2=+1$, so
\[
[a{:}F]=2,\qquad \partial F=2a-c,\qquad \partial a=\partial c=0 ,
\]
giving $H_*=(\mathbb Z,\mathbb Z,0)$ and $\chi=1-2+1=0$. The space is the triangle with two adjacent sides identified in the same rotational sense, boundary word $a\,a\,c^{-1}$: the class $a$ carries two triangle sides and $c$ one, so this is a compact surface with one boundary circle and $\chi=0$, the \emph{M\"obius band}. Collapsing the surviving $1$-cell $c$, the boundary circle of this quotient (a CW subcomplex, to which the topological operation of Definition~\ref{def:sc} applies), leaves cells $(1,1,1)$ with $\partial F=2a$, $\chi=1$ and $H_*=(\mathbb Z,\mathbb Z/2,0)$: the minimal CW structure on $\mathbb{RP}^2$.
\end{ex}

\begin{rem}[incidence numbers separate the two regimes]\label{rem:incidence-regimes}
Under a collapse, distinct facets of a simplex are distinct simplices, so no two covers between \emph{genuine} cells are ever parallel and $[x{:}y]\in\{0,\pm1\}$ throughout: the chain complex looks regular even though the stored presentation is not (Proposition~\ref{prop:regular}). Parallel cover occurrences from component points arise at edges, whose two endpoints may map to the same point, and $C$ discards those anyway. Under a gluing, genuine parallel covers do occur and incidence coefficients can have absolute value greater than one (always bounded by the number of local facet occurrences for a fixed cell) (Example~\ref{ex:mobius}). This is a third respect in which the two regimes differ, besides strict gradedness (Remark~\ref{rem:dichotomy}) and skeletal determinacy (\cref{rem:dichotomy-skel}).
\end{rem}

\section{Complete experimental tables}\label{app:tables}

\subsection{Storage crossover: measurements}\label{sec:crossover-measured}
Concerning the storage crossover of \cref{sec:crossover}, we tested~\eqref{eq:crossover} on three families of flag complexes chosen to span a wide range of $s$, taking the collapsed subcomplex to be an induced subcomplex and recording the achieved simplex fraction rather than a vertex fraction, since an induced subcomplex on a fraction $f$ of the vertices contains roughly $f^{k+1}$ of the $k$-simplices.

\begin{center}
\begin{tabular}{@{}lrrrrrr@{}}
\toprule
family & $n$ & median $\lvert K\rvert$ & $s$ & predicted $\alpha^\ast$ & measured $\alpha^\ast$ & seeds\\
\midrule
Erd\H{o}s--R\'enyi & $1000$ & $5\,081$ & $1.89$ & $0.307$ & $0.305$ & $20$\\
Erd\H{o}s--R\'enyi & $5000$ & $25\,052$ & $1.87$ & $0.302$ & $0.301$ & $20$\\
Erd\H{o}s--R\'enyi & $20\,000$ & $100\,214$ & $1.86$ & $0.301$ & $0.300$ & $20$\\
Erd\H{o}s--R\'enyi & $50\,000$ & $250\,062$ & $1.86$ & $0.301$ & $0.300$ & $20$\\
Vietoris--Rips & $1000$ & $16\,650$ & $3.11$ & $0.513$ & $0.518$ & $20$\\
Vietoris--Rips & $5000$ & $84\,972$ & $3.12$ & $0.515$ & $0.521$ & $20$\\
Vietoris--Rips & $20\,000$ & $342\,979$ & $3.12$ & $0.515$ & $0.523$ & $20$\\
Vietoris--Rips & $50\,000$ & $854\,953$ & $3.12$ & $0.515$ & $0.524$ & $20$\\
uncapped clique & $5000$ & --- & $5.11$ & $0.673$ & $0.677$ & $100$\\
\bottomrule
\end{tabular}
\end{center}

The Vietoris--Rips rows use the $3$-skeleta of Vietoris--Rips complexes of uniform planar point clouds of mean degree $8$ (a dimension-capped clique complex need not itself be flag, so these rows test the counting statements, which hold for arbitrary pairs, rather than the flag-specific theorems). The Erd\H{o}s--R\'enyi rows use independent edges at the same mean degree, which suppresses higher cliques and leaves every source two-dimensional. The last row is uncapped clique complexes of geometric graphs on the flat torus, reaching dimensions $10$ to $14$. Predicted and measured crossovers differ by at most $0.0088$, with median error $0.0033$, across $s$ from $1.86$ to $5.11$ and $\alpha^\ast$ from $0.30$ to $0.68$ (\cref{fig:crossover}). In these experiments the crossover is governed by the dimension distribution alone: at fixed vertex count and mean degree, changing the clique structure alone moves $\alpha^\ast$ by a factor of two.

\begin{figure}[htbp]
\centering
\includegraphics[width=\textwidth]{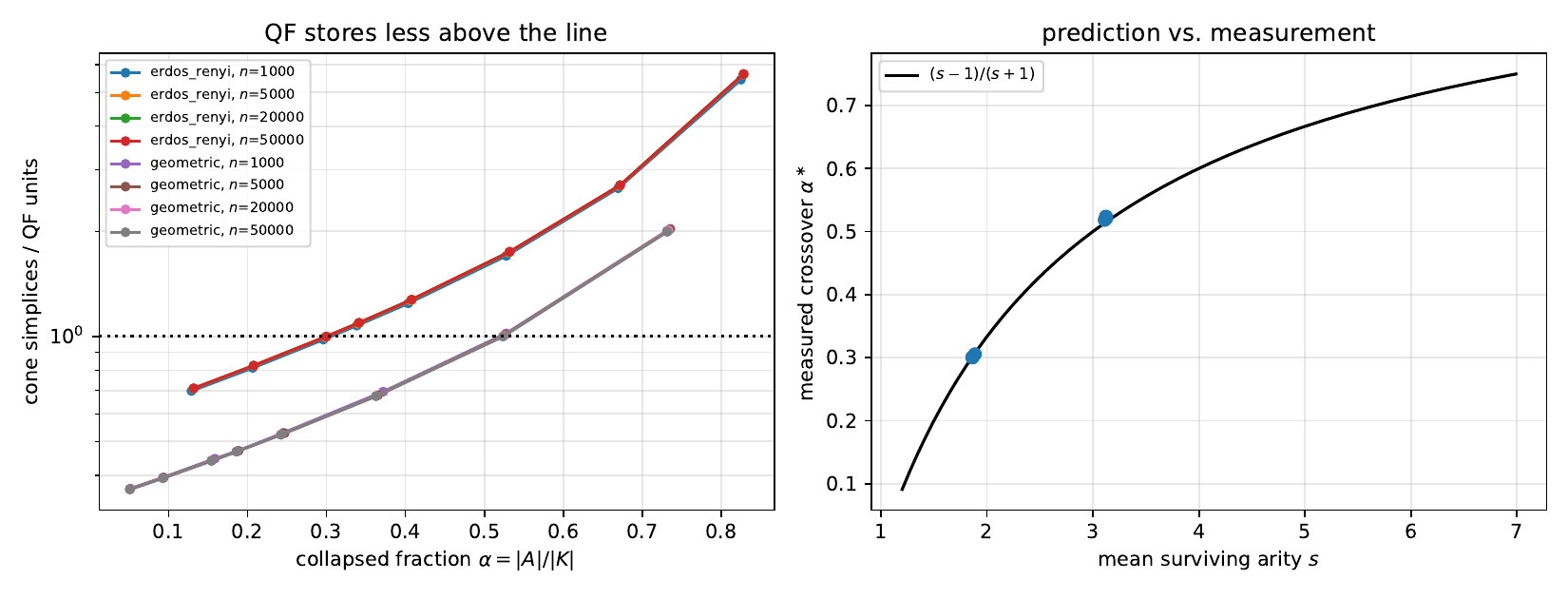}
\caption{Left: measured ratio of cone-model simplices to QF storage units against the collapsed fraction, for the Erd\H{o}s--R\'enyi and Vietoris--Rips families (the latter labelled `geometric' in the legend). Above the dotted line the quotient stores less. Right: the crossover predicted by~\eqref{eq:crossover} against the mean surviving arity, with the measured crossings superimposed. The Erd\H{o}s--R\'enyi crossing is measured at $\alpha^\ast\approx0.30$, in agreement with the prediction.}
\label{fig:crossover}
\end{figure}

\subsection{Static comparison: pilot and replication}

The capped pilot used $n\in\{1000,5000\}$, geometric and Erd\H{o}s--R\'enyi families, three seeds per condition, and five target simplex fractions. There were 60 pairs and 420 successful homology timing rows across seven routes. An exact-intersection experiment contributed 126 further rows on 18 side-pairs from three geometric union graphs. A full clique-complex example with 50,000 vertices, 1,170,307 simplices and dimension 11 contributed 21 homology rows and a separate 12-row topology-only series. All recorded Betti vectors agreed. These are exploratory measurements: there was one timing repetition per route/case, and only one source seed in the largest example.

\begin{table}[ht]
\centering
\caption{Geometric pilot, $n=5000$, dimension cap 3. Median cold compute milliseconds over three seeds. Full QF means the compact, topology-complete representation without the optional word trie.}
\label{tab:qf-gudhi-cold-050}
\begin{tabular}{rrrr}
\hline
Target $\alpha$ & Full QF & \textsc{Gudhi} relative & \textsc{Gudhi} cones\\
\hline
0.20 & 118.55 & 53.95 & 75.57\\
0.50 & 114.34 & 44.13 & 102.30\\
0.54 & 105.00 & 43.56 & 104.03\\
0.80 & 100.72 & 35.43 & 127.67\\
0.95 & 85.04 & 32.37 & 131.70\\
\hline
\end{tabular}
\end{table}

A crossover against component cones is observed between the sampled fractions near 0.54 and 0.80, but not against direct relative \textsc{Gudhi}. This does not identify a universal threshold. With both representations already materialized and the same \textsc{Gudhi} Matrix reducer, the median paired assembly-and-solve speedup from QFTree ranges approximately from 1.4 to 1.7 in these geometric $n=5000$ conditions. The cost of constructing the QF-tree is not included in that conditional comparison.

\paragraph{The full large example.} At $\alpha\simeq0.95$, cold compute took 4.542 seconds for compact full QF and 0.772 seconds for direct relative \textsc{Gudhi}. Feeding an already-materialized QF to \textsc{Gudhi} Matrix required 0.0111 seconds for assembly and solve, versus 0.0437 seconds from the already-materialized SimplexTree pair. The corresponding QFT, source-pair GST, and cone GST files occupied 3.604, 17.857, and 34.823 MiB, respectively. However, the QF construction/computation process peaked near 402 MiB, compared with about 201 MiB for relative \textsc{Gudhi}. The small retained representation therefore did not translate into a smaller cold-process peak. The compact full QF did not beat cone cold compute at any of the three tested fractions on this uncapped example, so the capped-pilot crossover does not transfer to another dimensional regime.

\paragraph{Interpretation.} The advantage supported by these data is a standalone quotient-space API that preserves source-labelled cells, repeated local facet occurrences, quotient maps, and further factorization without triangulating a larger cone model. Its native homology consumer is optional, and \textsc{Gudhi} Matrix can instead be used as a consumer. Sixty stages of nested quotient operations agreed byte-for-byte with the corresponding direct quotients. The immutable update algorithm used here rebuilds the entire table. For homology-only tasks a direct relative matrix needs less information, and repeated identical Betti queries can be cached.

The support-unit crossover serves as an explanatory proxy and does not measure RAM. Exact cell counts satisfy $N_Q=|K|-|A|+c$, $N_C=|K|+|A|+c$, so there is no cell-count crossover. These static experiments motivate evaluating the retained quotient on new operations instead of repeated requests for an unchanged vector of Betti numbers.

\subsubsection*{Independent replication}
\label{sec:user-run-replication}

A separate replication used the same geometric and Erd\H{o}s--R\'enyi families at 1000 and 5000 vertices, with three source seeds per condition. The repeated relative-chain and exact-intersection experiments reuse these sources, so they are not additional independent graph samples. The comparison comprises 135 distinct pairs, including 36 two-complex intersection cases. Timing repeats are summarized within each pair before comparing methods.

The recorded environment is macOS arm64 with Python 3.11.8 and Apple LLVM 21.0.0. Complexes have a maximum construction dimension of three, and the largest input has 90,992 simplices. Thus these observations do not replicate the previous uncapped 1,170,307-simplex stress case.

The relative-chain constructions agree entrywise on all 135 pairs. Their homology agrees with the independent \Gudhi{} Matrix and two-level CAM calculations. The intersection cases use exactly $A=K_1\cap K_2$.

\begin{table}[htbp]
\centering
\caption{$3$-skeleta of Vietoris--Rips complexes on $5000$ points: median cold compute time in ms over three seeds, one timing observation per route and seed (macOS arm64, Python 3.11, Apple LLVM 21). Full QF is the compact, topology-complete representation without the word trie.}
\label{tab:user-cold}
\begin{tabular}{r|rrrr}
\hline
Target $\alpha$ & Full QF & Relative \Gudhi & CAM & Component cones\\
\hline
0.20 & 84.6 & 56.7 & 65.9 & 79.1\\
0.50 & 79.6 & 51.4 & 66.1 & 97.3\\
0.54 & 69.0 & 50.4 & 63.8 & 109.1\\
0.80 & 57.1 & 45.4 & 72.2 & 110.4\\
0.95 & 55.1 & 41.4 & 78.9 & 126.8\\
\hline
\end{tabular}
\end{table}

A QF/cone time crossover is bracketed by the sampled actual median fractions $\alpha\simeq0.20006$ and $\alpha\simeq0.50025$. This is not a universal threshold. At target fraction 0.95, the median of paired cone/QF time ratios is 2.619, whereas direct relative \textsc{Gudhi} remains faster than full QF construction and solution. A median of paired ratios is not generally the ratio of the marginal medians in Table~\ref{tab:user-cold}.

For the same \textsc{Gudhi} Matrix solver applied to chains from ready presentations, the repeated geometric series on 5,000 vertices gives median relative-pair/QF ratios 0.979, 1.036, 1.043, 1.164, and 1.073 at the five sampled fractions. The gain is not uniform: the geometric 1,000-vertex conditions are slower along the recorded QF path. These measurements include the first use of the homology consumer, with no initialization cost subtracted.

Compared with the pilot above, the replication moves the QF/cone crossover bracket from between the sampled fractions near $0.54$ and $0.80$ to between $0.20$ and $0.50$, and lowers the ready-presentation ratio from approximately $1.4$--$1.7$ to the values above. Both series have three seeds per condition. We report them side by side instead of pooling them, and neither bracket should be expected to transfer to another platform.

For two factors by an exact common intersection, we sum the two side costs before comparing methods. On 5,000-vertex geometric graphs, the median full-QF costs are 38.229, 61.301, and 86.998 ms at shared-edge parameters $\eta=0.2,0.5,0.8$, versus 24.469, 40.916, and 64.369 ms for two relative \textsc{Gudhi} computations. For ready QFs and the identical \textsc{Gudhi} solver, the paired ratios are instead 1.325, 1.367, and 1.343. Here $\eta$ is an edge-sharing parameter, which differs from the simplex fraction $|A|/|K_i|$. These timings exclude the preparation of the common intersection itself.

Serialized size and process memory behave differently. At target $\alpha=0.95$ on the 5,000-vertex geometric inputs, median file sizes are 0.234 MiB for the QFT file, 1.278 MiB for the serialized \textsc{Gudhi} simplex tree of the pair with membership in $A$ marked (pair GST), and 2.492 MiB for that of the cone model (cone GST). Nevertheless, median compute-peak RSS is 55.938 MiB for compact QF versus 45.016 MiB for direct relative \textsc{Gudhi}. Across the archived runs, the maximum compute peak is 71.859 MiB and the maximum process-lifetime peak is 81.266 MiB. These figures exclude the generator and total system memory.

The source-free sphere example independently verifies the presentation functionality: four distinct edges share one index word, but remain distinct cells. Its cell counts are $(2,4,4)$ and Betti vector $(1,0,1)$. Collapsing the loaded object's entire one-skeleton produces cell counts $(1,0,4)$ and Betti vector $(1,0,4)$ with a validated quotient map. The four $2$-cells are then attached entirely to the basepoint, so the loaded object lies outside the strictly graded class of Theorem~\ref{thm:strict}. The encoding represents it regardless (Remark~\ref{rem:skeletal-filtration}). The example thus exercises operations on a retained quotient presentation beyond its homology.

The supported claim is therefore representation- and workload-specific: QFTree preserves a reusable quotient presentation, avoids cone inflation, and can reduce serialized size and later computation costs. These data do not establish universally faster one-shot relative homology or lower peak RAM than \textsc{Gudhi}. Entrywise equality of the relative matrices also rules out attributing the advantage to a necessarily smaller relative matrix.

\subsection{Closed-star budgets: measurements}\label{sec:local-measured}
For local collapses we measured the three budgets on $3$-skeleta of Vietoris--Rips complexes on $5000$ uniform points of the flat torus at mean degree $8$ (five seeds generated by \texttt{tools/star\_locality.py}, median $85\,031$ simplices, different from the instances of \cref{sec:crossover-measured}), with $A$ induced either on the points of a ball of radius $\rho$ or on a uniformly random vertex subset. Every local presentation was checked record by record against the full QF-tree, the assembled chain complexes column by column against the direct ones, and the compact presentation was checked to reproduce the Betti numbers without the source pair.

\begin{center}
\footnotesize
\setlength{\tabcolsep}{4pt}
\begin{tabular}{@{}lrrrrrrr@{}}
\toprule
$A$ induced on & param. & $\alpha=|A|/|K|$ & $|H|/|K|$ & ideal/cone & compact/cone & retained/cone & full QF/cone\\
\midrule
ball & $\rho=0.05$ & $0.005$ & $0.004$ & $0.998$ & $0.999$ & $1.007$ & $2.95$\\
ball & $\rho=0.10$ & $0.031$ & $0.009$ & $0.958$ & $0.961$ & $0.996$ & $2.81$\\
ball & $\rho=0.15$ & $0.062$ & $0.014$ & $0.908$ & $0.912$ & $0.980$ & $2.64$\\
ball & $\rho=0.20$ & $0.131$ & $0.020$ & $0.817$ & $0.821$ & $0.935$ & $2.29$\\
ball & $\rho=0.30$ & $0.278$ & $0.026$ & $0.605$ & $0.610$ & $0.844$ & $1.69$\\
ball & $\rho=0.40$ & $0.500$ & $0.034$ & $0.397$ & $0.403$ & $0.749$ & $0.99$\\
ball & $\rho=0.50$ & $0.769$ & $0.039$ & $0.174$ & $0.179$ & $0.631$ & $0.39$\\
\midrule
random vertices & $1\%$ & $0.001$ & $0.060$ & $1.11$ & $1.14$ & $1.17$ & $2.98$\\
random vertices & $5\%$ & $0.004$ & $0.256$ & $1.47$ & $1.58$ & $1.72$ & $2.97$\\
random vertices & $10\%$ & $0.009$ & $0.425$ & $1.77$ & $1.95$ & $2.20$ & $2.94$\\
random vertices & $20\%$ & $0.024$ & $0.679$ & $2.25$ & $2.46$ & $2.93$ & $2.87$\\
random vertices & $30\%$ & $0.050$ & $0.785$ & $2.38$ & $2.57$ & $3.17$ & $2.75$\\
random vertices & $50\%$ & $0.156$ & $0.813$ & $2.21$ & $2.31$ & $3.04$ & $2.27$\\
\bottomrule
\end{tabular}
\end{center}

Ratios are medians over seeds of paired ratios. For a ball, the median compact budget is below the cone budget at every sampled radius (from $0.999$ at $\alpha=0.005$ to $0.18$ at $\alpha=0.77$), though not in every individual case: two of the five seeds at $\rho=0.05$ lie above it. The median shell fraction is at most about $3.9\%$, while the largest individual shell fraction is about $6.41\%$. The compact budget carries a median overhead of about $2.69\%$ over the ideal one at $\rho=0.50$, reaching $3.13\%$ in an individual case. The two curves nonetheless nearly coincide in the figure, which normalizes by the cone budget: their absolute gap there is below $0.00654$ in every sampled ball case. The retained-source budget, which is what the default object holds, crosses the cone budget between $\rho=0.05$ and $\rho=0.10$. The full QF-tree crosses at $\alpha\approx0.5$, as~\eqref{eq:crossover} predicts for $s\approx3.1$. For a scattered vertex set the shell is already a large part of the complex at ten percent of the vertices (median shell fraction about $42.5\%$) and most of it at twenty percent (about $67.9\%$), and no closed-star budget is ever below the cone budget. \Cref{fig:star-locality} shows the ideal and compact budgets against the full QF-tree.

\begin{figure}[htbp]
\centering
\includegraphics[width=.9\textwidth]{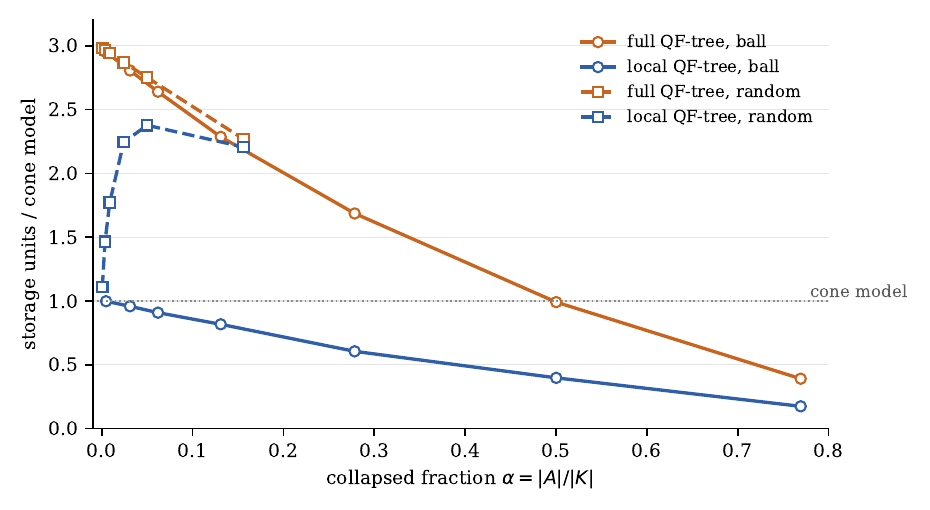}
\caption{Storage units relative to the cone model for the full QF-tree and for the closed-star presentation under the ideal and compact conventions, with the collapsed subcomplex induced on a ball (solid) or on a random vertex subset (dashed). Medians over five seeds. Below the dotted line the representation is the smaller one.}
\label{fig:star-locality}
\end{figure}

In this sampling regime the median compact budget of the closed-star presentation is below the cone budget at every sampled ball radius. For these medians the presentation removes the crossover of \cref{sec:crossover} for collapses that arise from a region of a point cloud, while it gives no saving for a collapsed subcomplex spread evenly over the complex. A record-sharing hybrid on a live simplex tree would reach the ideal budget. The library's compact object exceeds that budget by at most $3.1\%$ on these inputs (median $2.7\%$ at $\rho=0.50$, less at smaller radii), which is less than one percent of the cone budget. The retained-source object exceeds the ideal budget by a median factor that grows from $1.01$ at $\rho=0.05$ to $3.6$ at $\rho=0.50$.

\subsection{Design and endpoints of the operation experiments}\label{sec:qfe070design}
The operation experiments use periodic triangulations $K_s$ of the $2$-torus on an $s\times s$ grid of vertices, with $(f_0,f_1,f_2)=(s^2,3s^2,2s^2)$ and $|K_s|=6s^2$, where $s$ is the grid side (unrelated to the mean arity of \cref{sec:crossover}). Contracting a maximal spanning tree gives the initial quotient $Q_s$ with $(1,2s^2+1,2s^2)$ cells, which at $s=48$ means $13\,824$ source simplices and $9218$ quotient cells. A pilot uses $s\in\{4,12,24\}$ and twelve operations per sequence over all four tasks (quotients, edits, rings, composition). An extended design uses $s\in\{12,48\}$ and thirty-two operations for quotients and edits. Each condition has three seed indices, which vary vertex numbering and the chosen operations but not the topological type, and three timing repetitions. Reported times are medians over seeds of within-seed medians. Paired ratios are formed before aggregation, and error bars show the seed range. The measurements were made on macOS arm64 with Python 3.11.8 and single-thread OpenMP/OpenBLAS settings, with the same build for all matched comparisons within a series. The processor model and native compiler configuration were not recorded, which limits hardware-specific reproducibility. They evaluate the 0.7.0 computational kernels, which later releases retain. One extended execution has complete mathematical outputs and internal stage timings but no external wall-time or sampled-memory measurement, so it contributes only to the observed endpoints.

Correctness is checked at the level of each output: boundary matrices, cycle representatives, induced maps, kernels and cokernels for quotients; complete zigzag barcodes and final cell sets for edits; basis-independent invariants and the fan-model pullback for rings. All compared outputs agree.

Three quotient implementations are compared: the editable local QF, the immutable QF-tree rebuilt after each change, and a persistent \Gudhi{} simplex tree with an updated subcomplex mask, built once. All feed the same native homology-and-map consumer. For edits the comparators are local maintenance, reconstruction of the same cell table, and a \Gudhi{} simplex tree, all feeding \Gudhi's streaming zigzag, so that the comparison isolates representation maintenance from the zigzag algorithm. The timing endpoints are $T_{\mathrm{top}}$ (topology update only), $T_{\mathrm{ops}}$ (copy, topology and algebra per operation, summed), and $T_{\mathrm{cold}}$ (initialization plus $T_{\mathrm{ops}}$).

\paragraph{Routes in the static comparison.} The tasks are storing the quotient as a standalone presentation, computing relative Betti numbers of a pair, and constructing the cone model. The routes are: full QF-trees with and without their word index, feeding either the native $\mathbb F_2$ reducer or \Gudhi's persistence-matrix module (\Gudhi\ Matrix); relative chains assembled by traversing a \Gudhi\ simplex tree and reduced by \Gudhi\ Matrix (\emph{relative \Gudhi}); a two-level computation on $A\subseteq K$ with \Gudhi's persistent cohomology, based on the compressed annotation matrix (CAM) of~\citep{boissonnat2015cam}, using $\beta_d(K,A)=\beta_d(K)-r_d+\beta_{d-1}(A)-r_{d-1}$ with $r_d$ the rank of $H_d(A)\to H_d(K)$; and the component-cone simplex tree reduced by CAM. All use \Gudhi\ 3.13.0 and $\mathbb F_2$, include the top dimension, and avoid per-simplex Python loops. Cold compute time includes source import and validation, quotient or cone construction where applicable, boundary assembly and the solver, and excludes input generation, process startup and file reading.

\begin{figure}[tbp]
\centering
\includegraphics[width=.86\linewidth]{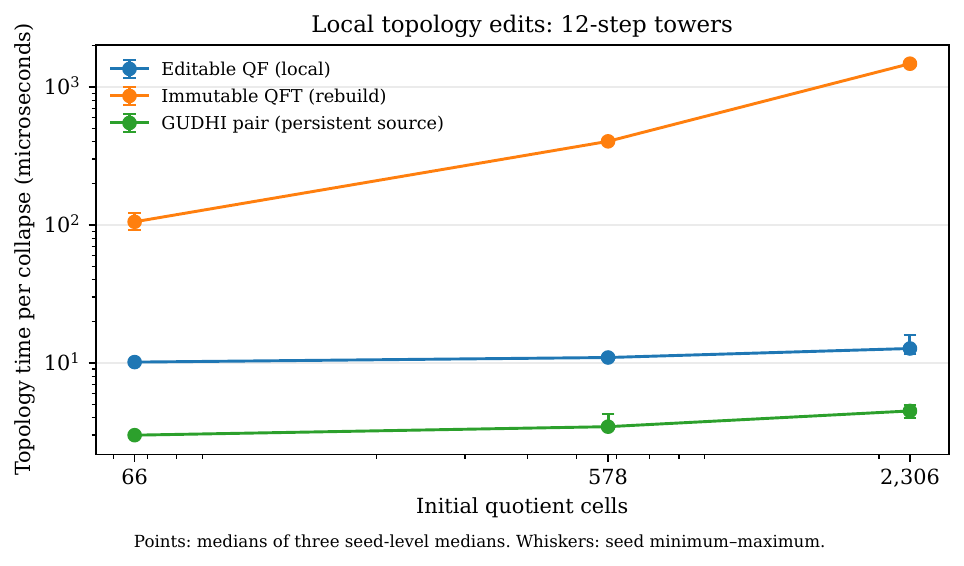}\\[1.2ex]
\includegraphics[width=.86\linewidth]{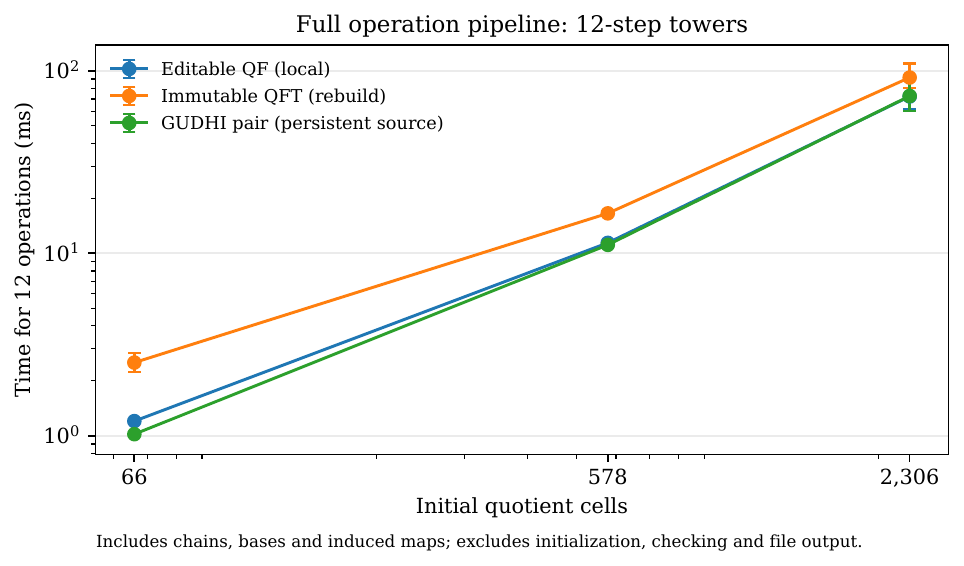}
\caption{Local quotient updates on the torus family. Above: topology time per collapse, which is flat in the quotient size for the editable representation and grows for the immutable rebuild. Below: the same sequences timed end to end, including chains, bases and induced maps and excluding initialization, checking and file output. The topology saving is large, while the complete-operation saving is small.}
\label{fig:qfe070local}
\end{figure}

\begin{figure}[tbp]
 \centering\includegraphics[width=.96\linewidth]{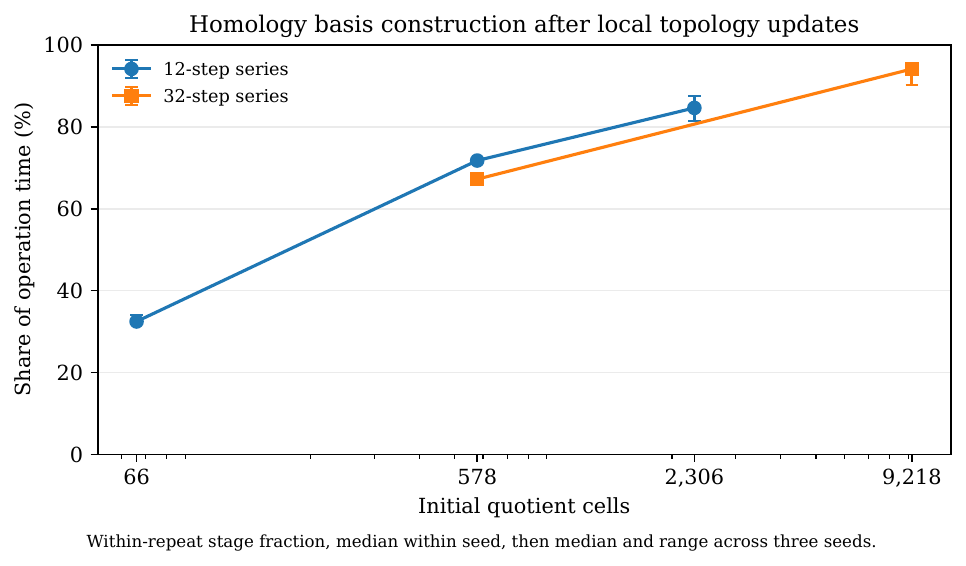}
 \caption{Fraction of local-QF operation time used by homology-basis construction. The two transcript lengths are shown separately, even at a shared source size.}
 \label{fig:qfe070basis}
\end{figure}

\begin{figure}[tbp]
\centering\includegraphics[width=.96\linewidth]{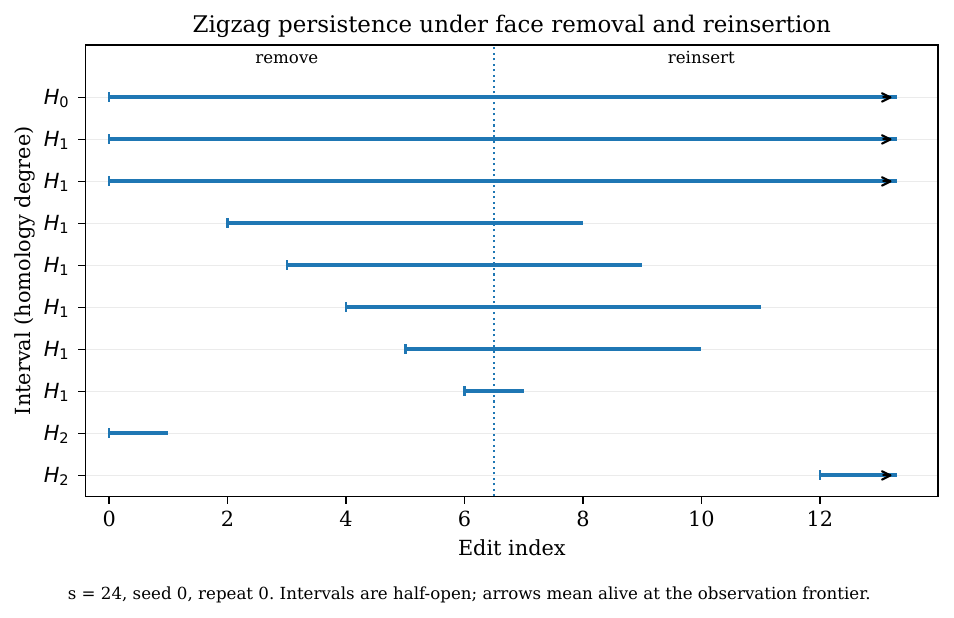}
\caption{Zigzag intervals for six face removals followed by six reinsertions on the $s=24$ torus. The two original degree-one classes persist; finite intervals identify classes created by missing faces. Rightward arrows indicate survival at the end of the observed sequence.}
\label{fig:qfe070barcode}
\end{figure}

\begin{figure}[tbp]
\centering\includegraphics[width=.96\linewidth]{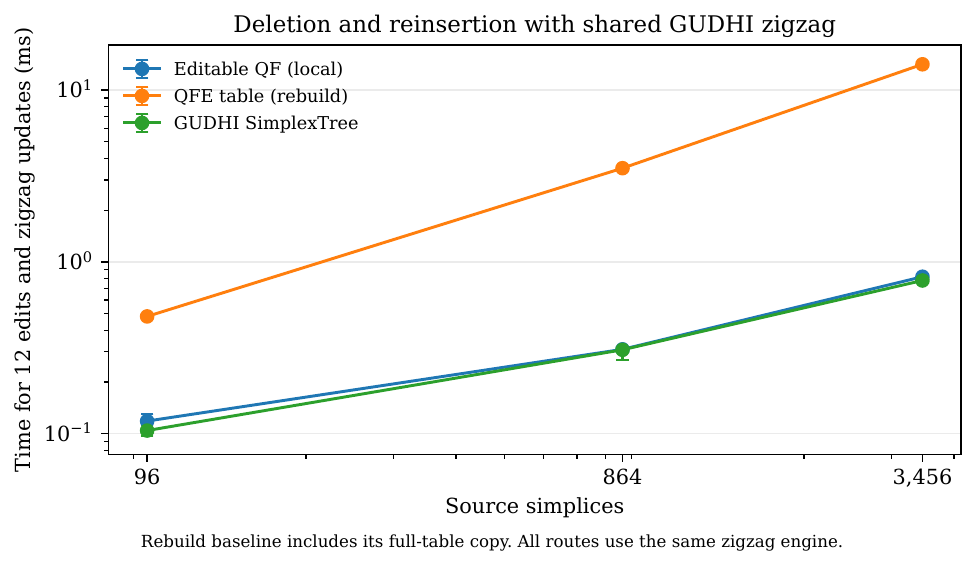}
\caption{Insertion/deletion cost with a common \Gudhi\ streaming zigzag engine. The reconstruction baseline includes its full-table copy. Initialization and independent validation are excluded from the plotted operation time.}
\label{fig:qfe070edittime}
\end{figure}

\begin{figure}[tbp]
\centering
\includegraphics[width=.86\linewidth]{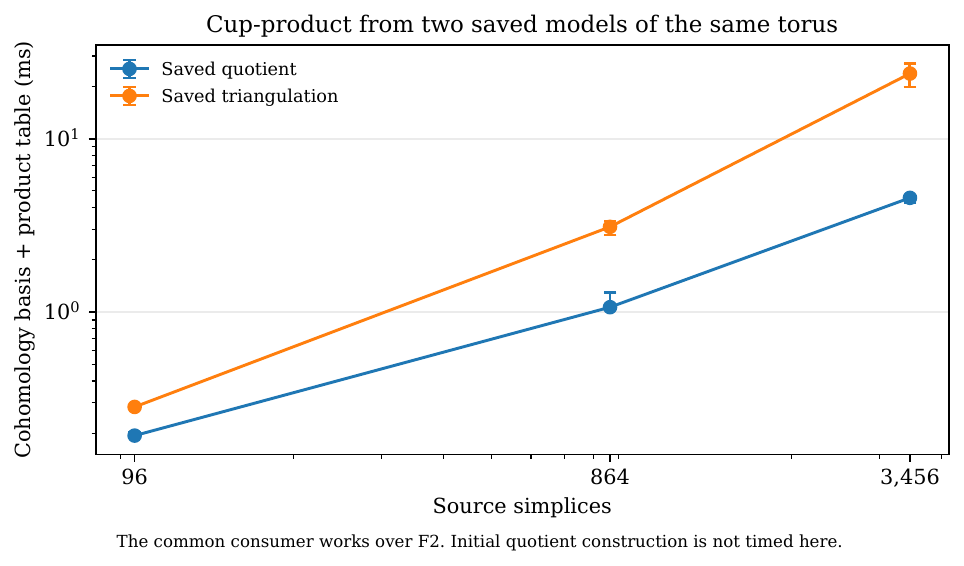}\\[1.2ex]
\includegraphics[width=.86\linewidth]{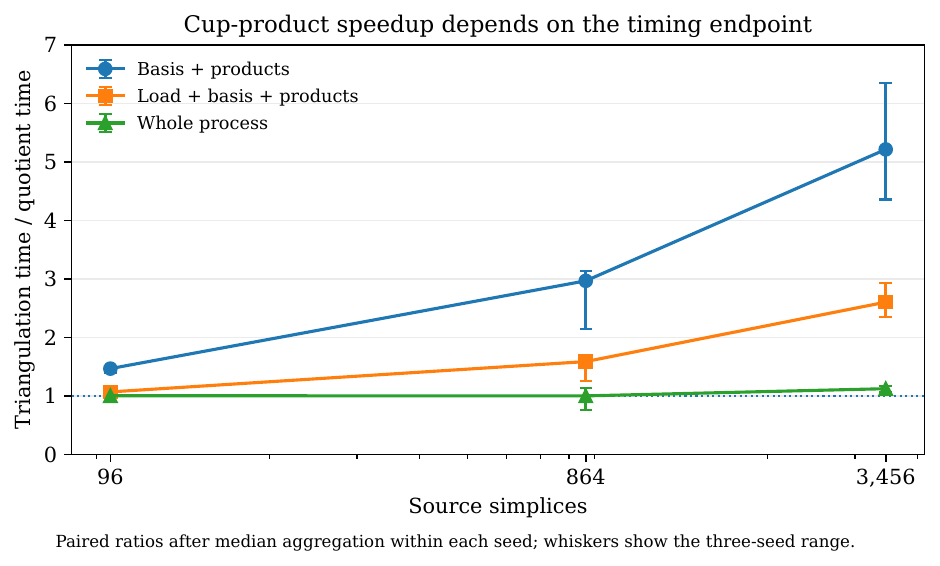}
\caption{Cohomology basis and cup products from two saved representations of the same torus. Above: absolute time. The original construction of the quotient is not part of this saved-object task. Below: the paired speedup at three timing endpoints, showing how much of it survives once loading, and then the whole process, are charged. Ratios are formed after median aggregation within each seed; whiskers show the three-seed range.}
\label{fig:qfe070cup}
\end{figure}

\begin{figure}[tbp]
\centering\includegraphics[width=\linewidth]{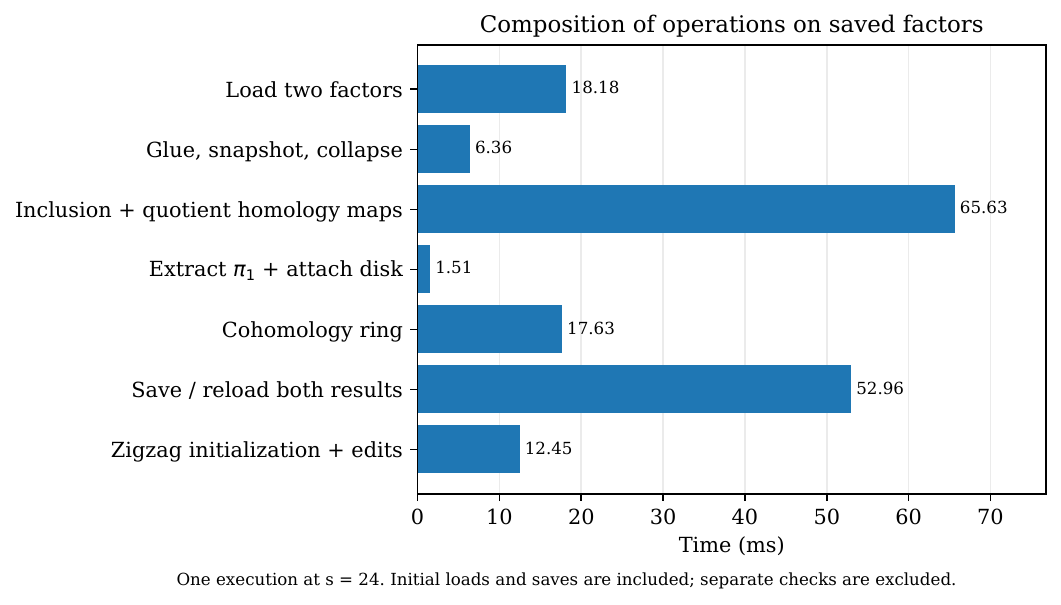}
\caption{Stage costs for one source-independent composition at $s=24$. The measured operations include loading and saving but exclude separate correctness checks.}
\label{fig:qfe070workflow}
\end{figure}

\subsection{Operation experiments: complete timing and memory tables}

Tables~\ref{tab:qfe070collapsepilot} and~\ref{tab:qfe070collapseextended} list every measured size, shape, and collapse route. The branch-copy cost in the editable route is charged explicitly. Zeros in the copy column of other routes mean no separately timed copy stage, not that those implementations allocate no memory: immutable reconstruction and mask copying occur in their own measured operations. Tables~\ref{tab:qfe070edits}, \ref{tab:qfe070cup}, and \ref{tab:qfe070workflow} give the remaining suites. Units and starting conditions are part of each caption.

\begin{table}[htbp]
\centering\small
\caption{Complete pilot collapse measurements: hierarchical medians.}
\label{tab:qfe070collapsepilot}
\begin{tabular}{rllrrrrr}
\toprule
$s$ & Shape & Route & $T_{\rm top}$ & $T_{\rm copy}$ & $T_{\rm ops}$ & $T_{\rm cold}$ & RSS\\
\midrule
4 & T & E & 0.121 & 0.000 & 1.202 & 2.830 & 40.86 \\
4 & T & Q & 1.262 & 0.000 & 2.517 & 4.122 & 41.94 \\
4 & T & G & 0.036 & 0.000 & 1.020 & 2.475 & 41.00 \\
4 & B & E & 0.109 & 0.143 & 1.401 & 2.954 & 40.31 \\
4 & B & Q & 1.147 & 0.000 & 2.361 & 3.868 & 40.98 \\
4 & B & G & 0.037 & 0.000 & 1.041 & 2.474 & 40.25 \\
12 & T & E & 0.131 & 0.000 & 11.395 & 14.666 & 40.73 \\
12 & T & Q & 4.836 & 0.000 & 16.533 & 19.738 & 41.16 \\
12 & T & G & 0.041 & 0.000 & 11.108 & 13.874 & 40.72 \\
12 & B & E & 0.124 & 0.909 & 13.328 & 16.631 & 41.59 \\
12 & B & Q & 4.874 & 0.000 & 17.573 & 20.792 & 41.34 \\
12 & B & G & 0.048 & 0.000 & 12.164 & 14.942 & 40.73 \\
24 & T & E & 0.152 & 0.000 & 72.254 & 83.770 & 43.02 \\
24 & T & Q & 17.675 & 0.000 & 91.746 & 102.424 & 42.77 \\
24 & T & G & 0.054 & 0.000 & 72.663 & 82.322 & 42.16 \\
24 & B & E & 0.142 & 3.667 & 81.130 & 92.748 & 46.39 \\
24 & B & Q & 17.720 & 0.000 & 96.228 & 106.945 & 43.58 \\
24 & B & G & 0.054 & 0.000 & 76.630 & 86.352 & 42.81 \\
\bottomrule
\end{tabular}
\par\smallskip\begin{minipage}{\linewidth}\footnotesize Times are milliseconds for the complete transcript; RSS is process peak MiB. E: editable local QF; Q: immutable QFT rebuild; G: persistent \Gudhi{} pair. T: tower; B: branching. There are 12 operations per transcript. Each entry is the median over three seed-specific medians of three timing repeats. Paired ratios are given in Table~\ref{tab:qfe070ratios}.\end{minipage}
\end{table}

\begin{table}[htbp]
\centering\small
\caption{Complete extended collapse measurements: hierarchical medians.}
\label{tab:qfe070collapseextended}
\begin{tabular}{rllrrrrr}
\toprule
$s$ & Shape & Route & $T_{\rm top}$ & $T_{\rm copy}$ & $T_{\rm ops}$ & $T_{\rm cold}$ & RSS\\
\midrule
12 & T & E & 0.211 & 0.000 & 16.804 & 18.941 & 41.38 \\
12 & T & Q & 12.519 & 0.000 & 39.858 & 42.966 & 41.19 \\
12 & T & G & 0.076 & 0.000 & 16.716 & 18.515 & 41.20 \\
12 & B & E & 0.283 & 1.848 & 23.649 & 25.878 & 43.48 \\
12 & B & Q & 12.712 & 0.000 & 44.127 & 47.144 & 41.73 \\
12 & B & G & 0.083 & 0.000 & 20.521 & 22.354 & 41.45 \\
48 & T & E & 0.444 & 0.000 & 1239.008 & 1308.649 & 53.70 \\
48 & T & Q & 191.293 & 0.000 & 1763.450 & 1839.094 & 51.16 \\
48 & T & G & 0.189 & 0.000 & 1221.002 & 1288.074 & 49.08 \\
48 & B & E & 0.406 & 27.359 & 1745.879 & 1815.742 & 90.17 \\
48 & B & Q & 192.658 & 0.000 & 2220.561 & 2294.599 & 70.67 \\
48 & B & G & 0.251 & 0.000 & 2770.530 & 2876.843 & 59.69 \\
\bottomrule
\end{tabular}
\par\smallskip\begin{minipage}{\linewidth}\footnotesize Times are milliseconds for the complete transcript; RSS is process peak MiB. E: editable local QF; Q: immutable QFT rebuild; G: persistent \Gudhi{} pair. T: tower; B: branching. There are 32 operations per transcript. Each entry is the median over three seed-specific medians of three timing repeats. Paired ratios are given in Table~\ref{tab:qfe070ratios}.\end{minipage}
\end{table}

\begin{table}[htbp]
\centering\small
\caption{Paired collapse speedups: baseline time divided by local-QF time.}
\label{tab:qfe070ratios}
\begin{tabular}{lrllll}
\toprule
Series & $s$ & Shape & Baseline & Topology ratio & Operation ratio\\
\midrule
P & 24 & T & Q & 115.633 [92.198, 128.168] & 1.270 [1.205, 1.309] \\
P & 24 & T & G & 0.388 [0.251, 0.391] & 0.997 [0.986, 1.006] \\
P & 24 & B & Q & 125.096 [124.711, 131.380] & 1.186 [1.151, 1.221] \\
P & 24 & B & G & 0.393 [0.355, 0.401] & 0.945 [0.939, 0.958] \\
E & 48 & T & Q & 369.351 [306.589, 431.257] & 1.172 [1.070, 1.770] \\
E & 48 & T & G & 0.365 [0.349, 0.510] & 0.985 [0.622, 1.010] \\
E & 48 & B & Q & 474.623 [245.943, 530.861] & 1.709 [0.667, 1.785] \\
E & 48 & B & G & 0.618 [0.550, 0.659] & 1.587 [0.925, 1.591] \\
\bottomrule
\end{tabular}
\par\smallskip\begin{minipage}{\linewidth}\footnotesize Entries are median [minimum, maximum] over three seed-specific ratios, after taking the median of the timing repeats within each seed. P: 12-step pilot; E: 32-step extension. Values above one favor local QF.\end{minipage}
\end{table}

Figure~\ref{fig:qfe070largeratios} displays the seed ranges in the larger series. At $s=12$ with branches, the GUDHI/local operation ratio ranges from 0.562 to 1.363; at $s=48$ with branches it ranges from 0.925 to 1.591. Both ranges cross one. Three timing repeats and three seed indices do not resolve these changes into an implementation-independent ranking. The locality counts are more direct in this family: all local collapse steps visit four records and touch six occurrences, regardless of the initial quotient size.

\begin{figure}[tbp]
 \centering\includegraphics[width=.98\linewidth]{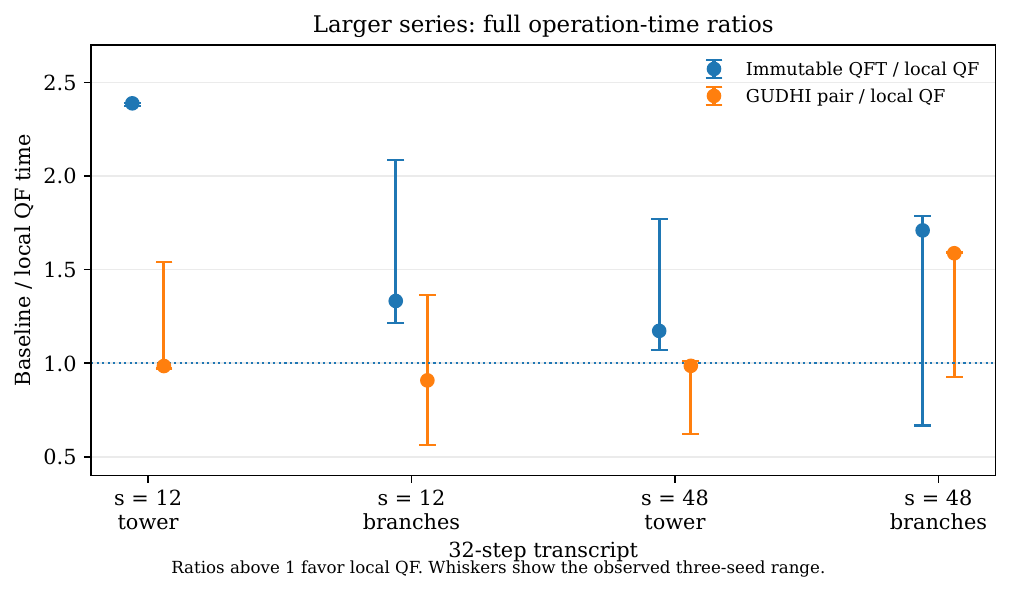}
 \caption{Paired complete-operation ratios for the separately configured 32-step series. Points are medians over seeds after within-seed timing medians; whiskers are the observed seed range.}
 \label{fig:qfe070largeratios}
\end{figure}

Peak resident memory is reported separately from serialized size. Neither measures only the data structure, and neither includes the whole generator/parent-process workload. At $s=48$, local-QF branching uses a median 90.17 MiB of process RSS, versus 70.67 MiB for immutable QFT and 59.69 MiB for the pair; the corresponding tower medians are 53.70, 51.16, and 49.08 MiB. Figure~\ref{fig:qfe070memory} shows no general reduction of peak process memory from local topology maintenance. The maximum observed process memory is 94.61 MiB.

\begin{figure}[tbp]
 \centering\includegraphics[width=.96\linewidth]{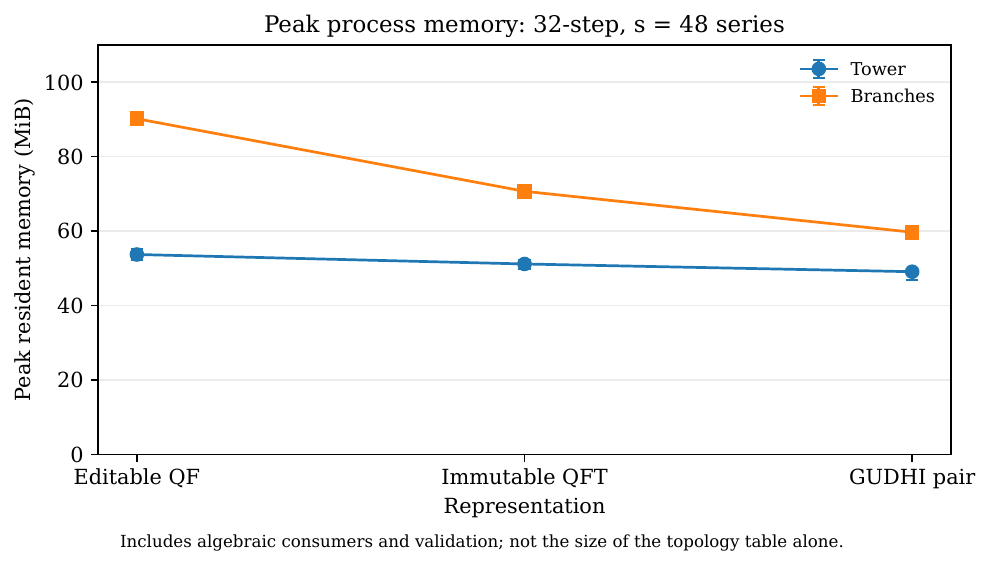}
 \caption{Process high-water memory at the largest measured size. Live branch snapshots and algebraic objects contribute, so this measures neither the native table alone nor the serialized file size.}
 \label{fig:qfe070memory}
\end{figure}

\begin{table}[htbp]
\centering\small
\caption{Insertion/deletion pipelines with a common GUDHI zigzag engine.}
\label{tab:qfe070edits}
\begin{tabular}{lrlrrrrr}
\toprule
Series & $s$ & Route & $T_{\rm top}$ & $T_{\rm copy}$ & $T_{\rm zz}$ & $T_{\rm ops}$ & $T_{\rm cold}$\\
\midrule
P & 4 & E & 0.041 & 0.000 & 0.077 & 0.118 & 1.911 \\
P & 4 & R & 0.032 & 0.371 & 0.082 & 0.481 & 2.283 \\
P & 4 & S & 0.027 & 0.000 & 0.078 & 0.104 & 1.886 \\
P & 12 & E & 0.044 & 0.000 & 0.255 & 0.309 & 6.768 \\
P & 12 & R & 0.036 & 3.213 & 0.259 & 3.509 & 10.076 \\
P & 12 & S & 0.028 & 0.000 & 0.278 & 0.308 & 6.643 \\
P & 24 & E & 0.056 & 0.000 & 0.749 & 0.819 & 25.624 \\
P & 24 & R & 0.043 & 13.324 & 0.757 & 14.124 & 39.129 \\
P & 24 & S & 0.030 & 0.000 & 0.748 & 0.779 & 25.431 \\
E & 12 & E & 0.085 & 0.000 & 0.367 & 0.455 & 6.827 \\
E & 12 & R & 0.060 & 5.737 & 0.407 & 6.204 & 10.425 \\
E & 12 & S & 0.039 & 0.000 & 0.346 & 0.386 & 4.318 \\
E & 48 & E & 0.161 & 0.000 & 5.110 & 5.288 & 130.804 \\
E & 48 & R & 0.125 & 92.728 & 4.289 & 96.540 & 174.850 \\
E & 48 & S & 0.072 & 0.000 & 4.138 & 4.195 & 91.540 \\
\bottomrule
\end{tabular}
\par\smallskip\begin{minipage}{\linewidth}\footnotesize Times are milliseconds. P: 12-edit pilot; E: 32-edit extension. E (route): local editable QF; R: rebuilding the same QFE table before each edit; S: \Gudhi{} simplex tree. The edit is applied to the source triangulation, with $6s^2$ cells, not to the $4s^2+2$-cell quotient. The rebuild cost appears in the copy column and is included in the operation total.\end{minipage}
\end{table}

\begin{table}[htbp]
\centering\small
\caption{Cohomology rings from saved representations of the same torus.}
\label{tab:qfe070cup}
\begin{tabular}{rlrrrrrr}
\toprule
$s$ & Model & Cells & Load & Basis & Products & $T_{\rm ops}$ & $T_{\rm cold}$\\
\midrule
4 & Quotient & 66 & 2.269 & 0.141 & 0.056 & 0.194 & 2.468 \\
4 & Source & 96 & 2.357 & 0.230 & 0.053 & 0.283 & 2.637 \\
12 & Quotient & 578 & 4.024 & 0.917 & 0.157 & 1.066 & 5.069 \\
12 & Source & 864 & 4.975 & 2.942 & 0.160 & 3.100 & 8.043 \\
24 & Quotient & 2306 & 9.687 & 4.136 & 0.395 & 4.558 & 14.252 \\
24 & Source & 3456 & 13.601 & 23.246 & 0.516 & 23.762 & 37.367 \\
\bottomrule
\end{tabular}
\par\smallskip\begin{minipage}{\linewidth}\footnotesize Times are milliseconds. Both representations use the same native consumer. Quotient construction during data preparation is excluded; the load column is measured. The table of products covers positive-degree basis pairs; degree-zero units are regression-tested separately.\end{minipage}
\end{table}

\begin{table}[htbp]
\centering\small
\caption{Source-independent composition of point gluing, quotienting and disc attachment. Times are in milliseconds and memory is peak process MiB.}
\label{tab:qfe070workflow}
\begin{tabular}{rrrrr}
\toprule
$s$ & Final cells & Subsequent operations & Including initial loads & Memory\\
\midrule
4 & 130 & 7.715 & 11.058 & 41.09\\
12 & 1154 & 37.681 & 44.461 & 45.09\\
24 & 4610 & 168.005 & 185.971 & 60.94\\
\bottomrule
\end{tabular}
\par\smallskip\begin{minipage}{\linewidth}\footnotesize The computation includes topology, induced maps, ring calculations and measured saves and reloads. Independent correctness checks are outside the timing endpoint but included in process memory. Entries are medians of the seed-specific medians, not additive stage measurements.
\end{minipage}
\end{table}

The cup-product speedup depends strongly on the endpoint (the lower panel of \cref{fig:qfe070cup}). The $s=24$ ring-consumer ratio of 5.214 becomes 2.603 after loading and 1.123 at the whole-process boundary. The latter contains initialization, verification and output work, which a long-lived application could handle differently. The three endpoints measure different things and are not interchangeable.

Figure~\ref{fig:qfe070basis} records the dominant remaining consumer cost. Within each execution, the homology-basis fraction is formed before aggregating repeats and seeds. It is not obtained by dividing two separately aggregated stage medians. At $s=48$ the three local tower fractions are approximately 95.6\%, 90.3\%, and 94.1\%. Improving topology maintenance alone cannot remove that measured basis-construction cost.

\clearpage

\end{document}